\documentclass[11pt]{amsart}

\usepackage[top=3cm,bottom=2.75cm,left=3cm,right=3cm]{geometry}

\RequirePackage[utf8]{inputenc} 

\usepackage{hyperref}
\usepackage[active]{srcltx}
\usepackage{mathtools}
\usepackage{psfrag}
\usepackage{tikz} 
\usetikzlibrary{decorations.pathmorphing, patterns,shapes}
\usepackage{float}
\usepackage{amsmath}
\usepackage{amsthm}
\usepackage{acronym}
\usepackage{color}
\usepackage{mathrsfs} 
\usepackage{enumitem}
\usepackage{bbm}
\usepackage{upref} 

\usepackage{graphicx}

\usepackage{float}
\usepackage{epstopdf}
\usepackage{subfigure}
\usepackage{longtable}
\usepackage{multicol}
\usepackage[normalem]{ulem}

\usepackage[square, numbers, comma, sort&compress]{natbib}

\usepackage{amsmath,amsfonts,amssymb,amscd,amsthm,xspace}
\theoremstyle{plain}

\newtheorem{theorem}{Theorem}[section]
\newtheorem{corollary}[theorem]{Corollary}
\newtheorem{lemma}[theorem]{Lemma}
\newtheorem{Prop}[theorem]{Proposition}
\newtheorem{proposition}[theorem]{Proposition}

\theoremstyle{definition}
\newtheorem{definition}[theorem]{Definition}

\theoremstyle{remark}
\newtheorem{remark}[theorem]{Remark}

\numberwithin{equation}{section}

\newcommand\restrict[1]{\raisebox{-.2ex}{$|$}_{#1}}

\usepackage{mathrsfs}

\DeclareMathOperator{\dist}{dist}
\DeclareMathOperator{\spt}{supp}

\newcommand{\norm}[1]{\left\lVert#1\right\rVert}
\newcommand{\ff}[1]{f\left( #1  \right) }
\newcommand{\hh}[1]{h\left( #1  \right) }

\newcommand*\diff{\mathop{}\!\mathrm{d}}
\newcommand{\betrag}[1]{\left\vert#1\right\vert}
\newcommand{\normb}[1]{|#1|}

\newcommand{\normf}[1]{\left\lVert#1\right\rVert_{\text{flat}}}
\newcommand{\R}{{\mathbb{R}}}

\newcommand{\Z}{{\mathbb{Z}}}
\newcommand{\N}{{\mathbb{N}}}

\newcommand{\ue}{u_{\varepsilon}}

\newcommand{\aee}{a_{\varepsilon}}
\newcommand{\ve}{v_{\varepsilon}}

\newcommand{\tgae}{\tilde{\gamma}_{\varepsilon}}
\newcommand{\tggae}{\tilde{g}_{\varepsilon}}

\newcommand{\scdot}{\, \cdot \,}

\newcommand{\go}{\gamma^{\textup{opt}}}

\newcommand{\gae}{\gamma_{\varepsilon}}

\newcommand{\ggae}{g_{\varepsilon}}

\newcommand{\Qd}{Q^{d-1}}

\newcommand{\eps}{\varepsilon}

\newcommand{\Wme}{{W^{1,1}}(\Omega)}

\newcommand{\Io}{\int_{\Omega}}

\newcommand{\Ij}{\int_{J_u}}
\newcommand{\HA}{\mathcal{H}^{d-1}}

\newcommand{\Du}{Du}

\newcommand{\Wd}{\mathcal{W}(\Omega, \R^d)}

\newcommand{\Wde}{\mathcal{W}(\Omega)}

\newcommand{\Rd}{\R^{d}}

\newcommand{\Mam}{\mathcal{M}(\Omega, \R^{d})}
\newcommand{\Mamone}{\mathcal{M}(\Omega)}

\newcommand{\Lm}{{L^{1}}(\Omega, \R^d)}
\newcommand{\Lme}{{L^{1}}(\Omega)}

\newcommand{\F}{\mathcal{F}}
\newcommand{\G}{\mathcal{G}}

\newcommand{\Sm}{\mathbb{S}^{d-1}}
\newcommand{\Am}{\mathcal{A}(\Omega)}
\newcommand{\Bm}{\mathcal{B}(\Omega)}
\newcommand{\dd}{ \, \textup{d}}

\newcommand{\cff}{\psi_{\varepsilon}}

\newcommand{\Lmd}{\mathcal{L}^d}

\newcommand{\Mned}{\mathcal{M}^{a,b}}
\newcommand{\Mneds}{\mathcal{M}^{a,0}}

\newcommand{\mrs}{\mathbin{\vrule height 1.6ex depth 0pt width
0.13ex\vrule height 0.13ex depth 0pt width 1.3ex}}

\newcommand\restr[2]{{
  \left.\kern-\nulldelimiterspace 
  #1 
  \vphantom{\big|} 
  \right|_{#2} 
  }}

\def\Xint#1{\mathchoice
{\XXint\displaystyle\textstyle{#1}}%
{\XXint\textstyle\scriptstyle{#1}}%
{\XXint\scriptstyle\scriptscriptstyle{#1}}%
{\XXint\scriptscriptstyle\scriptscriptstyle{#1}}%
\!\int}
\def\XXint#1#2#3{{\setbox0=\hbox{$#1{#2#3}{\int}$ }
\vcenter{\hbox{$#2#3$ }}\kern-.6\wd0}}

\def\dashint{\Xint-}

\newcommand{\Ibb}{\dashint_{B_{\varepsilon}(x)}}

\newcommand{\weakly}{\rightharpoonup}
\newcommand{\weakstar}{\stackrel{\ast}{\rightharpoonup}}

\newcommand{\overbar}[1]{\mkern 1.5mu\overline{\mkern-2.5mu#1\mkern-1.5mu}\mkern 1mu}
\newcommand{\oOmega}{\overbar{\Omega}}

\allowdisplaybreaks[3]

\begin{document}

\title[Eigendamage: an Eigendeformation model 
for the variational approximation of 
cohesive fracture in antiplane shear deformations ]{Eigendamage: an Eigendeformation model 
for the \\ variational approximation of 
cohesive fracture \\ in antiplane shear deformations }

\author[V. Auer-Volkmann]{Veronika Auer-Volkmann}
\address{V. Auer-Volkmann: Institut f\"{u}r Mathematik, Universit\"{a}t Augsburg, Universit\"{a}tsstr. 14, 86159 Augsburg, Germany} 
\email{veronika.auervolkmann@gmail.com}

\author[L. Beck]{Lisa Beck}
\address{L. Beck: Institut f\"{u}r Mathematik, Universit\"{a}t Augsburg, Universit\"{a}tsstr. 12a, 86159 Augsburg, Germany} 
\email{lisa.beck@math.uni-augsburg.de}

\author[B. Schmidt]{Bernd Schmidt}
\address{B. Schmidt: Institut f\"{u}r Mathematik, Universit\"{a}t Augsburg, Universit\"{a}tsstr. 14, 86159 Augsburg, Germany} 
\email{bernd.schmidt@math.uni-augsburg.de}

\date{\today}

\keywords{Cohesive fracture, eigendeformation, two-field approximation, $\Gamma$-con\-ver\-gence.}

\subjclass{74A45, 
74R20, 
74C05, 
49J45  
}

\begin{abstract}
We set up and investigate an approximation scheme for a variational theory of cohesive fracture in antiplane shear deformations. We consider a family of functionals acting on scalar valued functions in general dimensions depending on a small parameter $0 < \varepsilon \ll 1$ and on two fields: the elastic part of the displacement field and an eigendeformation field that describes the inelastic response of the material beyond the elastic regime. We measure the inelastic contributions of the latter in terms of a non-local energy functional. Our main result shows that, as $\varepsilon \to 0$, the approximate functionals $\Gamma$-converge to a cohesive zone model. 
\end{abstract}

\maketitle

\section{Introduction} 

The description of materials undergoing different deformation mechanisms under the action of external loads---ranging from purely elastic and plastic responses to the formation of fractures---is a central topic in mathematical materials science. In recent decades, variational approaches have proved particularly successful in this regard. Since the seminal work of Francfort and Marigo \cite{FrancfortMarigo:98}, in which---based on Griffith’s classical work \cite{Griffith:20} on fracture propagation---variational models were formulated in terms of free discontinuity problems, a broad range of generalized Griffith-type theories has been developed and applied to fracture-mechanics problems; see, for instance, the treatises \cite{Braides:98,afp,BourdinFrancfortMarigo:08,DelPiero:13} and the references therein. 

In ductile materials, deformations beyond the elastic regime may lead to the nucleation of micro-voids and the formation of damage zones in which the strain exhibits diffuse-singular contributions and the specimen undergoes plastic deformation; see, e.g., \cite{Dugdale:60,Barenblatt:62,FremondNedjar:96,FrancfortGarroni:06,GarroniLarsen:09,DelPieroTruskinovsky:09,CagnettiToader:11,PhamMarigo:13,AlessiMarigoVidoli:14}. This naturally leads to variational models whose bulk energy density has linear growth at infinity and whose surface energy scales linearly with small crack openings. In contrast to the brittle setting, bulk and surface contributions may thus interact in a non-trivial manner, cf.\ \cite{BouchitteBraidesButtazzo:95}, making the resulting models challenging both from the analytical and numerical perspectives.

Motivated by the need for efficient computational methods, a variety of numerical approximation schemes for generalized Griffith models have been proposed. In particular, the Ambrosio–Tortorelli scheme \cite{AT92,AT90} laid the foundation for a rapid and widespread development of so-called \emph{phase field models}. In addition to the deformation field, these models introduce an auxiliary variable---the phase field---which can be interpreted as a degradation parameter governing the loss of stiffness of the material; see, for example, \cite{F01,Iurlano:13,ContiFocardiIurlano:16,DalMasoOrlandoToader:16, CrismaleLazzaroniOrlando:18,ChambolleCrismale:19,ContiFocardiIurlano:24,MaggiorelliNegriVicentiniDeLorenzis:26}. Alternative approaches approximate the strain field itself through nonlocal convolution operators; see  \cite{BrGa98,Braides:98,CortesaniToader:99b,LuVi97,LVDD07,Neg06,Lussardi:08,LM13,ScillaSolombrino:21,MarzianiSolombrino:24}. 

Specifically for brittle materials, the \emph{eigenfracture model} was developed in \cite{sfo}. As in conventional phase-field approaches, a second field variable is employed. In contrast to phase-field models, however, this variable physically describes the inelastic response of the material; cf.\ \cite{M13} for the notion of eigendeformations. The corresponding inelastic contribution is penalized by means of nonlocal convolution-type functionals. We refer to \cite{PandolfiOrtiz:12,PandolfiLiOrtiz:14,WangSun:17,StochinoQinamiKaliske:17,QinamiPandolfiKaliske:20,ZhangShenZhou:20,PWO:21,ChihadehStormKaliske:23,DuongFriedrich:26,BozinisLammenKurzejaRoerentropSchmidtMosler:26} for further developments and applications of the eigenfracture framework. 

The main objective of the present work is to develop an eigendeformation based model that can be applied to ductile materials in the scalar antiplane shear setting. To this end, we introduce and analyze an `eigendamage model' capable of efficiently capturing the elastic, plastic, and fracture phenomena within such a framework.

As in \cite{sfo}, we consider a two-field model in which, for a small parameter $\varepsilon > 0$, an eigendeformation field $\gamma_\varepsilon$ is considered alongside the deformation variable $u_\varepsilon$ to describe the inelastic part of the deformation. 
We now, however, consider a model for which the limiting functional, as $\varepsilon \to 0$, exhibits a non-trivial coupling between plastic deformation and crack opening. Similar to the eigenfracture model and in contrast to common damage models, the constitutive laws do not need to be explicitly made dependent on a damage variable. Instead, an increase of damage is associated with a transition from the elastic to the inelastic eigendeformation field. From a mathematical perspective, this is reflected by a stored energy density that grows only linearly with respect to large bulk terms $\nabla u_\varepsilon - \gamma_\varepsilon$ while it scales linearly with respect to small inelastic terms, which are given by a nonlocal approximation of~$\gamma_\varepsilon$.    

The present work continues the investigation initiated in \cite{AuerVolkmannBeckSchmidt:22}, where the special case of a one-dimensional beam was studied. As already observed there, the multidimensional setting is considerably more challenging from a technical standpoint. 

Several aspects of our analysis build upon the contributions \cite{LuVi97,LVDD07} of Lussardi and Vitali, from which a number of auxiliary results can be adapted. Nevertheless, the coupling between $u_\varepsilon$ and $\gamma_\varepsilon$ in the present model poses substantial additional difficulties. Indeed, these fields are linked only through the regularity constraint $\nabla u_\varepsilon - \gamma_\varepsilon \in L^2$, while their convergence is available only in a very weak sense (with respect to suitable negative Sobolev norms). As a consequence, standard slicing techniques in the bulk are no longer applicable, which necessitates the development of new analytical arguments.

\subsection*{Outline}

We introduce our eigendamage model and state the main $\Gamma$-convergence theorem in Section~\ref{sec:settingBV}. Section~\ref{sec_preliminaries} collects the general notation, basics on functions of bounded variation and convergence in negative Sobolev spaces and in the flat topology. Our theorem on compactness of bounded energy sequences is formulated and proved in Section~\ref{sec_Compactness}. In Section~\ref{sec:relBV} we prove a novel relaxation result for free discontinuity functionals with linear growth in the space $BV$, which will be needed in Section~\ref{sec:upper-lim} and, we believe, might be of some independent interest. Section~\ref{sec: estimate from below} is devoted to the proof of the $\Gamma$-$\liminf$ inequality of our main result by the localization method. Correspondingly, Section~\ref{sec:upper-lim} contains the  proof of the $\Gamma$-$\limsup$ inequality. Finally, in Section~\ref{sec_optimzed-gamma} we prove, as a corollary to our main theorem, a $\Gamma$-convergence result for energies with optimized second variable $\gamma$.

\section{Setting of the problem and main results}\label{sec:settingBV}
Let $\Omega \subset \R^d$ be a bounded open set with Lipschitz boundary $\partial \Omega
$. We consider pure antiplane shear deformations of an elastic body, which are described by a deformation field $u \colon \Omega \to \R$ of bounded variation and an $\R^d$-vector valued measure $\gamma$ on~$\Omega$ as the eigendeformation. We model the energy of such a deformation by approximation of the deformation and eigendeformation. To this end, we consider functions $u \in \Wme$ and measures $\gamma = g \mathcal{L}^d$ with $g \in L^1(\Omega, \Rd)$. For the associated energy we take the function $f \colon [0,\infty) \to [0, \infty)$ defined via
\begin{align*}
f(t) = \left\{
\begin{array}{l l}
c_0\, t & \quad   \text{if } t < 1, \\
c_0 & \quad  \text{if } t \geq 1,
\end{array}
\right.
\end{align*} 
where $c_0$ is a fixed positive constant (which constitutes the simplest continuous function with $f(0)=0$, $\lim_{t \to 0^+} \frac{f(t)}{t}=c_0$ and $\lim_{t \to \infty} f(t)=c_0$). 
We further fix a constant $K>0$ and we then consider the functional $E_{\varepsilon} \colon L^1(\Omega) \times \Mam \to [0,\infty]$ given by
\begin{align*}  
E_{\varepsilon}(u,\gamma) \coloneqq  
\begin{cases}
\int_\Omega { |\nabla u- g |^2 \dd x }& \quad \text{if } u \in W^{1,1}(\Omega), \,   \norm{u}_{L^{\infty}(\Omega)} \leq K, \\ + \frac{1}{\varepsilon} \int_\Omega {f \big( \varepsilon \dashint_{B_{\varepsilon}(x) \cap \Omega}{ |g | } \dd y} \big) \dd x  & \quad \phantom{if }  \gamma= g \mathcal{L}^d, \, g \in L^1(\Omega,\R^d), \\
 & \quad \phantom{if } \text{and } \nabla u- g \in L^2(\Omega,\R^d), \\[0.2cm]
\infty & \quad \text{otherwise},
\end{cases}
\end{align*}
for $\varepsilon >0$. We observe that~$E_{\varepsilon}$ can only be finite if~$\gamma$ is absolutely continuous with respect to the $d$-dimensional Lebesgue measure~$\mathcal{L}^d$. In this case, $\nabla u-g$ represents the elastic strain, while~$\gamma$ describes the deformation beyond the elastic regime (which is permanent if $\gamma \neq 0$). We are interested in studying the asymptotic behavior of the functionals $\{E_\varepsilon\}_{\varepsilon>0}$ in the limit $\varepsilon \searrow 0$ in the sense of $\Gamma$-convergence. A natural choice of topology on the space $L^1(\Omega) \times \Mam$ is the topology of strong convergence in $L^1(\Omega)$ for the first component and the flat topology,  i.e., the norm topology in the dual of the space $W^{1,\infty}_0(\Omega, \Rd)$), for the second component. As anticipated above, we consider configurations $(u,\gamma) \in BV(\Omega) \times \Mam$ in the limit. However, for finite energy, the singular part~$\gamma^s$ of the measure~$\gamma$ with respect to the Lebesgue measure~$\mathcal{L}^d$ necessarily needs to coincide with the singular part~$D^s u$ of the measure derivative of~$u$. This compensation of the singular measure derivative in turn leads to energy contributions in the non-local energy term in~$E_\varepsilon$. Before giving the specific form, let us recall that~$D^s u$ can be decomposed into the Cantor part $D^c u$ and the jump part $D^j u = [u] \nu_u \HA \mrs J_u$, where $[u] \coloneqq (u\cdot +)- u(\cdot -)$ is the crack opening with direction $\nu_u \in \Sm$. In the approximation with $W^{1,1}(\Omega)$-function, both contributions result from a locally large derivative, but they behave differently with respect to $(d{-}1)$-dimensional hypersurfaces:  the Cantor part~$D^c u$ vanishes on them, while jumps happen across them. As a consequence, the Cantor part is penalized by the asymptotic value $c_0$ of the function~$f$, while for the jumps we may have to consider all values of~$f$ (without dependence on the direction of the jump), which is taken into account via the function $\theta \colon \R \to [0, \infty)$ defined by 
\begin{equation}
\theta(t) \coloneqq 2 \int_0^1 f \Big(  \frac{\omega_{d-1}}{\omega_{d}} (1-s^2 )^{\frac{d-1}{2}} |t| \Big) \dd s \quad \text{ for all } t \in \R. \label{definitiontheta}
\end{equation}
Here, $\omega_{k}$ denotes  the volume of the $k$-dimensional ball in $\R^k$, for every $k \in \N_0$. We further notice that $\theta(t)$ simplifies to $f(\tfrac{1}{2} |t|)$ for $d=1$, which specified the energy contribution of the jumps in the one-dimensional case study in~\cite{AuerVolkmannBeckSchmidt:22}. Concerning the argument of the integrand in~\eqref{definitiontheta}, we notice from Fubini's theorem that 
\begin{equation}
\omega_{d}= 2 \int_{0}^{1} \omega_{d-1} (1-s^2)^{\frac{d-1}{2}} \dd s,
\label{eqn_omega_d_Fubini}
\end{equation}
which, in view of the concavity of the integrand~$f$, allows to show that the growth of~$\theta(t)$ is bounded from below and above, up to a constant, by $\min\{|t|,1\}$, see Lemma~\ref{propertiesgBV} below. Moreover,~$\theta$ also inherits  the property of subadditivity from~$f$. With the function~$\theta$ from~\eqref{definitiontheta} we can now introduce the limit energy functional $E \colon L^1(\Omega) \times \Mam \to [0, \infty]$ defined by
\begin{equation*}
E(u,\gamma) \coloneqq
\begin{cases}
\int_\Omega {|\nabla u- g|^2 \dd x }+ c_0 \int_\Omega {  |g| \dd x } & \quad \text{if } u \in BV(\Omega), \,  \norm{u}_{L^{\infty}(\Omega)} \leq K, \\ 
\quad + \Ij \theta([u]) \dd \HA  & \quad \phantom{if } \gamma = D^su +g \mathcal{L}^d , \, g \in L^1(\Omega,\R^d),\\  \quad +c_0 |D^cu|(\Omega)
& \quad \phantom{if } \text{and }  \nabla u- g \in L^2(\Omega,\R^d),\\[0.2cm] 
\infty & \quad \text{otherwise}. 
\end{cases} 
\end{equation*}
We emphasize once again that for a finite energy configuration the displacement field~$u$ and the eigendeformation~$\gamma$ are coupled in the following sense: the singular parts~$\gamma^s$ and~$D^s u$ of~$\gamma$ and~$Du$ with respect to~$\mathcal{L}^d$ need to coincide, while their absolutely continuous parts $g \mathcal{L}^d$ and $\nabla u \mathcal{L}^d$ are only required to satisfy an integrability condition $\nabla u - g \in L^2(\Omega,\R^d)$. 

Our main result identifies the energy $E$ as the $\Gamma$-limit of the functionals $\{E_{\varepsilon}\}_{\varepsilon}$.

\begin{theorem}\label{mainresultmBV}
Let $L^1(\Omega)$ be equipped with the strong topology and $\mathcal{M}(\Omega,\R^d)$ be equipped with the flat topology. Then the family $\{E_{\varepsilon}\}_{\varepsilon>0}$ $\Gamma$-converges to~$E$ in $L^1(\Omega) \times \mathcal{M}(\Omega,\R^d)$, i.e., we have
\begin{enumerate}[font=\normalfont, label=(\roman{*}), ref=(\roman{*})]
\item\label{mainresultmBV_1} \emph{\textup{(}$\liminf$-inequality\textup{)}} For every sequence $\{(u_{\varepsilon},\gamma_{\varepsilon})\}_{\varepsilon}$ in $ L^1(\Omega) \times \Mam$ converging to  $(u ,\gamma) \in L^1(\Omega) \times \Mam$, i.e., $u_{\varepsilon} \to u$ in $\Lme$ and $\gamma_{\varepsilon} \to \gamma$ in the flat norm, we have
\begin{equation*}
\liminf_{\varepsilon \to 0} E_{\varepsilon}(u_{\varepsilon},\gamma_{\varepsilon})\geq E(u,\gamma).
\end{equation*} 
\item\label{mainresultmBV_2} \emph{($\limsup$-inequality\textup{)}} For every $(u,\gamma) \in  L^1(\Omega) \times\Mam$, there exists a sequence $\{(u_{\varepsilon},\gamma_{\varepsilon})\}_{\varepsilon}$ in $L^1(\Omega) \times \Mam$ such that  $u_{\varepsilon} \to u $ in $\Lme$, $\gamma_{\varepsilon} \to \gamma$ in the flat norm, and
\begin{equation*}
\limsup_{\varepsilon \to 0} E_{\varepsilon}(u_{\varepsilon}, \gae) \leq E(u,\gamma).
\end{equation*}
\end{enumerate}
\end{theorem}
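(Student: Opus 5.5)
The proof splits into the two inequalities. Both are organized around a compactness statement for sequences of bounded energy, which I prove first. If $E_\varepsilon(u_\varepsilon,\gamma_\varepsilon)\le C$, then by the concavity of $f$ and the elementary bound $f(t)\ge c_0\min\{t,1\}$ one controls $\int_\Omega|g_\varepsilon|$ on the set where the local averages $\varepsilon\dashint_{B_\varepsilon(x)\cap\Omega}|g_\varepsilon|$ are below $1$. The region where these averages exceed $1$ has measure $O(\varepsilon)$, so there $g_\varepsilon$ can be split off as a part that converges to zero in the flat norm. Its $W^{1,\infty}_0$-dual norm is bounded by $\varepsilon$ times its mass, since testing against a Lipschitz field and averaging over $\varepsilon$-balls moves mass only by distance $\varepsilon$.

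Combining this with $\|\nabla u_\varepsilon-g_\varepsilon\|_{L^2}\le C$ and $\|u_\varepsilon\|_\infty\le K$, I would decompose $u_\varepsilon$ so that $Du_\varepsilon$ is uniformly bounded in mass up to a flat-small error. This yields $u\in BV(\Omega)$ and $\gamma_\varepsilon\to\gamma$ in the flat sense along subsequences, with $\gamma-Du\in L^2$ coming from the weak $L^2$ limit of $\nabla u_\varepsilon-g_\varepsilon$. In particular $\gamma^s=D^su$ and the absolutely continuous parts differ by an $L^2$ field. This already shows the limit is finite only on the domain of $E$.

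\textbf{Liminf.} I would use the localization/blow-up method. Define the measures $\mu_\varepsilon=|\nabla u_\varepsilon-g_\varepsilon|^2\mathcal L^d+\frac1\varepsilon f(\cdots)\mathcal L^d$, extract a weak$^*$ limit $\mu$, and prove three density estimates.
\begin{itemize}
\item Lebesgue points: $\frac{d\mu}{d\mathcal L^d}\ge|\nabla u-g|^2+c_0|g|$. The quadratic term follows by lower semicontinuity of the $L^2$ norm. The linear term follows by $f(t)\ge c_0t-$(error on large values) together with weak lower semicontinuity of the total variation of the flat limit of $g_\varepsilon$ on the "small" set.
\item $|D^cu|$-a.e. points: density $\ge c_0$, via $f\to c_0$ asymptotically and $\mu\ge c_0\,|$flat limit of the large part$|$.
\item $\mathcal H^{d-1}\mrs J_u$-points: density $\ge\theta([u])$.
\end{itemize}
For the jump estimate I blow up at a jump point to a cylinder with axis $\nu_u$. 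The difficulty is that $u_\varepsilon$ and $\gamma_\varepsilon$ are coupled only weakly, so one cannot slice $u_\varepsilon$ directly. Instead I would show that $g_\varepsilon$ must carry, in a layer of width $o(1)$ around the hyperplane, a "$\nu$-flux" approximating $[u]$ on almost every transversal line, since $\nabla u_\varepsilon-g_\varepsilon$ is $L^2$ and hence carries no jump. I then evaluate the nonlocal term: for $x$ at signed distance $s\varepsilon$ from a concentrated layer, $\varepsilon\dashint_{B_\varepsilon(x)}|g_\varepsilon|\approx\frac{\omega_{d-1}}{\omega_d}(1-s^2)^{\frac{d-1}2}|[u]|$. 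Integrating $\frac1\varepsilon f$ over $s\in(-1,1)$ gives $\theta([u])$. By concavity of $f$ and Jensen, spreading the layer or tilting it only increases the energy, which is why the concentrated profile gives the lower bound. I expect this jump lower bound, and specifically transferring the flux information from the weakly converging pair to a one-dimensional concave-minimization statement, to be the main obstacle. I would first try averaging over translates and applying Jensen in the transversal variable.

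\textbf{Limsup.} By the density result of the relaxation section (Section~\ref{sec:relBV}), applied to the $BV$ free discontinuity functional with linear growth, it suffices to treat $u$ piecewise smooth with polyhedral jump set, $D^cu=0$, and $g$ smooth. The general case then follows by lower semicontinuity of the $\Gamma$-limsup and continuity of $E$ along the approximating sequence; $c_0|D^cu|$ arises as the relaxation of $\theta$ with slope $c_0$ at $0$. For such $u$ I set $u_\varepsilon$ equal to a mollification across $J_u$ in a strip of width $\delta_\varepsilon\ll\varepsilon$, and $g_\varepsilon=g+(\nabla u_\varepsilon-\nabla u)$ in the strip, so that $\nabla u_\varepsilon-g_\varepsilon=\nabla u-g$ stays in $L^2$. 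Then $\gamma_\varepsilon\to\gamma$ in the flat norm. Away from the $\varepsilon$-neighborhood of $J_u$, $f(t)\le c_0t$ gives $c_0\int|g|$. Near $J_u$, the explicit ball-average computation above gives $\int_{J_u}\theta([u])\,d\mathcal H^{d-1}+o(1)$, with the contribution of $g$ there vanishing as $\varepsilon\to0$, and $\|u_\varepsilon\|_\infty\le K$ is preserved by mollification.
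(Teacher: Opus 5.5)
\paragraph{The decisive gap: the bulk estimate.} You want $\liminf_{\varepsilon}\frac1\varepsilon\int_A f(\cdots)\,dx\ge c_0\int_A|g|\,dx$. Restricting $g_\varepsilon$ to the set $S_\varepsilon$ where $\varepsilon\dashint_{B_\varepsilon(x)\cap\Omega}|g_\varepsilon|\,dy<1$ and invoking lower semicontinuity of the total variation only gives $c_0|\breve\gamma|(A)$, where $\breve\gamma$ is a weak$^*$ limit of $g_\varepsilon\chi_{S_\varepsilon}\mathcal{L}^d$. Nothing forces $\breve\gamma^a=g$. The complementary part lives on a set of measure $O(\varepsilon)$, but its limit can be absolutely continuous.

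Here is a concrete example. Take $u=0$ and $g\equiv\kappa e_1$. Let $u_\varepsilon=\sum_i\phi_i$, with $N\to\infty$ uniformly spread bumps of radius $r$ satisfying $\varepsilon\ll r\to0$ and $Nr^{d-1}$ fixed. Each $\phi_i$ takes values in $[0,t]$ with $\omega_d/\omega_{d-1}<t\le K$, and has a transition of height $t$ and width $\ll\varepsilon$ across a small disc with normal $e_1$. Put $g_\varepsilon=g+\nabla u_\varepsilon$. Then:
\begin{itemize}
\item $u_\varepsilon\to0$ in $L^1$;
\item $g_\varepsilon\mathcal{L}^d\to g\mathcal{L}^d$ in the flat norm, since $\|Du_\varepsilon\|_{\mathrm{flat}}\lesssim\|u_\varepsilon\|_{L^1}$;
\item the energy stays bounded;
\item the sharp transitions lie in the large set, and that part of $g_\varepsilon$ converges weakly$^*$ to $\tau e_1\mathcal{L}^d$ with $\tau$ proportional to $tNr^{d-1}$.
\end{itemize}
Tuning $\tau=\kappa$, the small part converges to $0$, and your argument yields the lower bound $0$ instead of $c_0\kappa|\Omega|$. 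The true inequality survives only because the compensating gradients $\nabla\phi_i$ on $S_\varepsilon$ concentrate on a set of vanishing measure and add to $\int|g_\varepsilon|$.

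This is exactly the difficulty the paper isolates. It first passes, via the cube construction of Proposition~\ref{clmBV}, to $v_\varepsilon\in SBV$ and $\breve g_\varepsilon$ with $\mathcal{H}^{d-1}(J_{v_\varepsilon})$ bounded and $\breve g_\varepsilon-\nabla v_\varepsilon\rightharpoonup g-\nabla u$ in $L^2$. It then proves the new Lemma~\ref{estimatevolumeax}, $\liminf_k\int|\nabla v_k+w_k|\ge\int|\nabla u+w|$. The proof truncates to $\tilde v_k=u+g_\eta(v_k-u)$, so that the newly created jumps have total mass $O(\eta)$ while $\nabla\tilde v_k=\nabla v_k$ off a set of vanishing measure. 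Your proposal contains no substitute for this step.

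\paragraph{Further incorrect or unsupported claims.} In the compactness step, the part of $g_\varepsilon$ on the large set does not tend to zero in the flat norm. Near a jump with $|[u]|>\omega_d/\omega_{d-1}$ it is precisely what produces $D^ju$. Its mass is moreover unbounded, because $u_\varepsilon$ may oscillate there, so the bound ``flat norm $\le\varepsilon\cdot$mass'' gives nothing. The working route is to modify $u_\varepsilon$, not $g_\varepsilon$, on the bad cubes (Proposition~\ref{clmBV}); this gives $\|v_\varepsilon-u_\varepsilon\|_{L^1}\to0$ with $BV$ bounds, and then $\gamma_\varepsilon=Du_\varepsilon-(\nabla u_\varepsilon-g_\varepsilon)\mathcal{L}^d$ converges.

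Likewise, your Cantor bullet asserts $\mu\ge c_0|\lambda|$ for the flat limit $\lambda$ of the large part. This is false in general: at jumps, $\theta(t)\le 2c_0<c_0|t|$ for $|t|>2$. At Cantor points it is exactly what must be proved, and the paper does this by slicing with $(d-1)$-dimensional cube averages, the one-dimensional result, and a supremum over directions (Proposition~\ref{CPPMBV}).

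Your jump and limsup parts are in line with the paper. One caveat on the jump part: the optimality of the flat layer is obtained there by a majorization argument, not plain Jensen.
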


The corresponding compactness result is given in Theorem~\ref{commBV}, where we even obtain the (stronger) convergence in the negative Sobolev $W^{-1,q}(\Omega, \R^d)$ for all $1< q < 2^*$ for the second component. Hence, the $\Gamma$-convergence of $\{E_{\varepsilon}\}_{\varepsilon}$ to~$E$ in $L^1(\Omega) \times \mathcal{M}(\Omega,\R^d)$ holds also, when the flat topology for $\mathcal{M}(\Omega,\R^d)$ is replaced by the topology of convergence in $W^{-1,q}(\Omega, \R^d)$ for some $1 < q < 2^*$.

For a fixed function $u \in BV(\Omega)$ in the first entry of $E$ a particularly interesting choice of $\gamma$ is given by 
\begin{equation*}
\go \coloneqq D^su + g^* \mathcal{L}^{d}
\end{equation*}
where~$g^*$ is the unique minimizer of the optimization problem
\begin{equation}
\label{optimization_problem}
  \text{to minimize } \int_\Omega |\nabla u- g|^2 \dd x + c_0 \int_\Omega |g| \dd x  \quad \text{among all } g \in L^1(\Omega,\R^d).
\end{equation}
By a pointwise minimization of the integrand, the minimizer~$g^\ast$ can be determined as
\begin{equation}
g^* = \alpha(|\nabla u|) \nabla u \quad \text{with }\alpha(t) = \begin{cases} 1 - \frac{c_0}{2 t} & \text{if } t > \frac{c_0}{2},\\
0 & \text{if } t \leq \frac{c_0}{2}. \label{gammaaopt}
\end{cases}
\end{equation}
More generally, we consider the minimal energies with respect to the second variable $\gamma$, which are given as
\begin{equation}
\label{def_minimal_energy_eps}
\tilde{E}_{\varepsilon}(u) \coloneqq \inf_{\gamma \in \Mam} E_{\varepsilon}(u,\gamma)= \inf_{g \in \Lm} E_{\varepsilon}(u,g \mathcal{L}^d)
\end{equation}
and 
\begin{equation}
\label{def_minimal_energy_limit}
\tilde{E}(u) \coloneqq  \inf_{\gamma \in \Mam} E(u,\gamma) = \inf_{g \in \Lm} E(u,D^su+ g \mathcal{L}^d)
\end{equation}
for $u \in L^1(\Omega)$. We notice that for $u \in BV(\Omega)$ the infimum in~\eqref{def_minimal_energy_limit} is attained, with $\tilde{E}(u) = E(u,\go)$. As a consequence of Theorem~\ref{mainresultmBV}, we obtain a $\Gamma$-convergence result for these energies:

\begin{corollary}\label{mainresult2BV}
Let $L^1(\Omega)$ be equipped with the strong topology. Then the family $\{\tilde{E}_{\varepsilon}\}_{\varepsilon>0}$ $\Gamma$-converges to $\tilde{E}$ in $\Lme$.
\end{corollary}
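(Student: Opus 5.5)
We derive Corollary~\ref{mainresult2BV} from Theorem~\ref{mainresultmBV} together with the compactness result (Theorem~\ref{commBV}), using the general principle that $\Gamma$-convergence is preserved under partial minimization in one variable when the family is equicoercive in that variable. We prove the $\liminf$- and $\limsup$-inequalities separately.

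\emph{$\limsup$-inequality.} Fix $u \in L^1(\Omega)$. If $\tilde E(u)=\infty$ there is nothing to prove. Otherwise $u \in BV(\Omega)$ with $\norm{u}_{L^\infty(\Omega)}\le K$, and, as noted before the corollary, the infimum in \eqref{def_minimal_energy_limit} is attained at $\go = D^su + g^*\mathcal{L}^d$ with $g^*$ from \eqref{gammaaopt}. Since $\nabla u\in L^1$ and $|\nabla u - g^*|\le c_0/2$, we have $E(u,\go)=\tilde E(u)<\infty$. Theorem~\ref{mainresultmBV}\ref{mainresultmBV_2} then gives a recovery sequence $(u_\varepsilon,\gamma_\varepsilon)\to(u,\go)$ with
\begin{equation*}
\limsup_{\varepsilon\to0}\tilde E_\varepsilon(u_\varepsilon)\le\limsup_{\varepsilon\to0}E_\varepsilon(u_\varepsilon,\gamma_\varepsilon)\le E(u,\go)=\tilde E(u).
\end{equation*}

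\emph{$\liminf$-inequality.} Let $u_\varepsilon\to u$ in $L^1(\Omega)$. We may assume $\liminf_{\varepsilon}\tilde E_\varepsilon(u_\varepsilon)=\lim_{k}\tilde E_{\varepsilon_k}(u_{\varepsilon_k})=:M<\infty$ along a subsequence. By the definition \eqref{def_minimal_energy_eps}, choose $g_k\in L^1(\Omega,\R^d)$ with
\begin{equation*}
E_{\varepsilon_k}(u_{\varepsilon_k},g_k\mathcal{L}^d)\le\tilde E_{\varepsilon_k}(u_{\varepsilon_k})+\varepsilon_k .
\end{equation*}
This is only an almost-minimizer; existence of exact minimizers of $E_\varepsilon(u,\cdot)$ is not needed. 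The pairs $(u_{\varepsilon_k},g_k\mathcal{L}^d)$ then have uniformly bounded energy.

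The compactness result, Theorem~\ref{commBV}, is the key step. Applied to these pairs, it yields a further subsequence and a measure $\gamma\in\Mam$ with $g_k\mathcal{L}^d\to\gamma$ in $W^{-1,q}$ and hence in the flat norm. I expect the only delicate point to be that Theorem~\ref{commBV} is stated for sequences of bounded energy with $u_\varepsilon$ possibly only converging; this matches our situation, and the $u$-limit is identified as $u$ by uniqueness of $L^1$-limits.

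With this limit in hand, Theorem~\ref{mainresultmBV}\ref{mainresultmBV_1} gives
\begin{equation*}
M\ge\liminf_{k}E_{\varepsilon_k}(u_{\varepsilon_k},g_k\mathcal{L}^d)\ge E(u,\gamma)\ge\tilde E(u),
\end{equation*}
which concludes the proof.
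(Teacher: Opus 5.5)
Your proof is correct and takes essentially the same route as the paper. For the $\liminf$-inequality you use an almost-minimizing eigendeformation together with Theorem~\ref{commBV} and Theorem~\ref{mainresultmBV}~\ref{mainresultmBV_1}; for the $\limsup$-inequality you use a recovery sequence for $(u,\go)$ from Theorem~\ref{mainresultmBV}~\ref{mainresultmBV_2}, exactly as in the paper (and your explicit extraction of a subsequence realizing the $\liminf$ of $\tilde{E}_\varepsilon(u_\varepsilon)$ is, if anything, slightly more careful than the paper's write-up).
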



\section{Preliminaries} \label{sec_preliminaries}

In this section, we collect the general notation used throughout the paper, we recall some basic concepts related to $BV$-functions, and we summarize some facts on the convergence of measures.

\paragraph{General notation.}
In what follows, $\Omega\subset\R^{d}$ denotes an open, bounded set with Lip\-schitz boundary, for some $d \geq 1$. For two vectors $x,y \in \R^d$ the scalar product in $\R^d$ is denoted by $x \cdot y$, the euclidean norm by $|x|$ and the vector $(x_2,\ldots,x_d) \in \R^{d-1}$ of the final $(d-1)$ components by~$x'$. For $x \in\R^{d}$ and $r>0$, we use the abbreviation $B_{r}(x) \coloneqq \{y\in\R^{d}\colon |y-x|<r\}$ for open balls and we write $\mathbb{S}^{d-1}\coloneqq \partial B_{1}(0)$ for the $(d-1)$-dimensional unit sphere. Similarly, for $\nu \in \mathbb{S}^{d-1}$ we use $Q^\nu_{r}(x)$ for a cube centered in~$x$ of side length~$2r$, which has two faces orthogonal to $\nu$, and we write $Q_{r}(x)$ for the axes-aligned one. We denote the $d$-dimensional Lebesgue measure and the $(d-1)$-dimensional Hausdorff measure by $\mathcal{L}^d$ and $\HA$, respectively, for sets $A \subset \R^d$ we usually write $|A| = \mathcal{L}^d(A)$ and we set $\omega_d \coloneqq |B_{1}(0)|$. The family of all open subsets of $\Omega$ is indicated by $\Am$ and the family of all Borel subsets of $\Omega$ by $\Bm$. For a set $A \subset \R^d$ we write $\partial A$ for its topological boundary and $\overline{A}$ for its closure, and we abbreviate the inner and outer parallel sets by 
\begin{align*}
A_{\varrho-} & \coloneqq \lbrace x \in A \colon \dist(x, \partial A) > \varrho \rbrace \\
A_{\varrho +}& \coloneqq \lbrace x \in \Omega \colon \dist(x,A) < \varrho \rbrace, 
\end{align*}
for $\varrho > 0$. Finally, if $A$ is measurable and of finite positive measure $0 < |A| < \infty$ and if $g$ is an integrable function in~$A$, we indicate the mean value of~$g$ on~$A$ by $\dashint_{A} g \dd x$. 

\paragraph{Functions of bounded variation.}  A function $u\in L^1(\Omega)$ is said to belong to the space $BV(\Omega)$ of \emph{functions of bounded variation} if its distributional derivative is a finite $\R^{d}$-valued Radon measure, i.e, if the integration-by-parts formula
\begin{equation*}
\int_{\Omega} u \frac{\partial \varphi}{\partial x_i} \dd x = - \int_{\Omega} \varphi \dd D_iu \quad \text{ for every } \varphi \in C_c^{1}(\Omega) \text{ and } i=1, \dots, d
\end{equation*}
is valid for a (unique) vector measure $Du \in \mathcal{M}(\Omega;\R^{d})$. The space $BV(\Omega)$ is a Banach space endowed with the norm
\begin{equation*}
\norm{u}_{BV(\Omega)} \coloneqq  \norm{u}_{L^1(\Omega)} + |Du|(\Omega),
\end{equation*}
where $|Du|(\Omega)$ is the total variation of~$Du$. We here collect some basic facts on fine properties of $BV$-functions from~\cite{afp}, which are relevant for our paper. We start to discuss approximate continuity and discontinuity properties of $L^1$-functions. We say that a function~$u \in L^1_{\textnormal{loc}}(\Omega)$ has an \emph{approximate limit} at $x \in \Omega$ if there exists a (unique) $\bar{u}(x) \in \R$ such that
\begin{equation*}
\lim_{\varrho \searrow 0} \dashint_{B_{\varrho}(x)} |u(y)-\bar{u}(x)| \dd y =0.  
\end{equation*}
We denote by~$S_u$ the set, where this condition fails, and call it the \emph{approximate discontinuity set} of~$u$. It is $\mathcal{L}^d$-negligible, and $\bar{u}$ coincides $\mathcal{L}^d$-a.e.~in $\Omega \setminus S_u$ with~$u$.  Furthermore, we say that~$u$ has an \emph{approximate jump point} at $x \in S_u$ if there exist $\nu=\nu_u(x) \in \Sm$ and $u(x+), u(x-) \in \R$ with $u(x+) \neq u(x-)$ such that 
\begin{equation*}
\lim_{\varrho \searrow 0} \dashint_{B^+_{\varrho}(x,\nu)} |u(y)-u(x+)| \dd y =0 \quad \text{and} \quad \lim_{\varrho  \searrow 0} \dashint_{B^-_{\varrho}(x,\nu)} |u(y)-u(x-)| \dd y =0, 
\end{equation*}
where $B^{\pm}_{\varrho}(x,\nu)  \coloneqq  \{ y \in B_{\varrho}(x) \colon (y-x) \cdot \nu \in \R^{\pm} \}$. The triplet $(u(x+), u(x-), \nu)$ is uniquely determined up to a permutation of $(u(x+), u(x-))$ and a change of sign of~$\nu$. We denote by~$J_u$ the set of approximate jump points and call it the \emph{jump set} of~$u$. We further notice that, if $u \in BV(\Omega)$, then the set~$S_u$ is countably $\mathcal H^{d-1}$-rectifiable with $\mathcal H^{d-1}(S_u\setminus J_u)=0$ (see \cite[Theorem 3.78]{afp}). 

According to Lebesgue's decomposition theorem we can decompose the measure derivative~$Du = D^au \mathcal{L}^d + D^su$ into the absolutely continuous and the singular part with respect to the Lebesgue measure~$\mathcal{L}^d$. We then define the jump and the Cantor part of~$Du$ as
\begin{equation*}
D^j u  \coloneqq  D^s u \mrs J_u \quad \text{and} \quad  D^c u  \coloneqq   D^s u \mrs (\Omega\setminus S_u).
\end{equation*}
From the identifications $D^au = \nabla u \mathcal{L}^d$ with the approximate gradient~$\nabla u$ for the absolutely continuous part and $D^ju = [u] \nu_u \HA \mrs J_u$ with $[u] \coloneqq (u\cdot +)- u(\cdot -)$ for the jump part (see \cite[Theorem~3.83 and formula (3.90)]{afp}) we arrive at the decomposition
\begin{equation*}
Du= \nabla u \Lmd + [u] \nu_u \HA \mrs J_u + D^c u.
\end{equation*} 
Moreover,  we observe that the Cantor part $D^c u$ vanishes on all Borel sets which are $\sigma$-finite with respect to $\HA$ as well as on all preimages of~$\bar{u}$ of $\mathcal{H}^1$-negligible sets (see \cite[Proposition~3.92]{afp}).  

Finally, we mention the subspace $SBV(\Omega)$ of \emph{special functions of bounded variation}, which contains all functions $u \in BV(\Omega)$ with $D^cu= 0$, and we define
\begin{equation*}
SBV^2(\Omega) \coloneqq  \lbrace u \in SBV(\Omega) \colon \nabla u \in L^2(\Omega, \Rd) \text{ and } \HA(J_u) < \infty \rbrace.
\end{equation*}

\paragraph{Convergence in negative Sobolev spaces and in the flat topology.} The negative Sobolev spaces $W^{-1,q}(\Omega)$ with $1 < q \leq \infty$ are defined as usual as the dual spaces of $W^{1,q'}_0(\Omega)$ (with $q' \in [1,\infty)$ denoting the conjugate exponent to~$q$ with $\tfrac{1}{q} + \tfrac{1}{q'}=1$), and correspondingly the norm is defined via the duality pairing as 
\begin{equation*}
 \| T \|_{W^{-1,q}(\Omega)}  \coloneqq  \sup \Big \{ T(\varphi) \colon \varphi \in W^{1,q'}_0(\Omega) \text{ with }\norm{\varphi}_{W^{1,q'}(\Omega)} \leq 1 \Big\},
\end{equation*}
for every $T \in W^{-1,q}(\Omega)$. Consequently, the spaces $W^{-1,q}(\Omega)$ with $1 < q < \infty$ are reflexive and separable. We mention two specific situations. Let $v \in L^q(\Omega,\R^d)$ and $w \in L^r(\Omega)$ with~$1 \leq r \leq \infty$ such that the embedding $W^{1,q'}_0(\Omega) \subset L^{r'}(\Omega)$ is continuous. If we set
\begin{equation*}
 T_{{\rm div} \, v}(\varphi)  \coloneqq  - \int_\Omega v \cdot \nabla \varphi \diff x \quad \text{and} \quad  T_w(\varphi)  \coloneqq  \int_\Omega w \cdot \varphi \diff x \quad \text{for all } \varphi \in W^{1,q'}_0(\Omega), 
\end{equation*}
then, by the H{\"o}lder inequality and the continuous embedding $W^{1,q'}_0(\Omega) \subset L^{r'}(\Omega)$ (with constant~$C'$), we obtain $T_{{\rm div} \, v}, T_w \in W^{-1,q}(\Omega)$ with 
\begin{equation*}
 \| T_{{\rm div} \, v} \|_{W^{-1,q}(\Omega)} \leq \norm{v}_{L^q(\Omega,\R^d)} \quad \text{and} \quad \| T_w \|_{W^{-1,q}(\Omega)} \leq C' \norm{w}_{L^r(\Omega)}. 
\end{equation*}
Thus, in particular (partial deriviatives of) $L^q$-functions can be considered as elements of the space $W^{-1,q}(\Omega)$. Furthermore, because of the compact embedding $W^{1,q'}_0(\Omega) \Subset C_0(\Omega)$ for $q'>d$, the negative Sobolev norms can actually be considered on the space~$\Mamone$ of all finite Radon measures on~$\Omega$: for $\mu \in \Mamone$, the duality pairing reads as 
\begin{equation*}
 \| \mu \|_{W^{-1,q}(\Omega)} = \sup \bigg\{ \int_\Omega \varphi \dd \mu \colon \varphi \in W^{1,q'}_0(\Omega) \text{ with } \norm{\varphi}_{W^{1, q'}_0(\Omega)}\leq 1 \bigg\},
\end{equation*}
and if we allow $q' = \infty$ in this expression, we obtain the flat norm
\begin{equation*}
\normf{\mu} \coloneqq  \sup \bigg\{ \int_\Omega \varphi \dd \mu \colon \varphi \in W^{1, \infty}_0(\Omega) \text{ with } \norm{\varphi}_{W^{1, \infty}_0(\Omega)}\leq 1 \bigg\},
\end{equation*}
with $\norm{\varphi}_{W^{1, \infty}_0(\Omega)} \coloneqq \norm{\nabla \varphi}_{L^{\infty}(\Omega,\R^d)}$. Here we have the corresponding inequalities 
\begin{equation*}
 \normf{{\rm div} \, v} \leq  \norm{v}_{L^1(\Omega,\R^d)}  \quad \text{and} \quad  \|w \mathcal{L}^d\|_{\text{flat}} \leq \norm{w}_{L^1(\Omega)} 
\end{equation*}
for all functions $v \in BV(\Omega,\R^d)$ and $w \in L^1(\Omega)$. We notice that due to Schauder's theorem and the compact embedding $W^{1,\infty}_0(\Omega) \Subset C_0(\Omega)$, the flat topology metrizes weak-$\ast$ convergence of (uniformly bounded) measures. Therefore, we have the following relations for the convergence of measure with respect to convergence in $W^{-1,q}(\Omega)$, the flat norm and in the weak-$\ast$ sense. 

\begin{lemma}[on convergence of measures] 
\label{Lemma_weak_negativ_flat}
Let $\Omega  \subset \R^d$ be an open, bounded set. For a measure $\mu \in \Mamone$ and a sequence $\{\mu_n\}_{n \in \N}$ of measures in $\Mamone$, we have: 
\begin{enumerate}[font=\normalfont, label=(\roman{*}), ref=(\roman{*})]
 \item\label{Lemma_weak_negativ_flat_1} If $\mu_n \to \mu$ in $W^{-1,q}(\Omega)$ for some  $1 < q \leq \infty$, then $\mu_n \to \mu$ in the flat norm.
 \item\label{Lemma_weak_negativ_flat_2} If $\mu_n \overset{*}{\rightharpoonup} \mu$ in $\Mamone$, then $\mu_n \to \mu$ in the flat norm.
 \item\label{Lemma_weak_negativ_flat_3} If $\mu_n \to \mu$ in the flat norm and $\sup_{n \in \N} |\mu_n|(\Omega) < \infty$, then $\mu_n \overset{*}{\rightharpoonup} \mu$ in $\Mamone$.
\end{enumerate}
\end{lemma}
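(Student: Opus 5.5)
The plan is to reduce all three items to elementary comparisons of the test function classes, plus density and boundedness. The only result used beyond this is the compact embedding $W^{1,\infty}_0(\Omega)\Subset C_0(\Omega)$ already recalled above. Throughout, $\Omega$ is bounded, so for every $1<q\le\infty$ (with $q'\in[1,\infty)$) we have the elementary estimate
\begin{equation*}
\norm{\varphi}_{W^{1,q'}(\Omega)} \le C(\Omega,q)\, \norm{\nabla\varphi}_{L^\infty(\Omega,\R^d)} \quad\text{for } \varphi\in W^{1,\infty}_0(\Omega).
\end{equation*}
This follows from Poincaré's inequality, or simply from $|\varphi|\le \operatorname{diam}(\Omega)\norm{\nabla\varphi}_{L^\infty}$ for functions vanishing on $\partial\Omega$.

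For \ref{Lemma_weak_negativ_flat_1}, every competitor $\varphi$ in the flat-norm supremum with $\norm{\nabla\varphi}_{L^\infty}\le 1$ belongs to $W^{1,q'}_0(\Omega)$. By the estimate above, $\varphi/C$ is admissible in the $W^{-1,q}$ supremum. Hence $\normf{\mu_n-\mu}\le C\norm{\mu_n-\mu}_{W^{-1,q}(\Omega)}\to 0$. For $q=\infty$, i.e.\ $q'=1$, the same argument applies verbatim, since $W^{1,\infty}_0\subset W^{1,1}_0$ on bounded sets.

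For \ref{Lemma_weak_negativ_flat_2}, weak-$\ast$ convergence gives $\sup_n|\mu_n|(\Omega)=:M<\infty$ by the uniform boundedness principle. The unit ball $\mathcal{K}=\{\varphi\in W^{1,\infty}_0(\Omega)\colon \norm{\nabla\varphi}_{L^\infty}\le1\}$ is bounded and equi-Lipschitz, and its members vanish on $\partial\Omega$. By Arzel\`a--Ascoli it is therefore relatively compact in $C_0(\Omega)$ with the sup norm. Given $\delta>0$, I cover $\mathcal{K}$ by finitely many sup-norm balls of radius $\delta$ with centres $\varphi_1,\dots,\varphi_N\in C_0(\Omega)$. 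Then
\begin{equation*}
\sup_{\varphi\in\mathcal{K}}\Big|\int\varphi\,d(\mu_n-\mu)\Big|\le \max_i\Big|\int\varphi_i\,d(\mu_n-\mu)\Big|+\delta\,(M+|\mu|(\Omega)).
\end{equation*}
The first term tends to $0$ by weak-$\ast$ convergence, and $\delta$ is arbitrary. This equicontinuity/compactness step is the only genuinely non-trivial point of the lemma. The care needed is to check that the functions in $\mathcal{K}$ extend by zero continuously to $\overline\Omega$, so that Arzel\`a--Ascoli on the compact set $\overline\Omega$ applies.

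For \ref{Lemma_weak_negativ_flat_3}, flat convergence gives $\int\varphi\,d\mu_n\to\int\varphi\,d\mu$ for every $\varphi\in W^{1,\infty}_0(\Omega)$, after rescaling. Such $\varphi$ include $C^\infty_c(\Omega)$, which is dense in $C_0(\Omega)$ in the sup norm. For $\psi\in C_0(\Omega)$ I pick $\varphi\in C_c^\infty(\Omega)$ with $\norm{\psi-\varphi}_\infty\le\delta$ and split
\begin{equation*}
\Big|\int\psi\,d(\mu_n-\mu)\Big|\le\Big|\int\varphi\,d(\mu_n-\mu)\Big|+\delta\big(\sup_n|\mu_n|(\Omega)+|\mu|(\Omega)\big).
\end{equation*}
The first term tends to $0$ and $\delta$ is arbitrary, so $\mu_n\weakstar\mu$. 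The uniform mass bound is exactly what makes this density argument work.
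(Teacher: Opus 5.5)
Your proposal is correct and follows the paper's route. The paper gives no detailed proof; it only invokes Schauder's theorem together with the compact embedding $W^{1,\infty}_0(\Omega)\Subset C_0(\Omega)$ and the fact that the flat norm metrizes weak-$\ast$ convergence of uniformly bounded measures. Your norm comparison in (i), the Arzel\`a--Ascoli $\delta$-net argument in (ii) (a hands-on version of Schauder's theorem), and the density argument with the uniform mass bound in (iii) are exactly the unpacked form of that justification.
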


\section{Compactness}\label{sec_Compactness}

In this section we show a compactness result for sequences in $L^1(a,b) \times \mathcal{M}(a,b)$ with bounded energy~$E_\varepsilon$, which, together with the $\Gamma$-convergence result of Theorem \ref{mainresultmBV}, implies the convergence of (almost) minimizers. It is convenient to introduce localized versions of the functionals~$\{E_{\varepsilon}\}_{\varepsilon}$ and~$E$, which are defined for every open set~$A \in \Am$ via
\begin{align*}  
E_{\varepsilon}(u,\gamma,A)  \coloneqq  
\begin{cases}
\int_A |\nabla u- g|^2 \dd x & \quad \text{if } u \in W^{1,1}(\Omega), \,  \norm{u}_{L^{\infty}(\Omega)} \leq K, \\ + \frac{1}{\varepsilon} \int_A {f \big( \varepsilon \dashint_{B_{\varepsilon}(x) \cap \Omega}{ |g| } \dd y} \big) \dd x  & \quad \phantom{if } \gamma= g\mathcal{L}^d, \, g \in L^1(\Omega,\R^d),\\
 & \quad \phantom{if } \text{and } \nabla u- g \in L^2(\Omega,\R^d), \\[0.2cm]
\infty & \quad \text{otherwise},
\end{cases}
\end{align*}
and 
\begin{equation*}
E(u,\gamma,A) \coloneqq
\begin{cases}
\int_A {|\nabla u- g|^2 \dd x }+ \int_A { c_0 |g| \dd x } & \quad \text{if } u \in BV(\Omega), \,  \norm{u}_{L^{\infty}(\Omega)} \leq K, \\ 
+  \int_{J_u \cap A} \theta([u]) \dd \HA & \quad \phantom{if } \gamma = D^su + g \mathcal{L}^d,, \, g \in L^1(\Omega,\R^d),\\ 
+ c_0 |D^cu|(A) & \quad \phantom{if } \text{and }  \nabla u- g \in L^2(\Omega,\R^d),\\[0.2cm] 
\infty & \quad \text{otherwise}. 
\end{cases} 
\end{equation*}
For the $\Gamma$-lower and $\Gamma$-upper limit of $\{E_{\varepsilon}\}_{\varepsilon}$ we write
\begin{align*}
E^{\prime}(u, \gamma) & \coloneqq  \inf \big\{ \liminf_{\varepsilon \to 0} E_{\varepsilon}(\ue,\gae) \colon \ue \to u \text{ in } L^1(\Omega), \, \gae \to \gamma \text{ in the flat norm}\big\}, \\
 E^{\prime \prime}(u,\gamma) & \coloneqq  \inf \big\{ \limsup_{\varepsilon \to 0} E_{\varepsilon}(\ue,\gae) \colon \ue \to u \text{ in } L^1(\Omega), \, \gae \to \gamma \text{ in the flat norm}\big\},
\end{align*}
respectively. Moreover, we use the notation $E^\prime(\cdot,\cdot,A)$ for the localized $\Gamma$-lower limit, for every $A \in \Am$. We start with a technical proposition, which allows to pass from a finite $E_\varepsilon$-energy configuration $(u,\gamma) \in \Wme \times \Mam$ to a configuration $(v,\breve{\gamma}=\breve{g}\mathcal{L}^d) \in SBV(A) \times \mathcal{M}(A, \R^{d})$ such that essentially~$u$ and~$v$ are $L^1$-close, the jump set of~$v$ is energy bounded in some inner parallel set and $L^1$-bounds of~$\nabla v$ and~$\breve{g}$ are available. 

\begin{Prop}\label{clmBV}
Let $A \subset \Omega$ be open, $\varepsilon > 0$ and $\delta >0$. For every $(u,\gamma = g\mathcal{L}^d) \in \Wme \times \Mam$ with $E_\varepsilon(u,\gamma)<\infty$ there exists $(v,\breve{\gamma}=\breve{g}\mathcal{L}^d) \in SBV(A) \times \mathcal{M}(A, \R^{d})$ such that 
\begin{align*}
&  \int_A |\nabla v - \breve{g}|^2 \dd x + (1- \delta) c_0 \int_A |\breve{g}|\dd x \leq E_{\varepsilon}(u,\gamma,A), \\
&(1-\delta) c_0 \int_A \normb{\nabla v}  \dd x  \leq E_{\varepsilon}(u,\gamma,A) + c_0  E_{\varepsilon}(u,\gamma,A)^{\frac{1}{2}} |A|^{\frac{1}{2}} ,\\
&\HA(J_v \cap A_{6\varepsilon-}) \leq c E_{\varepsilon}(u,\gamma,A),\\
& \big| \{x \in A_{6 \varepsilon -} \colon \nabla v(x) - \breve{g}(x) \neq \nabla u(x) - g(x) \} \big| \leq \varepsilon c E_{\varepsilon}(u,\gamma,A), \\
&\norm{v}_{L^{\infty}(A)} \leq \norm{u}_{L^{\infty}(A)},\\
&c_0 \norm{v-u}^q_{L^q(A_{6 \varepsilon-})} \leq \varepsilon c E_{\varepsilon}(u,\gamma,A)  \norm{u}^q_{L^{\infty}(A)} \quad \text{for all } 1 \leq q < \infty,
\end{align*}
where~$c$ is a constant depending only on~$d$ and~$\delta$.
\end{Prop}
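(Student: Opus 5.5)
We follow the strategy of Lussardi--Vitali: split $A$ into a ``good'' region, where the nonlocal average of $|g|$ is small, and a ``bad'' region, where it is of order $1/\varepsilon$. On the bad region we replace $u$ by a piecewise constant (truncated) function, which creates an $SBV$ function whose jumps are controlled by the energy. Throughout, fix $\eta=\eta(\delta)\in(0,1)$ small and set
\[
h_\varepsilon(x)\coloneqq \varepsilon\dashint_{B_\varepsilon(x)\cap\Omega}|g|\dd y .
\]
Define the bad set $B\coloneqq\{x\in A\colon h_\varepsilon(x)\ge \eta\}$. On $B$ we have $\tfrac1\varepsilon f(h_\varepsilon)\ge c_0\eta/\varepsilon$, so $|B|\le \varepsilon E_\varepsilon(u,\gamma,A)/(c_0\eta)$.

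\textbf{Step 1: good points.} For $x\notin B$ the function $f$ is linear, so $\tfrac1\varepsilon f(h_\varepsilon(x))=c_0\dashint_{B_\varepsilon(x)\cap\Omega}|g|\dd y$. Integrating in $x$ and using Fubini, we want
\[
c_0(1-\delta)\int_{G}|g|\dd y\le \frac1\varepsilon\int_{A\setminus B}f(h_\varepsilon)\dd x ,
\]
where $G$ is the set of points $y$ whose $\varepsilon$-ball is mostly contained in $A\setminus B$. The tolerance $\delta$ absorbs the loss from the overlap weights $|B_\varepsilon(x)\cap\Omega|^{-1}$ versus $|B_\varepsilon(y)\cap\Omega|^{-1}$ near the boundary of $B$.

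\textbf{Step 2: cover the bad set.} Take a Besicovitch/Vitali covering of $B$ (inside $A_{6\varepsilon-}$) by balls of radius of order $\varepsilon$, say $5\varepsilon$. Let $U$ be their union, adjusted via the coarea formula applied to $\dist(\cdot,B)$ so that $\partial U$ has finite perimeter. The count of balls is $\lesssim |B|/\varepsilon^d$, which gives
\[
\mathcal{H}^{d-1}(\partial U\cap A_{6\varepsilon-})\lesssim \frac{\varepsilon^{d-1}|B|}{\varepsilon^d}\lesssim E_\varepsilon(u,\gamma,A)
\]
and $|U|\lesssim \varepsilon E_\varepsilon(u,\gamma,A)$. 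The shrinking to $A_{6\varepsilon-}$ is what makes the balls of radius $5\varepsilon$ admissible.

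\textbf{Step 3: the construction.} Set $v\coloneqq u$ on $A\setminus U$ and $v\coloneqq 0$ on $U$, with $\breve g\coloneqq g$ on $A\setminus U$ and $\breve g\coloneqq 0$ on $U$. Then $v\in SBV(A)$ with $J_v\subset\partial U$, so the jump bound follows from Step 2, and $\|v\|_{L^\infty}\le\|u\|_{L^\infty}$ holds trivially.

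The remaining estimates then follow:
\begin{itemize}
\item The set where $\nabla v-\breve g\ne\nabla u-g$ is contained in $U$, which has measure $\lesssim \varepsilon E_\varepsilon(u,\gamma,A)$.
\item Similarly, $\|v-u\|_{L^q}^q\le \|u\|_\infty^q|U|$.
\item The first estimate combines the $L^2$ term, which only decreases, with Step 1, provided $A\setminus U\subset G$. This is why $U$ must be a fattening of $B$ by $\varepsilon$-balls.
\item The second estimate follows from $|\nabla v|\le|\nabla v-\breve g|+|\breve g|$ together with Cauchy--Schwarz: $\int_A|\nabla v-\breve g|\le E_\varepsilon^{1/2}|A|^{1/2}$.
\end{itemize}

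\textbf{Main obstacle.} The delicate step is the Fubini comparison in Step 1: a good point $x$ may have $B_\varepsilon(x)$ intersecting $B$. We must choose the enlargement $U$ so that every $y\in A\setminus U$ has all $x\in B_\varepsilon(y)$ good. Then $\int_{A\setminus B}\dashint_{B_\varepsilon(x)}|g|\dd y\,\dd x\ge\int_{A\setminus U}|g(y)|\,w(y)\dd y$, where the weight $w(y)=\int_{B_\varepsilon(y)}|B_\varepsilon(x)\cap\Omega|^{-1}\dd x$ must be $\ge 1-\delta$. Near $\partial\Omega$ this weight is only comparable to $1$; interior points of $A_{6\varepsilon-}$ with $\dist(\cdot,\partial\Omega)>\varepsilon$ give $w=1$. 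Points of $A$ close to $\partial A$ need care, which is likely where the $(1-\delta)$ factor and possibly an additional truncation/averaging in $\varepsilon$-dependent layers enter.
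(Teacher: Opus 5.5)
\textbf{There is a genuine gap in Step~2 (the counting), and a second, easily fixed omission at the boundary of $A$.}

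\textbf{Your route versus the paper's.} You use a continuous Fubini argument on the good set plus a covering of the bad set $B=\{h_\varepsilon\ge\eta\}$. The paper instead compares the nonlocal integral with a Riemann sum over a suitably shifted lattice of centers. It keeps only the cubes $Q_{s\varepsilon/2}(x_\beta)$ with $x_\beta\in A_{5\varepsilon-}$ and small averages on $B_{3\varepsilon}(x_\beta)$. It then counts the discarded cubes through nearby lattice balls whose average is at least $1/\sigma_{s,d}$. Your Step~1 is fine: for $B_{2\varepsilon}(y)\subset A$ and $\dist(y,B)\ge\varepsilon$ the weight is exactly $w(y)=1$, so you do not even lose a factor $1-\delta$.

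\textbf{The gap.} The claim that the number of covering balls is $\lesssim|B|/\varepsilon^d$ is false. Nothing forces $B$ to contain a set of measure comparable to $\varepsilon^d$ near each of its points. For instance, spread $|g|$ uniformly over thin shells $\{\varepsilon-\tau<|y-x_i|<\varepsilon\}$, $i=1,\dots,N$, with $|x_i-x_j|\ge 4\varepsilon$ and mass $\tfrac32\eta\,\omega_d\varepsilon^{d-1}$ each.
\begin{itemize}
\item Then $h_\varepsilon(x_i)=\tfrac32\eta$.
\item Once $|y-x_i|\ge C\tau$, the ball $B_\varepsilon(y)$ misses almost half of the shell, so $h_\varepsilon(y)<\eta$. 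Hence $B\subset\bigcup_i B_{C\tau}(x_i)$.
\item So $|B|\to0$ as $\tau\to0$, while any cover of $B$ by $\varepsilon$-balls needs $N$ balls.
\end{itemize}
Therefore $|B|\lesssim\varepsilon E_\varepsilon(u,\gamma,A)$ yields neither $\mathcal{H}^{d-1}(J_v\cap A_{6\varepsilon-})\lesssim E_\varepsilon(u,\gamma,A)$ nor $|U|\lesssim\varepsilon E_\varepsilon(u,\gamma,A)$. A coarea choice of $U$ has the same defect, since it needs the measure of the $2\varepsilon$-neighbourhood of $B$ rather than $|B|$.

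\textbf{The missing ingredient.} You need a \emph{local} lower bound on the energy around each bad point; in the paper this is what the lattice Riemann sum provides. In your setting it reads: if $B_{3\varepsilon}(x)\subset\Omega$ and $h_\varepsilon(x)\ge\eta$ with $\eta\le 1$, then $\frac1\varepsilon\int_{B_{2\varepsilon}(x)}f(h_\varepsilon)\dd y\ge c_0\eta\,\omega_d\varepsilon^{d-1}$.
\begin{itemize}
\item To see this, set $m\coloneqq\int_{B_\varepsilon(x)}|g|\dd z$ and $\lambda\coloneqq\min\{1,\omega_d\varepsilon^{d-1}/m\}$.
\item The function $\tilde h(y)\coloneqq\varepsilon\dashint_{B_\varepsilon(y)}\lambda|g|\chi_{B_\varepsilon(x)}\dd z$ satisfies $\tilde h\le\min\{h_\varepsilon,1\}$ and vanishes outside $B_{2\varepsilon}(x)$.
\item Its integral is $\min\{\varepsilon m,\omega_d\varepsilon^d\}\ge\eta\,\omega_d\varepsilon^d$.
\end{itemize}
Now take a maximal $4\varepsilon$-separated family $x_1,\dots,x_N$ of points of $B\cap A_{3\varepsilon-}$.
\begin{itemize}
\item The balls $B_{2\varepsilon}(x_i)\subset A$ are disjoint, so $N\le E_\varepsilon(u,\gamma,A)/(c_0\eta\,\omega_d\varepsilon^{d-1})$.
\item $U\coloneqq\bigcup_i B_{5\varepsilon}(x_i)$ contains every $y\in A_{4\varepsilon-}$ with $\dist(y,B)<\varepsilon$.
\end{itemize}

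\textbf{The boundary layer you left open is harmless.} Also set $v=0$ and $\breve g=0$ on $A\setminus A_{t-}$, for some $t\in(4\varepsilon,5\varepsilon)$ chosen via the coarea formula so that $\{\dist(\cdot,\partial A)=t\}$ has finite $\mathcal{H}^{d-1}$-measure. This keeps $v\in SBV(A)$ for an arbitrary open $A$. Since the jump, measure and $L^q$ estimates are only claimed on $A_{6\varepsilon-}$, this layer costs nothing. The paper achieves the same by discarding all cubes centered outside $A_{5\varepsilon-}$.

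With these two changes your argument yields the required estimates, the first one even with factor $1$ in place of $1-\delta$. The $(1-\delta)$ in the paper comes only from its discrete ball-covering inequality.
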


\begin{proof}
We follow a slightly modified strategy of proof from \cite[Proposition 4.1]{BDM97}. The main (technical) difficulty is the passage from open squares (which are disjoint and whose union covers~$A$ up to a negligible set) to open balls (which appear in the definition of the nonlocal part of the energy). To overcome this problem, we rely upon the inequality
\begin{equation}
\sum_{\alpha \in \Z^d} \frac{(s \varepsilon)^d}{|B_{\varepsilon}|} \chi_{B_{\varepsilon}(s \varepsilon \alpha)}(x) \geq 1 - \delta \quad \text{on } \R^d, \label{saBV}
\end{equation}
which holds for a suitable $s \in (0, {1}/{\sqrt{d}})$ depending only on~$d$ and on~$\delta$ (see \cite[Lemma 4.3]{BDM97}).

Let $\phi_{\varepsilon} \in C^{\infty}_0(A)$ be a cut-off function with $0 \leq \phi_{\varepsilon} \leq 1$ in~$A$ and $\phi_{\varepsilon} \equiv 1$ in~$A_{\varepsilon-}$. We then define a function $\psi_{\varepsilon} \in C_0(\R^d)$ via 
\begin{equation*}
\cff(x) \coloneqq \phi_\varepsilon(x) f \bigg( \varepsilon \dashint_{B_{\varepsilon}(x) \cap \Omega} |g| \dd y \bigg) 
\end{equation*}
for $x \in A$ and we set $\cff(x) \coloneqq 0$ for $x \in \R^d \setminus A$. From \cite[Lemma 4.2]{BDM97} (with $\eta = s \varepsilon$), we infer
\begin{equation*}
\int_{\R^d} \cff(x)\dd x=\sum_{\alpha \in \Z^d} (s \varepsilon)^d \, \cff(x_\varepsilon + s \varepsilon \alpha) 
\end{equation*}
for a suitable $x_\varepsilon \in \R^d$. In what follows, we shall work with the (disjoint) cubes $Q_{s \varepsilon/2} (x_\alpha)$ and the (overlapping) balls $B_\varepsilon(x_\alpha)$ with centers $x_\alpha \coloneqq x_\varepsilon + s \varepsilon \alpha$ for $\alpha \in \Z^d$, for which we notice the inclusion  $Q_{s \varepsilon/2} (x_\alpha) \subset B_\varepsilon(x_\alpha)$ from $s \sqrt{d} < 1$. We further set 
\begin{equation*}
\mathscr{G}_{\varepsilon} \coloneqq \big\{ x_\alpha = x_\varepsilon + s \varepsilon \alpha \in \R^d \colon \alpha \in \Z^d \text{ and } x_\alpha \in A_{\varepsilon-} \big\}
\end{equation*}
(meaning that we have in particular $B_{\varepsilon}(x_\alpha) \subset A$ for all $x_\alpha \in \mathscr{G}_{\varepsilon}$). From the non-negativity of~$f$ and the properties of the function~$\phi_\varepsilon$ we then observe 
\begin{align}
\frac{1}{\varepsilon} \int_A f \bigg(\varepsilon \dashint_{B_{\varepsilon}(x) \cap \Omega} |g| \dd  y \bigg) \dd x & \geq \frac{1}{\varepsilon} \int_{\R^d} \cff(x)\dd x \geq \sum_{x_\alpha \in \mathscr{G}_{\varepsilon}} \frac{(s \varepsilon)^d}{\varepsilon} \cff(x_\alpha)\notag \\
&  =  \sum_{x_\alpha \in \mathscr{G}_{\varepsilon}}  \frac{(s \varepsilon)^d}{\varepsilon} f \bigg( \varepsilon \dashint_{B_{\varepsilon}(x_\alpha)} |g| \dd y \bigg) . \label{clm3BV}
\end{align}
In order to modify~$g$ in such a way that an $L^1$-estimate is available, we select two types of centers $x_\alpha \in \mathscr{G}_{\varepsilon}$ such that~$f$ on the right-hand side of~\eqref{clm3BV} behaves linearly, namely
\begin{align*}
\mathscr{G}_{\varepsilon}' & \coloneqq \bigg\{ x_\alpha \in \mathscr{G}_{\varepsilon} \colon \varepsilon \dashint_{B_{\varepsilon}(x_\alpha)} |g| \dd y < 1 \bigg\}, \\
\mathscr{G}_{\varepsilon}'' & \coloneqq \bigg\{ x_\beta \in \mathscr{G}_{\varepsilon} \cap A_{5\varepsilon -} \colon \varepsilon \dashint_{B_{3 \varepsilon}(x_\beta)} |g| \dd y < 3^{-d} \bigg\}.
\end{align*}
From these choices we immediately find the implication
\begin{equation*}
 x_\beta \in \mathscr{G}_{\varepsilon}'' \text{ with } Q_{s\varepsilon/2}(x_\beta) \cap B_\varepsilon(x_\alpha) \neq \emptyset \quad \Rightarrow \quad B_\varepsilon(x_\alpha) \subset B_{3\varepsilon}(x_\beta) \text{ and } x_\alpha \in \mathscr{G}_{\varepsilon}'
\end{equation*}
for each $\alpha \in \Z^d$ (which in particular yields the inclusion $\mathscr{G}_{\varepsilon}'' \subset \mathscr{G}_{\varepsilon}'$). With $f(t) = c_0 t$ for $t \in [0,1]$ and inequality~\eqref{saBV} (with centers of the balls shifted by~$x_\varepsilon$) we then continue to estimate~\eqref{clm3BV} from below by
\begin{align}
 \frac{1}{\varepsilon} \int_A f \bigg(\varepsilon \dashint_{B_{\varepsilon}(x) \cap \Omega} |g| \dd y \bigg) \dd x & \geq \sum_{x_\alpha \in \mathscr{G}_{\varepsilon}'} \frac{(s \varepsilon)^d}{|B_\varepsilon|} c_0 \int_{B_{\varepsilon}(x_\alpha)} |g| \dd y \notag \\
 & \geq  \sum_{x_\beta \in \mathscr{G}_{\varepsilon}''} \sum_{x_\alpha \in \mathscr{G}_{\varepsilon}'} \frac{(s \varepsilon)^d}{|B_\varepsilon|} c_0 \int_{ Q_{s\varepsilon/2}(x_\beta) \cap B_{\varepsilon}(x_\alpha)} |g| \dd y \notag \\
 & =  \sum_{x_\beta \in \mathscr{G}_{\varepsilon}''} \sum_{\alpha \in \Z^d} \frac{(s \varepsilon)^d}{|B_\varepsilon|} c_0 \int_{ Q_{s\varepsilon/2}(x_\beta) } \chi_{B_{\varepsilon}(x_\alpha)}  |g| \dd y \notag \\
 & \geq \sum_{x_\beta \in \mathscr{G}_{\varepsilon}''} (1-\delta) c_0 \int_{ Q_{s\varepsilon/2}(x_\beta) } |g| \dd y . \label{comp_energy_linear}
\end{align} 
We next define functions $v \colon A \to \R$ and $\breve{g}\colon A \to \R^d$ via 
\begin{align}
\label{vemBV}
v(x) \coloneqq  \begin{cases}
u(x) & \quad  x \in \bigcup \{Q_{s\varepsilon/2}(x_\beta) \colon x_\beta \in \mathscr{G}_{\varepsilon}''\},  \\ 
0 & \quad \text{otherwise},  
\end{cases} \\
\label{gvemBV}
\breve{g}(x) \coloneqq  \begin{cases}
g(x) & \quad  x \in \bigcup \{Q_{s\varepsilon/2}(x_\beta) \colon x_\beta \in \mathscr{G}_{\varepsilon}''\},  \\ 
0 & \quad \text{otherwise}.
\end{cases} 
\end{align}
In view of $u \in W^{1,1}(\Omega) \cap L^{\infty}(\Omega)$, we have $v \in SBV(A)$ with $\norm{v}_{L^{\infty}(A)}\leq \norm{u}_{L^{\infty}(A)}$, and since~$g \in L^1(\Omega,\R^d)$ we also get $\breve{g} \in L^1(A, \Rd)$. Moreover, with the inequality $|\nabla u - g| \geq |\nabla v - \breve{g}|$ in~$A$ and the previous estimate~\eqref{comp_energy_linear} we conclude
\begin{align}
E_{\varepsilon}(u,\gamma,A) & = \int_A |\nabla u - g|^2 \dd x + \frac{1}{\varepsilon} \int_A f \bigg(\varepsilon \dashint_{B_{\varepsilon}(x) \cap \Omega} |g| \dd y \bigg) \dd x \notag \\
& \geq \int_A |\nabla v - \breve{g}|^2 \dd x + (1- \delta) c_0 \int_A |\breve{g}|\dd y , \label{egaBV}
\end{align}
which proves the first claim of the proposition. Via~\eqref{egaBV} and the Cauchy--Schwarz inequality, we immediately also find the second claim:
\begin{align*}
 (1- \delta)  c_0  \int_{A}|\nabla v|\dd y & \leq (1- \delta) c_0 \int_A |\breve{g}|\dd y +  c_0 \bigg( \int_{A}|\nabla v-\breve{g}|^2 \dd y \bigg)^{\frac{1}{2}} |A|^{\frac{1}{2}}\\
 & \leq E_{\varepsilon}(u,\gamma,A) + c_0  E_{\varepsilon}(u,\gamma,A)^{\frac{1}{2}} |A|^{\frac{1}{2}}.
\end{align*}
We next address the size of jump set~$J_v$ of~$v$ in $A_{6\varepsilon-}$. By construction, there holds
\begin{equation*}
J_v \cap A_{6\varepsilon-} \subset \bigcup \big\{ \partial Q_{s\varepsilon/2}(x_\beta) \colon x_\beta \in (\mathscr{G}_{\varepsilon} \cap A_{5\varepsilon -}) \setminus \mathscr{G}_{\varepsilon}''\big\}.
\end{equation*}
Since points $x_\beta \in (\mathscr{G}_{\varepsilon} \cap A_{5\varepsilon -}) \setminus \mathscr{G}_{\varepsilon}''$ are characterized via estimates from below for mean values on balls with radius $3 \varepsilon$, we need to find related estimates from below for mean values on balls with radius~$\varepsilon$, in order to bound the number $\# ((\mathscr{G}_{\varepsilon}\cap A_{5\varepsilon -}) \setminus \mathscr{G}_{\varepsilon}'')$ in terms of the energy. To this end, we first notice 
\begin{equation*}
 \frac{\varepsilon}{3^d} \sum_{\{x_\alpha \in \mathscr{G}_{\varepsilon} \colon B_\varepsilon(x_\alpha) \cap B_{3 \varepsilon}(x_\beta) \neq \emptyset\} } \dashint_{B_{\varepsilon}(x_\alpha)} |g| \dd y \geq \varepsilon \dashint_{B_{3\varepsilon}(x_\beta)} |g| \dd y \geq 3^{-d}
\end{equation*}
for $x_\beta \in (\mathscr{G}_{\varepsilon} \cap A_{5\varepsilon -}) \setminus \mathscr{G}_{\varepsilon}''$, which means that there exists $x_\alpha  \in \mathscr{G}_{\varepsilon}$ with $B_\varepsilon(x_\alpha) \cap B_{3 \varepsilon}(x_\beta) \neq \emptyset$ and 
\begin{equation*}
 \varepsilon \dashint_{B_{\varepsilon}(x_\alpha)} |g| \dd y \geq \frac{1}{\sigma_{s,d}} \quad \text{with  }\sigma_{s,d} \coloneqq \# \{ x_\alpha \in \mathscr{G}_{\varepsilon} \colon B_\varepsilon(x_\alpha) \cap B_{3 \varepsilon}(x_\beta) \neq \emptyset \}.
\end{equation*}
Since by a shifting argument it is in turn clear that
\begin{equation*}
 \sigma_{s,d} = \# \{ x_\beta \coloneqq x_\varepsilon + s \varepsilon \beta \in \R^d \colon B_\varepsilon(x_\alpha) \cap B_{3\varepsilon}(x_\beta)\}, 
\end{equation*}
we obtain
\begin{equation*}
  \# \big((\mathscr{G}_{\varepsilon}\cap A_{5\varepsilon -}) \setminus \mathscr{G}_{\varepsilon}''\big) \leq \sigma_{s,d} \#  \bigg\{ x_\alpha \in \mathscr{G}_{\varepsilon} \colon \varepsilon \dashint_{B_{\varepsilon}(x_\alpha)} |g| \dd y \geq  \frac{1}{\sigma_{s,d}}  \bigg\}.
\end{equation*}
In consequence, we observe from~\eqref{clm3BV} that
\begin{equation} 
\frac{(s \varepsilon)^d}{\varepsilon} \# \big((\mathscr{G}_{\varepsilon}\cap A_{5\varepsilon -}) \setminus \mathscr{G}_{\varepsilon}''\big) \leq  \frac{\sigma_{s,d}}{f(\sigma_{s,d}^{-1})} E_\varepsilon(u,\gamma,A) . \label{clm3bBV}
\end{equation} 
Thus, we obtain
\begin{align*}
\HA(J_v  \cap A_{6\varepsilon-}) &\leq \# \big((\mathscr{G}_{\varepsilon}\cap A_{5\varepsilon -}) \setminus \mathscr{G}_{\varepsilon}''\big) \HA ( \partial Q_{s \varepsilon/2}) \\
 &  = \# \big((\mathscr{G}_{\varepsilon}\cap A_{5\varepsilon -}) \setminus \mathscr{G}_{\varepsilon}''\big)  2 d (s \varepsilon)^{d-1} \leq \frac{2 d}{s} \frac{\sigma_{s,d}}{f(\sigma_{s,d}^{-1})} E_{\varepsilon}(u,\gamma,A). 
\end{align*}
We finally show the estimate for the Lebesgue measure of the set on which $\nabla v - \breve{g}$ and $\nabla u - g$ differ as well as the estimate for $v-u$ in $L^q(A_{2\varepsilon-})$. Since~$v$ and $\breve{g}$ coincide with~$u$ and $g$, respectively, on the set $\cup \{Q_{s\varepsilon/2}(x_\alpha) \colon x_\alpha \in \mathscr{G}_{\varepsilon}''\}$, we can use once again the estimate~\eqref{clm3bBV} and find with $ |Q_{s \varepsilon/2}| =(s\varepsilon)^d$
\begin{multline*}
\big| \{x \in A_{6 \varepsilon -} \colon \nabla v(x) - \breve{g}(x) 
 \neq \nabla u(x) - g(x) \} \big|  \\
  \leq  \# \big((\mathscr{G}_{\varepsilon}\cap A_{5\varepsilon -}) \setminus \mathscr{G}_{\varepsilon}''\big)  |Q_{s \varepsilon/2}|  \leq  \varepsilon \frac{\sigma_{s,d}}{f(\sigma_{s,d}^{-1})} E_{\varepsilon}(u,\gamma,A)
\end{multline*}
and analogously
\begin{equation*}
\norm{v-u}^{q}_{L^{q}(A_{6\varepsilon-})} \leq \varepsilon \frac{\sigma_{s,d}}{f(\sigma_{s,d}^{-1})} E_{\varepsilon}(u,\gamma,A)\norm{u}^q_{L^{\infty}(A)}.
\end{equation*}
This completes the proof of the proposition.
\end{proof}

\begin{theorem}[Compactness]\label{commBV}
Let $\{(u_{\varepsilon},\gamma_\varepsilon)\}_{\varepsilon}$ be a sequence in $L^1(\Omega) \times \Mam$ with 
\begin{equation}
E_{\varepsilon}(u_{\varepsilon}, \gamma_{\varepsilon}) \leq C_0 \quad \text{for all } \varepsilon>0 \label{bmBV}
\end{equation} 
for a positive constant~$C_0$. There exist a function $u \in BV(\Omega)$ with $\norm{u}_{L^{\infty}(\Omega)} \leq K$ and a measure $\gamma \in \Mam$ such that, up to subsequences, $\{u_{\varepsilon}\}_{\varepsilon}$ converges to $u$ in $L^1(\Omega)$ and $\{\gamma_{\varepsilon}\}_{\varepsilon}$ converges to $\gamma=\gamma^s+ g \mathcal{L}^d$ in $W^{-1,q}(\Omega, \R^d)$ for all $q < 2^*$ and in particular in the flat norm. Moreover, there holds $\gamma^s = D^su$ and $\nabla u- g \in L^2(\Omega,\Rd)$.
\end{theorem}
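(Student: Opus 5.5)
Our plan is to apply Proposition~\ref{clmBV} to $(u_\varepsilon,\gamma_\varepsilon=g_\varepsilon\mathcal{L}^d)$ with $A=\Omega$ (or with a sequence of compactly contained $A\Subset\Omega$) and a fixed $\delta\in(0,1)$. This yields $(v_\varepsilon,\breve g_\varepsilon)\in SBV\times L^1$ with the following properties:
\begin{itemize}
\item $\|v_\varepsilon\|_{L^\infty}\le K$;
\item $\int|\nabla v_\varepsilon|\le C$ and $\int|\breve g_\varepsilon|\le C$;
\item $\HA(J_{v_\varepsilon}\cap A_{6\varepsilon-})\le cC_0$;
\item $\|v_\varepsilon-u_\varepsilon\|_{L^1(A_{6\varepsilon-})}\le C\varepsilon$.
\end{itemize}
Hence $|Dv_\varepsilon|(A')\le C(A')$ on every $A'\Subset\Omega$, with $C(A')$ independent of $A'$ since the jump height is at most $2K$. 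The $BV$ compactness theorem then gives a subsequence with $v_\varepsilon\to u$ in $L^1_{\rm loc}$. Consequently $u_\varepsilon\to u$ in $L^1_{\rm loc}$, and by the uniform bound $\|u_\varepsilon\|_\infty\le K$ also in $L^1(\Omega)$. Lower semicontinuity of the total variation yields $u\in BV(\Omega)$ with $|Du|(\Omega)\le C$ and $\|u\|_\infty\le K$.

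For $\gamma_\varepsilon$ we write $g_\varepsilon=\nabla u_\varepsilon-(\nabla u_\varepsilon-g_\varepsilon)$, where $w_\varepsilon:=\nabla u_\varepsilon-g_\varepsilon$ is bounded in $L^2(\Omega,\R^d)$ by $C_0^{1/2}$. Thus, up to a subsequence, $w_\varepsilon\rightharpoonup w$ weakly in $L^2$, and $\gamma_\varepsilon=Du_\varepsilon-w_\varepsilon\mathcal{L}^d$.
\begin{itemize}
\item \emph{The $L^2$ part.} Since $W^{1,q'}_0$ embeds compactly into $L^2$ exactly when $q'>(2^*)'$, i.e.\ $q<2^*$, weak convergence in $L^2$ upgrades to strong convergence in $W^{-1,q}$.
\item \emph{The derivative part.} For $Du_\varepsilon\to Du$ in $W^{-1,q}$ we use the pairing $\langle Du_\varepsilon-Du,\varphi\rangle=-\int(u_\varepsilon-u)\,{\rm div}\,\varphi$ (componentwise). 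Since $\|u_\varepsilon-u\|_{L^1}\to0$ with $\|u_\varepsilon-u\|_\infty\le2K$, interpolation gives $u_\varepsilon\to u$ in $L^q$ for every $q<\infty$, hence $Du_\varepsilon\to Du$ in $W^{-1,q}$ for all $q<\infty$. No quantitative bound on $Du_\varepsilon$ is needed here.
\end{itemize}
So $\gamma_\varepsilon\to\gamma:=Du-w\mathcal{L}^d$ in $W^{-1,q}$ for all $q<2^*$, and by Lemma~\ref{Lemma_weak_negativ_flat}\ref{Lemma_weak_negativ_flat_1} in the flat norm.

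It remains to identify the structure of $\gamma$. We have $\gamma=D^su+(\nabla u-w)\mathcal{L}^d$, so $\gamma^s=D^su$ and $g=\nabla u-w$, giving $\nabla u-g=w\in L^2$. We also need $g\in L^1$; this holds because $\nabla u\in L^1$ and $w\in L^2\subset L^1$ on the bounded set $\Omega$. Thus $\gamma\in\mathcal{M}(\Omega,\R^d)$, as required.

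The main obstacle is the first step: obtaining a uniform $BV$ bound, since $\nabla u_\varepsilon$ itself is not controlled in $L^1$ by the energy, as $g_\varepsilon$ may be large where $f$ saturates. This is exactly what Proposition~\ref{clmBV} provides through the surrogate $v_\varepsilon$. The one care point is the boundary layer $\Omega\setminus\Omega_{6\varepsilon-}$. We handle it by working on fixed $A'\Subset\Omega$ for $\varepsilon$ small, using a diagonal argument over an exhaustion of $\Omega$, and noting that the $BV$ bound on $u$ is uniform in $A'$.
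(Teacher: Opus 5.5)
Your proposal is correct and follows the paper's proof essentially step by step. Proposition~\ref{clmBV} supplies the surrogate $v_\varepsilon$ with a local $BV$ bound; compactness together with the $L^\infty$ bound gives $u_\varepsilon\to u$ in $L^1(\Omega)$, hence $Du_\varepsilon\to Du$ in $W^{-1,q}$; and the $L^2$-bounded strains $\nabla u_\varepsilon-g_\varepsilon$ converge strongly in $W^{-1,q}$ for $q<2^*$ by compactness of $L^2\hookrightarrow W^{-1,q}$, which identifies $\gamma=Du-w\mathcal{L}^d$. Your choice to apply Proposition~\ref{clmBV} once on $\Omega$ and use the jump-set bound only on $A'\Subset\Omega_{6\varepsilon-}$ is, if anything, a slightly more careful treatment of the boundary layer than the paper's assertion of a uniform bound on $\|v_\varepsilon\|_{BV(A)}$.
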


\begin{proof}
Let $A \Subset \Omega$ with smooth boundary $\partial A$. By Proposition \ref{clmBV} there exists a sequence $\{v_{\varepsilon}\}_{\varepsilon}$ in $SBV(A)$ with $\norm{v_{\varepsilon}}_{L^{\infty}(A)} \leq K$ and $\norm{v_{\varepsilon}}_{BV(A)} \leq M$ for all $\varepsilon>0$ and a constant~$M$ which is independent of~$A$ and~$\varepsilon$. By the Rellich--Kondrachov theorem there exists a function $u \in BV(A)$ with $\norm{u}_{BV(A)}\leq M$ such that $\{v_{\varepsilon}\}_{\varepsilon}$ converges, up to subsequences, to $u$ in $L^q(A)$ for all $q < 1^* = {d}/{(d-1)}$. Since  $\norm{v_{\varepsilon}-u_{\varepsilon}}_{L^q(A_{6\varepsilon-})} \to 0$ as $\varepsilon \to 0$, we deduce that $\{u_{\varepsilon}\}_{\varepsilon}$ converges to $u$ in $L^q(A_{6\varepsilon-})$ for all $q < 1^*$. By arbitrariness of~$A$, a diagonal argument and the uniform bound $\norm{u}_{BV(A)}\leq M$, we find a subsequence which converges in $L^q_{\text{loc}}(\Omega)$ to a function $u \in BV(\Omega)$. Finally, the uniform bound $\norm{u_{\varepsilon}}_{L^{\infty}(\Omega)} \leq K$ implies also strong convergence $u_\varepsilon \to u$ in $L^q(\Omega)$ and thus $\Du_{\varepsilon} \to Du$ in $W^{-1,q}(\Omega, \R^d)$ for all $1 \leq q < \infty$. Moreover, it holds $\norm{u}_{L^{\infty}(\Omega)}\leq K$. From the energy bound~\eqref{bmBV} we further infer that
$\{\nabla u_{\varepsilon} - \ggae\}_{\varepsilon}$ is bounded in $L^2(\Omega, \R^d)$. Therefore, it converges, up to a subsequence, weakly in $L^2(\Omega,\R^d)$ to some $w \in L^2(\Omega, \Rd)$.  Since $L^2(\Omega, \Rd)$ is compactly embedded in $W^{-1,q}(\Omega, \R^d)$ for all $q< 2^*$, this shows that $\{\gae= g_\varepsilon \mathcal{L}^d\}_{\varepsilon}$ converges in $W^{-1,q}(\Omega, \R^d)$ for all $q < 2^*$ and in the flat norm (cf.~Lemma~\ref{Lemma_weak_negativ_flat}) to some 
$\gamma \in W^{-1,q}(\Omega, \R^d)$.  By the uniqueness of the limit we obtain $\gamma = Du - w \mathcal{L}^d \in \Mam$. This in particular shows that $\gamma^s = D^su$ and $w =\nabla u - g \in L^2(\Omega,\R^d)$.
\end{proof}


\section{A relaxation result in $BV(\Omega)$}\label{sec:relBV}

In this section we prove a relaxation result for free discontinuity functionals with linear growth in the space ~$BV$ with respect to $L^1$-convergence, which will be needed in Section~\ref{sec:upper-lim}. We here consider only functionals with a specific bulk density, depending only on the approximate gradient and a continuous perturbation. To introduce this functional, we fix a function $v \in C^{\infty}_0(\overbar{\Omega},\Rd)$ and define $\psi \colon \oOmega \times \Rd \to \R$ via
\begin{equation*}
\psi(x, \xi) \coloneqq |v(x)|^2 + c_0 |\xi + v(x)| \quad \text{for } (x,\xi) \in \oOmega \times \Rd.
\end{equation*}
For later convenience, we introduce the functional $\F_v \colon L^1(\Omega) \times \Am \to [0, \infty]$ in its localized version, defined by 
\begin{align*}
\F_v(u,A) \coloneqq \left\{
\begin{array}{l l}
\int_{A}  \psi (x, \nabla u)\dd x + \int_{J_u \cap A} {\theta}([u]) \dd \HA &  \text{if } u\restrict{A}  \in SBV^2(A)\cap L^{\infty}(A), \\ 
\infty & \text{otherwise},
\end{array}
\right.
\end{align*}
with $\theta \colon \R \to [0,\infty)$ as in~\eqref{definitiontheta}, and we want to study its relaxation $\overbar{\F}_v \colon L^1(\Omega) \times \Am \to [0, \infty]$ with respect to strong convergence in $L^1(\Omega)$, given as 
\begin{align*}
\overbar{\F}_v(u,A) \coloneqq \inf\big\{   \liminf_{h \to \infty}  \F_v(u_h,A) \colon \{u_h\}_h \text{ in } L^1(\Omega) \text{ with } u_h \to u \text{ in } L^1(\Omega) \big\}. 
\end{align*}
The main result of this section is the following upper bound on the relaxed functional, needed only on the whole domain~$\Omega$.

\begin{theorem}\label{rel2BV}
The relaxed functional $\overbar{\F}_v(\cdot,\Omega)$ of $\F_v(\cdot,\Omega)$ with respect to strong convergence in $L^1(\Omega)$ satisfies the estimate
\begin{equation}
\overbar{\F}_v(u,\Omega) \leq   \int_{\Omega}  \psi(x, \nabla u )\dd x+ \Ij {\theta}([u])  \dd \HA  + c_0 |D^cu|(\Omega) \label{rfBV}
\end{equation}
for every $u \in BV(\Omega) $.
\end{theorem}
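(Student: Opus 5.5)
We prove \eqref{rfBV} by a density and lower semicontinuity argument, combined with the structure of $\overbar{\F}_v$ as a functional in the sense of the global method (Bouchitt\'e--Fonseca--Leoni--Mascarenhas). The plan rests on three properties of the integrand. First, $\psi(x,\cdot)$ is convex, continuous in $x$, and of linear growth $\psi(x,\xi)\le C(1+|\xi|)$, with recession function $\psi^\infty(x,\xi)=c_0|\xi|$. Second, $\theta$ is subadditive, even, and satisfies $\theta(t)\le c_0|t|$. Third, $\theta(t)/|t|\to c_0$ as $t\to 0$, because $f(t)=c_0t$ near $0$ and \eqref{eqn_omega_d_Fubini} gives $2\int_0^1\frac{\omega_{d-1}}{\omega_d}(1-s^2)^{(d-1)/2}\dd s=1$. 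Thus the jump density is controlled by the Cantor/recession density $c_0|\cdot|$, and the two coincide for small jumps. The right-hand side of \eqref{rfBV} is then exactly the candidate relaxation.

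\textbf{Step 1 (reduction to $SBV^2$ plus small jumps).} We first use the bound $\theta\le c_0|\cdot|$ to show that $\overbar{\F}_v(u,\Omega)\le \int_\Omega\psi(x,\nabla u)\dd x + c_0|D^su|(\Omega)$ for all $u\in BV\cap L^\infty$. To do this, we approximate $u$ in $L^1$ by functions $u_h\in SBV^2\cap L^\infty$ with $\|u_h\|_\infty\le\|u\|_\infty$ and $|Du_h|(\Omega)\to|Du|(\Omega)$. Concretely, we take piecewise constant approximations on fine grids, or $W^{1,1}$ strict approximations with $\nabla u_h\mathcal L^d\weakstar Du$ that are subsequently turned into staircase functions with many tiny jumps. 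By Reshetnyak continuity applied to the $1$-homogeneous perspective of $\psi$, which is justified since $v$ is continuous up to $\overbar\Omega$, we obtain $\F_v(u_h)\to\int\psi(x,\nabla u)+c_0|D^su|$. This already settles the Cantor part. However, it is too crude on $J_u$, where $\theta([u])<c_0|[u]|$ in general.

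\textbf{Step 2 (localization and measure property).} We next show that $\overbar{\F}_v(u,\cdot)$ is the restriction to open sets of a Borel measure $\lambda_u$ absolutely continuous with respect to $\mathcal L^d+|Du|$. We use the standard De Giorgi--Letta fundamental estimate, with cut-offs gluing two $SBV^2$ sequences; the glued function stays in $SBV^2$, and the gluing layer costs are controlled by the linear growth. It then suffices to bound the densities. On the part absolutely continuous with respect to $\mathcal L^d$, and on the Cantor part, Step 1 applied locally gives $\frac{d\lambda_u}{d\mathcal L^d}\le\psi(x,\nabla u)$ and $\frac{d\lambda_u}{d|D^cu|}\le c_0$. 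On $J_u$ we blow up at $\mathcal H^{d-1}$-a.e.\ $x_0\in J_u$: $u$ is close in $L^1(Q^\nu_r(x_0))$ to the pure jump $u_0=u^-+[u]\chi_{\{(y-x_0)\cdot\nu>0\}}$. We modify $u$ in $Q^\nu_r(x_0)$ by gluing it to $u_0$ via the fundamental estimate. This gives $\lambda_u(Q^\nu_r)\le \theta([u](x_0))r^{d-1}+o(r^{d-1})$, since $u_0\in SBV^2$ has energy $\theta([u](x_0))r^{d-1}+O(r^d)$. The error from gluing is handled by lower semicontinuity together with the convergence of $u$ to $u_0$ after rescaling. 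Together these yield \eqref{rfBV}.

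\textbf{Main obstacle.} The main obstacle is the jump density estimate in Step 2, specifically the gluing in $BV$. In the blow-up, $u-u_0$ is small only in $L^1$ and not in total variation, so the transition layer can create jumps of arbitrary size. A naive estimate of the cost $\int\theta([\cdot])$ on the layer only gives $c_0|Du|(\text{layer})$, which is fine since that term is $o(r^{d-1})$ along the blow-up. The delicate point is ensuring the glued functions remain in $SBV^2$ with $L^\infty$ bounds. To handle this, we first apply Step 1 to $u$ in the layer, then use truncation, which does not increase $\F_v$ because $\theta$ is nondecreasing in $|t|$ and truncation reduces $|\nabla u|$.
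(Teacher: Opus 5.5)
Your overall architecture is sound: the measure property of $\overbar{\F}_v(u,\cdot)$ via De Giorgi--Letta, then density bounds, with Step 1 handling the Lebesgue and Cantor parts. So is your observation that $\theta(t)=c_0|t|$ for $|t|\le\omega_d/\omega_{d-1}$. The gap is in the only non-routine step, the jump density.

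Gluing $u$ to $u_0$ inside $Q^\nu_r(x_0)$ produces a different function $w_r$. Your energy count therefore bounds $\overbar{\F}_v(w_r,Q^\nu_r(x_0))$, not $\lambda_u(Q^\nu_r(x_0))=\overbar{\F}_v(u,Q^\nu_r(x_0))$. Since the relaxation is only lower semicontinuous, there is no comparison between the two at a fixed scale. The remedy you indicate, lower semicontinuity plus $u(x_0+r\,\cdot)\to u_0$, points the wrong way: it gives $\liminf_r \lambda_u(Q^\nu_r(x_0))/r^{d-1}\ge\ldots$, which is the $\Gamma$-liminf mechanism, not an upper bound.

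To use lower semicontinuity for an upper bound you need functions converging to $u$ itself. One way to do this:
\begin{itemize}
\item For $\delta>0$, take a disjoint Vitali family of cubes $Q^{\nu_i}_{r_i}(x_i)$ with $r_i<\delta$, centred at good blow-up points.
\item Let the family cover all but a small $|D^ju|$-part of $\{x\in J_u\cap A\colon |[u](x)|>\omega_d/\omega_{d-1}\}$; smaller jumps are already sharp by Step 1.
\item Glue in all cubes simultaneously to obtain $w^\delta$ with $\|w^\delta-u\|_{L^1(\Omega)}\le\sum_i o(r_i^d)\to0$.
\item Bound $\overbar{\F}_v(u,A)\le\liminf_{\delta}\overbar{\F}_v(w^\delta,A)$, using locality and the measure property of $\overbar{\F}_v(w^\delta,\cdot)$, with Step 1 applied on $A\setminus\bigcup_i\overline{Q_i}$.
\end{itemize}
The paper avoids this by adding $\delta|Du|$ to obtain the coercivity required by the global method \cite{BFM98}. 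That method packages exactly such a covering argument into cell formulas, and the bounds on $b_\delta$, $\sigma_\delta$, $b^\infty_\delta$ then follow by inserting the boundary datum itself as competitor.

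There are also three smaller points to correct.

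First, in Step 1, $|Du_h|(\Omega)\to|Du|(\Omega)$ does not permit Reshetnyak continuity for the non-homogeneous $\psi$. You need area-strict convergence of $(\mathcal{L}^d,Du_h)$. For example, take $d=1$, $u(x)=x$, $v\equiv-1$: functions with $u_h'\in\{0,2\}$ converge strictly to $u$, yet $\int_0^1|u_h'-1|\dd x=1\neq0$.

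Second, your concrete constructions make this worse. Staircases or grid-wise constant functions produce $\psi(x,0)+c_0|\nabla u|$ in place of $\psi(x,\nabla u)$, and for grids even an $\ell^1$-norm. Mollifying an extension of $u$ across the Lipschitz boundary gives $C^\infty(\overline{\Omega})$ approximants that converge area-strictly, which is what you need.

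Third, truncation can increase $\int\psi(x,\nabla u)\dd x$, because $\psi(x,\cdot)$ is minimized at $-v(x)$, not at $0$. Corollary~\ref{rel2conBV} compensates by modifying $v$ as well. In your scheme truncation is unnecessary anyway: Lemma~\ref{lgBV} bounds $\overbar{\F}_v$ on all of $BV$, so the glued functions need not lie in $SBV^2\cap L^\infty$.
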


We first collect some relevant properties of the functions~$\psi$ given above and~$\theta$ from~\eqref{definitiontheta}. 

\begin{lemma}\label{propertiesgBV}
The functions~$\psi$ and~$\theta$ satisfy the following properties:
\begin{enumerate}[font=\normalfont, label=(\roman{*}), ref=(\roman{*})]
\item\label{propertiesgBV_1} $\psi$ is \textup{(}jointly\textup{)} continuous on $\overbar{\Omega} \times \Rd$,
\item\label{propertiesgBV_2} $\psi$ is of linear growth, i.e. there exists a constant $C_1 = C_1(c_0,\norm{v}_{L^{\infty}(\Omega, \Rd)})$ with
 \begin{equation}\label{minest2BV}
 c_0 |\xi| - C_1 \leq \psi(x,\xi) \leq C_1(1+|\xi|) \quad \text{for all } (x,\xi) \in \oOmega \times \Rd,
 \end{equation}
 \item\label{propertiesgBV_3} the \textup{(}strong\textup{)} recession function of~$\psi$ is given by 
 \begin{equation*}
  \psi^{\infty}(x, \xi) \coloneqq  \lim \limits_{ \substack {t \to \infty \\ (y,\zeta) \to (x,\xi) } } \frac{\psi (y ,t \zeta)}{t}  = c_0 |\xi| \quad \text{ for all } (x,\xi) \in \overbar{\Omega} \times \Rd,
 \end{equation*} 
 \item\label{propertiesgBV_4} there exists a constant $c_1=c_1(d)>0$ with
 \begin{equation}
 \label{minestBV}
 c_1 c_0 \min\{ |t|, 1\} \leq  {\theta}(t) \leq 2 c_0 \min\{ |t|, 1\} \quad \text{for all } t  \in \R.
 \end{equation}
\end{enumerate}
\end{lemma}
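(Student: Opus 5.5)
Items (i)--(iii) concern only the explicit function $\psi(x,\xi) = |v(x)|^2 + c_0|\xi + v(x)|$ with $v \in C^\infty_0(\overbar{\Omega},\Rd)$, so I plan to verify them directly.

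\emph{(i) Continuity.} Joint continuity follows because $v$ is continuous on $\oOmega$ and $(x,\xi)\mapsto |\xi+v(x)|$ is a composition of continuous maps.

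\emph{(ii) Linear growth.} Set $M\coloneqq\norm{v}_{L^\infty(\Omega,\Rd)}$. The triangle inequality gives
\begin{equation*}
c_0|\xi| - c_0 M \leq \psi(x,\xi) \leq M^2 + c_0 M + c_0|\xi|,
\end{equation*}
so $C_1 \coloneqq \max\{c_0,\ M^2+c_0M\}+c_0M$ works for both inequalities.

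\emph{(iii) Recession function.} For $t\to\infty$ and $(y,\zeta)\to(x,\xi)$ we write
\begin{equation*}
\frac{\psi(y,t\zeta)}{t} = \frac{|v(y)|^2}{t} + c_0\Big|\zeta + \frac{v(y)}{t}\Big|.
\end{equation*}
Since $v$ is bounded, the first term tends to $0$ and the second to $c_0|\xi|$, uniformly in $y$.

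\emph{(iv) Bounds for $\theta$.} For $t\in\R$, set $a(s)\coloneqq \frac{\omega_{d-1}}{\omega_d}(1-s^2)^{\frac{d-1}{2}}$ for $s\in[0,1]$. By \eqref{eqn_omega_d_Fubini}, $\int_0^1 a\dd s = \tfrac12$.

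For the upper bound, I use $f(\tau)\le c_0\min\{\tau,1\}$, so $\theta(t)\le 2\int_0^1 c_0\min\{a(s)|t|,1\}\dd s$. Integrating the two options of the minimum separately gives $\theta(t)\le 2c_0\min\{|t|/2,\,1\}\le 2c_0\min\{|t|,1\}$.

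For the lower bound, I exploit the concavity of $f$ together with $f(0)=0$. This gives $f(\lambda\tau)\ge\lambda f(\tau)$ for $\lambda\in[0,1]$. Note that $a(s)\le a(0)=\omega_{d-1}/\omega_d\eqqcolon a_0$, so $\lambda(s)\coloneqq a(s)/a_0\in[0,1]$. Hence
\begin{equation*}
\theta(t)\ge 2 f(a_0|t|)\int_0^1 \frac{a(s)}{a_0}\dd s = \frac{f(a_0|t|)}{a_0}.
\end{equation*}
Finally, $f(a_0|t|)=c_0\min\{a_0|t|,1\}\ge c_0\min\{a_0,1\}\min\{|t|,1\}$. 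This yields \eqref{minestBV} with $c_1 \coloneqq \min\{a_0,1\}/a_0 = \min\{1,1/a_0\}$, which depends only on $d$.

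The only step requiring care is this lower bound, namely choosing the concavity inequality $f(\lambda\tau)\ge\lambda f(\tau)$ in place of Jensen's inequality. Jensen's inequality would give $\theta(t)\ge 2f(|t|/2)$ and is equally valid. Either route suffices, and the rest is routine.
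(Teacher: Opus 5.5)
Your proof is correct and is essentially the paper's argument. The paper also gets the lower bound in (iv) from $f(\lambda\tau)\ge\lambda f(\tau)$ and arrives at the same $\theta(t)\ge c_0\min\{|t|,\omega_d/\omega_{d-1}\}$. Its upper bound $\theta(t)\le 2f(|t|/2)=c_0\min\{|t|,2\}$ comes from Jensen, and your $\int\min\le\min\int$ step reproduces it.

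Your closing aside is wrong, though. For concave $f$, Jensen gives $\theta(t)\le 2f(|t|/2)$, which is exactly the paper's upper bound. The reverse inequality fails for $d\ge2$: for instance $\theta(t)<2c_0=2f(|t|/2)$ whenever $|t|\ge2$, because $(1-s^2)^{(d-1)/2}\to0$ as $s\to1$. So the concavity-scaling route is not one option among two; it is the one that actually works.
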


\begin{proof}
We first notice that the smoothness of~$\psi$ in~\ref{propertiesgBV_1} follows directly from its definition and the smoothness of~$v$. Next, the estimate~\eqref{minest2BV} in~\ref{propertiesgBV_2} is obtained as a consequence of the smoothness of $v$, since we have
 \begin{equation*}
 \psi(x, \xi) \leq |v(x)|^2 + c_0 |\xi| + c_0| v(x)| \leq c_0 |\xi| +  c_0^2 +  \norm{v}_{L^{\infty}(\Omega, \Rd)}^2 
 \end{equation*}
 and
 \begin{equation*}
 \psi(x, \xi) \geq  |v(x)|^2 + c_0 |\xi| -c_0| v(x)| \geq c_0 |\xi| - c_0^2. 
 \end{equation*}
These bounds in turn directly imply the form of the recession function as stated in~(iii). Concerning the last claim~(iv) we first notice via the concavity of~$f$, Jensen's inequality and the formula~\eqref{eqn_omega_d_Fubini}
\begin{equation*}
 \theta(t) = 2 \int_0^1 f \Big(  \frac{\omega_{d-1}}{\omega_{d}} (1-s^2 )^{\frac{d-1}{2}} |t| \Big) \dd s \leq 2 f \Big( \frac{|t|}{2} \Big) = c_0 \min\{|t|,2\}.
\end{equation*}
Morover, using once again~\eqref{eqn_omega_d_Fubini}, we further find 
\begin{equation*}
 \theta(t) \geq 2 c_0 \min\Big\{ |t| \frac{\omega_{d-1}}{\omega_{d}},1\Big\} \int_0^1 (1-s^2 )^{\frac{d-1}{2}} \dd s =  c_0 \min\Big\{ |t|,\frac{\omega_{d}}{\omega_{d-1}}\Big\}. 
\end{equation*}
These two inequalities directly yield~\eqref{minestBV}. 
\end{proof}

As the first important ingredient of the proof of Theorem~\ref{rel2BV} we show in several steps that for every $u \in BV(\Omega)$, the set function $\overbar{\F}_v(u,\, \cdot \,) \colon \Am \to [0,\infty]$ is the restriction to $\Am$ of a regular Borel measure. 

\begin{lemma}[linear growth]\label{lgBV}
For every $ u \in BV(\Omega)$ and $ A \in \Am$ we have 
\begin{equation*} 
\overbar{\F}_v(u,A) \leq C_1 \big( \mathcal{L}^d(A) + |Du|(A) \big).
\end{equation*}
\end{lemma}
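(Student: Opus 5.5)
Given $u \in BV(\Omega)$ and $A \in \Am$, I will construct a recovery sequence in $SBV^2(A)\cap L^\infty(A)$ whose $\F_v(\cdot,A)$-energy is bounded by $C_1(\mathcal{L}^d(A)+|Du|(A))$, and then invoke the definition of $\overbar{\F}_v$. A remark on the definition first: $\F_v(u_h,A)$ only depends on $u_h\restrict{A}$, whereas the relaxation asks for $u_h\to u$ in $L^1(\Omega)$. I therefore set $u_h := u$ on $\Omega\setminus A$ and only modify $u$ inside $A$. This is legitimate because $u_h\restrict{A}$ is what gets tested, and $L^1(\Omega)$-convergence reduces to $L^1(A)$-convergence.

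The key step is to approximate $u\restrict{A}$ strictly by smooth functions. By the Anzellotti--Giaquinta / Meyers--Serrin type theorem for $BV$ (\cite[Theorem 3.9]{afp}), there exist $w_h \in C^\infty(A)\cap W^{1,1}(A)$ with
\[
w_h\to u \text{ in } L^1(A) \quad\text{and}\quad \int_A|\nabla w_h|\dd x\to |Du|(A).
\]
If $|Du|(A)=\infty$, then $u\notin BV$ on $A$, which is impossible since $|Du|(A)\le|Du|(\Omega)<\infty$, so no degenerate case arises. The functions $w_h$ need not lie in $L^\infty(A)$ or have $\nabla w_h\in L^2(A)$. I therefore truncate and approximate once more:

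\emph{Boundedness.} If $u\in L^\infty$, truncate at level $\|u\|_{L^\infty}$. Truncation is 1-Lipschitz, so it does not increase $\int|\nabla w_h|$ and it preserves $L^1$-convergence. If $u$ is unbounded, first truncate $u$ at levels $\pm k$, obtaining $u^k\to u$ in $L^1$ with $|Du^k|(A)\le|Du|(A)$, and conclude with a diagonal argument.

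\emph{$L^2$-integrability of the gradient.} On each $A'\Subset A$ the smooth function $w_h$ has bounded gradient, but near $\partial A$ it may fail to be in $W^{1,2}$. The cleanest route is to use the Meyers--Serrin construction with a partition of unity subordinate to a locally finite cover: there one can choose each mollified piece so that the total error in $\int|\nabla\cdot|$ is below $1/h$, and then replace each piece by a function that is smooth up to its support. An alternative is to note that $\F_v$ tolerates jumps: set $\tilde w_h := w_h$ on $A'_h$ (where $\nabla w_h\in L^\infty$) and $\tilde w_h := 0$ on $A\setminus A'_h$, where $A'_h\Subset A$ has smooth boundary and $|A\setminus A'_h|\to0$. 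Then $\tilde w_h\in SBV^2(A)\cap L^\infty(A)$, its jump set lies in $\partial A'_h$, and by Lemma~\ref{propertiesgBV}\ref{propertiesgBV_4} the jump costs at most $2c_0\,\HA(\partial A'_h)$. This extra boundary term does not vanish, however, so I prefer the first route: by the locally finite partition of unity, the resulting functions are locally Lipschitz, and an additional inner truncation $A'_h\uparrow A$ can be handled by choosing the cutoff region inside the zone where $\nabla w_h$ is already $L^1$-small.

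With such $u_h\in SBV^2(A)\cap L^\infty(A)$ (even $W^{1,\infty}_{\rm loc}$ and without jumps), the bound \eqref{minest2BV} gives
\[
\F_v(u_h,A)=\int_A\psi(x,\nabla u_h)\dd x\le C_1\Big(\mathcal{L}^d(A)+\int_A|\nabla u_h|\dd x\Big),
\]
and taking the $\liminf$ yields the claim. The main obstacle is producing approximants with $\nabla u_h\in L^2(A)$ without paying an uncontrolled jump cost near $\partial A$. My fallback is to notice that $\psi$ depends only on $\nabla u$ and has linear growth, so $W^{1,1}$ approximants suffice once one checks that the density of $W^{1,1}\cap W^{1,2}_{\rm loc}$ functions with finite $\int_A|\nabla\cdot|^2$ is attainable. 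The Meyers--Serrin sum $\sum_j(\varphi_j u)*\rho_{\varepsilon_j}$ has this property if one additionally demands $\|\nabla((\varphi_ju)*\rho_{\varepsilon_j})\|_{L^2}\le 2^{-j}$ beyond the first few terms. That is impossible in general, so the construction must genuinely be arranged in the tail, and this is where I expect the real work to lie.
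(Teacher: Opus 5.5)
\textbf{There is a genuine gap: the proposal stops exactly at the step that carries the whole lemma.}

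Everything reduces to producing $u_h\to u$ in $L^1(\Omega)$ with the following properties:
\begin{itemize}
\item $u_h\restrict{A}\in SBV^2(A)\cap L^\infty(A)$, in particular $\nabla u_h\in L^2(A)$ and $\HA(J_{u_h}\cap A)<\infty$;
\item $\limsup_h \F_v(u_h,A)\le C_1(\mathcal L^d(A)+|Du|(A))$.
\end{itemize}
You never construct such $u_h$. The Meyers--Serrin approximants of \cite[Theorem~3.9]{afp} on $A$ lie only in $C^\infty(A)\cap W^{1,1}(A)$. You note yourself that square-summability of the mollified pieces near $\partial A$ cannot be enforced, and the ``arrangement in the tail'' is never specified. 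You discarded the cut-off alternative because you bounded the jump on $\partial A'_h$ by $2c_0\HA(\partial A'_h)$, i.e.\ using only $\theta\le 2c_0$. That bound ignores that small jumps are cheap. So the argument is incomplete at its central step.

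\textbf{Comparison with the paper, and how to close the gap.}

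The paper sidesteps $\partial A$ altogether. It takes $u_h\in C^\infty(\overbar{\Omega})$ converging strictly to $u$ on $\Omega$, which is possible since $\Omega$ is Lipschitz \cite[Remark~3.22]{afp}. Then $u_h\restrict{A}$ is bounded with bounded gradient, so $\F_v(u_h,A)\le C_1(\mathcal L^d(A)+|Du_h|(A))$ holds trivially, and one passes to the limit by lower semicontinuity of the relaxation.

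That route has its own subtlety. Strict convergence on $\Omega$ only yields $\limsup_h|Du_h|(A)\le|Du|(\overbar{A}\cap\Omega)$. Hence the step $|Du_h|(A)\to|Du|(A)$ is guaranteed only when $|Du|(\partial A\cap\Omega)=0$. It fails, for instance, for $u=\chi_{\{x_1>0\}}$ and $A=\Omega\setminus\{x_1=0\}$, where $|Du_h|(A)=|Du_h|(\Omega)\to|Du|(\Omega)>0=|Du|(A)$. So your instinct to approximate locally on $A$ is sound.

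What makes the local approach work is to give up smoothness and use jumps.
\begin{itemize}
\item From $f(r)\le c_0 r$ and \eqref{eqn_omega_d_Fubini} one gets $\theta(t)\le c_0|t|$. Hence $\F_v(w,A)\le C_1\mathcal L^d(A)+c_0|Dw|(A)$ for every $w$ with $w\restrict{A}\in BV(A)$ taking finitely many values. Such $w$ lie in $SBV^2(A)\cap L^\infty(A)$ with $\nabla w=0$.
\item For bounded $u$, set $w_{h,s}\coloneqq\frac1h(\lfloor hu-s\rfloor+s)$ on $A$ and $w_{h,s}\coloneqq u$ on $\Omega\setminus A$. Then $|w_{h,s}-u|\le 1/h$.
\item The coarea formula gives $\int_0^1|Dw_{h,s}|(A)\dd s=|Du|(A)$. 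So for a suitable $s$ all level sets have finite perimeter in $A$ and $|Dw_{h,s}|(A)\le|Du|(A)$.
\item Since $c_0\le C_1$, this gives the lemma with the stated constant for every open $A$.
\item Unbounded $u$ are handled by truncation together with the $L^1$-lower semicontinuity of $\overbar{\F}_v(\cdot,A)$.
\end{itemize}
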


\begin{proof}
We first notice, that in view of the linear growth condition~\eqref{minest2BV} for~$\psi$, there holds
\begin{equation}
\overbar{\F}_v(u,A)\leq \F_v(u,A) =\int_{A}  \psi (x,\nabla u) \dd x  \leq  C_1 (\mathcal{L}^d(A) + |Du|(A))\label{gaBV}
\end{equation}
for a smooth function $u \in C^{\infty}(\overbar{\Omega})$. For a general function $u \in BV(\Omega)$ we use the density of $C^{\infty}(\overbar{\Omega})$ in~$BV(\Omega)$ with respect to strict convergence (see~\cite[Theorem~3.9 \& Remark~3.22]{afp}) to find a sequence $\{u_h\}_h$ in $C^{\infty}(\Omega)$ with $u_h \to u$ in $L^1(\Omega)$ and $ |Du_h|(A) \to |Du|(A)$ as $h \to \infty$. With the definition of~$\overbar{\F}_v$, the claim then follows directly from~\eqref{gaBV}.
\end{proof}

\begin{lemma}[almost subadditivity]\label{weaksubaddBV}
Let $A^{\prime}_1,A_1, A_2 \in \Am$ with $A^{\prime}_1 \Subset A_1$. For every $u  \in L^1(\Omega)$ there holds
\begin{equation*}
\overbar{\F}_v(u, A^{\prime}_1 \cup A_2) \leq \overbar{\F}_v(u, A_1) + \overbar{\F}_v(u,A_2).
\end{equation*}
\end{lemma}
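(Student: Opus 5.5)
The plan is the standard De Giorgi--Letta joining argument with cut-off functions, in the form used for free discontinuity problems (cf.\ \cite{BDM97}). It has to be adapted to the constraint that admissible competitors lie in $SBV^2\cap L^\infty$.

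\textbf{Setup.} Without loss of generality the right-hand side is finite. Take sequences $u_h\to u$ and $w_h\to u$ in $L^1(\Omega)$ with
\begin{equation*}
u_h\restrict{A_1}\in SBV^2(A_1)\cap L^\infty(A_1), \qquad w_h\restrict{A_2}\in SBV^2(A_2)\cap L^\infty(A_2),
\end{equation*}
and
\begin{equation*}
\lim_h \F_v(u_h,A_1)=\overbar{\F}_v(u,A_1), \qquad \lim_h \F_v(w_h,A_2)=\overbar{\F}_v(u,A_2).
\end{equation*}
Since $\psi\ge 0$, $\theta\ge0$ and the joined functions must stay in $L^\infty$, I would not truncate (truncation could alter $\psi$ by the $v$-term in a nontrivial way). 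Instead I keep the $L^\infty$ bounds of $u_h$ and $w_h$ on the respective sets, which are finite for each $h$; this is all that membership in the admissible class requires.

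\textbf{Joining.} Set $\delta=\dist(A_1',\partial A_1)>0$. Fix $N\in\N$ and, for $i=1,\dots,N$, layers
\begin{equation*}
S_i=\{x: \tfrac{(i-1)\delta}{N}<\dist(x,A_1')<\tfrac{i\delta}{N}\}.
\end{equation*}
Choose cut-offs $\varphi_i\in C^\infty_c(A_1)$ with $\varphi_i=1$ on the inner region, $\varphi_i=0$ outside $S_i$ and the inner region, and $|\nabla\varphi_i|\le 2N/\delta$. Define
\begin{equation*}
z_h^i=\varphi_i u_h+(1-\varphi_i)w_h.
\end{equation*}
On $A_1'\cup A_2$ this function is in $SBV^2\cap L^\infty$: where $\varphi_i\in(0,1)$ we are inside $A_1\cap A_2$, so both $u_h$ and $w_h$ are admissible there. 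Moreover
\begin{equation*}
\nabla z_h^i=\varphi_i\nabla u_h+(1-\varphi_i)\nabla w_h+\nabla\varphi_i\,(u_h-w_h), \qquad J_{z_h^i}\subset J_{u_h}\cup J_{w_h}.
\end{equation*}

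\textbf{Estimates.} For the bulk term, $\psi(x,\cdot)$ is convex (a square plus a norm of an affine map), and by \eqref{minest2BV} it is $c_0$-Lipschitz in $\xi$. Hence on $S_i$
\begin{equation*}
\psi(x,\nabla z_h^i)\le \psi(x,\nabla u_h)+\psi(x,\nabla w_h)+c_0\frac{2N}{\delta}|u_h-w_h|.
\end{equation*}
For the jump term on $S_i$, $\theta$ is subadditive, nondecreasing in $|t|$ and bounded by $2c_0$ (Lemma~\ref{propertiesgBV}). The jump of the convex combination satisfies
\begin{equation*}
|[z_h^i]|\le |[u_h]|+|[w_h]|,
\end{equation*}
since $\varphi_i$ is continuous. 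Therefore $\theta([z_h^i])\le\theta([u_h])+\theta([w_h])$ pointwise on the union of the jump sets. Summing over $i$ and averaging, there is an index $i$ with
\begin{equation*}
\F_v(z_h^i,A_1'\cup A_2)\le \F_v(u_h,A_1)+\F_v(w_h,A_2)+\frac1N\big(\F_v(u_h,A_1)+\F_v(w_h,A_2)\big)+\frac{2c_0N}{\delta}\norm{u_h-w_h}_{L^1(A_1\cap A_2)}.
\end{equation*}
Since $z_h^i\to u$ in $L^1$, letting $h\to\infty$ kills the last term, and then $N\to\infty$ gives the claim.

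\textbf{Main obstacle.} The hard step is controlling the jump energy created in the transition layer: jump points of both $u_h$ and $w_h$ inside $S_i$ appear in $J_{z}$. This is handled by the monotonicity and subadditivity of $\theta$, so the layer contribution is absorbed by the two energies themselves. A secondary point is checking that $z_h^i$ is admissible, i.e.\ that the new jump set has finite $\HA$-measure and $\nabla z\in L^2$. This follows because $\nabla\varphi_i(u_h-w_h)$ is bounded on the layer.
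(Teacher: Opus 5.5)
Your proposal is correct and is essentially the paper's proof: De Giorgi's gluing $\varphi_i u_h+(1-\varphi_i)w_h$ across $N$ nested transition layers, averaging over the layer index, and letting first $h\to\infty$ and then $N\to\infty$. The only difference is the layer estimate. You get constant one from convexity and $c_0$-Lipschitz continuity of $\psi(x,\cdot)$ (immediate from its definition, not from \eqref{minest2BV}) together with monotonicity and subadditivity of $\theta$. The paper instead uses the cruder bounds \eqref{minest2BV} and \eqref{minestBV}, whose constant $C''$ and extra term $\mathcal{L}^d(D_i)$ are exactly what the averaging removes---so with your sharper bound a single cut-off would already suffice, splitting $A'_1\cup A_2$ into $\{\varphi=1\}$, $\{0<\varphi<1\}$ and $\{\varphi=0\}$ without double counting.
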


\begin{proof}
We consider two sequences $\{u_h\}_h$ and $\{v_h\}_h$ in $L^1(\Omega)$ converging to~$u$ in $L^1(\Omega)$ with
\begin{equation*}
\overbar{\F}_v(u, A_1) = \lim_{h \to \infty} \F_v(u_h, A_1) \quad \text{and} \quad
\overbar{\F}_v(u, A_2) = \lim_{h \to \infty} \F_v(v_h, A_2). 
\end{equation*}
We can restrict ourselves to the case that the right-hand sides are finite, which means that we have $u_h \restrict{A_1} \in SBV^2(A_1) \cap L^{\infty}(A_1)$ and $v_h\restrict{A_2} \in SBV^2(A_2)\cap L^{\infty}(A_2)$ for all $h \in \N$.
We now follow  a well-known procedure, introduced by De Giorgi~\cite{DEGIORGI75} (see also~\cite{dm}), that allows to glue the two sequences in order to construct a sequence  $\{w_h\}_h$ in $L^1(\Omega)$, which converges to~$u$ in $L^1(\Omega)$, satisfies $w_h \restrict{A^{\prime}_1 \cup A_2} \in SBV^2(A^{\prime}_1 \cup A_2)  \cap L^{\infty}(A^{\prime}_1 \cup A_2)$ for each $h \in \N$, and provides the desired energy bound.

Let $k \in \N$, set $\delta \coloneqq \dist(A^{\prime}_1, \partial A_1) > 0$, and consider a chain of open sets 
\begin{equation*}
 B_0 \coloneqq A^{\prime}_1 \Subset B_1 \Subset \ldots \Subset B_k \Subset B_{k+1} \coloneqq A_1
\end{equation*}
with $\dist(B_i, \partial B_{i+1}) \geq {\delta}/(k+1)$ for every $i \in \{0,\ldots,k\}$.
We fix $i \in \{0,\ldots,k\}$, select a cut-off function $\varphi_i \in C^{\infty}_c(\Omega)$ between~$B_i$ and~$B_{i+1}$, i.e., with $0 \leq \varphi_i \leq 1$ in~$\Omega$, $\spt(\varphi_i) \subset B_{i+1}$, $\varphi_i \equiv 1$ in an open neighborhood~$U_i$ of $\overbar{B}_{i}$, and $\norm{\nabla \varphi_i}_{L^{\infty}(\Omega)} \leq 2 (k+1)/{\delta}$, and define  
\begin{equation*}
w^i_h \coloneqq \varphi_i u_h + (1-\varphi_i) v_h.
\end{equation*}
Obviously, there holds $w^i_h \restrict{A^{\prime}_1 \cup A_2} \in SBV^2(A^{\prime}_1 \cup A_2)  \cap L^{\infty}(A^{\prime}_1 \cup A_2)$ for each $h \in \N$ and $\{w^i_h\}_h$ converges to~$u$ in $L^1(\Omega)$. Since we have $w^i_h=u_h$ on $U_i \supset \overbar{B}_{i}$ and $w^i_h=v_h$ on $A_2 \setminus \spt(\varphi_i) \supset A_2 \setminus B_{i+1}$, we infer from the locality and non-negativity~of~$\F_v$ 
\begin{align*}
\F_v(w^i_h,A^{\prime}_1\cup A_2) &\leq \F_v(w^i_h,U_i) + \F_v(w^i_h,A_2 \cap(B_{i+1}\setminus \overbar{B}_{i})) +\F_v(w^i_h, A_2 \setminus \spt(\varphi_i) ) \\
& \leq  \F_v(u_h,A_1) + \F_v(w^i_h, A_2  \cap (B_{i+1}\setminus \overbar{B}_{i})) +\F_v(v_h, A_2). 
\end{align*}
We next study the energy of~$w^i_h$ on the open set $D_i \coloneqq A_2  \cap (B_{i+1}\setminus \overbar{B}_{i})$, that is
\begin{align*}
\F_v(w^i_h,D_i) = \int_{D_i} \psi(x, \nabla w^i_h) \dd x + \int_{J_{w^i_h}\cap D_i} {\theta}([w^i_h]) \dd  \HA.
\end{align*}
As the measure derivative of~$w^i_h$ is given by 
\begin{equation*}
Dw^i_h= \varphi_i Du_h + (1-\varphi_i)Dv_h + (\nabla \varphi_i  (u_h-v_h)) \mathcal{L}^d, 
\end{equation*}
we deduce from the linear growth condition~\eqref{minest2BV} for~$\psi$
\begin{align*}
\lefteqn{\int_{D_i} \psi(x, \nabla w^i_h) \dd x  = \int_{D_i} \psi(x, \varphi_i \nabla u_h + (1-\varphi_i) \nabla v_h + \nabla \varphi_i (u_h-v_h))\dd x }\\  
 &\leq C_1 \left[ \mathcal{L}^d(D_i)+ \int_{D_i} |\nabla u_h| \dd x  + \int_{D_i} |\nabla v_h| \dd x+ \int_{D_i} |\nabla \varphi_i  (u_h-v_h)|\dd x \right]\\
 &\leq C^{\prime} \left[ \mathcal{L}^d(D_i)+ \int_{D_i} \psi(x,\nabla u_h) \dd x  + \int_{D_i} \psi(x, \nabla v_h) \dd x+ (k+1)\int_{D_i} |u_h-v_h|\dd x \right],
\end{align*}
with a constant~$C^{\prime}$ depending only on~$c_0$, $v$ and~$\delta$. 
In order to estimate the surface term, we notice that the jump set of the sum of two $BV$-functions is included, by the chain rule, in the union of the jump sets of the single functions.
Thus, via~\eqref{minestBV} we obtain
\begin{align*}
\lefteqn{\int_{J_{w^i_h}\cap D_i} {\theta}([w^i_h])\dd \HA \leq 
2c_0 \int_{J_{w^i_h}\cap D_i} \min\left\lbrace |[w^i_h]|, 1\right\rbrace \dd  \HA }\\
&\leq 
 2c_0  \int_{J_{u_h}\cap D_i} \min\left\lbrace |[u_h]|, 1\right\rbrace \dd \HA + 2c_0 \int_{J_{v_h}\cap D_i} \min\left\lbrace |[v_h]|, 1\right\rbrace \dd \HA\\
 &\leq \frac{2}{c_1} \int_{J_{u_h}\cap D_i}{\theta}([u_h]) \dd \HA + \frac{2}{c_1} \int_{J_{v_h}\cap D_i} {\theta}([v_h]) \dd \HA.
\end{align*}
Collecting the previous estimates, we get 
\begin{align*}
\F_v(w^i_h,A^{\prime}_1\cup A_2) &\leq \F_v(u_h,A_1) + \F_v(v_h,A_2)  + C^{\prime \prime} \big[\F_v(u_h,D_i) + \F_v(v_h,D_i) + \mathcal{L}^d(D_i) \big]  \\& \quad +C^{\prime \prime} (k+1)\int_{D_i} |u_h-v_h|\dd x ,
\end{align*}
with $C^{\prime \prime} = \max\{ C^{\prime}, \tfrac{2}{c_1} \}$. Summing this inequality for $i \in\{0, \ldots, k\}$, taking the average with respect to~$i$ and using $\dot{\cup}_{i=1}^{k} D_i = A_2 \cap (A_1 \setminus \overbar{A^{\prime}_1})$ (which is contained in each of the sets~$A_1$, $A_2$ and~$\Omega$), we can choose an index~$i_h$ with
\begin{align*}
\F_v(w^{i_h}_h,A^{\prime}_1\cup A_2) 
& \leq \F_v(u_h,A_1) + \F_v(v_h,A_2) + \frac{ C^{\prime \prime}}{k+1} \left[\F_v(u_h,A_1) + \F_v(v_h,A_2) + \mathcal{L}^d(\Omega) \right]  \\& \quad  +C^{\prime \prime} \int_{A_2 \cap (A_1 \setminus \overbar{A^{\prime}_1})} |u_h-v_h|\dd x.
\end{align*}
We now define $w_h=w^{i_h}_h$ for $h \in \N$. Letting first $h \to \infty$ (recall that $\{u_h\}_h$ and $\{v_h\}_h$ both converge in $L^1(\Omega)$ to~$u$) and then $k \to \infty$ in the previous estimate we end up with
\begin{align*}
\overbar{\F}_v(u, A^{\prime}_1 \cup A_2) \leq \liminf_{h \to \infty}{\F}_v(w_h, A^{\prime}_1 \cup A_2)  &\leq \lim_{h \to \infty}{\F}_v(u_h, A_1) +  \lim_{h \to \infty} \F_v(v_h,A_2)\\
& \, = \overbar{\F}_v(u, A_1) + \overbar{\F}_v(u, A_2),
\end{align*}
which completes the proof of the lemma.
 \end{proof}

\begin{Prop}\label{sfBV}
For every $u \in BV(\Omega)$ the set function $\overbar{\F}_v(u,\, \cdot \,) \colon \Am \to [0,\infty]$ is the restriction to $\Am$ of a regular Borel measure.
\end{Prop}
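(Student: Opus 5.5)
The plan is to apply the De Giorgi--Letta criterion (see, e.g., \cite[Theorem~1.53]{afp} or \cite{dm}). For a fixed $u \in BV(\Omega)$ we set $\mu(A) \coloneqq \overbar{\F}_v(u,A)$ for $A \in \Am$. It then suffices to check four properties of $\mu$: it is increasing, it is inner regular, it is subadditive, and it is superadditive on disjoint open sets. Once these hold, $\mu$ extends to a Borel measure via $\mu(B) = \inf\{\mu(A) \colon A \in \Am,\ B \subset A\}$. This measure is finite by Lemma~\ref{lgBV}, since $\mu(\Omega) \leq C_1(\mathcal{L}^d(\Omega) + |Du|(\Omega)) < \infty$. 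Finite Borel measures on $\Omega$ are automatically regular, so the extension is a regular Borel measure.

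Monotonicity and superadditivity come directly from the definition. Since $\psi \geq 0$ and $\theta \geq 0$, the map $A \mapsto \F_v(w,A)$ is increasing for every $w$, and this passes to the relaxation. If $A, B$ are disjoint, then $\F_v(w,A\cup B) = \F_v(w,A) + \F_v(w,B)$; here the condition $w\restrict{A\cup B} \in SBV^2 \cap L^\infty$ holds precisely when it holds on $A$ and on $B$ separately. Taking $\liminf$ along an optimal sequence for $A \cup B$ then gives $\mu(A\cup B) \geq \mu(A) + \mu(B)$.

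Inner regularity is the key step. Given $A \in \Am$ and $\eta>0$, we choose a compact $K \subset A$ with $C_1(\mathcal{L}^d(A\setminus K) + |Du|(A\setminus K)) < \eta$; this is possible because $\mathcal{L}^d + |Du|$ is a finite Radon measure. Next we take open sets $A' \Subset A_1 \Subset A$ with $K \subset A'$, and set $A_2 \coloneqq A \setminus K$. Lemma~\ref{weaksubaddBV} together with Lemma~\ref{lgBV} then yields
\begin{equation*}
\mu(A) \leq \mu(A'\cup A_2) \leq \mu(A_1) + \mu(A\setminus K) \leq \sup_{A''\Subset A}\mu(A'') + \eta,
\end{equation*}
and $\eta \to 0$ gives inner regularity; the first inequality is monotonicity, as $A'\cup A_2 = A$.

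Subadditivity $\mu(A\cup B) \leq \mu(A)+\mu(B)$ then follows from inner regularity and Lemma~\ref{weaksubaddBV}. For $C \Subset A\cup B$ we split $\overline{C}$ into a compact piece inside $A$ and a piece inside $B$. Concretely, we pick open $A'\Subset A_1 \Subset A$ and $B' \Subset B$ with $C \subset A'\cup B'$. Then $\mu(C) \leq \mu(A'\cup B') \leq \mu(A_1) + \mu(B') \leq \mu(A) + \mu(B)$, and taking the supremum over $C$ finishes the argument.

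The main subtlety is the constructive decomposition in this last step. We take $K \coloneqq \overline{C}$ and set $K_A \coloneqq \{x \in K \colon \dist(x,\R^d\setminus A) \geq \dist(x,\R^d\setminus B)\}$ and $K_B \coloneqq K \setminus K_A$. The set $K_A$ is compact and contained in $A$, and we cover $K_B$ by $B$. No finiteness issue arises anywhere, because all values are bounded by Lemma~\ref{lgBV}.
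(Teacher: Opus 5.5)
Your proposal is correct and follows essentially the same route as the paper: the De Giorgi--Letta criterion, with inner regularity obtained by choosing a compact $K$ of small $\mathcal{L}^d+|Du|$ mass in $A\setminus K$ and combining Lemma~\ref{weaksubaddBV} with Lemma~\ref{lgBV}, and subadditivity deduced from inner regularity plus Lemma~\ref{weaksubaddBV}. Your explicit splitting of $\overline{C}$ only fills in the step the paper calls immediate. There you could simply take $B'=B$, since Lemma~\ref{weaksubaddBV} puts no compact-containment requirement on $A_2$.
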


\begin{proof} 
We follow the strategy of the proof of \cite[Proposition 3.3]{BBB95}. We first notice that the set function~$\F_v(u,\, \cdot \,)$ is increasing and superadditive (on disjoint sets), which implies that also $\overbar{\F}_v(u,\, \cdot \,)$ is increasing and superadditive. In order to prove the claim, according to the measure property criterion of De Giorgi--Letta from \cite[Theorem 5.6]{DGL77} (see also \cite[Theorem 1.53]{afp}) we only need to show that $\overbar{\F}_v(u,\, \cdot \,)$ is inner regular and also subadditive. In order to prove the inner regularity, we show 
\begin{equation}
\label{eqn_F_relaxation_approx}
{\overbar{\F}}_v(u,A) = \sup \big\{ \overbar{\F}_v(u,B) \colon  B \in \Am, B \Subset A\big\} 
\end{equation}
for every $A \in \Am$. Since $\overbar{\F}_v(u,\, \cdot \,)$ is increasing, we only need to prove the $\leq$-inequality. For this purpose, we fix $\varepsilon >0$ and choose a compact set $K \Subset A$ with $\mathcal{L}^d(A \setminus K) < {\varepsilon}/{2}$ and $|Du|(A \setminus K) < {\varepsilon}/{2}$. For sets $A^{\prime}, A^{\prime \prime} \in \Am$ with $K \Subset A^{\prime} \Subset A^{\prime \prime} \Subset A$ we then infer from Lemma~\ref{weaksubaddBV} and Lemma~\ref{lgBV}
\begin{align*}
\overbar{\F}_v(u,A) = \overbar{\F}_v(u, A^{\prime} \cup( A \setminus K)) & \leq \overbar{\F}_v(u, A^{\prime \prime}) + \overbar{\F}_v(u,  A \setminus K) \\
& \leq  \overbar{\F}_v(u, A^{\prime \prime}) +  C_1\big(\mathcal{L}^d(A \setminus K) + |Du|(A \setminus K) \big)\\
& \leq \sup \{ \overbar{\F}_v(u,B) \colon B \in \Am, B \Subset A \} + C_1 \varepsilon,
\end{align*}
which, in the limit $\varepsilon \to 0$, concludes the proof of the inner regularity of $\overbar{\F}_v(u,\, \cdot \,)$. Finally, we notice that the subadditivity of $\overbar{\F}_v(u,\, \cdot \,)$, i.e.,
\begin{equation*}
\overbar{\F}_v(u,A_1 \cup A_2) \leq \overbar{\F}_v(u,A_1) + \overbar{\F}_v(u,A_2) \quad \text{for all } A_1, A_2 \in \Am,
\end{equation*}
is an immediate consequence of Lemma~\ref{weaksubaddBV} and the inner regularity. 
\end{proof}

Before proving Theorem~\ref{rel2BV}, we still need to study the behavior of $\F_v$ under translations.  

\begin{lemma}\label{moduluscontinuity}
There exists a constant $C_2 = C_2(c_0,\norm{v}_{W^{1,\infty}(\Omega, \Rd)},\Omega)$ such that for all $u \in BV(\Omega)$, $A \in \Am$, $b \in \R$ and $ x_0 \in \R^d$ with $x_0 + A \subset \Omega$ we have
\begin{equation*}
| \overbar{\F}_v(u(\scdot - x_0)+b,x_0+A)-  \overbar{\F}_v(u,A) | \leq C_2 |x_0|  \mathcal{L}^d(A) .
\end{equation*} 
\end{lemma}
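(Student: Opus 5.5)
The plan is to compare the unrelaxed functional $\F_v$ along translated competitors and then pass to the relaxation. The only ingredient of $\F_v$ that depends explicitly on the position is the continuous perturbation $v$ inside $\psi(x,\xi) = |v(x)|^2 + c_0|\xi + v(x)|$. The surface term $\int_{J_w} \theta([w])\dd\HA$ and the requirement $w\restrict{A}\in SBV^2(A)\cap L^\infty(A)$ are invariant under translating the domain and adding a constant to the function. Adding the constant $b$ changes neither $\nabla w$ nor $[w]$, so it plays no role beyond bookkeeping.

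\textbf{Step 1: a pointwise estimate for $\psi$.} For $x,y\in\overline\Omega$ and $\xi\in\R^d$,
\begin{equation*}
|\psi(y,\xi)-\psi(x,\xi)| \leq \big| |v(y)|^2-|v(x)|^2\big| + c_0|v(y)-v(x)| \leq \big(2\norm{v}_{L^\infty} + c_0\big)\norm{\nabla v}_{L^\infty}\,|y-x| .
\end{equation*}
This bound is uniform in $\xi$, which is the key point. It holds whenever the segment $[x,y]$ lies in a set where $v$ is Lipschitz. Since $v\in C^\infty_0(\overline\Omega,\R^d)$, I would regard $v$ as extended smoothly to $\R^d$, or use the Lipschitz constant of $v$ on $\overline\Omega$ directly. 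Either way, the constant depends only on $c_0$, $\norm{v}_{W^{1,\infty}}$ and possibly $\Omega$.

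\textbf{Step 2: comparison for $\F_v$.} Let $w\in L^1(\Omega)$ with $w\restrict{A}\in SBV^2(A)\cap L^\infty(A)$, and set $\tilde w(y) := w(y-x_0)+b$ on $x_0+A$. Then $\nabla\tilde w(y)=\nabla w(y-x_0)$, the jump set of $\tilde w$ is $x_0+J_w$, and $[\tilde w](y)=[w](y-x_0)$. A change of variables together with Step 1 gives
\begin{equation*}
|\F_v(\tilde w,x_0+A)-\F_v(w,A)| \leq \int_A |\psi(x+x_0,\nabla w(x))-\psi(x,\nabla w(x))|\dd x \leq C_2|x_0|\,\mathcal{L}^d(A).
\end{equation*}
The same translation maps finite-energy functions on $A$ bijectively to finite-energy functions on $x_0+A$. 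If $\F_v(w,A)=\infty$, then $\F_v(\tilde w,x_0+A)=\infty$ as well.

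\textbf{Step 3: passing to the relaxation.} Take $u_h\to u$ in $L^1(\Omega)$ with $\F_v(u_h,A)\to\overbar{\F}_v(u,A)$, assuming this value is finite. I then need a sequence in $L^1(\Omega)$ that converges to $u(\cdot-x_0)+b$ and coincides on $x_0+A$ with $u_h(\cdot-x_0)+b$. This is the main technical obstacle: the translated function $u_h(\cdot-x_0)$ is only defined on $\Omega+x_0$, not on all of $\Omega$, and the relaxation is defined through $L^1(\Omega)$ convergence on the whole domain.

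I would handle this by defining
\begin{equation*}
\tilde u_h := \big(u_h(\cdot-x_0)+b\big)\chi_{x_0+A} + \big(u(\cdot-x_0)+b\big)\chi_{\Omega\setminus(x_0+A)},
\end{equation*}
where $u(\cdot-x_0)$ is set to $0$ wherever it is undefined. Only $L^1$ convergence on $\Omega$ matters here. Since $\F_v(\cdot,x_0+A)$ depends only on the restriction to $x_0+A$ (locality), we have $\tilde u_h\to\tilde u$ in $L^1(\Omega)$, where $\tilde u$ is the analogous glued version of $u$. The relaxation $\overbar{\F}_v(\cdot,x_0+A)$ likewise depends only on values on $x_0+A$: modifying the approximating sequence outside $x_0+A$ does not change its energy there. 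Hence $\overbar{\F}_v(\tilde u, x_0+A)=\overbar{\F}_v(u(\cdot-x_0)+b,x_0+A)$.

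Step 2 then gives $\overbar{\F}_v(u(\cdot-x_0)+b,x_0+A)\leq \overbar{\F}_v(u,A)+C_2|x_0|\mathcal{L}^d(A)$. For the reverse inequality I would translate by $-x_0$ and subtract $b$, using the same gluing with the roles of $A$ and $x_0+A$ exchanged. If either side is infinite, the symmetric argument shows both are, and the statement is read with that convention.
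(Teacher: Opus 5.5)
Your proposal is correct and follows the paper's proof. The paper also first shows $|\F_v(w(\scdot-x_0)+b,x_0+A)-\F_v(w,A)|\le C_2|x_0|\mathcal{L}^d(A)$ for $w\restrict{A}\in SBV^2(A)\cap L^\infty(A)$, using a change of variables, translation invariance of the jump term and Lipschitz continuity of $v$, and then simply says the claim follows by passing to the relaxation. Your Step 3 is a correct way of making that implicit step precise: you glue the translated approximating sequence with a fixed function outside $x_0+A$ and use locality of $\overbar{\F}_v$, since $u_h(\scdot-x_0)$ is only defined on $x_0+\Omega$. One small correction there: the $L^1(\Omega)$-convergence of the glued sequence comes from $\int_{x_0+A}|u_h(y-x_0)-u(y-x_0)|\dd y\le\norm{u_h-u}_{L^1(\Omega)}$, not from locality, but the conclusion stands.
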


\begin{proof}
We first consider arbitrary $u \in L^1(\Omega)$, $A \in \Am$, $b \in \R$ and $x_0 \in \R^d $ with $u\restrict{A}  \in SBV^2(A)\cap L^{\infty}(A)$ and $x_0 + A \subset \Omega$. Then we obtain by substitution and the Lipschitz continuity of $v$ with constant ${\rm Lip}(v)$
\begin{align*}
& |\F_v(u(\scdot - x_0)+b,x_0+A)-  \F_v(u,A) | \\& \leq  \bigg| \int_{x_0+A} |v(x)|^2 \dd x - \int_{A} |v(x)|^2 \dd x \bigg| \\
 & \quad + \bigg| \int_{x_0+A}  c_0 |\nabla u(x-x_0) +v(x)| \dd x  - \int_{A}  c_0 |\nabla u(x) +v(x)| \dd x \bigg| \\
 & \quad +  \bigg|\int_{J_u \cap (x_0 + A)} {\theta}([u(x-x_0)]) \dd \HA(x) - \int_{J_u \cap A} {\theta}([u(x)]) \dd \HA(x) \bigg| \\
 & \leq \bigg| \int_{A} [ (v(y +x_0)+ v(y))(v(y+x_0)-v(y))]  \dd y \bigg| + \bigg| \int_{A}  c_0 |v(y + x_0) - v(y)| \dd y  \bigg|\\
 & \leq \big( 2 \norm{v}_{L^{\infty}(\Omega, \Rd)} {\rm Lip}(v) + c_0 \norm{\nabla v}_{L^{\infty}(\Omega, \R^{d \times d})}\big)  |x_0| \mathcal{L}^d(A) \eqqcolon  C_2 |x_0|  \mathcal{L}^d(A). 
\end{align*}
Via the passage to the relaxation we directly arrive at the claim.
\end{proof}

\begin{proof}[Proof of Theorem~\ref{rel2BV}]
We wish to apply \cite[Theorem 3.12]{BFM98} to the relaxed functional, but we first need to regularize the functional by adding a $\delta$-small total variation measure~$|Du|$, in order to make a suitable lower bound on the functional available. Therefore, we consider the functionals $\overbar{\F}_{v, \delta} \colon BV(\Omega) \times \Am \to [0, \infty]$ defined via
\begin{equation*}
\overbar{\F}_{v, \delta}(u, A) \coloneqq \overbar{\F}_v(u, A) + \delta |Du|(A)
\end{equation*}
for all $u \in L^1(\Omega)$, $A \in \Am$ and $\delta > 0$. We now verify that for every $\delta > 0$ the assumptions for the application of \cite[Theorem 3.12]{BFM98} are verified.
\begin{enumerate}[font=\normalfont, label=(\roman{*}), ref=(\roman{*})]
 \item $\overbar{\F}_{v, \delta}(u, \scdot)$ is the restriction to $\Am$ of a Radon measure, for every $u \in BV(\Omega)$. This is a direct consequence of Proposition~\ref{sfBV} and the definition of~$\overbar{\F}_{v,\delta}$. 
 \item $\overbar{\F}_{v, \delta}(\scdot, A)$ is $L^1(A)$-lower semicontinuous for every $A \in \Am$. We first observe from \cite[Proposition 16.15]{dm} that $\overbar{\F}_{v}$ is local, i.e., for every $A \in \Am$ and $u, \tilde{u} \in BV(\Omega)$ with $u =\tilde{u}$ a.e.~in $A$ there holds $\overbar{\F}_{v}(u,A) = \overbar{\F}_{v}(\tilde{u},A)$. Then the $L^1(A)$-lower semicontinuity of~$\overbar{\F}_v$ is clear from the definition of the relaxation, while the $L^1(A)$-lower semicontinuity of the total variation is well-known. 
 \item For the constant $C_1 = C_1(c_0,\norm{v}_{L^{\infty}(\Omega, \Rd)})$ from Lemma~\ref{propertiesgBV} there holds
 \begin{equation*}
 \delta |Du|(A) \leq \overbar{\F}_{v, \delta}(u,A) \leq (C_1 + \delta ) ( \mathcal{L}^d(A) + |Du|(A))
 \end{equation*}
 for every $u \in BV(\Omega)$ and $A \in \Am$. These bounds follow from Lemma~\ref{lgBV} and the definition of $\overbar{\F}_{v, \delta}$.
 \item For the constant $C_2 = C_2(c_0,\norm{v}_{W^{1,\infty}(\Omega, \Rd)},\Omega)$ from Lemma~\ref{moduluscontinuity} there holds
 \begin{equation*}
 |\overbar{\F}_{v, \delta}(u(\scdot - x_0)+b,x_0+A)-  \overbar{\F}_{v, \delta}(u,A) |\leq C_2|x_0| \big( \mathcal{L}^d(A) + |Du|(A) \big)
 \end{equation*}
 for all $u \in BV(\Omega)$, $A \in \Am$, $b \in \R$ and $ x_0 \in \R^d$ with $x_0 + A \subset \Omega$. Because of $|Du(\cdot - x_0)|(x_0 + A) = |Du|(A)$, this is an immediate consequence of Lemma~\ref{moduluscontinuity}.
\end{enumerate}
In view of \cite[Theorem 3.12]{BFM98} we obtain the representation
\begin{align*}
 \overbar{\F}_{v, \delta}(u, A) & = \int_A b_\delta(x,u,\nabla u) \dd x + \int_{J_u \cap A} \sigma_\delta(x,u^+,u^-,\nu_u) \dd \HA \\
 & \quad + \int_A b_\delta^\infty(x,u, \dd D^c u / \dd |D^c u| ) \dd  |D^c u|
\end{align*}
for every $u \in BV(\Omega)$ and $A \in \Am$, where the integrands~$b_\delta$ and~$\sigma_\delta$ are defined 
via optimization problems of the form
\begin{equation*}
  m_\delta(u,A) \coloneqq \inf \big\{ \overbar{\F}_{v, \delta}(w, A) \colon w \in BV(\Omega), \, w\restrict{\partial A} = u\restrict{\partial A} \big\}
\end{equation*}
with cubes as the set~$A$ and either affine or pure jump functions as boundary values~$u$, and where~$b^\infty_\delta$ is the recession function. Notice that this is designed to capture the interaction between the bulk energy density and the surface energy density. More precisely, we have
\begin{align*}
 b_\delta(x_0,a,\xi) & \coloneqq \limsup_{\varepsilon \to 0} \frac{ m_{\delta}(a + \xi (\scdot -x_0), Q_{\varepsilon}(x_0))}{(2\varepsilon)^d},  \\
 \sigma_\delta(x_0,\lambda,\mu,\nu) & \coloneqq \limsup_{\varepsilon \to 0} \frac{ m_{\delta}(u_{\lambda, \mu}^{\nu}(\scdot -x_0), Q^\nu_{\varepsilon}(x_0))}{(2\varepsilon)^{d-1}},
\end{align*}
for all $x_0 \in \overbar{\Omega}$, $a, \lambda, \mu \in \R$, $\xi \in \Rd$ and $\nu \in \Sm$. 
Here, $u_{\lambda, \mu}^{\nu}$ denotes the pure jump function 
\begin{equation*}
u_{\lambda, \mu}^{\nu}(x)\coloneqq  \begin{cases} \lambda & \text{ if } x \cdot \nu >0,\\
\mu & \text{ if } x \cdot \nu < 0.
\end{cases}
\end{equation*}
From the above representation of~$\overbar{\F}_{v, \delta}$ we next derive the estimate~\eqref{rfBV}, essentially by bounding~$b_\delta$ and~$\sigma_\delta$ from above, as we first plug the affine or pure jump function determining the boundary values into the functional $\overbar{\F}_{v, \delta}$ and then estimate the relaxation~$\overbar{\F}_{v}$ by the original functional~$\F_{v}$. Using also the continuity of $x \mapsto \psi(x,\xi)$, we hence find
\begin{align*}
 b_\delta(x_0,a,\xi) &\leq \limsup_{\varepsilon \to 0}  (2\varepsilon)^{-d}  \overbar{\F}_{v, \delta}(a + \xi (\scdot -x_0), Q_{\varepsilon}(x_0))\\ 
 &\leq \limsup_{\varepsilon \to 0} (2\varepsilon)^{-d}  \big[ \F_{v}(a + \xi (\scdot -x_0), Q_{\varepsilon}(x_0)) + \delta |\xi| \mathcal{L}^d(Q_{\varepsilon}(x_0)) \big] \\
 &= \limsup_{\varepsilon \to 0}(2\varepsilon)^{-d}  \int_{Q_{\varepsilon}(x_0)}  \psi (x, \xi)\dd x +  \delta  |\xi| \\
 & =  \psi (x_0, \xi) + \delta |\xi|
\end{align*} 
and
\begin{align*}
 \sigma_{\delta}(x_0, \lambda, \mu, \nu)& \leq \limsup_{\varepsilon \to 0} (2\varepsilon)^{-d+1} \overbar{\F}_{v, \delta}(u_{\lambda, \mu}^{\nu} ( \scdot - x_0), Q_{\varepsilon}^{\nu}(x_0)) \\ 
 &\leq \limsup_{\varepsilon \to 0}(2\varepsilon)^{-d+1} \big[ \F_{v}(u_{\lambda, \mu}^{\nu} ( \scdot - x_0), Q_{\varepsilon}^{\nu}(x_0)) + \delta |D u_{\lambda, \mu}^{\nu}|(Q_{\varepsilon}^{\nu}(x_0)) \big] \\
 &= \limsup_{\varepsilon \to 0} (2\varepsilon)^{-d+1}  \int_{J_{u_{\lambda, \mu}^{\nu}} \cap Q_{\varepsilon}^{\nu} }{\theta}([u_{\lambda, \mu}^{\nu}]) \dd \HA +  \delta |\lambda-  \mu| \\ 
 & =  \theta(\lambda-  \mu) + \delta |\lambda-  \mu|.
\end{align*} 
Furthermore, we infer $b_\delta^\infty(x_0,a,\xi) \leq c_0 |\xi| + \delta |\xi|$ from Lemma~\ref{propertiesgBV}~\ref{propertiesgBV_3}. Combining these three estimates with the representation and definition of~$\overbar{\F}_{v, \delta}$, we conclude 
\begin{align*}
 \overbar{\F}_v(u, \Omega) & =  \overbar{\F}_{v, \delta}(u, \Omega) - \delta |Du|(\Omega) \\
 & \leq \int_{\Omega}  \psi(x, \nabla u )\dd x + \delta |D^a u|(\Omega) + \Ij {\theta}([u]) \dd \HA  + \delta |D^j u|(\Omega) \\
 & \quad + c_0 |D^cu|(\Omega) + \delta |D^c u|(\Omega) -  \delta |Du|(\Omega) \\
 & =  \int_{\Omega}  \psi(x, \nabla u )\dd x+ \Ij {\theta}([u])  \dd \HA  + c_0 |D^cu|(\Omega)
\end{align*}
for every $u \in BV(\Omega)$, which was the claim of Theorem~\ref{rel2BV}.
\end{proof}

In Section \ref{sec:upper-lim} we will apply a slightly modified version of Theorem \ref{rel2BV}, where we additionally impose an $L^{\infty}$-bound. For a given positive constant $K>0$, we consider the functional $\G_v \colon L^1(\Omega) \to [0, \infty]$, defined by 
\begin{equation*}
\G_v(u) \coloneqq \left\{
\begin{array}{l l}
\Io \psi( x, \nabla u) \dd x + \int_{J_u} {\theta}([u]) \dd \HA &  \text{if } u \in SBV^2(\Omega), \,  \norm{u}_{L^{\infty}(\Omega)} \leq K, \\ 
\infty & \text{otherwise}.
\end{array}
\right.
\end{equation*}

\begin{corollary}\label{rel2conBV}
The relaxed functional~$\overbar{\G}_v$ of~$\G_v$ with respect to strong convergence in $L^1(\Omega)$ satisfies the estimate
\begin{equation}
\overbar{\G}_v(u) = \overbar{\F}_v(u,\Omega) \leq \Io \psi(x, \nabla u) \dd x + \int_{J_u} \theta([u]) \dd \HA  + c_0 |D^cu|(\Omega) \label{rf2BV}
\end{equation}
for every $u \in BV(\Omega)$ with $\norm{u}_{L^{\infty}(\Omega)} \leq K$.
\end{corollary}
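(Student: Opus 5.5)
The inequality in~\eqref{rf2BV} follows from Theorem~\ref{rel2BV} once the equality $\overbar{\G}_v(u) = \overbar{\F}_v(u,\Omega)$ is established for every $u \in BV(\Omega)$ with $\norm{u}_{L^{\infty}(\Omega)} \leq K$. The inequality $\overbar{\F}_v(u,\Omega) \leq \overbar{\G}_v(u)$ is immediate, because $\F_v(\cdot,\Omega) \leq \G_v$ pointwise: whenever $\G_v(w)<\infty$ we have $w \in SBV^2(\Omega)\cap L^\infty(\Omega)$ and the two functionals coincide. Taking relaxations preserves this ordering.

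For the reverse inequality, I would start from a recovery sequence $\{u_h\}_h$ for $\overbar{\F}_v(u,\Omega)$. Thus $u_h \to u$ in $L^1(\Omega)$, $u_h \in SBV^2(\Omega) \cap L^\infty(\Omega)$ and $\F_v(u_h,\Omega) \to \overbar{\F}_v(u,\Omega)$. The only defect of this sequence is that the bound $\norm{u_h}_{L^\infty}\le K$ may fail, so I truncate and set $\tilde u_h \coloneqq \max\{-K,\min\{u_h,K\}\}$. Since $|u|\le K$ a.e. and truncation is $1$-Lipschitz, we get $\norm{\tilde u_h - u}_{L^1} \leq \norm{u_h-u}_{L^1} \to 0$. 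Moreover $\tilde u_h \in SBV^2(\Omega)$ with $\norm{\tilde u_h}_{L^\infty}\le K$, so $\G_v(\tilde u_h) = \F_v(\tilde u_h,\Omega)$.

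It then remains to check that $\F_v(\tilde u_h,\Omega) \leq \F_v(u_h,\Omega) + o(1)$. For the surface part, the chain rule in $BV$ gives $J_{\tilde u_h} \subset J_{u_h}$ and $|[\tilde u_h]| \le |[u_h]|$. Since $\theta$ is even and nondecreasing in $|t|$ (because $f$ is nondecreasing), the jump energy does not increase. For the bulk part, $\nabla \tilde u_h = \nabla u_h \chi_{\{|u_h|<K\}}$ a.e. Hence the bulk energy of $\tilde u_h$ equals the bulk energy of $u_h$ on $\{|u_h|<K\}$ plus $\int_{\{|u_h|\ge K\}}\psi(x,0)\dd x$. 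Here lies the main obstacle: $\psi(x,0) = |v|^2 + c_0|v|$ need not be dominated by $\psi(x,\nabla u_h)$, since $\psi(\cdot,\xi)$ is not minimized at $\xi=0$. By the triangle inequality,
\begin{equation*}
\psi(x,0) \leq \psi(x,\nabla u_h) + c_0|\nabla u_h|,
\end{equation*}
so the error is at most $c_0\int_{\{|u_h|\ge K\}}|\nabla u_h|\dd x$. This quantity does not obviously vanish: $u_h$ could oscillate above $K$ with large gradient.

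To handle the error, I would first truncate at the level $K+\eta$ instead of $K$. This truncation yields a sequence with the same limit and energy at most $\F_v(u_h,\Omega) + C|\{|u_h|>K+\eta\}|$ by the same computation, and the measure term vanishes since $u_h\to u$ in measure and $|u|\le K$. The resulting functions $u_h^\eta$ satisfy $|u_h^\eta| \leq K+\eta$. I would then rescale via $w_h^\eta \coloneqq \frac{K}{K+\eta}u_h^\eta$, which satisfies the bound $K$. The scaling changes the bulk energy by at most $C\eta\int|\nabla u_h|\dd x$; this is bounded by $C\eta(\F_v(u_h,\Omega)+C_1)$ using the lower bound in~\eqref{minest2BV}. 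The scaling decreases the jumps, so by the monotonicity of $\theta$ the jump energy does not grow. The $L^1$-error of the rescaled sequence is $O(\eta)$. Letting $h\to\infty$ first and then $\eta\to0$ along a diagonal sequence, together with the lower semicontinuity of the relaxation, gives $\overbar{\G}_v(u)\le\overbar{\F}_v(u,\Omega)$, which completes the equality.
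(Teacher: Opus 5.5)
Your argument is correct and is essentially the paper's proof: you truncate the recovery sequence at level $K+\eta$, so that the modified set $\{|u_h|>K+\eta\}$ has vanishing measure, then rescale by $K/(K+\eta)$ and use that $\theta$ is nondecreasing in $|t|$. The differences are only in bookkeeping. The paper ties $\eta_h\to0$ to $h$ and rescales $v$ together with $u_h$ (via $v_h$, $\hat v_h$) to get the pointwise bound $|\nabla\hat u_h+\hat v_h|\le|\nabla u_h+v|$. You instead fix $\eta$, bound the truncation error by $\sup_x\psi(x,0)\,|\{|u_h|>K+\eta\}|$, control the rescaling error through the coercivity in \eqref{minest2BV}, and then diagonalize. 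Note that the truncation bound follows from $\psi\ge0$ and the boundedness of $\psi(\cdot,0)$, not from the triangle-inequality computation you cite.
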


\begin{proof}
Since $\F_v(u,\Omega) \leq \G_v(u)$ for all $u \in L^1(\Omega)$, we have $\overbar{\F}_v(u,\Omega) \leq \overbar{\G}_v(u)$ for all $u \in L^1(\Omega)$. In order to establish the reverse inequality for $u \in BV(\Omega)$ with $\norm{u}_{L^{\infty}(\Omega)} \leq K$, we consider a sequence $\{u_h\}_h$ in $SBV^2(\Omega) \cap L^{\infty}(\Omega)$ with $u_h \to u$ in $L^1(\Omega)$ and $\F_v(u_h,\Omega) \to \overbar{\F}_v(u,\Omega)$.
Because of $u_h \to u$ in $L^1(\Omega)$ and $\norm{u}_{L^{\infty}(\Omega)} \leq K$, we can select a sequence $\{\eta_{h}\}_{h}$ in~$\R^+$ with $\eta_{h} \to 0$ and
\begin{equation*}
 \mathcal{L}^d( \{ x \in \Omega: |u_h(x)| \geq K + \eta_{h} \}) \to 0 \quad \text{as } h \to \infty. 
\end{equation*}
We then define sequences $\{\tilde{u}_h\}_h$ and $\{v_h\}_h$, via
\begin{equation*}
\tilde{u}_h \coloneqq  \min \{ \max \{ u_h, -K - \eta_{h} \} , K + \eta_{h} \}  \quad \text{and} \quad v_h(x) \coloneqq \begin{cases} 
v(x) & \text{if } \tilde{u}_h(x) = u_h(x),\\
0 & \text{otherwise},
\end{cases}
\end{equation*}
for every $h \in \N$. Then, we have $\tilde{u}_h \in SBV^2(\Omega) \cap L^{\infty}(\Omega)$ for every $h \in \N$ with $\tilde{u}_h \to u$ in $L^1(\Omega)$, and we further have $v_h \to v$ in $L^1(\Omega, \Rd)$, since there holds
\begin{equation*}
\int_{\Omega} |v_h - v| \dd x \leq \norm{v}_{L^{\infty}(\Omega, \Rd)} \mathcal{L}^d( \{ x \in \Omega \colon|u_h(x)| \geq K + \eta_{h} \}) \to 0 \quad \text{ as } h \to \infty. 
\end{equation*}
We next adjust the $L^\infty$-bound, by defining sequences $\{\hat{u}_h\}_h$ and $\{\hat{v}_h\}_h$, via
\begin{equation*}
\hat{u}_{h} \coloneqq \frac{K}{K + \eta_h} \tilde{u}_h \quad \text{and} \quad \hat{v}_{h} \coloneqq \frac{K}{K + \eta_{h}} v_h 
\end{equation*}
for every $h \in \N$. Since $\eta_h \to 0$, we still have $\hat{u}_{h} \to u$ in $L^1(\Omega)$ and $\hat{v}_{h} \to v$ in $L^1(\Omega, \Rd)$ as $h \to \infty$. From the definitions of $\hat{u}_h$ and $\hat{v}_h$, we have $\hat{u}_h \in SBV^2(\Omega) \cap L^{\infty}(\Omega)$, with the additional bound $\norm{\hat{u}_{h}}_{L^{\infty}(\Omega)} \leq K$ and with
\begin{equation*}
 \int_{\Omega}|\nabla u_h + v| \dd x \geq \int_{\Omega}|\nabla \hat{u}_h + \hat{v}_h| \dd x \quad  \text{ and } \quad 
 \int_{J_u} {\theta}([{u}_h]) \dd \HA \geq  \int_{J_u} {\theta}([\hat{u}_h]) \dd \HA 
\end{equation*} 
for every $h \in \N$. Hence, we deduce 
\begin{align*}
\overbar{\F}_v(u,\Omega) &= \lim_{h \to \infty} {\F}_v({u}_h,\Omega)\\
 &\geq \liminf_{h \to \infty}\left[ \int_{\Omega}|v|^2 \dd x + \int_{\Omega}c_0|\nabla \hat{u}_h + \hat{v}_h| \dd x + \int_{J_u} {\theta}([\hat{u}_h]) \dd \HA\right]  \\
&\geq \liminf_{h \to \infty} {\G}_v(\hat{u}_h) - \limsup_{h \to \infty}  \int_{\Omega}c_0 |\hat{v}_h - v| \dd x \geq \overbar{\G}_v(u).
\end{align*}
Consequently, we have $\overbar{\F}_v(u,\Omega)=\overbar{\G}_v(u)$ for all $u \in BV(\Omega)$ with $\norm{u}_{L^{\infty}(\Omega)} \leq K$. The claim~\eqref{rf2BV} now follows directly from the estimate~\eqref{rfBV} in Theorem \ref{rel2BV}.
\end{proof}


\section{Estimate from below of the $\Gamma$-lower limit}
\label{sec: estimate from below}

We here prove the $\Gamma$-$\liminf$-inequality. By the compactness result of Theorem~\ref{commBV} it is sufficient to consider $(u,\gamma) \in  BV(\Omega) \times \Mam$ with $\norm{u}_{L^{\infty}(\Omega)} \leq K$, $\gamma = D^su+ g \mathcal{L}^d$ and $\nabla u- g \in L^2(\Omega)$.
As in the one-dimensional setting~\cite{AuerVolkmannBeckSchmidt:22}, the estimate of~$E_{\varepsilon}$ from below  splits up into three separate estimates involving a surface term, a volume term and a Cantor term. Our estimate of the surface term is obtained by a blow-up argument, whereas we use the slicing method for the Cantor term. The estimate of the volume term, however, needs to be proved directly, since it is not clear how to apply the slicing method or a blow-up argument to the weakly converging term depending on $\gamma$, as $\Mam$ is merely endowed with the flat topology.

\subsection{Estimate from below of the surface term}

We apply Besicovitch’s differentiation theorem with respect to $\HA \mrs J_u$ to the $\Gamma$-lower limit considered to be a set function for fixed $(u, \gamma) \in BV(\Omega) \times \Mam$ (Lemma \ref{derivativesingularpartBV}).
Next, an appropriate estimate from below for the density of the lower bound is found by a rescaling argument (Lemma \ref{derivativesingularpartBV2}), showing that only the non-local term contributes. Finding such an estimate is then reduced to a minimization problem (Lemma \ref{surfacebv2withoutslicing}) on a suitable class of Radon measures, which can be solved directly by the majorization principle of Hardy, Littlewood and P\'{o}lya (Proposition~\ref{surfacebv2withoutslicing2}).

For every $A \in \Am$ we denote the inner regular envelope of the $\Gamma$-$\liminf$ $E^{\prime}$ by
\begin{equation}
\label{def_Psi_prime}
\Psi^{\prime}(\, \cdot \, , \,\cdot \, , A) \coloneqq \sup \big\{ E^{\prime}( \, \cdot \, , \,\cdot \, , B) \colon B \in \Am, B \Subset A\big\}.
\end{equation}
Clearly, $\Psi^{\prime}( \scdot, \scdot, A)$ is lower semicontinuous in $L^1(\Omega) \times \Mam$ for every $A\in \Am$, and $\Psi^{\prime}( u, \gamma, \scdot)$ is an increasing, inner regular and superadditive set function on~$\Am$ for every $(u, \gamma) \in L^1(\Omega) \times \Mam$.

\begin{lemma}\label{derivativesingularpartBV}
For every $(u,\gamma) \in \Lm \times \Mam$ the set function $\Psi^{\prime}(u, \gamma, \scdot) \colon \Am \to \R \cup \{\infty\}$ is the restriction to $\Am$ of a regular Borel measure on~$\Omega$. Moreover, if $u \in BV(\Omega)$, $\gamma =  D^su + g \mathcal{L}^d $ and $\nabla u - g \in L^2(\Omega, \R^d)$, then for every $A \in \Am$ there holds
\begin{equation*}
\Psi^{\prime}(u, \gamma, A) \geq \int_{J_u \cap A} h \dd  \HA \quad \text{with } h(x) \coloneqq  \lim_{\varrho \searrow 0}\frac{\Psi^{\prime}(u, \gamma, B_{\varrho}(x))}{\omega_{d-1} \varrho^{d-1}} \text{ for $\HA$-a.e. } x \in J_u.
\end{equation*}
\end{lemma}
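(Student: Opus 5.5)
The plan is to run the localization method as in the proof of Proposition~\ref{sfBV}, now applied to the inner regular envelope $\Psi^{\prime}$ of the localized $\Gamma$-lower limit, and then to differentiate the resulting measure with respect to $\HA \mrs J_u$. For the measure property I use the De Giorgi--Letta criterion \cite[Theorem 1.53]{afp}. By definition in \eqref{def_Psi_prime}, $\Psi^{\prime}(u,\gamma,\scdot)$ is already increasing, superadditive on disjoint sets and inner regular. Hence only subadditivity remains to be shown. If $E^{\prime}(u,\gamma,\Omega)=\infty$, I will work with the convention that the measure is $+\infty$ on every open set where the envelope is infinite; a cleaner route is to check the criterion only in the relevant case $E^{\prime}(u,\gamma,\Omega)<\infty$, where everything is finite.

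The key step is an almost-subadditivity estimate analogous to Lemma~\ref{weaksubaddBV}:
\[
E^{\prime}(u,\gamma,A_1^{\prime}\cup A_2^{\prime}) \le E^{\prime}(u,\gamma,A_1)+E^{\prime}(u,\gamma,A_2) \quad \text{for } A_i^{\prime}\Subset A_i .
\]
I take recovery sequences $(u_\varepsilon,g_\varepsilon)$ on $A_1$ and $(v_\varepsilon,h_\varepsilon)$ on $A_2$ and glue them via the De Giorgi slicing: $w_\varepsilon=\varphi_i u_\varepsilon+(1-\varphi_i)v_\varepsilon$ and $k_\varepsilon=\varphi_i g_\varepsilon+(1-\varphi_i)h_\varepsilon$. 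Then
\[
\nabla w_\varepsilon-k_\varepsilon=\varphi_i(\nabla u_\varepsilon-g_\varepsilon)+(1-\varphi_i)(\nabla v_\varepsilon-h_\varepsilon)+\nabla\varphi_i(u_\varepsilon-v_\varepsilon) .
\]
The elastic term on the transition layer is therefore controlled by the two energies plus $(k+1)^2\|u_\varepsilon-v_\varepsilon\|_{L^2}^2$, and this error term tends to $0$ by the $L^\infty$ bound $K$ together with $L^1$ convergence.

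The nonlocal term needs more care. Since $f$ is increasing and subadditive and $|k_\varepsilon|\le|g_\varepsilon|+|h_\varepsilon|$, we get
\[
f\Big(\varepsilon\dashint|k_\varepsilon|\Big)\le f\Big(\varepsilon\dashint|g_\varepsilon|\Big)+f\Big(\varepsilon\dashint|h_\varepsilon|\Big) .
\]
This uses the averages over $B_\varepsilon(x)\cap\Omega$. The nonlocality reaches only distance $\varepsilon$, so for $\varepsilon<\delta/(k+1)$ the points near $B_i$ see only $g_\varepsilon$, and the points outside $B_{i+1}$ see only $h_\varepsilon$. Strictly speaking, one must shrink the layers by $\varepsilon$ and account for the $\varepsilon$-neighborhoods, which is harmless because they are absorbed into the layer sets $D_i$. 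Averaging over $i$ and letting $\varepsilon\to0$ and then $k\to\infty$ gives the estimate. I also need convergence $k_\varepsilon\mathcal{L}^d\to\gamma$ in the flat norm on $\Omega$. Locally this follows from $\varphi_i g_\varepsilon\to\varphi_i\gamma$ for a smooth cutoff, since $\|\varphi\mu\|_{\text{flat}}\lesssim\|\mu\|_{\text{flat}}$. Outside $A_1\cup A_2$ the sequences can be chosen to be arbitrary recovery sequences for the whole of $\Omega$, so I must be careful that $E^{\prime}(\scdot,\scdot,A)$ is defined with global convergence. I will take $u_\varepsilon,v_\varepsilon$ with both converging globally, which is possible since the localized functional ignores the complement. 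Combined with inner regularity, this gives subadditivity of $\Psi^{\prime}$ exactly as in the proof of Proposition~\ref{sfBV}.

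For the second claim, by the lower bound in the compactness/energy estimates we have $\Psi^{\prime}(u,\gamma,\scdot)\ge0$. I apply the Besicovitch differentiation theorem to this measure with respect to $\mu=\HA\mrs J_u$, which is $\sigma$-finite and in fact finite for $u\in BV$. The Radon--Nikodym derivative
\[
h(x)=\lim_{\varrho\to0}\frac{\Psi^{\prime}(u,\gamma,B_\varrho(x))}{\mu(B_\varrho(x))}
\]
exists $\mu$-a.e., and the absolutely continuous part satisfies $\Psi^{\prime}(u,\gamma,A)\ge\int_{J_u\cap A}h\,\dd\mu$. By rectifiability, $\mu(B_\varrho(x))/(\omega_{d-1}\varrho^{d-1})\to1$ for $\HA$-a.e. $x\in J_u$, which gives the stated normalization. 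If $\Psi^{\prime}$ is not locally finite, I first reduce to the finite-energy case, since otherwise the inequality is trivial. The main obstacle I expect is the gluing step for the nonlocal term together with the flat convergence of the glued eigendeformation.
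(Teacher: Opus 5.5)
Your overall architecture (gluing estimate, then subadditivity of $\Psi^{\prime}$, then De Giorgi--Letta, then Besicovitch differentiation) matches the paper's. Your gluing variant is also fine. You put the cut-off error $\nabla\varphi_i(u_\varepsilon-v_\varepsilon)$ into the elastic strain, whereas the paper puts it into the eigendeformation via $g^i=\varphi_i g_1+(1-\varphi_i)g_2+\nabla\varphi_i(u_1-u_2)$. This costs you $M^2\|u_\varepsilon-v_\varepsilon\|_{L^2}^2\le 2KM^2\|u_\varepsilon-v_\varepsilon\|_{L^1}$ instead of the paper's $c_0M\|u_1-u_2\|_{L^1}$, and that error still vanishes for fixed $k$.

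\textbf{Main gap: no common subsequence.} The step ``take recovery sequences on $A_1$ and on $A_2$ and glue them'' does not go through as stated. Since $E^{\prime}$ is a $\Gamma$-\emph{lower} limit, a recovery sequence on $A_1$ only gives $\liminf_\varepsilon E_\varepsilon(u_\varepsilon,g_\varepsilon\mathcal{L}^d,A_1)=E^{\prime}(u,\gamma,A_1)$. This is realized along some subsequence of $\varepsilon$, and the one for $A_2$ possibly along a different subsequence. Gluing requires the same $\varepsilon$, and $\liminf(a_\varepsilon+b_\varepsilon)$ cannot be bounded above by $\liminf a_\varepsilon+\liminf b_\varepsilon$. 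What your construction actually yields is
$E^{\prime}(u,\gamma,A_1^{\prime}\cup A_2)\le E^{\prime}(u,\gamma,A_1)+E^{\prime\prime}(u,\gamma,A_2)$.
A $\Gamma$-liminf by itself is in general not subadditive: a family oscillating between two behaviours, one cheap on $A_1$ and the other cheap on $A_2$, is a counterexample. The missing ingredient is the one the paper uses in Step~2, the compactness theorem for $\bar{\Gamma}$-convergence \cite[Theorem 16.9]{dm}. It lets you pass to a subsequence along which the inner regular envelopes of the $\Gamma$-liminf and $\Gamma$-limsup coincide. Along that subsequence you can pick both recovery sequences with $\limsup$ bounded by $\Psi^{\prime}(u,\gamma,A_1)$ and $\Psi^{\prime}(u,\gamma,A_2)$; this is why the paper inserts $A_1^{\prime}\Subset A_1^{\prime\prime}\Subset A_1$.

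\textbf{Secondary error: $\HA\mrs J_u$ need not be finite.} Your claim that $\HA\mrs J_u$ is finite for $u\in BV(\Omega)$ is false; it is only $\sigma$-finite. For example, take $u=\chi_{\{x_2>0\}}+\sum_n a_n\chi_{\{x_2>1/n\}}$ with $a_n>0$ and $\sum_n a_n<\infty$. Then $\HA(J_u\cap B_\varrho(x))=\infty$ for every $\varrho>0$ and every $x$ on $\{x_2=0\}$. So $\HA\mrs J_u$ is not a Radon measure, and neither Besicovitch differentiation nor the density-one normalization applies to it directly. The fix is the paper's: differentiate with respect to $\HA\mrs J_{u,k}$, where $J_{u,k}=\{|[u]|\ge 1/k\}$. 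This set has finite measure because $\int_{J_u}|[u]|\dd\HA\le|Du|(\Omega)$. Then let $k\to\infty$ by monotone convergence.
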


\begin{proof}
\emph{Step 1: $\{E_{\varepsilon}\}_{\varepsilon}$ satisfies a \textup{(}generalized\textup{)} fundamental estimate: for every $A^{\prime}_1, A_1, A_2 \in \Am$ with $A^{\prime}_1 \Subset A_1$ and every $\eta>0$, there exist $\varepsilon_0 >0$ and $M >0$ such that 
\begin{multline}
E_{\varepsilon}( \varphi u_1 + (1-\varphi) u_2, [\varphi g_1 + (1-\varphi) g_2 + \nabla \varphi (u_1 - u_2)]\mathcal{L}^d, A^{\prime}_1 \cup A_2) \\
 \leq (1 + \eta)  \big[ E_{\varepsilon}( u_1 , \gamma_1, A_1) +  E_{\varepsilon}( u_2 , \gamma_2, A_2) \big] + M \norm{u_1-u_2}_{L^1(\Omega)} \label{fundamentalestimate}
\end{multline}
holds for every $\varepsilon \leq \varepsilon_0$,  $(u_1,\gamma_1), (u_2,\gamma_2) \in L^\infty(\Omega) \times \Mam$ with $\gamma_1= g_1 \mathcal{L}^d, \gamma_2 = g_2 \mathcal{L}^d$ for functions $g_1, g_2 \in L^1(\Omega,\Rd)$ for a suitable cut-off function $\varphi \in C^{\infty}_0 (\Omega)$ with $0 \leq \varphi \leq 1$ in~$\Omega$, $\spt(\varphi) \subset A_1$, $\varphi \equiv 1$ on $A^{\prime}_1$ and $\norm{\nabla \varphi}_{L^{\infty}(\Omega, \Rd)} \leq M$.} We proceed in a similar way as in \cite[Proposition 4.3 and Theorem 4.6]{C98} (and as in Section~\ref{sec:relBV}), but here we additionally have to include the variable $\gamma$. We choose $k \in \N$ with $2/k < \eta$ and $2c_0/k < 1$, set $\delta \coloneqq \dist(A^{\prime}_1, \partial A_1) > 0$ and $M=4(k+2)/\delta$, and consider a chain of open sets 
\begin{equation*}
 B_0 \coloneqq A^{\prime}_1 \Subset B_1 \Subset \ldots \Subset B_{k+1} \Subset B_{k+2} \coloneqq A_1
\end{equation*}
with $\dist(B_i, \partial B_{i+1}) \geq {\delta}/(k+2)$ for every $i \in \{0,\ldots,k+1\}$. For each $i \in \{1,\ldots,k\}$, we select a cut-off function $\varphi_i \in C^{\infty}_c(\Omega)$ between~$B_i$ and~$B_{i+1}$, i.e., with $0 \leq \varphi_i \leq 1$ in~$\Omega$, $\spt(\varphi_i) \subset (B_{i+1})_{\delta/4(k+2)-}$, $\varphi_i \equiv 1$ in an open neighborhood of~$(\overbar{B}_{i})_{\delta/4(k+2)+}$, and $\norm{\nabla \varphi_i}_{L^{\infty}(\Omega)} \leq M$. For $(u_1,\gamma_1), (u_2,\gamma_2) \in L^\infty(\Omega) \times \Mam$ as above we now  show that the claim~\eqref{fundamentalestimate} holds with $\varphi = \varphi_i$ for some $i\in \{0,\ldots,k-1\}$, for the choice $\varepsilon_0 \coloneqq \delta /4(k+2)$. To this end, we set 
\begin{equation*}
u^i \coloneqq \varphi_i u_1 + (1-\varphi_i) u_2 \quad \text{and} \quad \gamma^i \coloneqq g^i \mathcal{L}^d \text{ with }  g^i \coloneqq \varphi_i g_1 + (1-\varphi_i) g_2 + \nabla \varphi_i (u_1 - u_2).
\end{equation*}
Similarly as in the proof of Lemma~\ref{weaksubaddBV} (and taking into account $g^i = g_1$ on $(\overbar{B}_i)_{\varepsilon+}$ and $g^i = g_2$ on $(A_2 \setminus \overbar{B}_{i+1})_{\varepsilon +}$) we have
\begin{align*}
 E_{\varepsilon}(u^i,\gamma^i,A_1' \cup A_2) \leq  E_{\varepsilon}(u_1,\gamma_1,A_1) + E_{\varepsilon}(u^i, \gamma^i, A_2  \cap (B_{i+1}\setminus \overbar{B}_{i})) +E_{\varepsilon}(u_2,\gamma_2, A_2).
\end{align*}
We set $D_i \coloneqq A_2  \cap (B_{i+1}\setminus \overbar{B}_{i})$ and then estimate the contributions of the second term on the right-hand side: firstly, by definitions of~$u^i$ and~$g^i$ as well as the fact that $0 \leq \varphi_i \leq 1$ in~$\Omega$, we find 
\begin{align*}
 \int_{D_i} |\nabla u^i - g^i|^2 \dd x & =  \int_{D_i} |\varphi_i \nabla u_1 + (1-\varphi_i) \nabla u_2 - \varphi_i g_1 + (1-\varphi_i) g_2|^2 \dd x \\
  & \leq \int_{D_i}  |\nabla u_1 - g_1|^2 \dd x + \int_{D_i}  |\nabla u_2 - g_2|^2 \dd x.
\end{align*}
Secondly, by the subadditivity of~$f$ with $f(t) \leq c_0 t$ for all $t \in \R$ and with the bound $\norm{\nabla \varphi_i}_{L^{\infty}(\Omega)} \leq M$, we infer 
\begin{align}
 \lefteqn{ \frac{1}{\varepsilon} \int_{D_i} f \bigg( \varepsilon \dashint_{B_{\varepsilon}(x) \cap \Omega} |g^i| \dd y \bigg) \dd x } \label{eqn_gluing_f} \\  
 & \leq \frac{1}{\varepsilon}  \int_{D_i} \bigg[ f \bigg( \varepsilon \dashint_{B_{\varepsilon}(x) \cap \Omega} |g_1| \dd y \bigg) + f \bigg( \varepsilon \dashint_{B_{\varepsilon}(x) \cap \Omega} |g_2| \dd y \bigg) + f \bigg( M \varepsilon \dashint_{B_{\varepsilon}(x) \cap \Omega} |u_1-u_2| \dd y \bigg) \bigg] \dd x \notag \\
 & \leq \frac{1}{\varepsilon}  \int_{D_i} f \bigg( \varepsilon \dashint_{B_{\varepsilon}(x) \cap \Omega} |g_1| \dd y \bigg) \dd x +  \frac{1}{\varepsilon}  \int_{D_i}  f \bigg( \varepsilon \dashint_{B_{\varepsilon}(x) \cap \Omega} |g_2| \dd y \bigg) \dd x  + c_0 M \int_{(D_i)_{\varepsilon+}} |u_1-u_2| \dd x. \notag
\end{align}
Collecting these estimates, we obtain
\begin{align*}
 E_{\varepsilon}(u^i,\gamma^i,A_1' \cup A_2) & \leq  E_{\varepsilon}(u_1,\gamma_1,A_1)  + 2 E_{\varepsilon}(u_1,\gamma_1,D_i) +E_{\varepsilon}(u_2,\gamma_2, A_2) + 2 E_{\varepsilon}(u_2,\gamma_2,D_i) \\
 & \quad + c_0 M  \norm{u_1-u_2}_{L^1((D_i)_{\varepsilon+})}.
\end{align*}
We then sum this inequality for $i \in\{1, \ldots, k\}$, use the inclusions $\dot{\cup}_{i=1}^{k} D_i \subset A_1$, $\dot{\cup}_{i=1}^{k} D_i \subset A_2$ and the fact that each point in~$\Omega$ is contained in the set $(D_i)_{\varepsilon+}$ for at most two indices~$i$ (recalling $\varepsilon \leq \varepsilon_0=\delta /4(k+2)$ and $\dist(B_i, \partial B_{i+1}) \geq {\delta}/(k+2)$). Taking the average with respect to~$i$, we can then choose as in the proof of Lemma~\ref{weaksubaddBV} an index $i_0 \in\{1, \ldots, k\}$ with
\begin{equation*}
 E_{\varepsilon}(u^{i_0},\gamma^{i_0},A_1' \cup A_2) \leq \Big( 1 + \frac{2}{k} \Big)   \big[ E_{\varepsilon}(u_1,\gamma_1,A_1) + E_{\varepsilon}(u_2,\gamma_2, A_2)\big] + \frac{2 c_0M}{k} \norm{u_1-u_2}_{L^1(\Omega)}.
\end{equation*}
Since $k \in \N$ satisfies by the initial choice the inequalities $2/k < \eta$ and $2c_0/k < 1$, this proves the claim~\eqref{fundamentalestimate} with $\varphi \coloneqq \varphi_{i_0}$.

\emph{Step 2: Subadditivity of $\Psi^{\prime}(u,\gamma, \scdot)$, i.e., for all $A_1, A_2 \in \Am$ there holds}
\begin{equation}
\label{eqn_Psiprime_subadditive}
\Psi^{\prime}(u,\gamma,A_1 \cup A_2) \leq \Psi^{\prime}(u,\gamma,A_1) + \Psi^{\prime}(u,\gamma,A_2).
\end{equation}
We may assume that the right-hand side of~\eqref{eqn_Psiprime_subadditive} is finite since it is trivial otherwise. For $A_1^{\prime}, A_2^{\prime} \in \Am$ with $A_1^{\prime} \Subset A_1$ and  $A_2^{\prime} \Subset A_2$ we choose $A_1^{\prime \prime} \in \Am$ with $A_1^{\prime} \Subset A_1^{\prime \prime} \Subset A_1$. The $\Gamma$-convergence compactness theorem in \cite[Theorem 16.9]{dm} allows to select a subsequence $\varepsilon\to 0$ along which there are sequences $\{(u_{1, \varepsilon},\gamma_{1, \varepsilon})\}_\varepsilon, \{(u_{2, \varepsilon},\gamma_{2, \varepsilon})\}_\varepsilon$ in $L^{\infty}(\Omega) \times \Mam$ with $\gamma_{1, \varepsilon} = g_{1, \varepsilon}\mathcal{L}^d$ and $\gamma_{2, \varepsilon}= g_{2, \varepsilon}\mathcal{L}^d$ for functions $g_{1, \varepsilon}, g_{2, \varepsilon} \in L^1(\Omega,\Rd)$ such that $u_{1, \varepsilon} \to u$ and $u_{2, \varepsilon} \to u$ in $L^1(\Omega)$,  $\gamma_{1, \varepsilon} \to \gamma$ and $\gamma_{2, \varepsilon} \to \gamma$ in the flat norm and
\begin{equation*}
\limsup_{\varepsilon \to 0} E_{\varepsilon}  (u_{1, \varepsilon}, \gamma_{1, \varepsilon}, A^{\prime \prime}_1) \leq \Psi^{\prime}(u, \gamma, A_1) \quad \text{and}\quad 
\limsup_{\varepsilon \to 0} E_{\varepsilon}  (u_{2, \varepsilon}, \gamma_{2, \varepsilon}, A^{\prime}_2) \leq \Psi^{\prime}(u, \gamma, A_2).
\end{equation*}
Due to $\norm{u_{1, \varepsilon}}_{L^{\infty}(\Omega)}, 
\norm{u_{2, \varepsilon}}_{L^{\infty}(\Omega)} \leq K$, we even have $u_{1, \varepsilon} \to u$ and $u_{2, \varepsilon} \to u$ in $L^p(\Omega)$ for all $1 \leq p < \infty$. We now fix $\eta >0$. By Step~1, there exist $\varepsilon_0>0$ and $M >0$ such that
\begin{align}
&E_{\varepsilon}( \varphi_{\varepsilon} u_{1, \varepsilon} + (1-\varphi_{\varepsilon}) u_{2, \varepsilon}, [\varphi_{ \varepsilon} g_{1, \varepsilon} + (1-\varphi_{\varepsilon}) g_{2, \varepsilon} + \nabla \varphi_{\varepsilon} (u_{1, \varepsilon} - u_{2, \varepsilon})] \mathcal{L}^d, A^{\prime}_1 \cup A_2') \notag\\ & \leq (1 + \eta)  \left[ E_{\varepsilon}( u_{1, \varepsilon} , \gamma_{1, \varepsilon}, A^{\prime \prime }_1) +  E_{\varepsilon}( u_{2, \varepsilon} , \gamma_{2, \varepsilon}, A_2') \right]  + M \norm{u_{1, \varepsilon}-u_{2, \varepsilon}}_{L^1(\Omega)} \label{appliedfundamentalestimate}
\end{align}
holds  for every $\varepsilon \leq \varepsilon_0$ for a suitable cut-off function $\varphi_{\varepsilon} \in C_c^{\infty}(A^{\prime \prime}_1)$ with $\varphi_{\varepsilon} \equiv 1$ on $A^{\prime}_1$ and $\norm{\nabla \varphi_\varepsilon}_{L^{\infty}(\Omega, \Rd)} \leq M$.
As $\varepsilon \to 0$, we obtain
\begin{align*}
 \varphi_{\varepsilon} u_{1, \varepsilon} &+ (1-\varphi_{\varepsilon}) u_{2, \varepsilon} \to u \text{ in }L^1(\Omega), \\
 [\varphi_{ \varepsilon} g_{1, \varepsilon} + (1-\varphi_{\varepsilon}) g_{2, \varepsilon} &+ \nabla \varphi_{\varepsilon} (u_{1, \varepsilon} - u_{2, \varepsilon})] \mathcal{L}^d \to \gamma\text{ in the flat norm}.
\end{align*}
Sending $\varepsilon \to 0$ in \eqref{appliedfundamentalestimate}, we get by choice of the sequences $\{(u_{1, \varepsilon},\gamma_{1, \varepsilon})\}_\varepsilon, \{(u_{2, \varepsilon},\gamma_{2, \varepsilon})\}_\varepsilon$
\[ E'(u, \gamma, A^{\prime}_1 \cup A^{\prime}_2) \leq (1+ \eta) \left[ \Psi^{\prime}(u, \gamma, A_1)  + \Psi^{\prime}(u, \gamma, A_2)\right]. \]
With $\eta \searrow 0$ and then $A^{\prime}_1 \uparrow A_1$ and $A^{\prime}_2 \uparrow A_2$ we finally obtain~\eqref{eqn_Psiprime_subadditive}.

\emph{Step 3: Conclusion.} Since we have also verified the subadditivity of~$\Psi^{\prime}$ in Step~$2$, the first statement of the lemma follows directly from the measure property criterion of De Giorgi--Letta from \cite[Theorem 5.6]{DGL77}. Finally, the second statement is shown analogously to the second part of \cite[Proposition 5.1]{LVDD07} (by approximating $J_u$ with $J_{u,k} = \{x \in J_u : |[u](x)| \ge 1/k \}$ for $k \in \N$ and observing that $\Psi^{\prime}(u, \gamma, \scdot) \ge h \HA \mrs J_{u,k}$ with $h = \dd \Psi^{\prime}(u, \gamma, \scdot) / \dd \HA \mrs J_{u,k}$, for each $k \in \N$.). 
\end{proof}

We fix a constant $\bar{K}>0$ and for each $x_0 \in \Omega$ define a family of auxiliary functionals $H_{\varepsilon,x_0} \colon L^1(B_2(x_0)) \times \mathcal{M}(B_2(x_0), \R^{d}) \times \mathcal{A}(B_2(x_0)) \to [0, \infty]$ by
\begin{equation*}
H_{\varepsilon,x_0}(u,\gamma,A) \coloneqq 
\begin{cases} 
\frac{1}{\varepsilon} \int_{A} f(\varepsilon \dashint_{B_{\varepsilon}(x) \cap B_2(x_0)} |g| \dd y) \dd x &  \text{if } u \in W^{1,1}(B_2(x_0)), \, \norm{u}_{L^{\infty}(B_2(x_0))} \leq K, \\   & \quad \phantom{if } \gamma = g \mathcal{L}^d,  \, g \in L^1(B_2(x_0),\R^d), \\  &\quad \phantom{if } \text{and } \norm{\nabla u - g}_{L^2(B_2(x_0), \Rd)} \leq \bar{K},  \\[0.2cm] 
\infty &  \text{otherwise}.
\end{cases}
\end{equation*}
(We omit the subscript $x_0$ in case $x_0 = 0$.) We will now estimate the density $h$ of the previous lemma by the $\Gamma$-lower limit $H_{x_0}^{\prime}$ of $\{H_{\varepsilon,x_0}\}_{\varepsilon}$.

\begin{lemma}\label{derivativesingularpartBV2}
For every $(u,\gamma) \in  BV(\Omega) \times \Mam$  with $\gamma = D^su+ g \mathcal{L}^d$ and $\nabla u- g \in L^2(\Omega,\R^d)$ we have
 \begin{equation*}
\liminf_{\varrho \searrow 0} \frac{\Psi^{\prime}(u, \gamma,B_{\varrho}(x_0)) } { \varrho^{d-1}} \geq H_{x_0}^{\prime} (u_0, \gamma_0, B_1(x_0)) \quad \text{for } \HA \text{-a.e. } x_0 \in J_u, 
\end{equation*}
where $u_0$ is defined by
\begin{equation*}
u_0(x) \coloneqq
\begin{cases} u(x_0+) & \text{ if } (x-x_0) \cdot \nu \geq 0\\
u(x_0-) & \text{ if } (x-x_0) \cdot \nu < 0.
\end{cases}
\end{equation*}
with $\nu \coloneqq \nu_{u}(x_0)$ and where $\gamma_0 \coloneqq D^su_0$.
\end{lemma}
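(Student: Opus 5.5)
We argue by blow-up at $\HA$-a.e.\ jump point. First we fix $x_0 \in J_u$ in a set of full $\HA$-measure in $J_u$ with the following properties:
\begin{enumerate}[font=\normalfont, label=(\alph{*})]
\item the rescaled functions $u_\varrho(y) \coloneqq u(x_0 + \varrho (y-x_0))$ converge to $u_0$ in $L^1(B_2(x_0))$;
\item $\varrho^{1-d}|D^a u|(B_{2\varrho}(x_0)) \to 0$ and $\varrho^{1-d}\int_{B_{2\varrho}(x_0)}|g|\dd x \to 0$;
\item $\varrho^{-d}\int_{B_{2\varrho}(x_0)}|\nabla u - g|^2 \dd x$ stays bounded, or even $\varrho^{1-d}\int_{B_{2\varrho}(x_0)}|\nabla u-g|^2\dd x \to 0$;
\item the limit $h(x_0)$ from Lemma~\ref{derivativesingularpartBV} exists and is finite.
\end{enumerate}
Properties (b) and (c) hold $\HA\mrs J_u$-a.e.\ by standard density estimates for the measures $|g|\mathcal{L}^d$ and $|\nabla u - g|^2\mathcal{L}^d$, which are absolutely continuous and hence have zero $(d-1)$-dimensional density $\HA$-a.e. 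For such $x_0$, Lemma~\ref{derivativesingularpartBV} allows us to pick $\varrho_j \searrow 0$ realizing the $\liminf$. Using the definition of $\Psi'$ as an inner regular envelope together with the definition of $E'$, we then choose, for each $j$, sequences $(u_{\varepsilon}^j, g_\varepsilon^j\mathcal{L}^d) \to (u,\gamma)$ with $\liminf_\varepsilon E_\varepsilon(u^j_\varepsilon,\gamma^j_\varepsilon, B_{\varrho_j}(x_0)) \le \Psi'(u,\gamma,B_{\varrho_j}(x_0)) + \varrho_j^{d}$.

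Next we rescale. Define $w(y) \coloneqq u^j_\varepsilon(x_0+\varrho_j(y-x_0))$ and $\tilde g(y) \coloneqq \varrho_j\, g^j_\varepsilon(x_0+\varrho_j(y-x_0))$ on $B_2(x_0)$, taken as zero-extended or restricted appropriately, and set $\sigma = \varepsilon/\varrho_j$. A change of variables gives
\[
\frac{1}{\varepsilon}\int_{B_{\varrho_j}(x_0)} f\Big(\varepsilon \dashint_{B_\varepsilon(x)} |g^j_\varepsilon|\Big)\dd x = \varrho_j^{d-1}\,\frac1\sigma\int_{B_1(x_0)} f\Big(\sigma\dashint_{B_\sigma(y)}|\tilde g|\Big)\dd y ,
\]
since $\varepsilon\, g = \sigma\, \tilde g$. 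For the elastic term we have $\int_{B_2}|\nabla w-\tilde g|^2 = \varrho_j^{2-d}\int_{B_{2\varrho_j}}|\nabla u^j_\varepsilon - g^j_\varepsilon|^2$. Dividing the energy by $\varrho_j^{d-1}$, we therefore bound $\varrho_j^{1-d}E_\varepsilon(\cdot,B_{\varrho_j})$ from below by $H_{\sigma,x_0}(w,\tilde g\mathcal{L}^d,B_1(x_0))$, provided the $L^2$-constraint $\|\nabla w - \tilde g\|\le \bar K$ holds. This is where property (c) enters, via $\varrho^{2-d}\cdot\varrho^{d-1}\cdot O(1) = O(\varrho)\to 0$ times the energy bound. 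One point needs care: balls $B_\varepsilon(x)$ for $x$ near $\partial B_{\varrho_j}$ must lie in $\Omega$, which is fine for small $\varrho_j$, and the domain of averaging $B_2(x_0)$ matches after rescaling.

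Finally, a diagonal argument. We choose $\varepsilon = \varepsilon_j$ so small that $\sigma_j \to 0$, the rescaled $w_j \to u_0$ in $L^1(B_2(x_0))$ (combining (a) with the $L^1$-closeness of $u^j_\varepsilon$ to $u$, scaled by $\varrho_j^{-d}$), and the rescaled $\tilde g_j\mathcal{L}^d \to \gamma_0 = D^s u_0$ in the flat norm on $B_2(x_0)$. This last convergence is the main obstacle. The rescaled limit measure $\gamma$ blows up as $\varrho_j^{1-d}\gamma(x_0+\varrho_j\,\cdot)$, whose singular part $D^s u$ blows up to $D u_0$ at jump points (since $|D^c u|$ and the remaining jump part have vanishing density w.r.t.\ $\HA\mrs J_u$ a.e.), while the absolutely continuous part vanishes by (b). For the approximating $g^j_\varepsilon$, flat convergence is not scale invariant. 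We will instead control it by writing $\tilde g = \nabla w - (\nabla w - \tilde g)$: the gradient part converges to $Du_0$ in $W^{-1,1}$-type norms because $w\to u_0$ in $L^1$, and the $L^2$-part is small by (c). So we choose $\varepsilon_j$ to make $\|g^j_{\varepsilon}\mathcal{L}^d-\gamma\|_{\rm flat}$ small enough to absorb the scaling factor $\varrho_j^{-d}$. Then $H'_{x_0}(u_0,\gamma_0,B_1(x_0)) \le \liminf_j H_{\sigma_j}(\dots) \le \liminf_j \varrho_j^{1-d}\Psi'(u,\gamma,B_{\varrho_j}(x_0))$, as claimed.
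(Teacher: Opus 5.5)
Your blow-up scheme is the paper's: good points of $J_u$, the rescaling $w(y)=u_\varepsilon(x_0+\varrho(y-x_0))$, $\tilde g(y)=\varrho\, g_\varepsilon(x_0+\varrho(y-x_0))$, the change of variables in the nonlocal term, and a diagonal choice with $\sigma_j=\varepsilon_j/\varrho_j\to0$. Your route to flat convergence via the factor $\varrho_j^{-d}\|g^j_\varepsilon\mathcal{L}^d-\gamma\|_{\text{flat}}$ and the blow-up of $\gamma$ is also fine and matches the paper.

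There is, however, a gap in verifying the constraint $\|\nabla w-\tilde g\|_{L^2(B_2(x_0),\R^d)}\le\bar K$. Without it $H_{\sigma,x_0}(w,\tilde g\mathcal{L}^d,B_1(x_0))=\infty$ and your lower bound is void. The constraint lives on $B_2(x_0)$, so it requires $\varrho_j^{2-d}\int_{B_{2\varrho_j}(x_0)}|\nabla u^j_\varepsilon-g^j_\varepsilon|^2\dd x\le\bar K^2$ for the \emph{approximating} fields. Property (c) cannot give this. It concerns the limit $\nabla u-g$, while $\nabla u^j_\varepsilon-g^j_\varepsilon$ converges only weakly in $L^2$, so its local $L^2$ norms are not bounded by those of the limit; lower semicontinuity goes the wrong way. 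The only available control is the energy of the approximating sequence itself. But you chose $(u^j_\varepsilon,g^j_\varepsilon)$ almost optimal on $B_{\varrho_j}(x_0)$ only, which says nothing about $B_{2\varrho_j}(x_0)\setminus B_{\varrho_j}(x_0)$. Choosing them almost optimal on $B_{2\varrho_j}(x_0)$ instead would cost a factor $2^{d-1}$ in the final bound. The same unjustified smallness of $\nabla w-\tilde g$ on $B_2(x_0)$ is what your decomposition argument for the flat convergence relies on.

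What is missing is a single approximating sequence that is almost optimal on $B_\varrho(x_0)$ and at the same time has energy $O(\varrho^{d-1})$ on $B_{2\varrho}(x_0)$. The paper gets this from the $\Gamma$-compactness theorem. It takes one sequence with $\lim_\varepsilon E_\varepsilon(u_\varepsilon,\gamma_\varepsilon,B_\varrho(x_0))\le\Psi'(u,\gamma,B_{\delta\varrho}(x_0))$ for all $\varrho$ and a fixed $\delta>1$, and uses it with both radii $\ell\varrho_k$, $\ell\in\{1,2\}$. For $\ell=2$ the rescaled elastic term gives $\varrho_k^{-1}\int_{B_2}|\nabla w_k-\bar g_k|^2\dd z=O(1)$, hence $\|\nabla w_k-\bar g_k\|_{L^2(B_2)}\to0$. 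For $\ell=1$ one gets the nonlocal term. Gluing via the fundamental estimate of Step~1 in Lemma~\ref{derivativesingularpartBV} would also work.

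The $\delta>1$ also repairs a smaller point. Your choice of sequences with energy on $B_{\varrho_j}(x_0)$ below $\Psi'(u,\gamma,B_{\varrho_j}(x_0))+\varrho_j^d$ is not directly available, since $\Psi'(u,\gamma,A)\le E'(u,\gamma,A)$. The definitions only give $\Psi'$ of a slightly larger ball, or radii with $\Psi'(u,\gamma,\partial B_{\varrho}(x_0))=0$; either is harmless after $\delta\searrow1$.

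Once the $L^2$-smallness on $B_2(x_0)$ is in hand, your decomposition $\tilde g\mathcal{L}^d=Dw-(\nabla w-\tilde g)\mathcal{L}^d$ does yield $\tilde g\mathcal{L}^d\to Du_0=\gamma_0$ in the flat norm directly from $w\to u_0$ in $L^1(B_2(x_0))$. This is simpler than the paper's route, because it makes the flat-norm rescaling and the density conditions on $|Du|$ and $|g-\nabla u|$ at $x_0$ unnecessary.
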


\begin{proof}
We follow the strategy of the proof of \cite[Proposition 5.2]{LVDD07}, with a number of modifications due to the additional variable~$\gamma$.
We may without loss of generality assume that $\lim_{\varrho \searrow 0} \frac{\Psi^{\prime}(u, \gamma,B_{\varrho}(x_0)) }{ \varrho^{d-1}}$ exists, see Lemma~\ref{derivativesingularpartBV}, and that  
\begin{equation}
\limsup_{\varrho \searrow 0} \frac{|Du|(B_\varrho(x_0))}{ \varrho^{d-1}}  < \infty \quad \text{and} \quad \lim_{\varrho \searrow 0} \frac{1}{ \varrho^{d-1} } \int_{B_{\varrho}(x_0)} |g - \nabla u| \dd x =0, \label{absolutelyfaelltweg}
\end{equation}
see \cite[(2.40) \& (2.41)]{afp}, as these conditions are satisfied $\HA$-a.e.~on~$J_u$. For easy notation we also assume $x_0=0$.
Let $\delta > 1$ be arbitrary. By the definition~\eqref{def_Psi_prime} of $\Psi^{\prime}$ as the inner regular envelope of $E^{\prime}$ and the $\Gamma$-convergence compactness theorem in \cite[Theorem 16.9]{dm}, there exists a subsequence $\{(u_{\varepsilon},\gamma_{\varepsilon})\}_{\varepsilon} \in  W^{1,1}(\Omega) \times \Mam$ with $\norm{\ue}_{L^{\infty}(\Omega)} \leq K$ and with $\gae = g_{\varepsilon} \mathcal{L}^d$ for a function $g_{\varepsilon} \in L^1(\Omega, \Rd)$ for $\varepsilon >0$ such that $\ue \to u$ in $L^1(\Omega)$, $\gae \to \gamma$ in the flat norm and 
\begin{align*}
\lim_{\varepsilon \to 0}E_{\varepsilon}(\ue, \gae, B_{\varrho}(0)) \leq \Psi'(u, \gamma, B_{\delta\varrho}(0)) 
\end{align*}
for every $\varrho >0$ with $B_{\varrho}(x_0) \subset \Omega$. Now let $\{\varrho_k\}_k$ be an arbitrary decreasing sequence in $(0,1)$ converging to~$0$ such that $B_{2\varrho_1}(0) \subset \Omega$. From our subsequence $\eps$ we then select $\{\varepsilon_k\}_{k}$ with ${\varepsilon_k} \leq \varrho_k / k$ and $\{(u_{\varepsilon_k},\gamma_{\varepsilon_k})\}_{k} \in  W^{1,1}(\Omega) \times \Mam$, still with $\norm{u_{\varepsilon_k}}_{L^{\infty}(\Omega)} \leq K$ and $\gamma_{\varepsilon_k} = g_{\varepsilon_k} \mathcal{L}^d$ for a function $g_{\varepsilon_k} \in L^1(\Omega, \Rd)$ for $k \in \N$, such that 
\begin{align*}
E_{\varepsilon_k}(u_{\varepsilon_k}, \gamma_{\varepsilon_k}, B_{\ell \varrho_k}(0)) & \leq \Psi'(u, \gamma, B_{\ell\delta\varrho_k}(0)) + \frac{\varrho_k^{d-1}}{k},\quad  \ell \in \{1,2\}, \\
\norm{u_{{\varepsilon_k}}-u}_{L^1(\Omega)} \leq \frac{\varrho_k^d}{k}  \quad & \text{and}  \quad \int_{B_2(0)} |u_{{\varepsilon_k}}(\varrho_k x) - u(\varrho_k x)| \dd x \leq \frac{1}{k}, \\
\norm{\gamma_{{\varepsilon_k}}-\gamma}_{\text{flat}} \leq \frac{\varrho_k^d}{k} \text{ on } \Omega \quad & \text{and} \quad 
\norm{\gamma_{\varepsilon_k}^{\varrho_k} - \gamma^{\varrho_k}}_{\text{flat}} \leq \frac{1}{k} \text{ on }
B_2(0). 
\end{align*}
Here, the superskript~$\varrho_k$ denotes the rescaling of the measures~$\gamma_{\varepsilon_k}$ and~$\gamma$ by the factor~$\varrho_k$, i.e., with $\gamma^{\varrho_k}(A) \coloneqq \varrho_k^{1-d} \gamma(\varrho_k A)$ for every open set $A \subset \varrho_k^{-1} \Omega$. From the first inequality we obtain
\begin{align}
\limsup_{k \to \infty} \frac{E_{\varepsilon_k}(u_{\varepsilon_k}, \gamma_{\varepsilon_k}, B_{\ell \varrho_k}(0)) }  { \varrho_{k}^{d-1}} \leq \lim_{k \to \infty} \frac{\Psi'(u, \gamma, B_{\ell\delta\varrho_k}(0))}  { \varrho_{k}^{d-1}} 
 = (\ell\delta)^{d-1}\lim_{\varrho \searrow 0} \frac{\Psi'(u, \gamma, B_{\varrho}(0))}  { \varrho^{d-1}} \label{blowupfunction}
\end{align} 
for $\ell \in \{1,2\}$. From the second inequality in combination with the triangle inequality and the definition of $u^{\pm}$ (cf. \cite[Remark 3.72]{afp}) it follows that
\begin{align*}
\int_{B_2(0)} |u_{\varepsilon_k}(\varrho_k x) - u_0(\varrho_k x)| \dd x  \leq \frac{1}{k} + \int_{B_2(0)} |u(\varrho_k x) - u_0(x)| \dd x \to 0  \quad \text{ as } k \to \infty,
\end{align*}
meaning that $u_{\varepsilon_k}(\varrho_k \, \cdot) \to u_0$ in $L^1(B_2(0))$. Correspondingly, the third inequality gives
\begin{equation*}
 \norm{\gamma_{\varepsilon_k}^{\varrho_k} - \gamma_0}_{\text{flat}} \leq \frac{1}{k} + \norm{\gamma^{\varrho_k} - \gamma_0}_{\text{flat}} \text{ on }
B_2(0).
\end{equation*}

Hence, if we show that the second term on the right-hand side vanishes in the limit $k \to \infty$, then $\{\gamma_{\varepsilon_k}^{\varrho_k}\}_k$ converges to $\gamma_0$ in the flat norm on $B_2(0)$.
Note that $|Du|^{\varrho_k}(B_2(0)) = |Du|(B_{2\varrho_k}(0)) / \varrho_k^{d-1}$ is bounded uniformly in~$k \in \N$ by our assumption~\eqref{absolutelyfaelltweg} on $x_0$. Hence, the sequence $\{(Du)^{\varrho_k}\}_{k}$ of measures in $\mathcal{M}(B_2(0))$ converges weakly$^{\ast}$ to $Du_0=D^s u_0 =\gamma_0$ and therefore, by Lemma~\ref{Lemma_weak_negativ_flat}~\ref{Lemma_weak_negativ_flat_2} also in the flat norm on $B_2(0)$. Thus, it remains to show that $ \gamma^{\varrho_k}-(Du)^{\varrho_k} \to 0$ in the flat norm.
Employing $\gamma - Du = (g - \nabla u) \mathcal{L}^d$ and once again our assumption~\eqref{absolutelyfaelltweg} for $x_0=0$, we have indeed  
\begin{align*}
 \bigg \vert\int_{B_2(0)} \varphi(x) \dd ({\gamma}^{\varrho_k} - (Du)^{\varrho_k}) \bigg \vert 
 & =  \bigg \vert\int_{B_{2\varrho_k}(0)} \varrho_k^{1-d} \varphi(\varrho_k^{-1} x) (g(x) - \nabla u (x)) \dd x \bigg \vert \\
 & \leq  2 \varrho_k^{1-d} \norm{\nabla \varphi}_{L^{\infty}(\Omega, \Rd)} \int_{B_{2\varrho_k}(0)} |g - \nabla u | \dd x  \to 0
\end{align*}
as $k \to \infty$ for every $\varphi \in W^{1,\infty}_0(B_2(0))$ with $\norm{\varphi}_{W^{1,\infty}_0(B_2(0))} \leq 1$. To conclude the proof of the lemma, we still need to estimate the left-hand side of~\eqref{blowupfunction} from below via the $H$-functional on the rescaled ball. To this end, we rewrite via the substitution $x = \varrho_k z$
\begin{equation*}
\frac{E_{\varepsilon_k}(u_{\varepsilon_k}, \gamma_{\varepsilon_k}, B_{\ell \varrho_k}(0)) }  { \varrho_{k}^{d-1}} = \frac{1}{\varrho_k} \int_{B_{\ell}(0)} \vert \nabla w_{\varepsilon_k} - \bar{g}_{\varepsilon_k}|^2 \dd z  + \frac{\varrho_k}{\varepsilon_k} \int_{B_{\ell}(0)} f \bigg( \frac{\varepsilon_k}{\varrho_k} \dashint_{B_{\varepsilon_k/\varrho_k}(z)} \vert \bar{g}_{\varepsilon_k}\vert  \dd y \bigg) \dd z
\end{equation*}
with $w_{\varepsilon_k} \coloneqq u_{\varepsilon_k}(\varrho_k \, \cdot)$ and $\bar{g}_{\varepsilon_k} \coloneqq \varrho_k g_{\varepsilon_k}(\varrho_k \, \cdot)$. For the restrictions to $B_2(0)$ with $w_{\varepsilon_k} \in W^{1,1}(B_2(0))$ and $\gamma_{\varepsilon_k}^{\varrho_k} = \bar{g}_{\varepsilon_k} \mathcal{L}^d \mrs B_2(0)$ with $\bar{g}_{\varepsilon_k} \in L^1(B_2(0), \Rd)$ we find for $\ell = 2$, respectively, $\ell = 1$, 
\begin{align*}
\frac{E_{\varepsilon_k}(u_{\varepsilon_k}, \gamma_{\varepsilon_k}, B_{2\varrho_k}(0)) }  { \varrho_{k}^{d-1}} 
&\ge \frac{1}{\varrho_k} \int_{B_{2}(0)} \vert \nabla w_{\varepsilon_k} - \bar{g}_{\varepsilon_k}|^2 \dd z \quad\mbox{and}\\ 
\frac{E_{\varepsilon_k}(u_{\varepsilon_k}, \gamma_{\varepsilon_k}, B_{\varrho_k}(0)) }  { \varrho_{k}^{d-1}} 
&\ge  \frac{\varrho_k}{\varepsilon_k} \int_{B_{1}(0)} f \bigg( \frac{\varepsilon_k}{\varrho_k} \dashint_{B_{\varepsilon_k /\varrho_k}(z)} \vert \bar{g}_{\varepsilon_k} \vert  \dd y \bigg) \dd z. 
\end{align*}
Since we may assume that the left hand side of~\eqref{blowupfunction} is finite, the first inequality yields $\| \nabla w_{\varepsilon_k} - \bar{g}_{\varepsilon_k} \|_{L^2(B_2(0),\R^d)} \to 0$. Since $0 \leq \varepsilon_k /\varrho_k \leq 1/k \to 0$, $w_{\varepsilon_k} \to u_0$ in $L^1(B_2(0))$ and $\gamma_{\varepsilon_k}^{\varrho_k} \to \gamma_0$ in the flat norm on $B_2(0)$, \eqref{blowupfunction} for $\ell = 1$ and the second inequality now imply
\begin{equation*}
\delta^{d-1}\lim_{\varrho \searrow 0} \frac{\Psi'(u, \gamma, B_{\varrho}(0))}  { \varrho^{d-1}}
 \ge \liminf_{k \to \infty} H_{\varepsilon_k/\varrho_k}(w_{\varepsilon_k},\gamma_{\varepsilon_k}^{\varrho_k}, B_1(0)) \geq H^{\prime}(u_0, \gamma_0, B_1(0)).
\end{equation*}
Thus, the desired estimate follows from the arbitrariness of $\delta > 1$. \qedhere
\end{proof}

As an immediate consequence of Lemma \ref{derivativesingularpartBV}
and Lemma \ref{derivativesingularpartBV2} we obtain:

\begin{corollary}\label{surfacebvCor}
Let $A \subset \Omega$ be open.  For every $(u,\gamma) \in  BV(\Omega) \times \Mam$  with $\gamma = D^su+ g \mathcal{L}^1$ and $\nabla u- g \in L^2(\Omega,\R^d)$ we have 
\begin{equation*}
\Psi^{\prime}(u, \gamma, A) \geq \int_{J_u \cap A} h' \dd  \HA \quad \text{with }
h'(x_0) \coloneqq \frac{H^{\prime}_{x_0}(u_0, \gamma_0, B_1(x_0))}{\omega_{d-1}} \text{ for } \HA \text{-a.e. } x_0 \in J_u,
\end{equation*}
where $u_0$ and $\gamma_0$ are defined as in Lemma~\ref{derivativesingularpartBV2}.
\end{corollary}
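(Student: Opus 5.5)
The corollary follows by combining the two preceding lemmas pointwise on $J_u$. Fix $(u,\gamma)$ as in the statement and an open set $A\subset\Omega$. By Lemma~\ref{derivativesingularpartBV}, $\Psi^{\prime}(u,\gamma,\cdot)$ is the restriction to $\Am$ of a regular Borel measure, and
\begin{equation*}
\Psi^{\prime}(u,\gamma,A)\ \geq\ \int_{J_u\cap A} h \dd\HA,
\qquad
h(x_0)=\lim_{\varrho\searrow0}\frac{\Psi^{\prime}(u,\gamma,B_\varrho(x_0))}{\omega_{d-1}\varrho^{d-1}}
\end{equation*}
for $\HA$-a.e.\ $x_0\in J_u$. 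In particular, the limit defining $h$ exists $\HA$-a.e.\ on $J_u$, so there it coincides with the corresponding $\liminf$.

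Lemma~\ref{derivativesingularpartBV2} then gives, for $\HA$-a.e.\ $x_0\in J_u$,
\begin{equation*}
\liminf_{\varrho\searrow0}\frac{\Psi^{\prime}(u,\gamma,B_\varrho(x_0))}{\varrho^{d-1}}\ \geq\ H^{\prime}_{x_0}(u_0,\gamma_0,B_1(x_0)).
\end{equation*}
Dividing by $\omega_{d-1}$ yields $h(x_0)\geq h^{\prime}(x_0)$ outside the union of two $\HA$-null exceptional sets, which is again $\HA$-null. Since $h$ and $h^{\prime}$ take values in $[0,\infty]$, monotonicity of the integral gives
\begin{equation*}
\int_{J_u\cap A}h\dd\HA\ \geq\ \int_{J_u\cap A}h^{\prime}\dd\HA,
\end{equation*}
and chaining this with the first inequality proves the claim.

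The only point that needs care is measurability of $h^{\prime}$. It is only defined $\HA$-a.e., and its value depends on $x_0$ through the jump data $(u(x_0\pm),\nu_u(x_0))$ and the translated functional. If Borel measurability is not immediate, the integral of $h^{\prime}$ can be read as a lower integral: it is still dominated by the integral of the measurable function $h$, so the inequality survives. Apart from this, the argument is bookkeeping. In the later analysis $h^{\prime}(x_0)$ will anyway be bounded below by the explicit quantity $\theta([u](x_0))$, which is a measurable function of $x_0$.
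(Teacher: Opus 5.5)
Your proposal is correct and follows the paper, which states the corollary as an immediate consequence of Lemma~\ref{derivativesingularpartBV} and Lemma~\ref{derivativesingularpartBV2}. The argument is exactly your pointwise comparison $h \geq h'$ for $\HA$-a.e.\ point of $J_u$, which is possible because the limit defining $h$ exists there. Your remark on the measurability of $h'$ goes beyond the paper, which does not address it, and reading the integral as a lower or upper integral handles it correctly.
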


We now aim to find an appropriate estimate of $H^{\prime}(u_0, \gamma_0, B_1(x_0))$ from below.
Let $\nu \in \Sm$. For every $y \in \R^d$ we denote by $y_{\nu}$ and $y_{\nu^{\perp}}$ the projections onto the subspaces $V = \lbrace t \nu \colon t \in \R \rbrace$ and $V^{\perp} = \lbrace x \in \Rd : x \cdot \nu = 0\rbrace$, respectively. For $\varrho >0$ and $x \in \Rd$ we define cylinders
\begin{equation}
C_{\varrho}^{\nu}(0)\coloneqq \lbrace y \in \Rd : |y_{\nu}| <\varrho , |y_{\nu^{\perp}}| < \varrho   \rbrace, \quad C_{\varrho}^{\nu}(x)\coloneqq  x + C_{\varrho}^{\nu}(0). \label{zylinder}
\end{equation}

In the next lemma, which is analogous to \cite[Lemma 5.4]{LVDD07}, we consider the situation of a pure jump function $u_0$ and correspondingly $\gamma_0 = D^s u_0$. We here show the existence of a suitable sequence $\{(\ue, \gae)\}_{\varepsilon}$ converging to $(u_0,\gamma_0)$ and approximating the energy, i.e., $\lim_{\varepsilon \to 0} H_{\varepsilon}(\ue, \ggae, A) = H^{\prime}(u_0, \gamma_0, A) $, with $
\ue=u_0$ and $\gae= \gamma_0$ outside of an infinitesimal neighborhood of the jump set of~$u_0$.

\begin{lemma}[approximation of pure jump functions]\label{modificationsurface}
Let $a \neq b \in \R$, $\nu \in \Sm$,
\begin{align*}
u_0(x)=
\begin{cases} a & \text{ if } x \cdot \nu \geq 0,\\
 b & \text{ if } x \cdot \nu < 0,
\end{cases}
\end{align*}
and $\gamma_0=D^su_0.$ 
For every open subset $A$ of $C_1^{\nu}(0)$ there exists a sequence $\{(u_{\varepsilon},\gamma_\varepsilon)\}_{\varepsilon}$ in $W^{1,1}(B_2(0)) \times \mathcal{M}(B_2(0), \R^d)$ with $\norm{\ue}_{L^{\infty}(B_2(0))} \leq K$, with $\gae=\ggae \mathcal{L}^d$ for a function $\ggae \in L^1(B_2(0),  \R^d)$ and with $\norm{\nabla \ue - g_{\varepsilon}}_{L^2(B_2(0), \Rd)} \leq \bar{K} $ for every $\varepsilon >0$ such that $\ue \to u_0$ in $L^1(B_2(0))$ and $\gae \to \gamma_0$ in the flat norm on $B_2(0)$ and such that, for a subsequence, there holds
\begin{gather*}
\lim_{\varepsilon \to 0}H_{\varepsilon}(\ue, \gae, A) = H^{\prime}(u_0, \gamma_0, A),\\
\ue(x)= a \text{ if } x \cdot \nu \geq \alpha_{\varepsilon} \text{ and } \ue(x)= b \text{ if } x \cdot \nu \leq - \beta_{\varepsilon},\\
\spt(\ggae) \subset \{ x \in \R^d \colon x \cdot \nu \in (-\beta_{\varepsilon},\alpha_{\varepsilon})\},
\end{gather*}
where $\{\alpha_{\varepsilon}\}_{\varepsilon}$, $\{\beta_{\varepsilon}\}_{\varepsilon}$ are suitable nonnegative infinitesimal sequences.
\end{lemma}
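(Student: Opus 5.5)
We follow the strategy of \cite[Lemma 5.4]{LVDD07}. First we take a recovery sequence for the $\Gamma$-lower limit. By the $\Gamma$-convergence compactness theorem \cite[Theorem 16.9]{dm} we pass to a subsequence along which $H'$ is a $\Gamma$-limit. This gives $(v_\varepsilon,\tilde\gamma_\varepsilon=\tilde g_\varepsilon\mathcal{L}^d)$ with $\norm{v_\varepsilon}_{L^\infty}\le K$ and $\norm{\nabla v_\varepsilon-\tilde g_\varepsilon}_{L^2}\le\bar K$ such that $v_\varepsilon\to u_0$ in $L^1(B_2(0))$, $\tilde\gamma_\varepsilon\to\gamma_0$ in the flat norm, and $H_\varepsilon(v_\varepsilon,\tilde\gamma_\varepsilon,A)\to H'(u_0,\gamma_0,A)$. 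We may assume $H'(u_0,\gamma_0,A)<\infty$; the case $=\infty$ is handled by taking any sequence converging appropriately. The task is then to modify this sequence away from the hyperplane $\{x\cdot\nu=0\}$ without increasing the nonlocal energy in the limit.

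Next we glue with the constants $a$ and $b$. We fix infinitesimal $\alpha_\varepsilon,\beta_\varepsilon$ with $\alpha_\varepsilon,\beta_\varepsilon\gg\varepsilon$, chosen slowly enough that $\int_{\{x\cdot\nu>\alpha_\varepsilon/2\}}|v_\varepsilon-a|\,\mathrm dx$ and $\int_{\{x\cdot\nu<-\beta_\varepsilon/2\}}|v_\varepsilon-b|\,\mathrm dx$ are $o(\alpha_\varepsilon)$ and $o(\beta_\varepsilon)$, respectively. This is possible since $v_\varepsilon\to u_0$ in $L^1$: pick $\alpha_\varepsilon\ge\norm{v_\varepsilon-u_0}_{L^1}^{1/2}$. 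We then use a cut-off $\varphi_\varepsilon(x\cdot\nu)$ with $\varphi_\varepsilon=1$ on $[\alpha_\varepsilon,\infty)$, $\varphi_\varepsilon=0$ on $[0,\alpha_\varepsilon/2]$, and $|\varphi_\varepsilon'|\le 4/\alpha_\varepsilon$, and analogously on the side of $b$. Set
\[ u_\varepsilon=\varphi_\varepsilon a+(1-\varphi_\varepsilon)v_\varepsilon,\qquad g_\varepsilon=(1-\varphi_\varepsilon)\tilde g_\varepsilon+\nabla\varphi_\varepsilon(a-v_\varepsilon). \]
This keeps $\nabla u_\varepsilon-g_\varepsilon=(1-\varphi_\varepsilon)(\nabla v_\varepsilon-\tilde g_\varepsilon)$, so the $L^2$ bound $\bar K$ and the $L^\infty$ bound $K$ are preserved. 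It also yields the required support property $\spt(g_\varepsilon)\subset\{x\cdot\nu\in(-\beta_\varepsilon,\alpha_\varepsilon)\}$ and the equalities $u_\varepsilon=a$, respectively $u_\varepsilon=b$, beyond the layers.

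For the energy we argue as in \eqref{eqn_gluing_f}, using the subadditivity of $f$ and $f(t)\le c_0t$. The extra term is at most $c_0\int|\nabla\varphi_\varepsilon||a-v_\varepsilon|\lesssim\alpha_\varepsilon^{-1}\int_{\{\alpha_\varepsilon/2-\varepsilon<x\cdot\nu<\alpha_\varepsilon+\varepsilon\}}|v_\varepsilon-a|\to0$. The part $(1-\varphi_\varepsilon)\tilde g_\varepsilon$ satisfies $|(1-\varphi_\varepsilon)\tilde g_\varepsilon|\le|\tilde g_\varepsilon|$, and $f$ is monotone, so its contribution is bounded by $H_\varepsilon(v_\varepsilon,\tilde\gamma_\varepsilon,A)$. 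Hence $\limsup H_\varepsilon(u_\varepsilon,\gamma_\varepsilon,A)\le H'(u_0,\gamma_0,A)$. The reverse inequality follows from the definition of $H'$, provided the convergences hold. Convergence $u_\varepsilon\to u_0$ in $L^1$ is clear. For $\gamma_\varepsilon\to\gamma_0$ in the flat norm, note that $\gamma_\varepsilon-\tilde\gamma_\varepsilon=-\varphi_\varepsilon\tilde g_\varepsilon+\nabla\varphi_\varepsilon(a-v_\varepsilon)$. The second term tends to $0$ in $L^1$. For the first, write $\tilde g_\varepsilon=\nabla v_\varepsilon-(\nabla v_\varepsilon-\tilde g_\varepsilon)$; the $L^2$ part, bounded by $\bar K$ and supported in a set whose intersection with $B_2$ has bounded measure, is harmless only up to a bounded error, which is a problem.

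The main obstacle is exactly this flat-norm convergence of $\gamma_\varepsilon$: the term $\varphi_\varepsilon(\nabla v_\varepsilon-\tilde g_\varepsilon)$ need not be small. The plan is to test against $\psi\in W^{1,\infty}_0$ and use $\tilde\gamma_\varepsilon=Dv_\varepsilon-(\nabla v_\varepsilon-\tilde g_\varepsilon)\mathcal{L}^d$ together with $\gamma_\varepsilon=Du_\varepsilon-(1-\varphi_\varepsilon)(\nabla v_\varepsilon-\tilde g_\varepsilon)\mathcal{L}^d$. Since $Du_\varepsilon\to Du_0$ in the flat norm (by $L^1$ convergence of $u_\varepsilon$), it suffices that $w_\varepsilon:=\nabla v_\varepsilon-\tilde g_\varepsilon\to0$ in the flat sense. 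From $\tilde\gamma_\varepsilon\to\gamma_0=Du_0$ and $Dv_\varepsilon\to Du_0$ we get $w_\varepsilon\mathcal{L}^d\to0$ in the flat norm. Because $w_\varepsilon$ is bounded in $L^2$, this upgrades to $w_\varepsilon\rightharpoonup0$ weakly in $L^2$, hence $(1-\varphi_\varepsilon)w_\varepsilon\rightharpoonup0$ as well, since $\varphi_\varepsilon\to\chi_{\{x\cdot\nu>0\}}+\chi_{\{x\cdot\nu<0\}}$ pointwise a.e. (with the analogous cut-off on the $b$ side) and $\norm{w_\varepsilon}_{L^2}\le\bar K$. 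Weak $L^2$ convergence in turn yields flat convergence by the compact embedding used in Theorem~\ref{commBV}. This closes the argument; finally, one restricts to the chosen subsequence.
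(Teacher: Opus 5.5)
Your argument is correct and has the same skeleton as the paper's proof. You take a sequence realizing $H'(u_0,\gamma_0,A)$ and glue it to the constants $a$ and $b$ by a cut-off in the $\nu$-direction, with the transition layer at distance more than $\varepsilon$ from $\{x\cdot\nu=0\}$. The new eigenstrain is the gradient of the glued combination, so $\nabla u_\varepsilon-g_\varepsilon$ is the cut-off times the old elastic strain. The extra term $\nabla\varphi_\varepsilon(a-v_\varepsilon)$ is controlled by subadditivity of $f$ and $f(t)\le c_0t$, with the layer width tuned to the $L^1$-rate of $v_\varepsilon\to u_0$. You tune it by the explicit choice $\alpha_\varepsilon\ge\|v_\varepsilon-u_0\|_{L^1}^{1/2}$; the paper does it by a diagonal choice of $\varepsilon_k$ for given $\sigma_k$.

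The genuine difference is how you get flat-norm convergence of the modified eigendeformation.
\begin{itemize}
\item The paper transfers the convergence of the original sequence through the cut-off. It additionally requires $\sigma_k^{-1}\|\gamma_{\varepsilon_k}-\gamma_0\|_{\mathrm{flat}}\to0$ and tests against $\varphi\,\phi_{\varepsilon_k,\sigma_k}$, whose Lipschitz constant is $O(1/\sigma_k)$.
\item You use the identity $\gamma_\varepsilon=Du_\varepsilon-(1-\varphi_\varepsilon)w_\varepsilon\mathcal{L}^d$. Everything then reduces to $u_\varepsilon\to u_0$ in $L^1$ and smallness of $(1-\varphi_\varepsilon)w_\varepsilon$.
\end{itemize}
Your version is more robust, and simpler than you made it. The detour through weak $L^2$-convergence and the compact embedding is unnecessary, because H\"older's inequality gives
\[
\|(1-\varphi_\varepsilon)w_\varepsilon\|_{L^1(B_2(0))}\le\bar K\,\big|\{x\cdot\nu\in(-\beta_\varepsilon,\alpha_\varepsilon)\}\cap B_2(0)\big|^{1/2}\to0 .
\]
So the flat convergence of $\gamma_\varepsilon$ uses only the $L^2$-bound, not $\tilde\gamma_\varepsilon\to\gamma_0$. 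It also needs no coupling between layer width and flat-norm rate.

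Two small points:
\begin{itemize}
\item The $L^\infty$-bound requires $|a|,|b|\le K$. The paper assumes this, and without it no admissible sequence converges to $u_0$.
\item Passing to ``a subsequence along which $H'$ is a $\Gamma$-limit'' is not accurate, since a $\Gamma$-limit along a subsequence can exceed $H'$. It suffices to take a sequence realizing the $\Gamma$-lower limit. For the reverse inequality, note that the $\Gamma$-lower limit along any subsequence dominates $H'$.
\end{itemize}
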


\begin{proof}
It is not restrictive to assume $\nu=e_1$, $|a| \leq K$ and $|b| \leq K$. Let $\{(u_{\varepsilon},\gamma_\varepsilon)\}_{\varepsilon}$ be a sequence in $W^{1,1}(B_2(0)) \times \mathcal{M}(B_2(0), \R^d)$ with $\norm{\ue}_{L^{\infty}(B_2(0))} \leq K$, $\gae=\ggae \mathcal{L}^d$ for a function $\ggae \in L^1(B_2(0),  \R^d)$ and $\norm{\nabla \ue - g_{\varepsilon}}_{L^2(B_2(0), \Rd)} \leq \bar{K} $ for every $\varepsilon >0$ such that $\ue \to u_0$ in $L^1(B_2(0))$, $\gae \to \gamma_0$ in the flat norm on $B_2(0)$ and
\begin{equation*}
H_{\varepsilon}(\ue, \gae,A) \to H^{\prime}(u_0, \gamma_0,A) < \infty
\end{equation*}
(with all elements being finite). We may assume $\varepsilon < \dist(C_1^{\nu}(0), \partial B_2(0))$ in what follows. We shall now modify this sequence in such a way that $\ue(x)= b$ holds for $x_1 \leq - \beta_\varepsilon$ and $\spt(g_\varepsilon) \subset (-\beta_\varepsilon,\infty) \times \R^{d-1}$. By analogous arguments applied on the other half-space the full statement of the lemma is then established.  

For $\sigma > 0$ we consider the continuous function $\phi \in C(\R^d)$ given by
\begin{align*}
\phi(x)= \phi_{\varepsilon, \sigma}(x)\coloneqq 
\begin{cases}0 & \text{ if }x_1 \leq -2 \varepsilon - \sigma,\\
\text{affine} & \text{ if } -2 \varepsilon - \sigma \leq x_1 \leq -2 \varepsilon,\\
1 & \text{ if } x_1 \geq -2 \varepsilon.
\end{cases}
\end{align*}
In particular, we have $|\nabla \phi| \leq 1 / \sigma$ on $\Omega$ and $\norm{\phi}_{L^{\infty}(\Rd)} = 1$. We define $(\tilde{u}_{\varepsilon},\tilde{\gamma}_\varepsilon)$ in $W^{1,1}(B_2(0)) \times \mathcal{M}(B_2(0), \R^d)$ with $\tgae= \tggae \mathcal{L}^d$ via
\begin{equation*}
\tilde{u}_\varepsilon \coloneqq \phi u_\varepsilon +(1-\phi) b \quad \text{and} \quad  \tilde{g}_\varepsilon \coloneqq \phi g_\varepsilon +(1-\phi) \cdot 0 + \nabla \phi (u_\varepsilon - b).
\end{equation*} 
By construction, we have $\norm{\tilde{u}_\varepsilon}_{L^{\infty}(B_2(0))}\leq K$ and 
\begin{equation*}
\int_{B_2(0)} |\nabla \tilde{u}_{\varepsilon}- \tilde{g}_{\varepsilon}|^2  \dd x  = \int_{B_2(0)} |\phi (\nabla u_{\varepsilon} - g_{\varepsilon}) |^2  \dd x \leq \bar{K}.
\end{equation*}
Exactly as in Step~1 in the proof of Lemma~\ref{derivativesingularpartBV}, see~\eqref{eqn_gluing_f}, we find that the non-local term for this modification is estimated by
\begin{align*}
\frac{1}{\varepsilon}\int_{A} f \bigg( \varepsilon \dashint_{B_{\varepsilon}(x)} |\tilde{g}_\varepsilon| \dd y \bigg) \dd x & \leq \frac{1}{\varepsilon} \int_{A } f  \bigg(\varepsilon \dashint_{B_{\varepsilon}(x)} |g_\varepsilon| \dd y \bigg) \dd x \\ & \quad + \frac{1}{\varepsilon}\int_{A} f \bigg( \varepsilon \dashint_{B_{\varepsilon}} |0| \dd y \bigg) \dd x + \frac{c_0}{\sigma} \int_{\{x \in A \colon \nabla \phi \neq 0\}_{\varepsilon+}} |u_{\varepsilon}-b| \dd x \\
& \leq \frac{1}{\varepsilon} \int_{A } f  \bigg(\varepsilon \dashint_{B_{\varepsilon}(x)} |g_\varepsilon| \dd y \bigg) \dd x + \frac{c_0}{\sigma} \int_{B_2^-(0)} |u_{\varepsilon}-b| \dd x,
\end{align*}
where $B_2^-(0) \coloneqq \{ x \in B_2(0) \colon x_1 < 0\}$. Hence, we have 
\begin{equation*}
H_{\varepsilon}(\tilde{u}_{\varepsilon},\tgae,A)  \leq H_{\varepsilon}(\ue,\gae,A) + \frac{c_0}{\sigma}\int_{B_2^-(0)} { |\ue - b|} \dd x. 
\end{equation*}
Since $\{u_{\varepsilon}\}_\varepsilon$ converges to~$u_0$ in $L^1(B_2(0))$ and $u_0 = b$ on~$B_2^-(0)$, the second integral on the right-hand side of this inequality vanishes in the limit~$\varepsilon \to 0$. 
Recalling that $\gamma_\varepsilon \to \gamma_0$ in the flat norm on~$B_2(0)$, we can therefore find to a given positive infinitesimal sequence $\{\sigma_{k}\}_k$ a positive infinitesimal sequence $\{\varepsilon_{k}\}_k$ such that
\begin{align}
\frac{1}{\sigma_{k}} \int_{B_2^-(0)} { |u_{\varepsilon_{k}} - b|} \dd x \to 0 \quad \text{and} \quad \frac{1}{\sigma_{k}} \norm{\gamma_{\varepsilon_k} - \gamma_0}_{\textnormal{flat}} \to 0 \quad \text{ as } k \to \infty. \label{sigmahchoice}
\end{align}
This shows 
\begin{align*}
\liminf_{k \to \infty}H_{\varepsilon_k}(\tilde{u}_{\varepsilon_k},\tilde{\gamma}_{\varepsilon_k},A) \leq \liminf_{k \to \infty}H_{\varepsilon_k}(u_{\varepsilon_k},{\gamma}_{\varepsilon_k},A) = H^{\prime}(u_0,\gamma_0,A),
\end{align*}
where $\tilde{u}_{\varepsilon_k},\tilde{\gamma}_{\varepsilon_k}$ are defined as $\tilde{u}_{\varepsilon},\tilde{\gamma}_{\varepsilon}$ with $(\varepsilon, \sigma)$ replaced by $(\varepsilon_k, \sigma_k)$. In order to conclude with $H_{\varepsilon_k}(\tilde{u}_{\varepsilon_k},\tilde{\gamma}_{\varepsilon_k},A) \to H^{\prime}(u_0,\gamma_0,A)$ as claimed, it only to remains to verify $\tilde{u}_{\varepsilon_k} \to u_0$ in $L^1(B_2(0))$ and $\tilde{\gamma}_{\varepsilon_k} \to \gamma_0$ in the flat norm on $B_2(0)$, as $k \to \infty$. The first convergence is obvious from  $\ue \to u_0$ in $L^1(B_2(0))$  and $u_0 = b$ on the set $\{x \in B_2(0) \colon \phi_{\varepsilon_k,\sigma_k} \neq 1\}$. For the second one, we consider $\varphi \in W^{1, \infty}_0(B_2(0))$ with $\norm{\varphi}_{W^{1, \infty}_0(B_2(0))} \leq 1$ and we first rewrite
\begin{multline*}
 \int_{B_2(0)} \varphi \dd (\tilde{\gamma}_{\varepsilon_k} - \gamma_0) \\ = \int_{B_2(0)} \varphi \phi_{\varepsilon_k,\sigma_k} \dd (\gamma_{\varepsilon_k}-\gamma_0) + \int_{B_2(0)} \varphi \nabla \phi_{\varepsilon_k,\sigma_k} (u_{\varepsilon_k} -b )   \dd x - \int_{B_2(0)} \varphi (1-\phi_{\varepsilon_k,\sigma_k}) \dd \gamma_0.
\end{multline*}
Since $\phi_{\varepsilon_k,\sigma_k} \equiv 1$ on $B_2(0) \setminus B_2^-(0)$, the relevant domain of integration of the second integral is only~$B_2^-(0)$, while the third integral does not contribute at all. Moreover, due to $|\nabla \phi_{\varepsilon_k,\sigma_k}| \leq 1/\sigma_k$ on~$B_2(0)$, the product $\varphi \phi_{\varepsilon_k,\sigma_k}$ is in $W^{1, \infty}_0(B_2(0))$ with $\norm{\psi \phi_{\varepsilon_k,\sigma_k}}_{W^{1, \infty}_0(B_2(0))}\leq C/\sigma_k$ for a constant~$C$ which is independent of~$k \in \N$. Thus,~\eqref{sigmahchoice} implies
\begin{equation*}
\bigg \vert \int_{B_2(0)} \varphi \dd (\tilde{\gamma}_{\varepsilon_k} - \gamma_0) \bigg \vert
 \leq   \frac{C}{\sigma_k} \norm{\gamma_{\varepsilon_k} - \gamma_0}_{\textnormal{flat}} + \frac{\norm{\varphi}_{L^\infty(B_2(0))}}{\sigma_k} \int_{B_2^-(0)}  |u_{\varepsilon_k} -b | \dd x  \to 0 \quad \text{as } k \to \infty.
\end{equation*}
Thus, we have verified $\tilde{\gamma}_{\varepsilon_k} \to \gamma_0$ in the flat norm on~$B_2(0)$ as well.  Finally, we notice that by construction, if we set $\beta_{\varepsilon_k} \coloneqq 2 \varepsilon_k + \sigma_k$, the sequence $\{\beta_{\varepsilon_k}\}_k$ is infinitesimal, and we have
\begin{equation*}
 \tilde{u}_{\varepsilon_k} = b \text{ if } x_1 \leq - \beta_{\varepsilon_k} \quad \text{and} \quad \spt(\tilde{g}_{\varepsilon_k}) \subset \{ x \in \R^d \colon x_1 > -\beta_{\varepsilon_k}\}
\end{equation*}
for each $k \in \N$. This finishes the (one-sided) desired modification of the original sequences.
\end{proof}

\begin{lemma}\label{equalityonset}
For $\HA$ a.e. $x_0 \in J_u$ we have
\begin{equation*}
H_{x_0}^{\prime}(u_0, \gamma_0, B_1(x_0)) = H_{x_0}^{\prime}(u_0, \gamma_0, C_{1}^{\nu}(x_0)),
\end{equation*}
where $\nu=\nu_{u_0}(x_0)$ and $u_0$ and $\gamma_0$ are defined as in Lemma \ref{derivativesingularpartBV2}.
\end{lemma}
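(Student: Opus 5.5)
We may take $x_0=0$ and $\nu=e_1$, so that $u_0$ is the pure jump function with values $a=u(x_0+)$, $b=u(x_0-)$ across the hyperplane $\{x_1=0\}$, and $\gamma_0=(a-b)e_1\HA\mrs\{x_1=0\}$. The cylinder $C_1^{\nu}(0)$ contains $B_1(0)$, and $H_\varepsilon(u,\gamma,\cdot)$ is increasing in the set variable. Hence $H'(u_0,\gamma_0,B_1)\le H'(u_0,\gamma_0,C_1^\nu)$ is immediate, and all the work lies in the reverse inequality. The point is that $(u_0,\gamma_0)$ depends only on $x_1$, and $C_1^\nu\setminus B_1$ is "the same jump" in the transversal directions. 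So I would compare energies on the two sets through their intersection with the jump hyperplane, using the $(d-1)$-dimensional translation invariance of the problem.

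\textbf{Plan.} First, by the definition of the $\Gamma$-lower limit (and Lemma~\ref{modificationsurface}, applied with $A=B_1(0)\subset C_1^\nu(0)$), take a recovery sequence $(u_\varepsilon,\gamma_\varepsilon)$ for $H'(u_0,\gamma_0,B_1)$. This sequence has $u_\varepsilon=u_0$ and $g_\varepsilon=0$ outside the slab $S_\varepsilon=\{-\beta_\varepsilon<x_1<\alpha_\varepsilon\}$. For $z\in\{x_1=0\}$ with $|z|$ small, the translates $x\mapsto u_\varepsilon(x-z)$, $g_\varepsilon(x-z)$ still converge to $(u_0,\gamma_0)$, because the limit is invariant in those directions; the flat convergence is preserved under translation by an $L^\infty$-small shift. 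Their energy on $B_1(z)$ equals the original energy on $B_1$, up to the boundary effect from $B_\varepsilon(x)\cap B_2$, which is negligible as $\varepsilon\to0$.

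Second, I would rather work at the level of the set function. The nonlocal energy of these sequences concentrates on $S_\varepsilon\cap A$, so one expects
\[
A\mapsto H'(u_0,\gamma_0,A)
\]
to behave like $c\,\HA(\{x_1=0\}\cap A)$ for some constant $c$. Concretely, I would show the following: for any open $A\subset C_1^\nu$, the value $H'(u_0,\gamma_0,A)$ depends only on $A\cap\{x_1=0\}$ in the limit. This follows because, by Lemma~\ref{modificationsurface}, admissible sequences can be taken to coincide with $(u_0,0)$ off a vanishing slab, and the region of $A$ at distance more than $\alpha_\varepsilon+\beta_\varepsilon+\varepsilon$ from the hyperplane contributes zero energy. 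Hence $H'(u_0,\gamma_0,B_1)\ge \limsup H'(u_0,\gamma_0, C^\nu_{1}\cap\{|x_1|<\eta\})$ for every $\eta>0$. Here one glues: inside $\{|x_1|<\eta\}\cap B_{1}$-type regions the recovery sequence for $B_1$ is used, since $B_1\cap\{|x_1|<\eta\}$ and $C_1^\nu\cap\{|x_1|<\eta\}$ have the same trace on the hyperplane. The discrepancy near the lateral boundary $\{|x'|=1\}$ is controlled by a cut-off in the $x'$-variables. This cut-off uses the fundamental estimate~\eqref{fundamentalestimate} (Step~1 of Lemma~\ref{derivativesingularpartBV}, adapted to $H_\varepsilon$) together with the superadditivity and inner regularity from the measure property. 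The resulting error is bounded by $C\,\HA(\{x_1=0\}\cap(B_1\setminus B_{1-\eta'}))\to0$.

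\textbf{Main obstacle.} The hard part is the gluing near the lateral boundary, where the two domains agree only up to a thin shell. It is not the bulk term $\|\nabla u-g\|_{L^2}\le\bar K$; that is a constraint, and under convex cut-off combinations it is preserved as in Lemma~\ref{modificationsurface}. The difficulty is that the nonlocal term does not see $x'$-cut-offs cheaply. The error term is $c_0M\|u_1-u_2\|_{L^1}$, which vanishes, but the $(1+\eta)$ factor must be applied to energies concentrated on shells. These shells need to be shown small by using the linear growth bound $H'(u_0,\gamma_0,A)\le C\,\HA(A\cap\{x_1=0\})$. That bound I would prove directly by the explicit competitor: $u_\varepsilon$ affine in $x_1$ on a slab of width $\varepsilon$, and $g_\varepsilon=\nabla u_\varepsilon$. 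If this is delicate, I would instead argue by monotone approximation $B_1\supset$ a polygon whose trace exhausts the disc. Then the corresponding inequality for cylinders $C_r^\nu$ comes from dilation/translation invariance of $H'$ at the pure jump, and equality follows by letting the shells shrink.
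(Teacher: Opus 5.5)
Your strategy (patch the $B_1$-recovery sequence with a cheap competitor near the lateral boundary $\{|x'|=1\}$) differs from the paper's and can work. The mechanism you invoke for the patching, however, is not available, so as written there is a gap.

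First, nothing establishes a measure structure for $A\mapsto H'(u_0,\gamma_0,A)$. The De Giorgi--Letta argument in the paper is carried out only for the inner regular envelope $\Psi'$ of $E'$, along a $\Gamma$-converging subsequence. A $\Gamma$-liminf is in general not subadditive, because liminfs on different sets may be realized along different subsequences. So ``superadditivity and inner regularity from the measure property'', and a linear bound on the \emph{value} $H'(u_0,\gamma_0,\text{shell})$, cannot be combined with $H'(u_0,\gamma_0,B_1)$. The patching has to be done on sequences.

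Second, the constraint $\|\nabla u-g\|_{L^2(B_2)}\le\bar K$ is not preserved when two arbitrary admissible pairs are glued. Indeed $\nabla u-g=\varphi(\nabla u_1-g_1)+(1-\varphi)(\nabla u_2-g_2)$ can have squared norm close to $2\bar K^2$. The constraint survives only because one partner has zero strain: $(b,0)$ in Lemma~\ref{modificationsurface}, or your competitor with $g_\varepsilon=\nabla u_\varepsilon$.

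Third, slab concentration gives $H'(A)=H'(A\cap\{|x_1|<\eta\})$, not that $H'$ depends only on $A\cap\{x_1=0\}$. The $B_1$-recovery sequence is uncontrolled on $(C^{e_1}_1\setminus B_1)\cap\{|x_1|<\eta\}$, so your ``Hence'' line already contains the claim.

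A direct gluing closes the argument.
\begin{itemize}
\item Take $(u^1_\varepsilon,g^1_\varepsilon)$ from Lemma~\ref{modificationsurface} with $A=B_1$, along its subsequence.
\item Take the competitor $u^2_\varepsilon=b+(a-b)\min\{\max\{x_1/\varepsilon,0\},1\}$ with $g^2_\varepsilon=\nabla u^2_\varepsilon$.
\item Take a cut-off $\varphi=\varphi(x')$ with $\varphi=1$ on $\{|x'|<1-2\eta\}$ and $\spt\varphi\subset\{|x'|<1-\eta\}$.
\item Set $u=\varphi u^1_\varepsilon+(1-\varphi)u^2_\varepsilon$ and $g=\varphi g^1_\varepsilon+(1-\varphi)g^2_\varepsilon+\nabla\varphi\,(u^1_\varepsilon-u^2_\varepsilon)$.
\end{itemize}
Then $\nabla u-g=\varphi(\nabla u^1_\varepsilon-g^1_\varepsilon)$, so the constraint holds. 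Subadditivity and monotonicity of $f$ split the nonlocal term on $C^{e_1}_1$ into three parts.
\begin{itemize}
\item The $\varphi g^1_\varepsilon$-part only sees points of $\{|x'|<1-\eta+\varepsilon,\ -\beta_\varepsilon-\varepsilon<x_1<\alpha_\varepsilon+\varepsilon\}$. For small $\varepsilon$ this set lies in $B_1$, so the part is at most $H_\varepsilon(u^1_\varepsilon,g^1_\varepsilon\mathcal{L}^d,B_1)$.
\item Since $f\le c_0$, the $(1-\varphi)g^2_\varepsilon$-part is at most $3c_0\mathcal{L}^{d-1}(\{1-2\eta-\varepsilon<|x'|<1\})$.
\item The cross term is at most $c_0\|\nabla\varphi\|_\infty\|u^1_\varepsilon-u^2_\varepsilon\|_{L^1}\to0$.
\end{itemize}
Flat convergence survives multiplication by the fixed Lipschitz function $\varphi$. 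Hence $H'(u_0,\gamma_0,C^{e_1}_1)\le H'(u_0,\gamma_0,B_1)+C\eta$, with no $(1+\eta)$ factor and no measure theory.

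The paper avoids gluing altogether. It uses $C^{e_1}_\delta(0)\cap R^{e_1}_{-\beta_\varepsilon-\varepsilon,\alpha_\varepsilon+\varepsilon}\Subset B_1$ for $\delta<1$ to get $H_\varepsilon(\cdot,B_1)\ge H_\varepsilon(\cdot,C^{e_1}_\delta)$. It then rescales by $v_\varepsilon=\delta^du_\varepsilon(\delta\cdot)$ and $\tilde g_\varepsilon=\delta^{d+1}g_\varepsilon(\delta\cdot)$, where the extra $\delta^d$ protects the $L^\infty$ and $L^2$ constraints. It concludes by lower semicontinuity of $H'$ as $\delta\nearrow1$.

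Your polygon/dilation fallback is close to this. Note, though, that the plain dilation $u(\delta\cdot)$, $\delta g(\delta\cdot)$ multiplies $\|\nabla u-g\|^2_{L^2}$ by $\delta^{2-d}$, which violates the constraint for $d\ge3$. That is exactly why the paper perturbs the jump height and invokes lower semicontinuity.
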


\begin{proof}
We follow (up to a change in the scaling factor which disappears in the limit $\delta \nearrow 1$ and which allows to preserve the $L^2$-bounds) the proof of \cite[Proposition 5.5]{LVDD07}.
Note that we only have to show  the ``$\geq$''-inequality since the other inequality is trivially satisfied due to the facts that $B_1(x_0) \subset C_1^\nu(x_0)$ and that $H^{\prime}_{x_0}(u_0, \gamma_0,\scdot)$ is an increasing set function.  We again assume $x_0=0$ and $\nu=e_1$. We consider the sequence $\{(u_{\varepsilon},\gamma_\varepsilon)\}_{\varepsilon}$  with $\gae=\ggae \mathcal{L}^d$ for a function $\ggae \in L^1(B_2(0),  \R^d)$ for every $\varepsilon > 0$ from the previous Lemma~\ref{modificationsurface} for the choice $A=B_1(0)$. Then, we have $\ue(x)=a$ if $x_1 \geq \alpha_{\varepsilon}$, $\ue(x)=b$ if $x_1 \leq - \beta_{\varepsilon}$, and $\spt(g_\varepsilon) \subset R^{e_1}_{-\beta_{\varepsilon}, \alpha_{\varepsilon}}$ for 
\begin{equation}
R^{e_1}_{-\beta_{\varepsilon}, \alpha_{\varepsilon}}\coloneqq  \{ x \in \Rd \colon x_1 \in (- \beta_{\varepsilon}, \alpha_{\varepsilon})\},\label{definitionlayer}
\end{equation}
where $\{\alpha_{\varepsilon}\}_{\varepsilon}$ and $\{\beta_{\varepsilon}\}_{\varepsilon}$ are suitable nonnegative infinitesimal sequences.
Fix $0 < \delta < 1$. For sufficiently small $\varepsilon >0$ we have $C_{\delta}^{e_1}(0) \cap R^{e_1}_{-\beta_{\varepsilon} - \varepsilon, \alpha_{\varepsilon}  + \varepsilon } \Subset B_1(0)$, and since precisely the subsets of $R^{e_1}_{-\beta_{\varepsilon} - \varepsilon, \alpha_{\varepsilon}  + \varepsilon }$ are the relevant domains of integration for the localized $H_\varepsilon$-functional for our sequence $\{(u_{\varepsilon},\gamma_\varepsilon)\}_{\varepsilon}$, we then find
\begin{equation}
H_{\varepsilon}(\ue, \gae, B_1(0)) \geq H_{\varepsilon}(\ue, \gae, C_{\delta}^{e_1}(0) \cap B_1(0)) = H_{ \varepsilon}(\ue, \gae, C_{\delta}^{e_1}(0)). \label{estimateballcylinder}
\end{equation}
If we set $\ve(x)\coloneqq  \delta^d \ue(\delta x)$ and $\tgae\coloneqq    \tggae \mathcal{L}^d$ with $\tggae(x)=\delta^{d+1}\ggae(\delta x)$, we have $\ve \to u_0^{\delta}\coloneqq  \delta^d u_0$ in $L^1(B_2(0))$ and $\tgae \to \gamma^{\delta}_0\coloneqq  \delta^{d} \gamma_0$  in the flat norm in~$B_2(0)$, and
\begin{equation*}
\norm{\nabla \ve- \tilde{g}_{\varepsilon}}_{L^2(B_2(0), \Rd)}^2 \leq \delta^{d+2} \norm{\nabla \ue- {g}_{\varepsilon}}_{L^2(B_2(0), \Rd)}^2 \leq \bar{K}.
\end{equation*}
Moreover, by a change of variables and the fact that $f$ in non-decreasing, we obtain
\begin{align*}
H_{\varepsilon}(\ue, \gae, C_{\delta}^{e_1}(0)) 
&=\delta^d \frac{1}{\varepsilon} \int_{C_{1}^{e_1}(0)} f \bigg( \frac{\varepsilon}{\delta} \dashint_{B_{{\varepsilon/\delta}}(z)} \vert \delta g_{\varepsilon}(\delta y) \vert  \dd y \bigg) \dd z \\
& \geq \delta^{d-1} \frac{\delta}{\varepsilon} \int_{C_{1}^{e_1}(0)} f \bigg( \frac{\varepsilon}{\delta} \dashint_{B_{{\varepsilon/\delta}}(z)} \vert \tilde{g}(y) \vert  \dd y \bigg) \dd z
=\delta^{d-1} H_{\varepsilon/\delta}(\ve, \tgae, C_{1}^{e_1}(0)).
\end{align*}
Hence, by passing on the left-hand side of \eqref{estimateballcylinder} to the limit in the subsequence provided by Lemma~\ref{modificationsurface}, we get
\begin{equation*}
H^{\prime}(u_0, \gamma_0, B_1(0))\geq \delta^{d-1} \liminf_{\varepsilon \to 0} H_{\varepsilon/\delta}(\ve, \tgae, C_{1}^{e_1}(0)) \geq \delta^{d-1}H^{\prime}(u_0^{\delta}, \gamma_0^{\delta},  C_{1}^{e_1}(0)). 
\end{equation*}
Taking $\delta \nearrow 1$ and using the lower semicontinuity of $H^{\prime}$ concludes the proof.
\end{proof}

Hereafter, we denote by $0_{d-1}$ the zero vector in $\R^{d-1}$. 
Next, we will find the asserted estimate from below of the surface term by formulating a minimization problem on a suitable class of Radon measures satisfying the following disintegration property involving the $(d-1)$ dimensional Lebesgue measure~$\mathcal{L}^{d-1}$. 
 
\begin{definition}
Let $a,b \in \R$ with $a \neq b$. A measure $\mu \in \mathcal{M}( \R \times B^{d-1}_1(0_{d-1}))$ belongs to $\Mned$ if it satisfies the following disintegration property: 
There exists an $\mathcal{L}^{d-1}$-almost everywhere uniquely determined  family $\{\mu_{x'}\}_{x' \in B^{d-1}_1(0_{d-1})}$ of probability measures in $\mathcal{M}(\R)$ such that $x' \mapsto \mu_{x'}$ is Borel measurable and such that $\mu = |a-b| \mathcal{L}^{d-1} \otimes \mu_{x'}$, i.e., for every Borel measurable set $A \times B \subset \R \times B^{d-1}_1(0_{d-1})$ there holds 
 \begin{equation*}
\mu(A \times B)= |a-b| \int_{B} \int_{A} \dd \mu_{x'}(x_1)  \dd x'. 
\end{equation*}
\end{definition}



In what follows, we denote  the convolution of a function $\chi$ and a measure $\mu$ defined on $\R^d$ (possibly after extension) by $(\mu \ast \chi)(x) = \int_{\R^d} \chi(x - y) \dd \mu(y)$.

\begin{lemma}\label{surfacebv2withoutslicing}
Let $u_0$ and $\gamma_0$ be as in Lemma \ref{modificationsurface} with $a \neq b \in \R$ and $\nu= e_1$. Then,
\begin{align*}
H^{\prime}_{x_0}(u_0, \gamma_0, B_1(x_0)) \geq \omega_{d-1} \inf_{\mu \in \Mned} G(\mu),
\end{align*}
where
\begin{equation}
\label{eqn_def_G}
G(\mu) \coloneqq \int_{\R} f \bigg( \frac{1}{\omega_d}(\mu \ast \chi_{B_1(0)})(t, 0_{d-1}) \bigg) \dd t.
\end{equation} 
\end{lemma}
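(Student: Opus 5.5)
The plan is a blow-up at scale $\varepsilon$ in the direction $\nu=e_1$. This turns the nonlocal energy on the cylinder into an average over $x'\in B^{d-1}_1(0_{d-1})$ of one-dimensional functionals of the type $G$. Each of these is then bounded below by $\inf_{\Mned} G$, up to errors that vanish as $\varepsilon\to 0$.

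\emph{Step 1: reduction to a good sequence.} By Lemma~\ref{equalityonset} (with $x_0=0$, $\nu=e_1$) it suffices to bound $H'(u_0,\gamma_0,C^{e_1}_1(0))$ from below. I apply Lemma~\ref{modificationsurface} with $A=C^{e_1}_1(0)$ and obtain $(u_\varepsilon,g_\varepsilon)$ with
\[
H_\varepsilon(u_\varepsilon,\gamma_\varepsilon,C^{e_1}_1(0))\to H'(u_0,\gamma_0,C^{e_1}_1(0)),
\]
$u_\varepsilon=a$ for $x_1\ge\alpha_\varepsilon$, $u_\varepsilon=b$ for $x_1\le-\beta_\varepsilon$, and $\spt g_\varepsilon$ contained in the strip $S_\varepsilon\coloneqq R^{e_1}_{-\beta_\varepsilon,\alpha_\varepsilon}$. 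Cauchy--Schwarz on the strip gives
\[
\int_{S_\varepsilon\cap C_2}|\nabla u_\varepsilon-g_\varepsilon|\dd x\le \bar K\,|S_\varepsilon\cap C_2|^{1/2}\to 0 .
\]
Since $\int_{\R}|\partial_1u_\varepsilon(x_1,y')|\dd x_1\ge|a-b|$ for a.e.\ $y'$, the fiber masses $m_\varepsilon(y')\coloneqq\int_\R|g_\varepsilon(x_1,y')|\dd x_1$ satisfy
\[
\int_{B^{d-1}_{1+\varepsilon}}(|a-b|-m_\varepsilon(y'))^+\dd y'\to 0 .
\]
This is the only place where the coupling through the $L^2$-bound enters.

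\emph{Step 2: rescaling.} I substitute $x=(\varepsilon t,x')$ and, for each $x'\in B^{d-1}_1$, define the blown-up measure $\mu^{x'}_\varepsilon$ on $\R\times B^{d-1}_1$ as the push-forward of $\varepsilon^{1-d}|g_\varepsilon|\mathcal L^d$ under $y\mapsto (y_1/\varepsilon,(y'-x')/\varepsilon)$. Then $\frac{\varepsilon}{|B_\varepsilon|}\int_{B_\varepsilon(\varepsilon t,x')}|g_\varepsilon|\dd y=\frac1{\omega_d}(\mu^{x'}_\varepsilon*\chi_{B_1(0)})(t,0_{d-1})$, and therefore
\[
H_\varepsilon(u_\varepsilon,\gamma_\varepsilon,C_1^{e_1}(0))=\int_{B^{d-1}_1}G(\mu^{x'}_\varepsilon)\dd x'
\]
(modulo the harmless intersection with $B_2(0)$, which is absent for small $\varepsilon$ since the relevant balls stay inside). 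The fiber of $\mu^{x'}_\varepsilon$ at $z'$ has mass $m_\varepsilon(x'+\varepsilon z')$.

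\emph{Step 3: projection onto $\Mned$.} Fix $\eta\in(0,1)$. On fibers with $m_\varepsilon\ge(1-\eta)|a-b|$ I scale the fiber down to mass exactly $(1-\eta)|a-b|$. Since $f$ is nondecreasing, this only decreases $G$. On the remaining "bad'' fibers I add a fixed probability measure (say $\delta_0$) times the deficit. Subadditivity of $f$, $f(s)\le c_0 s$ and $\int_\R\chi_{B_1}(t-y_1,-y')\dd t\le 2$ give $G(\mu+\lambda)\le G(\mu)+\frac{2c_0}{\omega_d}\lambda(\R\times B^{d-1}_1)$. This yields a measure $\tilde\mu^{x'}_\varepsilon=(1-\eta)\mu$ with $\mu\in\Mned$, and
\[
G(\mu^{x'}_\varepsilon)\ge G(\tilde\mu^{x'}_\varepsilon)-\frac{2c_0}{\omega_d}\,\varepsilon^{1-d}\!\int_{B^{d-1}_\varepsilon(x')}\big((1-\eta)|a-b|-m_\varepsilon\big)^+\dd y'.
\]
Concavity of $f$ with $f(0)=0$ gives $G((1-\eta)\mu)\ge(1-\eta)G(\mu)\ge(1-\eta)\inf_{\Mned}G$. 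Integrating over $x'\in B^{d-1}_1$ and using Fubini, the error integrates to $\frac{2c_0\omega_{d-1}}{\omega_d}\int(\ldots)^+\dd y'$, which tends to $0$ by Step~1. Hence $H'\ge(1-\eta)\,\omega_{d-1}\inf_{\Mned}G$, and $\eta\to0$ concludes.

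The main obstacle is Step~3. $G$ couples neighbouring fibers through the ball $B_1$, so the fiberwise mass deficit cannot be handled pointwise in $x'$. The point is that it must be controlled only on average, via Fubini and the linear error bound coming from subadditivity, and I expect the bookkeeping of the fiber masses of the rescaled measures to be the delicate part.
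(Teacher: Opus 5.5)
Your argument is correct. It shares the paper's opening: reduction to the cylinder by Lemma~\ref{equalityonset}, the sequence from Lemma~\ref{modificationsurface}, blow-up at scale $\varepsilon$, and Cauchy--Schwarz on the shrinking strip to turn the $L^2$-bound into $L^1$-smallness of the defect. It then differs in how you force every fiber to carry mass exactly $|a-b|$.

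The paper does this by rearrangement:
\begin{itemize}
\item it normalizes to $a>0=b$ and replaces $u_\varepsilon$ by the monotone function $v_\varepsilon$, the truncated integral of $(\partial_1 u_\varepsilon)^+$;
\item it bounds $|g_\varepsilon|$ from below by $\hat g_{1,\varepsilon}=\min\{|\tilde g_\varepsilon\cdot e_1|,\partial_1 v_\varepsilon\}$;
\item it trades $\hat g_{1,\varepsilon}$ for $\partial_1 v_\varepsilon$ via $f(c-s)\ge f(c)-c_0 s$, at cost $c_0\|\partial_1 v_\varepsilon-\hat g_{1,\varepsilon}\|_{L^1}\to 0$;
\item it picks a minimizing slice $x'$, on which $\partial_1 w_\varepsilon\mathcal{L}^d$ lies in $\mathcal{M}^{a,0}$ exactly.
\end{itemize}

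You keep $|g_\varepsilon|$ itself. You fix each fiber of the blown-up measure by scaling down, which is free by monotonicity of $f$, or by padding with $\delta_0$, which costs an amount linear in the added mass by subadditivity and $f(s)\le c_0 s$. You then average over $x'$ by Fubini rather than selecting a slice. Your error term is the integrated fiber-mass deficit $\int(|a-b|-m_\varepsilon)^+$, which you get directly from the fundamental theorem of calculus on a.e.\ line.

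Your route avoids the rearrangement, the sign normalization, the auxiliary functions $\tilde g_\varepsilon,\hat g_{1,\varepsilon}$, and the attainment of a minimum in $x'$. The paper's route produces a competitor that is literally the derivative of a monotone Sobolev function. It therefore needs no padding construction and no measurability check for a modified fiber family.

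Your $\eta$-margin and the concavity step are unnecessary. Since $\int_{B^{d-1}_{1+\varepsilon}}(|a-b|-m_\varepsilon)^+\,\mathrm{d}y'\to 0$ already, you can normalize every fiber to mass exactly $|a-b|$ and land in $\mathcal{M}^{a,b}$ directly.
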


\begin{proof}
The first part of this proof is similar to the one of \cite[Proposition 5.6]{LVDD07}. Since there are some modification necessary because of the second variable $\gamma$, we state the main steps. It is not restrictive to assume $a>0$ with $|a| \leq K$, $b=0$ and $x_0=0$. For simplicity, we denote by~$C$ the set $C^{e_1}_1(0)$ from~ \eqref{zylinder} in what follows. We consider the sequence $\{(u_{\varepsilon},\gamma_\varepsilon)\}_{\varepsilon}$  with $\gae=\ggae \mathcal{L}^d$ for a function $\ggae \in L^1(B_2(0),  \R^d)$ for every $\varepsilon > 0$ from Lemma~\ref{modificationsurface} for the choice $A=C$. Note that $\ue(x)=a$ if $x_1 \geq \alpha_{\varepsilon}$, $\ue(x)=0$ if $x_1 \leq - \beta_{\varepsilon}$, and $\spt(g_\varepsilon) \subset R^{e_1}_{-\beta_{\varepsilon}, \alpha_{\varepsilon}}$ with $R^{e_1}_{-b_{\varepsilon}, a_{\varepsilon}}$ defined in~\eqref{definitionlayer}, for suitable nonnegative infinitesimal sequences $\{\alpha_{\varepsilon}\}_{\varepsilon}$ and $\{\beta_{\varepsilon}\}_{\varepsilon}$. We may assume that $\varepsilon<1$ is sufficiently small such that $\alpha_\varepsilon, \beta_\varepsilon < 1$. Recalling the notation $x'=(x_2, \dots, x_{d})$, we set
\begin{equation*}
\ve(x_1,x')\coloneqq \min \bigg\{ \int_{- 1}^{x_1} \max \Big\{ \frac{\partial}{\partial s} \ue(s,x') ,0 \Big\}  \dd s, a \bigg\}
\end{equation*}
and  
\begin{equation*}
\tgae= \tggae \mathcal{L}^d  \quad \text{with } \tggae \coloneqq  \begin{cases}
\ggae  & \text{if }\frac{\partial  v _{\varepsilon}}{\partial x_1} =\frac{\partial u _{\varepsilon}}{\partial x_1},\\
0 & \text{otherwise}.
\end{cases}
\end{equation*} 
Since $\ue \in W^{1,1}(B_2(0))$ with $\norm{\ue}_{L^{\infty}(B_2(0))}\leq K$ and $g_\varepsilon \in L^1(B_2(0))$, we clearly have $\ve \in W^{1,1}(B_2(0))$ with $\norm{\ve}_{L^{\infty}(B_2(0))} \leq K$ and $\tggae \in L^1(B_2(0), \R^d)$ for all $\varepsilon >0$. Moreover, we have $\ve(x) = u_0(x)$ if $x \notin R^{e_1}_{-b_{\varepsilon}, a_{\varepsilon}}$ and $\ve$ is non-decreasing in direction of~$e_1$. Furthermore, if we set  $\partial_{1} v _{\varepsilon}\coloneqq  \partial v_{\varepsilon} /{\partial x_1}$ and $\hat{g}_{1,\varepsilon}\coloneqq  \min\{ | \tilde{g}_{\varepsilon} \cdot e_1|, \partial_{1} v _{\varepsilon} \} \geq 0$, we have 
\begin{align*}
  \int_{B_2(0)} | \partial_{1} v _{\varepsilon} - \hat{g}_{1,\varepsilon} |^2  \dd x & \leq \int_{B_2(0)}  \Big \vert \frac{\partial \ve}{\partial x_1} - \tggae \cdot e_1 \Big \vert^2 \dd x \\ & \leq  \int_{B_2(0)} \Big \vert \frac{\partial \ue}{\partial x_1} - \ggae \cdot e_1 \Big \vert^2 \dd x \leq  \int_{B_2(0)} \vert \nabla \ue  - \ggae \vert^2 \dd x \leq \bar{K} . 
\end{align*}
This implies, by the Cauchy--Schwarz inequality and $\mathcal{L}^d(R^{e_1}_{-b_{\varepsilon}, a_{\varepsilon}}) \to 0$, 
\begin{align*}
\int_{B_2(0)} |\partial_{1} v _{\varepsilon} - \hat{g}_{1,\varepsilon}| \dd x = \int_{R^{e_1}_{-b_{\varepsilon}, a_{\varepsilon}}} |\partial_{1} v _{\varepsilon} - \hat{g}_{1,\varepsilon}| \dd x \leq  \bar{K} \mathcal{L}^d(R^{e_1}_{-b_{\varepsilon}, a_{\varepsilon}})^{\frac{1}{2}} \to 0 \quad \text{as } \varepsilon \to 0, 
\end{align*}
i.e., $\partial_{1} v _{\varepsilon} - \hat{g}_{1,\varepsilon} \to 0$ in $L^1(B_2(0))$. Moreover, we have 

\begin{align*}
 H_{\varepsilon}(\ue, \gae,C) \geq  \frac{1}{\varepsilon} \int_C f \bigg( \varepsilon \dashint_{B_{\varepsilon}(x)} | \tggae \cdot e_1| \dd y \bigg) \dd x \geq  \frac{1}{\varepsilon} \int_C  f \bigg(\varepsilon \dashint_{B_{\varepsilon}(x)} \hat{g}_{1,\varepsilon} \dd y \bigg) \dd x .
\end{align*}
Therefore, using the inequality $f(c-a) \geq f(c) - c_0 a$ for all $c \geq a \geq 0$ in combination with  $  \partial_{1} v _{\varepsilon} \geq \partial_{1} v _{\varepsilon} - \hat{g}_{1,\varepsilon}\geq 0$ on~$B_2(0)$, we obtain
\begin{align}
 H_{\varepsilon}(\ue, \gae,C) 
 & \geq \frac{1}{\varepsilon} \int_C f \bigg(\varepsilon \dashint_{B_{\varepsilon}(x)}    \partial_{1} v _{\varepsilon} \dd y - \varepsilon \dashint_{B_{\varepsilon}(x)} (  \partial_{1} v _{\varepsilon} - \hat{g}_{1,\varepsilon} ) \dd y \bigg) \dd x \notag\\
& \geq  \frac{1}{\varepsilon} \int_C f \bigg( \varepsilon \dashint_{B_{\varepsilon}(x)}  \partial_{1} v _{\varepsilon} \dd y \bigg) \dd x - c_0  \int_C \dashint_{B_{\varepsilon}(x)} (\partial_{1} v _{\varepsilon} - \hat{g}_{1,\varepsilon} )  \dd y \dd x. \label{splitsurfaceestimate}
\end{align}
Since $C=(-1,1) \times B^{d-1}_{1}(0_{d-1})$, we have by Fubini's theorem
\begin{align*}
 \frac{1}{\varepsilon} \int_C f \bigg( \varepsilon \dashint_{B_{\varepsilon}(x)}  \partial_{1} v _{\varepsilon}  \dd y \bigg) \dd x & = \frac{1}{\varepsilon} \int_{B^{d-1}_1(0_{d-1})} \int_{-1}^{1} f \bigg(\varepsilon \dashint_{B_{\varepsilon}(x)}    \partial_{1} v _{\varepsilon}  \dd y \bigg) \dd x_1 \, \dd x' \\
 & \geq \frac{\omega_{d-1}}{\varepsilon} \min_{x' \in \bar{B}^{d-1}_1(0_{d-1})} \int_{-1}^{1} f \bigg( \varepsilon \dashint_{B_{\varepsilon}(x_1,x')} \partial_{1} v _{\varepsilon} \dd y \bigg) \dd x_1.
\end{align*}
If the minimum on the right-hand side is attained at $x' \in \bar{B}^{d-1}_1(0_{d-1})$, then, by changing variables, we have
\begin{align*}
 \frac{1}{\varepsilon} \int_C f \bigg( \varepsilon \dashint_{B_{\varepsilon}(x)} \partial_{1} v _{\varepsilon}    \dd y \bigg) \dd x & \geq \frac{\omega_{d-1}}{\varepsilon} \int_{-1}^{1} f \bigg( \frac{1}{\omega_{d} \varepsilon^{d-1}} \int_{B_{\varepsilon}(x_1,x')}    \partial_{1} v _{\varepsilon}  \dd y \bigg) \dd x_1 \\
 &  =\omega_{d-1} \int_{-\frac{1}{\varepsilon}}^{\frac{1}{\varepsilon}} f \bigg( \frac{\varepsilon}{\omega_{d} } \int_{B_{1}(0)}    \partial_{1} v _{\varepsilon}(\varepsilon y + (\varepsilon t, x'))    \dd y \bigg) \dd t.
\end{align*}
We now define $w_{\varepsilon}(y)\coloneqq  \ve(\varepsilon y + (0,x'))$. Note that $w_{\varepsilon}$ is non-decreasing in the first variable, with $w_\varepsilon = u_0$ outside of $(-\beta_\varepsilon/\varepsilon,\alpha_\varepsilon/\varepsilon) \times \R^{d-1}$. If we extend $w_{\varepsilon}$ to $\R \times  B^{d-1}_1(0_{d-1})$ by the values $0$ and $a$, respectively, we obtain $w_{\varepsilon} \in W^{1,1}_{\text{loc}}(\R \times B^{d-1}_1(0_{d-1}))$. Morover, we have $\partial_1 w_{\varepsilon} \mathcal{L}^d \in \Mneds$, by observing via Fubini's theorem that $\partial_1 w_{\varepsilon} \mathcal{L}^d = a  \mathcal{L}^{d-1} \otimes (\tfrac{1}{a}\partial_1 w_{\varepsilon}(\cdot,y')   \mathcal{L}^{1}) $, where, by construction,  $\tfrac{1}{a}\partial_1 w_{\varepsilon}(\cdot,y')  \mathcal{L}^1$ is a probability measure in $\mathcal{M}(\R)$  for $ \mathcal{L}^{d-1}$-almost every $y' \in B^{d-1}_1(0_{d-1})$. This allows to estimate the non-local integral in terms of $w_\varepsilon$ and the functional~$G$ from~\eqref{eqn_def_G} 
\begin{align*}
 \frac{1}{\varepsilon} \int_C f \bigg( \varepsilon \dashint_{B_{\varepsilon}(x)}    \partial_{1} v _{\varepsilon}    \dd y \bigg) \dd x 
  & \geq \omega_{d-1} \int_{\R} f \bigg( \frac{1}{\omega_{d} } \int_{B_{1}(0)}    \partial_{1} w _{\varepsilon}(y+ (t, 0_{d-1}))    \dd y  \bigg) \dd t \\
  & = \omega_{d-1} \int_{\R} f \bigg( \frac{1}{\omega_{d} } \left( \partial_{1} w _{\varepsilon} \ast \chi_{B_1(0)} \right) (t, 0_{d-1}) \bigg) \dd t \\
  & = \omega_{d-1} G(\partial_1 w_{\varepsilon} \mathcal{L}^d).
\end{align*}
In conclusion, we then obtain by Lemma \ref{equalityonset}, the choice of (a suitable subsequence of) $\{(u_{\varepsilon},\gamma_\varepsilon)\}_{\varepsilon}$ according to Lemma~\ref{modificationsurface} and the estimate~\eqref{splitsurfaceestimate} 
\begin{align*}
H^{\prime}(u_0, \gamma_0, B_1(0)) & =H^{\prime}(u_0, \gamma_0, C) =  \liminf_{\varepsilon \to 0} H_{ \varepsilon}(\ue, \gae, C) \\ & \geq \liminf_{\varepsilon \to 0} \bigg[ \frac{1}{\varepsilon} \int_C  f \bigg( \varepsilon \dashint_{B_{\varepsilon}(x)}  \partial_{1} v _{\varepsilon} \dd y \bigg) \dd x - c_0 \int_C \dashint_{B_{\varepsilon}(x)} (\partial_{1} v _{\varepsilon} - \hat{g}_{1,\varepsilon} )  \dd y \dd x \bigg] \\ & \geq  \liminf_{\varepsilon \to 0} \omega_{d-1} G(\partial_1 w_{\varepsilon} \mathcal{L}^d) - \limsup_{\varepsilon \to 0} c_0\ \int_C  \dashint_{B_{\varepsilon}(x)} (\partial_{1} v _{\varepsilon} - \hat{g}_{1,\varepsilon} )  \dd y \dd x \\ & \geq \omega_{d-1}  \inf_{\mu \in \Mneds} G(\mu ) ,
\end{align*}
where the last inequality follows from the facts that $\partial_1 w_{\varepsilon} \mathcal{L}^d \in \Mneds$ and that $\partial_{1} v _{\varepsilon} - \hat{g}_{1,\varepsilon} \to 0$ in $L^1(B_2(0))$.
\end{proof}

Our next goal is to solve the minimization problem
\begin{equation}
\inf_{\mu \in \Mned} G(\mu) = \inf_{\mu \in \Mned} \int_{\R} \ff{\frac{1}{\omega_{d} } \left( \mu \ast \chi_{B_1(0)} \right) (t, 0_{d-1})  } \dd t \label{minimierungsproblemmeasure}
\end{equation}
from Lemma \ref{surfacebv2withoutslicing}. In this regard, we show that a solution~$\mu_0 \in \Mned$ exists (see Proposition~\ref{surfacebv2withoutslicing2}). The candidate $\mu_0$ for the minimizer of \eqref{minimierungsproblemmeasure} results from the following two considerations. Firstly, one can show that the extreme points of the set $\Mned$ are characterized by $\mu_{y'}=\delta_{t(y')}$ for $\HA$-a.e. $y' \in B^{d-1}_1(0_{d-1})$. Roughly speaking, it is secondly optimal to choose $\mu$ in such a way that $  \mu \ast \chi_{B_1(0)} $ takes the largest possible values, since $f$ cuts them off. But this is tantamount to choose $\mu$ such that the support of $  \mu \ast \chi_{B_1(0)} $ is minimal, since $\| \mu \ast  \chi_{B_1(0)} (\, \cdot \, , 0_{d-1}) \|_{L^1(\R)}= |a-b|$ for all $\mu \in \Mned$ (cf. Lemma \ref{majorisation} (i)). It is easy to see that this happens if $\delta_{t(y')}= \delta_c$ for $\HA$-a.e. $y \in B^{d-1}_1(0_{d-1})$, where $c$ is a given constant. Due to the translations invariance of $G$, we are free to choose a certain value for $c$. In order to solve the minimization problem~\eqref{minimierungsproblemmeasure} rigorously, we use the majorization principle. The key elements are shown in the following lemma.

\begin{lemma}\label{majorisation}
Let $a,b \in \R$ with $a \neq b$. The measure $\mu_0 = |a-b| \, \HA \mrs \, (\{0\} \times {B}^{d-1}_{1}(0_{d-1}))$ belongs to $\Mned$. Moreover, we have for all $\mu \in \Mned$ and for all $c \geq 0$
\begin{enumerate}[font=\normalfont, label=(\roman{*}), ref=(\roman{*})]
\item\label{majorisation_1} $ \begin{aligned} \int_{  \R }{ \frac{1}{\omega_d}(\mu_0 \ast \chi_{B_1(0)})(t, 0_{d-1})} \dd t =  \int_{  \R } { \frac{1}{\omega_d}(\mu \ast \chi_{B_1(0)})(t,0_{d-1})} \dd t  = |a-b|\end{aligned},$
\item\label{majorisation_2} $ \begin{aligned} \int_{  \R } \min \bigg \lbrace{ \frac{1}{\omega_d}(\mu_0 \ast \chi_{B_1(0)})(t, 0_{d-1})} , c \bigg  \rbrace \dd t \leq  \int_{  \R } \min  \bigg \lbrace{ \frac{1}{\omega_d}(\mu \ast \chi_{B_1(0)})(t,0_{d-1})}, c \bigg \rbrace \dd t \end{aligned},$
\item\label{majorisation_3} $ \begin{aligned} \int_{  \R }\left(  \frac{1}{\omega_d}(\mu_0 \ast \chi_{B_1(0)})(t, 0_{d-1}) -c \right) ^+ \textup{d} t \geq  \int_{  \R } \left(  \frac{1}{\omega_d}(\mu \ast \chi_{B_1(0)})(t, 0_{d-1}) -c \right)^+ \textup{d} t \end{aligned}.$
\end{enumerate}
\end{lemma}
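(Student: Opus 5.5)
The plan is to reduce everything to a single function of one variable. For $\mu\in\Mned$ set $F_\mu(t) \coloneqq \frac{1}{\omega_d}(\mu\ast\chi_{B_1(0)})(t,0_{d-1})$. Using the disintegration $\mu=|a-b|\,\mathcal{L}^{d-1}\otimes\mu_{y'}$, we can write
\begin{equation*}
F_\mu(t)=\frac{|a-b|}{\omega_d}\int_{B^{d-1}_1(0_{d-1})}\int_\R \chi_{\{|t-s|<(1-|y'|^2)^{1/2}\}}\,\dd\mu_{y'}(s)\,\dd y'.
\end{equation*}
The point is that $(t,0_{d-1})-(s,y')\in B_1(0)$ holds exactly when $|t-s|^2+|y'|^2<1$. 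Here we only need $\chi_{B_1}$ restricted to the slice $\{|y'|<1\}$; this restriction is automatic, since $\mu$ lives on $\R\times B^{d-1}_1$.

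First, membership $\mu_0\in\Mned$ is immediate with $\mu_{y'}=\delta_0$. Its profile is $F_{\mu_0}(t)=\frac{|a-b|}{\omega_d}\,\mathcal{L}^{d-1}(\{|y'|<1,\ |y'|^2<1-t^2\})=|a-b|\frac{\omega_{d-1}}{\omega_d}(1-t^2)_+^{(d-1)/2}$. This function is even and nonincreasing in $|t|$.

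For \ref{majorisation_1}, integrate in $t$ using Fubini. For each $y'$ and $s$, the $t$-integral of the indicator equals $2(1-|y'|^2)^{1/2}$, independently of $s$, because each $\mu_{y'}$ is a probability measure. Hence $\int_\R F_\mu\,\dd t=\frac{|a-b|}{\omega_d}\int_{B^{d-1}_1}2(1-|y'|^2)^{1/2}\dd y'=|a-b|$. The last equality is Fubini for the volume of $B_1$, i.e. the analogue of \eqref{eqn_omega_d_Fubini}. Since this value does not depend on $\mu$, it holds in particular for $\mu_0$.

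Next, \ref{majorisation_2} and \ref{majorisation_3} are equivalent given \ref{majorisation_1}, because $\min\{F,c\}=F-(F-c)^+$; so it suffices to prove \ref{majorisation_3}. I would argue via the layer-cake/majorization principle of Hardy--Littlewood--P\'olya. Since $F_{\mu_0}$ is symmetric decreasing, it equals its own decreasing rearrangement. It therefore suffices to show that $\int_I F_\mu\,\dd t\le\int_{-r}^{r}F_{\mu_0}\,\dd t$ for every measurable set $I\subset\R$ with $|I|=2r$. Then, using $\int(F-c)^+=\sup_I\int_I(F-c)$, we get
\begin{equation*}
\int(F_\mu-c)^+=\sup_I\Big(\int_I F_\mu-c|I|\Big)\le\sup_r\Big(\int_{-r}^r F_{\mu_0}-2cr\Big)=\int(F_{\mu_0}-c)^+ .
\end{equation*}
To prove the key inequality, apply Fubini again:
\begin{equation*}
\int_I F_\mu=\frac{|a-b|}{\omega_d}\int_{B^{d-1}_1}\int_\R\big|I\cap(s-h(y'),s+h(y'))\big|\,\dd\mu_{y'}(s)\,\dd y',\qquad h(y')=(1-|y'|^2)^{1/2}.
\end{equation*}
Each inner length is bounded by $\min\{|I|,2h(y')\}=\min\{2r,2h(y')\}$, which is exactly the length $|(-r,r)\cap(-h,h)|$ obtained for $\mu_0$ with $I=(-r,r)$. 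Integrating against the probability measures and over $y'$ then gives the claim.

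I expect the main obstacle to be the rigorous passage from the "interval-mass" inequality to \ref{majorisation_3}, i.e. the identity $\int(F-c)^+=\sup_I\int_I(F-c)$. This identity holds by taking $I=\{F>c\}$, which has finite measure since $F\in L^1$. The same identity must also be checked for $F_{\mu_0}$ with symmetric intervals, which is fine because the superlevel sets of $F_{\mu_0}$ are exactly the intervals $(-r,r)$. Minor measurability issues, namely the Borel dependence of $y'\mapsto\mu_{y'}$ needed in Fubini, are covered by the definition of $\Mned$.
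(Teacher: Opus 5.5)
Your proof is correct, but for (ii)/(iii) it takes a different route from the paper.

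\textbf{The paper's route.} It proves (ii) directly by splitting in the transversal variable $y'$. For $c<|a-b|\omega_{d-1}/\omega_d$ it chooses $r<1$ with $|a-b|\frac{\omega_{d-1}}{\omega_d}r^{d-1}=c$. It then checks that $\min\{F_{\mu_0},c\}$ coincides with $g_r$, the contribution of the fibres $y'\in B^{d-1}_r(0_{d-1})$ to $F_{\mu_0}$. Next it bounds $\min\{F_\mu,c\}$ from below by the contribution of the same fibres to $F_\mu$. This contribution is at most $c$, so the truncation does not touch it. By Fubini its $t$-integral equals $\int g_r$, independently of $\mu$. Finally (iii) follows from (i).

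\textbf{Your route.} You split in the longitudinal variable $t$ instead. The pointwise bound $|I\cap(s-h,s+h)|\le\min\{2r,2h\}$ gives $\int_I F_\mu\le\int_{-r}^{r}F_{\mu_0}$ for every $I$ with $|I|=2r$. The representation $\int(F-c)^+=\sup_I\int_I(F-c)$ then turns this into (iii).

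\textbf{What each buys.} Your version needs no case distinction in $c$ and no tuning of $r$. Moreover, your interval-mass bound is exactly the majorization $\int_0^{s}F_\mu^{\ast}\le\int_0^{s}F_{\mu_0}^{\ast}$ of decreasing rearrangements, since $F_{\mu_0}$ is symmetric decreasing. That is the hypothesis of the Hardy--Littlewood--P\'olya/Karamata inequality, so the generalization in Remark~\ref{rem_majorization_generalization} follows at once. The paper's version avoids rearrangements and sup-over-sets arguments entirely and produces an explicit pointwise minorant.

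\textbf{Small corrections.} You claim $\{F_\mu>c\}$ has finite measure ``since $F\in L^1$''. By Chebyshev this holds only for $c>0$; for $c=0$ the set may have infinite measure. In that case, however, (iii) reduces to (i), so nothing is lost. Also, in the last step you only need the trivial inequality $\int_{-r}^{r}(F_{\mu_0}-c)\le\int(F_{\mu_0}-c)^+$. Your remark about the superlevel sets of $F_{\mu_0}$ being intervals is therefore unnecessary.
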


\begin{proof}
We first notice $\mu_0 \in \Mned$ (with constant probability measures~$\delta_0$ in $\mathcal{M}(\R)$), since for every Borel measurable set $A \times B \subset \R \times {B}^{d-1}_{1}(0_{d-1})$ we have 
\begin{equation*}
 \mu_0(A \times B) = |a-b|  \int_{B} \int_{A} \dd \delta_0(x_1)  \dd x'.
\end{equation*}

Consider $\mu \in \Mned$ arbitrary.
Since 
\begin{equation*}
 (\mu \ast \chi_{B_1(0)})(t,0_{d-1})  =  |a-b| \int_{B^{d-1}_1(0_{d-1})} \int_{\R} \chi_{(- \sqrt{1-|x'|^2}, \sqrt{1-|x'|^2})}(x_1 -t) \dd \mu_{x'}(x_1) \dd x'
\end{equation*}
for all $t\in \R$, Fubini's theorem, the fact that $\{\mu_{x'}\}_{x' \in B^{d-1}_1(0_{d-1})}$ is a family of probability measures on~$\R$ and Cavalieri's principle (notice that $\omega_1=2$) then implies the claim in~\ref{majorisation_1}:
\begin{align*}
& \int_{\R}\frac{1}{\omega_d}(\mu \ast \chi_{B_1(0)})(t,0_{d-1}) \dd t \\
&=  \frac{|a-b|}{\omega_d} \int_{B^{d-1}_1(0_{d-1})}  \int_{\R}  \int_{\R} \chi_{(- \sqrt{1-|x'|^2}, \sqrt{1-|x'|^2})}(x_1 -t)\dd t \dd \mu_{x'}(x_1)  \dd x'\\ 
& = \frac{|a-b|}{\omega_d} \int_{B^{d-1}_1(0_{d-1})}  2 \sqrt{1-|x'|^2}   \dd x' = |a-b|.
\end{align*}
Let $c \geq 0$ be fixed. Note that if $c \geq |a-b| ({\omega_{d-1}} /{\omega_{d}})$, then the assertion in~\ref{majorisation_2} is obviously satisfied due to~\ref{majorisation_1} and the fact that $\| \mu \ast \chi_{B_1(0)} \|_{L^{\infty}(\Rd)} \leq |a-b| \omega_{d-1}$.  Hence, we may assume $c < |a-b| ({\omega_{d-1}} /{\omega_{d}})$. We choose $r<1$ such that 
\begin{equation*}
{|a-b| ({\omega_{d-1}} /{\omega_{d}}) r^{d-1} = c}.
\end{equation*}
We define functions $f_c \colon \R \to \R$ and $g_\ell \colon \R \to [0, \infty)$ for every $\ell \in (0,1]$ by $f_c(t)\coloneqq  \min\lbrace t, c \rbrace$ and 
\begin{equation}
g_\ell(t)\coloneqq  \frac{|a-b|}{\omega_d} \int_{B^{d-1}_\ell(0_{d-1})} \chi_{ (  - \sqrt{1-|x'|^2} , + \sqrt{1-|x'|^2} ) } (t) \dd x' \label{specialgl}
\end{equation} 
for all $t \in \R$. In particular, we then have $g_1(t) = ({1}/{\omega_d})(\mu_0 \ast \chi_{B_1(0)})(t,0_{d-1})$ for $t \in \R$. We claim that 
\begin{equation}
 g_r(t) = f_c(g_r(t))=  f_c(g_1(t))  \quad \text{for all }t \in \R. \label{equalityforgr}
\end{equation}
The first equality follows immediately from the observation
\begin{align*}
g_r(t) \leq \frac{|a-b|}{\omega_d} \int_{B^{d-1}_r(0_{d-1})} \dd x' = \frac{|a-b|}{\omega_d} \omega_{d-1} r^{d-1} = c \quad \text{for all }t \in \R.
\end{align*} 
For the second equality, we distinguish two cases. If $t \in [  - \sqrt{1-r^2} , + \sqrt{1-r^2}]$, then $g_r(t)=c$ and therefore $c \geq f_c(g_1(t)) \geq f_c(g_r(t)) = f_c(c) =c$. Otherwise if $t \notin [  - \sqrt{1-r^2} , + \sqrt{1-r^2}]$, then $g_1(t)=g_r(t)$ follows from
\begin{align*}
\frac{|a-b|}{\omega_d} \int_{ B^{d-1}_1(0_{d-1}) \setminus B^{d-1}_r(0_{d-1})} \chi_{ (  - \sqrt{1-|x'|^2} , + \sqrt{1-|x'|^2} ) } (t) \dd x' = 0.
\end{align*}

We get
\begin{align}
 \frac{1}{\omega_d}(\mu \ast \chi_{B_1(0)})(y_1, 0_{d-1}) \notag & = 
\frac{|a-b|}{\omega_d} \int_{ B^{d-1}_1(0_{d-1})}  \int_{\R} \chi_{(- \sqrt{1-|x'|^2} ,  \sqrt{1-|x'|^2}) }(x_1-t) \dd \mu_{x'}(x_1) \dd x' \notag \\
 & \geq \frac{|a-b|}{\omega_d} \int_{ B^{d-1}_r(0_{d-1})} \int_{\R} \chi_{( - \sqrt{1-|x'|^2} , \sqrt{1-|x'|^2} ) }(x_1-t) \dd \mu_{x'}(x_1) \dd x'. \label{ggeneral}
\end{align}
Since the integrand of the right-hand side is bounded by~$1$, we observe that the right-hand side is bounded by $\tfrac{|a-b|}{\omega_d} \omega_{d-1} r^{d-1} = c$, meaning that $f_c$ acts as identity on it. Therefore, it follows  from \eqref{ggeneral}, \eqref{specialgl} and \eqref{equalityforgr} that 
\begin{align*}
& \int_{  \R } f_c \bigg(  \frac{1}{\omega_d}(\mu \ast \chi_{B_1(0)})(t, 0_{d-1})\bigg) \dd  t \\
& \geq  \int_{  \R } f_c \bigg(  \frac{|a-b|}{\omega_d} \int_{ B^{d-1}_r(0_{d-1})}  \int_{\R}\chi_{( - \sqrt{1-|x'|^2} ,  \sqrt{1-|x'|^2}) }(x_1-t) \dd \mu_{x'}(x_1) \dd x' \bigg) \dd  t\\
& = \int_{  \R } \frac{|a-b|}{\omega_d} \int_{ B^{d-1}_r(0_{d-1})} \int_{\R}\chi_{(  - \sqrt{1-|x'|^2} ,  \sqrt{1-|x'|^2}) }(x-t) \dd \mu_{x'}(x_1) \dd x' \dd  t\\
& = \int_{\R} \frac{|a-b|}{\omega_d} \int_{ B^{d-1}_r(0_{d-1})} \chi_{ ( - \sqrt{1-|x'|^2} , \sqrt{1-|x'|^2}) } (t) \dd x' \dd t \\
&= \int_{\R}  g_r(t)\dd t =  \int_{\R} f_c\left( g_1(t)\right)\dd t = \int_{  \R } f_c\bigg(  \frac{1}{\omega_d}(\mu_0 \ast \chi_{B_1(0)})(t, 0_{d-1})\bigg) \dd  t ,
\end{align*}
where we have used the Fubini theorem (twice) and the fact that $\mu_{x'}$ are probability measures for $x' \in B^{d-1}_1(0_{d-1})$ in order to pass from the third to the forth line. 
This shows assertion~\ref{majorisation_2}.
Finally, we infer~\ref{majorisation_3} from~\ref{majorisation_1} and~\ref{majorisation_2} together with $(t-c)^+ = t - \min\{t,c\}$ for $t \in \R$ and $c \geq 0$.
\end{proof}

We are now able to identify $\mu_0$ as a solution of the minimization problem~\eqref{minimierungsproblemmeasure} and to compute the minimal value:

\begin{proposition}
\label{surfacebv2withoutslicing2}
Let   $a,b \in \R$ with $a \neq b$.  Then $\mu_0 = |a-b| \, \HA \mrs \, (\{0\} \times {B}^{d-1}_{1}(0_{d-1}))$ satisfies
\begin{equation*}
\inf_{\mu \in \Mned} G(\mu) = G(\mu_0) = \theta(a-b),
\end{equation*}
where $G$ is defined in~\eqref{eqn_def_G} and $\theta$ in~\eqref{definitiontheta}.
\end{proposition}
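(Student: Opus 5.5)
The proof splits into two parts: first $G(\mu_0)=\theta(a-b)$ by direct computation, then $G(\mu)\ge G(\mu_0)$ for every $\mu\in\Mned$ by a majorization argument built on Lemma~\ref{majorisation}.

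\emph{Computing $G(\mu_0)$.} By the disintegration of $\mu_0$ (all $\mu_{x'}=\delta_0$) we have $\frac{1}{\omega_d}(\mu_0\ast\chi_{B_1(0)})(t,0_{d-1})=g_1(t)$ with $g_1$ from \eqref{specialgl}. Explicitly, $g_1(t)=\frac{|a-b|}{\omega_d}\,\mathcal{L}^{d-1}\big(\{x'\in B^{d-1}_1(0_{d-1})\colon |x'|^2<1-t^2\}\big)$, which gives
\begin{equation*}
g_1(t)=\frac{\omega_{d-1}}{\omega_d}(1-t^2)^{\frac{d-1}{2}}|a-b| \quad\text{for } |t|<1,
\end{equation*}
and $g_1(t)=0$ otherwise. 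Since $f(0)=0$ and $g_1$ is even, we get $G(\mu_0)=2\int_0^1 f(g_1(s))\dd s=\theta(a-b)$, exactly as in \eqref{definitiontheta}.

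\emph{Lower bound $G(\mu)\ge G(\mu_0)$.} The key observation is that $f(t)=c_0\min\{t,1\}$ for $t\ge0$, and all arguments of $f$ here are nonnegative. Write $F_\mu(t):=\frac1{\omega_d}(\mu\ast\chi_{B_1(0)})(t,0_{d-1})\ge0$. Then
\begin{equation*}
G(\mu)=c_0\int_\R\min\{F_\mu(t),1\}\dd t,
\end{equation*}
and Lemma~\ref{majorisation}~\ref{majorisation_2} with $c=1$ gives directly $\int_\R\min\{F_{\mu_0},1\}\dd t\le\int_\R\min\{F_\mu,1\}\dd t$. Hence $G(\mu_0)\le G(\mu)$ for all $\mu\in\Mned$. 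Together with $\mu_0\in\Mned$, also from Lemma~\ref{majorisation}, this shows the infimum is attained at $\mu_0$.

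\emph{Main obstacle.} The only care needed is measurability and well-definedness of $F_\mu$, i.e.\ that $G(\mu)$ is a well-defined integral. This holds because $F_\mu$ is a nonnegative Borel function with integral $|a-b|$ by Lemma~\ref{majorisation}~\ref{majorisation_1}, so no issue arises. For a more general concave nondecreasing $f$, one would instead invoke the Hardy–Littlewood–Pólya majorization principle: combine \ref{majorisation_1} and \ref{majorisation_3} for all $c$, and represent a concave $f$ as a superposition of functions $\min\{\cdot,c\}$. For the specific piecewise linear $f$ of the paper, the single choice $c=1$ suffices.
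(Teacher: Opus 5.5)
Your proposal is correct and follows essentially the same route as the paper. You obtain the bound $G(\mu_0)\le G(\mu)$ from Lemma~\ref{majorisation}~\ref{majorisation_2} with $c=1$, using $f(t)=c_0\min\{t,1\}$ on $[0,\infty)$, and the value $G(\mu_0)=\theta(a-b)$ by directly evaluating $g_1(t)=\frac{\omega_{d-1}}{\omega_d}(1-t^2)^{\frac{d-1}{2}}|a-b|$ and using that it is even in $t$.
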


\begin{proof}
According to Lemma~\ref{majorisation}~\ref{majorisation_2} with $c=1$, we have $G(\mu_0) \leq  G(\mu)$ for all $\mu \in \Mned$. Using the definition of~$\mu_0$ we then conclude 
\begin{align*}
\inf_{\mu \in \Mned} G(\mu)  =  G(\mu_0) & = \int_{\R} f \bigg( \frac{1}{\omega_d}(\mu_0 \ast \chi_{B_1(0)})(y_1, 0_{d-1}) \bigg) \dd y_1 \\ &= \int_{\R} f \bigg(\frac{|a-b|}{\omega_d} \int_{B^{d-1}_1(0_{d-1})} \chi_{ \left(  - \sqrt{1-|x'|^2} , + \sqrt{1-|x'|^2} \right) } (t) \dd x' \bigg) \dd t\\
&= 2 \int_{0}^1 f \bigg(\frac{\omega_{d-1}}{\omega_d}|a-b| (1-t^2)^{\frac{d-1}{2}}  \bigg) \dd t  = \theta(a-b). \qedhere
\end{align*}
\end{proof}

We are now in the position to establish the estimate from below for the surface term:

\begin{Prop}\label{surfacebv}
Let $A \subset \Omega$ be open. For every $(u,\gamma) \in  BV(\Omega) \times \Mam$  with $\gamma = D^su+ g \mathcal{L}^1$ and $\nabla u- g \in L^2(\Omega,\R^d)$ we have
\begin{equation*}
E^{\prime}(u,\gamma,A) \geq \int_{J_u \cap A} \theta ([u]) \dd \HA.
\end{equation*}
\end{Prop}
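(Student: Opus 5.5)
The plan is to chain the results of this section: pass from $E'$ to its inner regular envelope $\Psi'$, use Corollary~\ref{surfacebvCor} to reduce to a blow-up density at jump points, and then identify that density via the cylinder reduction and the minimisation problem.

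\emph{Step 1: reduction to the envelope.} Since $E^{\prime}(u,\gamma,\scdot)$ is increasing, we have $E'(u,\gamma,A)\geq E'(u,\gamma,B)$ for every $B\Subset A$. Taking the supremum over such $B$ gives $E'(u,\gamma,A)\ge \Psi'(u,\gamma,A)$ by~\eqref{def_Psi_prime}. It therefore suffices to prove the claimed inequality with $\Psi'$ in place of $E'$.

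\emph{Step 2: the density at jump points.} By Corollary~\ref{surfacebvCor},
\begin{equation*}
\Psi'(u,\gamma,A)\ge \int_{J_u\cap A} \frac{H'_{x_0}(u_0,\gamma_0,B_1(x_0))}{\omega_{d-1}}\dd\HA(x_0).
\end{equation*}
So it remains to show $H'_{x_0}(u_0,\gamma_0,B_1(x_0))\ge \omega_{d-1}\theta([u](x_0))$ for $\HA$-a.e. $x_0\in J_u$. Fix such an $x_0$ and set $a=u(x_0+)$, $b=u(x_0-)$, $\nu=\nu_u(x_0)$.

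\emph{Step 3: rotation and translation.} The functionals $H_{\varepsilon,x_0}$ are invariant under translations, under rotations (balls, the $L^2$-constraint and $|g|$ are rotation invariant), and under the reflection $x\mapsto -x$. Using these, we may assume $x_0=0$ and $\nu=e_1$. Lemma~\ref{surfacebv2withoutslicing} then gives $H'_{x_0}(u_0,\gamma_0,B_1(x_0))\ge\omega_{d-1}\inf_{\mu\in\Mned}G(\mu)$. Proposition~\ref{surfacebv2withoutslicing2} evaluates the infimum as $\theta(a-b)=\theta([u](x_0))$. Since $\theta$ is even, the orientation convention for $[u]$ is irrelevant.

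\emph{Main obstacle.} The substance is already in the preceding lemmas, so the only delicate point is bookkeeping: checking that the $\HA$-a.e. conditions invoked in Lemmas~\ref{derivativesingularpartBV2} and~\ref{equalityonset} hold simultaneously. Each is a full-measure condition on $J_u$, and a countable intersection of such sets still has full measure. Combining Steps 1--3 then yields $E'(u,\gamma,A)\ge\int_{J_u\cap A}\theta([u])\dd\HA$.
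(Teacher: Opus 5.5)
Your proposal is correct and follows the paper's own argument. The paper also applies Corollary~\ref{surfacebvCor}, then Lemma~\ref{surfacebv2withoutslicing} and Proposition~\ref{surfacebv2withoutslicing2}, to get $h'(x_0)\ge\theta([u](x_0))$ for $\HA$-a.e.\ $x_0\in J_u$. Your Step~1 ($E^{\prime}(u,\gamma,A)\ge\Psi^{\prime}(u,\gamma,A)$ by monotonicity of $E^{\prime}(u,\gamma,\scdot)$) and your reduction to $x_0=0$, $\nu=e_1$ via translation and rotation invariance are steps the paper uses only tacitly, so making them explicit is a useful clarification rather than a different route.
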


\begin{proof}
According to Corollary \ref{surfacebvCor}, we have
\begin{align*}
E^{\prime}(u,\gamma,A) \geq \int_{J_u \cap A} h' \dd \HA, 
\end{align*} 
with $h'(x_0) \coloneqq \omega_{d-1}^{-1} H^{\prime}(u_0, \gamma_0, B_1(x_0))$ for $\HA$-a.e. $x_0 \in J_u$. Combining Proposition \ref{surfacebv2withoutslicing} and Proposition \ref{surfacebv2withoutslicing2}, we have $h'(x_0) \geq \theta([u](x_0))$, which finishes the proof of the proposition.
\end{proof}

\begin{remark}
\label{rem_majorization_generalization}
From Lemma \ref{majorisation}~\ref{majorisation_1} and~\ref{majorisation_3} we obtain that $\mu_0$ satisfies a majorization principle (originally expressed in terms of decreasing rearrangements). From the Hardy, Littlewood and P\'{o}lya inequality (also known as Karamata's inequality), see e.g.~\cite[Proposition H.1.a]{MOA11}, we then find 
\begin{equation*}
\int_{\R} \hh{\frac{1}{\omega_{d} } \left( \mu_0 \ast \chi_{B_1(0)} \right) (t, 0_{d-1})  } \dd t = \inf_{\mu \in \Mned} \int_{\R} \hh{\frac{1}{\omega_{d} } \left( \mu \ast \chi_{B_1(0)} \right) (t, 0_{d-1})  } \dd t
\end{equation*}
for every continuous concave function $h \colon [0, \infty) \to [0, \infty)$ with $h(0)=0$, which generalizes the first equality in Proposition~\ref{surfacebv2withoutslicing2}. Hence, we obtain in fact the stronger result that the minimizer of \eqref{minimierungsproblemmeasure} is also a solution of a generalized optimization problem where the function~$f$ in the functional~$G$ is replaced by such a function~$h$.
\end{remark}

\begin{remark}
A closer look at Lemma \ref{surfacebv2withoutslicing}, Lemma \ref{majorisation} and Remark \ref{rem_majorization_generalization} shows that the concept of majorization leads to an alternative proof of the corresponding results in \cite{LVDD07}.
\end{remark}

\subsection{Estimate from below of the volume term}

Now, we turn to the estimate from below of the volume term. For this purpose we first show the following auxiliary lemma:

\begin{lemma}\label{estimatevolumeax}
Let $C > 0$ and let $U \subset \R^d$ be bounded and open. Let $u \in BV(U)$ and let $\{v_k\}_k$ be a sequence in $SBV(U)$ with $\sup_{k}\mathcal{H}^{d-1}(J_{v_k}) < \infty$ and $v_k \weakstar u$ in $BV(U)$. Let $w \in L^2(U,\Rd)$ and let $\{w_k\}_k$ be a sequence in $L^2(U, \Rd)$ with $w_k \weakly w$ in $L^2(U, \Rd)$. Then 
$$ \liminf_{k \to \infty} \int_{U} | \nabla v_k + w_k | \dd x 
   \ge \int_{U} | \nabla u + w | \dd x. $$ 
\end{lemma}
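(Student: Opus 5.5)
The plan is to pass to the limit at the level of measures and to use lower semicontinuity of the total variation. The key structural fact needed is that the jump parts of $Dv_k$ cannot produce an absolutely continuous contribution in the limit, because $\sup_k \HA(J_{v_k})<\infty$.

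\emph{Step 1: limit measures.} We may assume the $\liminf$ is a limit and finite. Since $D^a v_k$ and $D^j v_k$ are mutually singular parts of $Dv_k$, we have $|\nabla v_k|\mathcal{L}^d(U)+|D^j v_k|(U) = |Dv_k|(U)$. This quantity is bounded because $v_k \weakstar u$ in $BV(U)$. Moreover $\{w_k\}_k$ is bounded in $L^2(U,\Rd)$, and hence in $L^1(U,\Rd)$ since $U$ is bounded. Up to a subsequence we therefore get
\[
D^j v_k \weakstar \lambda, \qquad \nabla v_k \mathcal{L}^d \weakstar Du-\lambda, \qquad w_k\mathcal{L}^d \weakstar w\mathcal{L}^d
\]
in $\mathcal{M}(U,\Rd)$. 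The last convergence uses that $L^2$-weak convergence implies convergence against $C_0$ test functions. By lower semicontinuity of the total variation under weak-$\ast$ convergence on the open set $U$,
\[
\lim_{k\to\infty}\int_U|\nabla v_k+w_k|\dd x \ \ge\ |Du-\lambda+w\mathcal{L}^d|(U) \ \ge\ \int_U |\nabla u - \lambda^a + w|\dd x .
\]
Here $\lambda^a$ denotes the density of the absolutely continuous part of $\lambda$. The last inequality holds because the total variation dominates the total variation of the absolutely continuous part. So it remains to show $\lambda^a=0$.

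\emph{Step 2 (main obstacle): $\lambda^a=0$.} The jump heights $[v_k]$ are not controlled, so a direct density argument comparing $\lambda$ with the limit of $\HA\mrs J_{v_k}$ does not work. I would instead argue by slicing, componentwise. Fix $\xi\in\Sm$ and consider $\lambda_\xi \coloneqq \xi\cdot\lambda$, the weak-$\ast$ limit of $\xi\cdot D^j v_k$. By the slicing theory for $SBV$ (see~\cite{afp}), $\xi\cdot D^jv_k$ disintegrates into the jump parts of the one-dimensional slices $(v_k)^\xi_y$, and $\int_{\xi^\perp}\#(J_{(v_k)^\xi_y})\dd\mathcal{H}^{d-1}(y)\le \HA(J_{v_k})\le C$. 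By Fatou's lemma, for $\mathcal{H}^{d-1}$-a.e.\ $y$ the number of jumps of $(v_k)^\xi_y$ has finite $\liminf$; likewise $\int_{\xi^\perp}|D(v_k)^\xi_y|(U^\xi_y)\dd y$ stays bounded.

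In one dimension the picture is then clear. Along a subsequence (depending on $y$) with at most $N_y$ jumps, the jump points converge to at most $N_y$ points. Away from these points the slices are $W^{1,1}$, so the weak-$\ast$ limit of the jump parts is concentrated on finitely many points and is in particular singular. The delicate issue is that the subsequence depends on $y$. To handle this, I would test with $\varphi\in C_c(U)$, $\varphi\ge 0$, and write $\int\varphi\dd\lambda_\xi$ as a limit of integrals over slices. Then I would use a Fatou-type argument, with test functions vanishing near the finitely many accumulation points, to show that for every compact $F$ with $\mathcal{L}^d(F)>0$ on which $\lambda_\xi^a$ is bounded away from zero, the $\lambda_\xi$-mass of $F$ must in fact be carried by sets of $\mathcal{L}^d$-measure zero. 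An alternative, possibly cleaner route is to invoke Ambrosio's $SBV$ closure/compactness argument applied to truncations of $v_k$. Either way one obtains $(\xi\cdot\lambda)^a=0$ for all $\xi$, hence $\lambda^a=0$.

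\emph{Step 3: conclusion.} Inserting $\lambda^a=0$ into the estimate of Step~1 gives $\liminf_k\int_U|\nabla v_k+w_k|\dd x\ge\int_U|\nabla u+w|\dd x$. Since the original $\liminf$ was first reduced to a limit along a subsequence and all further subsequence extractions only preserve that limit, the bound holds for the full sequence.
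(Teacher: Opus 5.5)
\textbf{Gap: the claim $\lambda^a=0$ in Step 2 is false.} Your Steps 1 and 3 are fine; Step 1 is the same lower-semicontinuity step the paper uses. Step 2, however, cannot be carried out, because for $d\ge2$ the claim $\lambda^a=0$ fails. A bound on $\HA(J_{v_k})$, even with bounded jump heights, does not prevent $\HA\mrs J_{v_k}$ from spreading out into Lebesgue measure.

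Here is a counterexample. Take $d=2$, $U=(0,1)^2$ and $k\ge5$. Let $c_1,\dots,c_{k^2}$ be the centres of the grid cells of side $1/k$, set $\ell\coloneqq k^{-2}$, $S_i\coloneqq c_i+[0,\ell]\times\{0\}$, and
\[
\psi_i(x)\coloneqq\chi_{\{x_2>(c_i)_2\}}(x)\,\max\{0,1-\dist(x,S_i)/\ell\},\qquad v_k\coloneqq\sum_{i=1}^{k^2}\psi_i .
\]
Each $\psi_i$ is Lipschitz off $T_i\coloneqq c_i+[-\ell,2\ell]\times\{0\}$. Its jump part is $D^j\psi_i=[\psi_i]\,e_2\,\mathcal{H}^1\mrs T_i$ with $0\le[\psi_i]\le1$ and $\int_{T_i}[\psi_i]\dd\mathcal{H}^1=2\ell$. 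Moreover $\|\nabla\psi_i\|_{L^1}\le3\ell$ and $\|\psi_i\|_{L^1}\le3\ell^2$.

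Consequently $v_k\in SBV(U)$, $\mathcal{H}^1(J_{v_k})\le3$, $|Dv_k|(U)\le5$ and $v_k\to0$ in $L^1(U)$, so $v_k\weakstar u\coloneqq0$. But each cell of area $k^{-2}$ carries jump mass $2k^{-2}$ near its centre. Hence $D^jv_k\weakstar\lambda=2e_2\mathcal{L}^2\mrs U$ and $\nabla v_k\mathcal{L}^2\weakstar-2e_2\mathcal{L}^2\mrs U$, so $\lambda^a=2e_2\neq0$.

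Now take $w_k=w=2e_2$. Your Step 1 then only gives $|Du-\lambda+w\mathcal{L}^2|(U)=0$, whereas the lemma asserts the bound $2$. That bound does hold here, but only because $\nabla v_k$ lives on a set of area $O(k^{-2})$.

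The example also shows where your two suggested arguments break.
\begin{itemize}
\item Slicing: for $\xi=e_2$, a.e.\ vertical line has jump-free slices for infinitely many $k$. Meanwhile all the mass of $e_2\cdot D^jv_k$ sits on a set of lines of measure $3/k$, each crossing $k$ cracks. So $y$-dependent subsequences lose exactly the mass you need.
\item Ambrosio's closure theorem: it requires equi-integrability of $\nabla v_k$, which is precisely what fails here. Truncation makes the defect small, but not zero.
\end{itemize}

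\textbf{The missing idea: make the limit of the jump parts small, not singular.} The paper does this by truncating around $u$. Fix $\eps>0$, pick $\eta\in(0,\eps)$ with $\int_{J_u}\min\{|[u]|,\eta\}\dd\HA<\eps$, and set $\tilde v_k\coloneqq u+g_\eta(v_k-u)$ with $g_\eta(t)=\max\{-\eta,\min\{t,\eta\}\}$.

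By the $BV$ chain rule, $D^s\tilde v_k$ splits into two pieces:
\begin{itemize}
\item $D^ju$ plus a restriction of $D^cu$, whose weak-$\ast$ limit points are dominated by $|D^su|$ and are therefore singular;
\item a jump measure $\mu_k$ with jumps at most $2\eta$ on $J_{v_k}$ and at most $2\min\{|[u]|,\eta\}$ on $J_u\setminus J_{v_k}$.
\end{itemize}
Hence $|\mu_k|(U)\le2\eta\sup_k\HA(J_{v_k})+2\eps$. This is where the bound on $\HA(J_{v_k})$ really enters.

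Apply the argument of your Step 1 to $\nabla\tilde v_k\mathcal{L}^d=D\tilde v_k-D^s\tilde v_k$ (note $\tilde v_k\weakstar u$). The absolutely continuous part of any limit of $\mu_k$ has $L^1$-norm $O(\eps)$, so
\[
\liminf_k\int_U|\nabla\tilde v_k+w_k|\dd x\ \ge\ \int_U|\nabla u+w|\dd x-O(\eps).
\]

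To return to $v_k$, use the following facts:
\begin{itemize}
\item $\nabla\tilde v_k=\nabla v_k$ on $G_k\coloneqq\{|v_k-u|<\eta\}$;
\item $\nabla\tilde v_k=\nabla u$ on $U\setminus G_k$, and $|U\setminus G_k|\to0$;
\item $\{\nabla u+w_k\}_k$ is equi-integrable.
\end{itemize}
Together these give
\[
\int_U|\nabla v_k+w_k|\dd x\ \ge\ \int_{G_k}|\nabla\tilde v_k+w_k|\dd x\ =\ \int_U|\nabla\tilde v_k+w_k|\dd x-o(1).
\]
In words, the gradient of $v_k$ is discarded on the vanishing set where $v_k$ is far from $u$. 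That is exactly where the crack-tip gradients of the example above live.
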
 

\begin{proof} 
\emph{Step 1: Construction of modifications of~$v_k$ which are equal to~$v_k$ on regions where these functions are close to the limit~$u$.} 
Fix $\eps > 0$. For $0 < \eta < \eps$ we define $g_{\eta}(t) = \max \{ -\eta, \min \{ t, \eta \} \}$ for $t \in \R$. By monotone convergence we can choose $\eta$ so small that 
\begin{align}\label{eq:smalljumps} 
  |g_{\eta} \circ [u] \mathcal{H}^{d-1} \lfloor J_u|(U) 
  = \int_{J_u} \min\{ |[u]|, \eta \} \, \mathrm{d} \mathcal{H}^{d-1} 
  < \eps. 
\end{align}
Now we set 
\begin{equation*}
\tilde{v}_k 
   \coloneqq  u + g_{\eta} \circ (v_k - u) \quad \text{for } k \in \N. 
\end{equation*}   
By the chain rule for $BV$-functions (see, e.g., \cite[Thm.\ 3.99]{afp}), $g_{\eta} \circ (v_k - u)$ belongs to $BV(U)$ with 
\begin{align*}
  D ( g_{\eta} \circ (v_k - u) ) 
  &= g_{\eta}'(v_k - u) \nabla (v_k - u) \mathcal{L}^d + g_{\eta}'(\overline{v_k - u}) D^c (v_k - u) \\ 
  &\qquad + [g_{\eta} \circ (v_k - u)] \nu_{v_k - u} \mathcal{H}^{d-1} \mrs  J_{v_k - u}. 
\end{align*}
Here $\overline{(v_k - u)}(x)$ denotes the approximate limit of $v_k - u$ at $x$ (see Section~\ref{sec_preliminaries}). Notice that $S_{v_k - u}$ is $|D^c (v_k - u)|$-negligible. Moreover, the sets ${\{ x \in U \colon | v_k(x) - u(x) | = \eta \}}$ and $\{ x \in U \colon | \overline{(v_k - u)}(x) | = \eta \}$ are negligible with respect to $\nabla (v_k - u) \mathcal{L}^d$ and $D^c (v_k - u)$, respectively. Therefore, setting $G_k = \{ x \in U \colon | v_k(x) - u(x) | < \eta \}$ and $\overline{G}_k = \{ x \in U \colon | \overline{(v_k - u)}(x) | < \eta \}$, we obtain 
\begin{equation*}
  D ( g_{\eta} \circ (v_k - u) ) 
  = \chi_{G_k} \nabla (v_k - u) \mathcal{L}^d + \chi_{\overline{G}_k}  D^c (v_k - u) + [g_{\eta} \circ (v_k - u)] \nu_{v_k - u} \mathcal{H}^{d-1} \lfloor J_{v_k - u} 
\end{equation*}
and so 
\begin{equation*}
  D \tilde{v}_k 
  = ( \chi_{G_k} \nabla v_k + \chi_{U \setminus G_k} \nabla u ) \mathcal{L}^d + \chi_{U \setminus \overline{G}_k} D^c u + D^j u + [g_{\eta} \circ (v_k - u)] \nu_{v_k - u} \mathcal{H}^{d-1} \mrs J_{v_k - u}. 
\end{equation*}
We now estimate the total variation of the last term on the right hand side. We first note that for $\mathcal{H}^{d-1} \lfloor (J_u \setminus J_{v_k})$-a.e.\ $x$ there holds
\begin{align*}
  |[g_{\eta} \circ (v_k - u)](x)| 
  &= |g_{\eta}(\overline{v_k}(x) - u^+(x)) - g_{\eta}(\overline{v_k}(x) - u^-(x))| \\ 
  &\le 2 |g_{\eta}(u^-(x) - u^+(x))| 
   = 2 |g_{\eta}([u](x))|, 
\end{align*}
where we have used the elementary estimate $|g_{\eta}(t) - g_{\eta}(s)| \le 2 \min\{|t-s|,\eta\} = 2|g_{\eta}(t-s)|$ for all $s,t\in\mathbb{R}$. With \eqref{eq:smalljumps} and $C = \sup_{k} |D \tilde{v}_k|(U)$ we thus obtain   
\begin{align}\label{eq:smalljumps-est}
\begin{split}
  &|[g_{\eta} \circ (v_k - u)] \nu_{v_k - u} \mathcal{H}^{d-1} \lfloor J_{v_k - u}|(U) \\  
  &\qquad \le 2 \eta \mathcal{H}^{d-1}(J_{v_k}) +  |[g_{\eta} \circ (v_k - u)] \mathcal{H}^{d-1} \lfloor (J_u \setminus J_{v_k})|(U) 
  \le 2 C \eta + 2 \eps. 
\end{split}
\end{align}
For later use we remark that this in particular shows that $\sup_{k} |D \tilde{v}_k|(U) < \infty$, which in turn implies that $\{\tilde{v}_k\}_{k}$ is bounded in $BV(U)$. Because of $v_k \to u$ in $L^1(U)$ 
we thus have also that  
$$ \tilde{v}_k \weakstar u \mbox{ in } BV(U). $$

\emph{Step 2: Proof of the claim with $\{v_k\}_k$ replaced by $\{\tilde{v}_k\}_k$ up to an error of order~$\eps$.} We have 
\begin{align*}
  D^s \tilde{v}_k 
  &= \chi_{U \setminus \overline{G}_k} D^c u + D^j u + [g_{\eta} \circ (v_k - u)] \nu_{v_k - u} \mathcal{H}^{d-1} \mrs J_{v_k - u}. 
\end{align*}
Passing if necessary to a subsequence (not relabeled), we may assume that 
\begin{equation*}
\chi_{U \setminus \overline{G}_k} \frac{\textnormal{d} D^c u}{\textnormal{d} |D^c u|} \weakly h \text{ in }L^2(|D^c u|, \Rd)
\end{equation*}
and 
\begin{equation*}
[g_{\eta} \circ (v_k - u)] \nu_{v_k - u} \mathcal{H}^{d-1} \mrs J_{v_k - u} \weakstar f \mathcal{L}^d + \mu^s\text{ in }\mathcal{M}(U, \Rd)
\end{equation*}
for some $h \in L^2(|D^c u|, \Rd)$, $f \in L^1(U, \Rd)$   and a $\mu^s \in \mathcal{M}(U,\Rd)$ which is singular to $\mathcal{L}^d$.
Then $D^s \tilde{v}_k \weakstar f \mathcal{L}^d + \varrho^s$ in $\mathcal{M}(U, \Rd)$, where $\varrho^s = h |D^c u| + D^j u + \mu^s$ is singular to $\mathcal{L}^d$ as well. From \eqref{eq:smalljumps-est} and $\eta \leq \eps$ it follows that $\| f \|_{L^1(\Omega)}\le 2 (C+1) \eps$. But then 
\begin{align*} 
  \nabla \tilde{v}_k + w_k 
  &= D \tilde{v}_k - D^s \tilde{v}_k + w_k \\ 
  & \weakstar Du - f \mathcal{L}^d - \varrho^s + w 
    = (\nabla u - f + w) \mathcal{L}^d + D^s u - \varrho^s \text{ in }\mathcal{M}(U, \Rd),
\end{align*} 
which implies the claim
\begin{equation*}
 \liminf_{k \to \infty} \int_{U} | \nabla \tilde{v}_k + w_k | 
   \ge \int_{U} | \nabla u - f + w | \ge \int_{U} | \nabla u + w | - 2 (C + 1) \eps.
\end{equation*}   

\emph{Step 3: Proof of the claim for the original sequence $(v_k)_k$.} To this end, we first note that, since $v_k \to u$ in $L^1(U)$ and thus in measure, we have $\lim_{k \to \infty} | U \setminus G_k | = 0$. Using $\nabla \tilde{v}_k = \chi_{G_k} \nabla v_k + \chi_{U \setminus G_k} \nabla u$, we estimate 
\begin{equation*}
  \int_{U} | \nabla v_k + w_k | 
  \ge \int_{G_k} | \nabla \tilde{v}_k + w_k | 
  = \int_{U} | \nabla \tilde{v}_k + w_k | - \int_{U \setminus G_k} | \nabla u + w_k |  
\end{equation*}
and passing to the $\liminf$ we obtain 
\begin{equation*}
\liminf_{k \to \infty} \int_{U} | \nabla v_k + w_k | 
   \ge \int_{U} | \nabla u + w | - 2 (C+1) \eps, 
\end{equation*}   
where we have used that $(\nabla u + w_k)_k$ is equiintegrable. Since~$\eps>0$ was arbitrary, the assertion follows. 
\end{proof}

Now, we can show the estimate from below for the volume part.

\begin{Prop}\label{ACPPMBV}
Let $A \subset \Omega$ be open. For every $(u,\gamma) \in  BV(\Omega) \times \Mam$ with $\norm{u}_{L^{\infty}(\Omega)} \leq K$, $\gamma = D^su+ g \mathcal{L}^d$ and $\nabla u- g \in L^2(\Omega)$, we have
\begin{equation}
E^{\prime}(u,\gamma,A) \geq \int_A {|\nabla u- g|^2 \dd x } + c_0 \int_A { |g| \dd x}.  \label{ACPMDBV}
\end{equation}
\end{Prop}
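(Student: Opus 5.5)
We prove \eqref{ACPMDBV} directly, without slicing or blow-up. Take any sequence $(u_\varepsilon,\gamma_\varepsilon)\to(u,\gamma)$ with $\liminf_\varepsilon E_\varepsilon(u_\varepsilon,\gamma_\varepsilon,A)<\infty$, and pass to a subsequence realizing the $\liminf$. It suffices to prove the inequality on every open $A'\Subset A$, since then letting $A'\uparrow A$ gives the claim by monotone convergence. Fix $\delta>0$ and apply Proposition~\ref{clmBV} on $A'$ to obtain $(v_\varepsilon,\breve g_\varepsilon)$. We then work on some $U\Subset A'$, which is contained in $A'_{6\varepsilon-}$ for small $\varepsilon$.

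Proposition~\ref{clmBV} supplies four facts. First, $\{v_\varepsilon\}$ is bounded in $BV(U)$, by the $L^1$-bound on $\nabla v_\varepsilon$, the $\mathcal H^{d-1}$-bound on $J_{v_\varepsilon}$, and the $L^\infty$-bound. Second, $v_\varepsilon\to u$ in $L^1(U)$, because $\|v_\varepsilon-u_\varepsilon\|_{L^1(U)}\to0$. Hence $v_\varepsilon\weakstar u$ in $BV(U)$, and $\sup_\varepsilon \mathcal H^{d-1}(J_{v_\varepsilon}\cap U)<\infty$. Third, $w_\varepsilon:=\nabla u_\varepsilon-g_\varepsilon$ is bounded in $L^2$. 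By the compactness argument of Theorem~\ref{commBV}, $w_\varepsilon\weakly \nabla u-g$ in $L^2(\Omega,\R^d)$. Fourth, $\nabla v_\varepsilon-\breve g_\varepsilon=w_\varepsilon$ outside a set $N_\varepsilon$ with $|N_\varepsilon|\le c\varepsilon\to0$. Set $\breve w_\varepsilon:=\nabla v_\varepsilon-\breve g_\varepsilon$. Then $\breve w_\varepsilon$ is bounded in $L^2$ by the first estimate of Proposition~\ref{clmBV}, and $\breve w_\varepsilon-w_\varepsilon$ is supported in $N_\varepsilon$ and bounded in $L^2$, so it tends weakly to $0$ in $L^2$. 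Therefore $\breve w_\varepsilon\weakly \nabla u-g$ as well.

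By the first estimate of Proposition~\ref{clmBV}, restricted to $U$,
\[
E_\varepsilon(u_\varepsilon,\gamma_\varepsilon,A')\ge \int_U|\breve w_\varepsilon|^2\,\mathrm dx+(1-\delta)c_0\int_U|\nabla v_\varepsilon-\breve w_\varepsilon|\,\mathrm dx .
\]
Weak lower semicontinuity of the $L^2$-norm gives $\liminf\int_U|\breve w_\varepsilon|^2\ge\int_U|\nabla u-g|^2$. For the second term we apply Lemma~\ref{estimatevolumeax} with $v_k=v_\varepsilon$ and $w_k=-\breve w_\varepsilon\weakly g-\nabla u$. This yields $\liminf\int_U|\nabla v_\varepsilon-\breve w_\varepsilon|\ge\int_U|\nabla u+g-\nabla u|=\int_U|g|$.

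The two liminfs are taken separately, and $\liminf(a+b)\ge\liminf a+\liminf b$, so combining them gives
\[
\liminf_\varepsilon E_\varepsilon(u_\varepsilon,\gamma_\varepsilon,A)\ge\int_U|\nabla u-g|^2\,\mathrm dx+(1-\delta)c_0\int_U|g|\,\mathrm dx .
\]
Letting $\delta\to0$ and $U\uparrow A$ yields \eqref{ACPMDBV}.

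The main obstacle is already handled by Lemma~\ref{estimatevolumeax}. The bulk gradient $\nabla v_\varepsilon$ converges only through the weak-$*$ convergence of $Dv_\varepsilon$, whose singular parts may concentrate diffusely and create absolutely continuous mass in the limit. The lemma rules this out because the jump sets have bounded $\mathcal H^{d-1}$-measure. What remains delicate here is verifying its hypotheses: uniform finiteness of $\mathcal H^{d-1}(J_{v_\varepsilon}\cap U)$ on a fixed $U$, and the identification of the weak $L^2$-limit of $\breve w_\varepsilon$ through the small-measure exceptional set $N_\varepsilon$.
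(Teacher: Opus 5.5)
Your proposal is correct and follows essentially the same route as the paper: apply Proposition~\ref{clmBV}, identify the weak $L^2$-limit of $\nabla v_\varepsilon-\breve g_\varepsilon$ as $\nabla u-g$ using the vanishing measure of the exceptional set, and use weak lower semicontinuity for the quadratic term and Lemma~\ref{estimatevolumeax} on an interior set for the linear term. Your identification of the limit through $w_\varepsilon=\nabla u_\varepsilon-g_\varepsilon$ is a slightly more direct version of the paper's flat-convergence argument for $D^j v_\varepsilon+\breve g_\varepsilon\mathcal{L}^d$. One small correction: the localized energy only bounds $\|\nabla u_\varepsilon-g_\varepsilon\|_{L^2(A)}$, so the weak convergence to $\nabla u-g$ should be asserted in $L^2(A,\R^d)$ (tested with $C^\infty_c(A)$ fields), not in $L^2(\Omega,\R^d)$; this is all you actually use.
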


\begin{proof}
Let $\{(\ue,\gamma_\varepsilon)\}_{\varepsilon}$ be a sequence in $W^{1,1}(\Omega) \times \Mam$ with $\norm{\ue}_{L^{\infty}(\Omega)} \leq K$, $\gae = \ggae \mathcal{L}^d$ for a function $\ggae \in L^1(\Omega,\R^d)$ satisfying $\nabla \ue- \ggae \in L^2(\Omega,\R^d)$ for every $\varepsilon >0$ such that $u_\varepsilon \to u$ in $L^1(\Omega)$, $\gamma_\varepsilon \to \gamma$ in the flat norm on~$\Omega$ and
\begin{equation*}
E_{\varepsilon}(\ue,\gae, A) \to E^{\prime}(u,\gamma, A) < \infty
\end{equation*}
(with all elements being finite). Note that \eqref{ACPMDBV} is trivial if $ E^{\prime}(u,\gamma, A)= \infty$. Let $\delta \in (0,1)$ be arbitrary. In view of Proposition~\ref{clmBV}, we find a sequence $\{(\ve,\breve{\gamma}_{\varepsilon} = \breve{g}_{\varepsilon}\mathcal{L}^d)\}_{\varepsilon}$ in $SBV(A) \times \mathcal{M}(A, \R^{d})$  such that $\ve \weakstar u$ in $BV(A)$ and with $\norm{\ve}_{L^{\infty}(A)} \leq \norm{\ue}_{L^{\infty}(A)}\leq K$, $\breve{g}_{\varepsilon} \in L^1(A, \Rd)$, ${\HA(J_{\ve} \cap A_{6 \varepsilon}-) \leq c E_{\varepsilon}(\ue,\gae,A)}$, and
\begin{equation}
E_{\varepsilon}(\ue,\gae,A) \geq \int_A {|\nabla \ve- \breve{g}_{\varepsilon}|^2 \dd x } + (1- \delta) c_0 \,\int_A { |\breve{g}_{\varepsilon}| \dd x} \label{estimatevolumetotal}
\end{equation}
for every $\varepsilon > 0$. We next observe that $\{D^j\ve +\breve{g}_{\varepsilon} \mathcal{L}^d \}_{\varepsilon}$ converges to $D^su + g \mathcal{L}^d$ in the flat norm on~$A$. This follows by rewriting $D^j\ve +\breve{g}_{\varepsilon} \mathcal{L}^d = D\ve + (\breve{g}_{\varepsilon}  - \nabla \ve ) \mathcal{L}^d$ for $\varepsilon > 0$, combined with $\ue, \ve \to u$ in $L^1(\Omega)$, $\ggae \mathcal{L}^d \to \gamma =  D^su+ g \mathcal{L}^d$ in the flat norm, the uniform boundedness of $\{\breve{g}_{\varepsilon}  - \nabla \ve\}_\varepsilon$ in $L^2(\Omega)$ and  
\[
|A \setminus A_{6\varepsilon- }| +  \big| \{x \in A_{6 \varepsilon -} \colon \nabla v_\varepsilon(x) - \breve{g}_\varepsilon(x) \neq \nabla u_\varepsilon(x) - g_\varepsilon(x) \} \big| \to 0  \quad \text{ as } \varepsilon \to 0. \]
From the equiboundedness of $\{E_{\varepsilon}(\ue,\gae,A)\}_\varepsilon$ and from the uniqueness of the limit we get that $\{\nabla \ve - \breve{g}_{\varepsilon}\}_{\varepsilon} $ weakly converges, up to subsequences, to $\nabla u - g$ in $L^2(A, \Rd).$ 
Hence, we have
\begin{equation}
\liminf_{\varepsilon \to 0} \int_A {|\nabla \ve- \breve{g}_{\varepsilon}|^2 \dd x }  \geq \int_A |\nabla u - g|^2 \dd x. \label{estimatevolumefirstpart}
\end{equation}
Let $B$ be an open subset of $\R^d$ with $B \Subset A$ and $7 \varepsilon < \text{dist}(B, \partial A)$, which implies $B \subset A_{6\varepsilon}-$ and in turn that $\HA(J_{\ve} \cap B)$ is bounded independently of $\varepsilon$. We now consider the sequence $\{w_{\varepsilon}\}_{\varepsilon}$ defined by $w_{\varepsilon} \coloneqq  \breve{g}_{\varepsilon} - \nabla v_{\varepsilon}$ for $\varepsilon >0$. Since $\{w_{\varepsilon}\}_{\varepsilon}$ converges weakly to $w \coloneqq g - \nabla u$ in $L^2(A, \Rd)$, we can apply Lemma~\ref{estimatevolumeax} with the sequence $\{\ve, w_{\varepsilon}\}_{\varepsilon}$ on the set $U=B$. In this way, we obtain 
\begin{equation}
\label{estimatevolumeBVh1}
 \liminf_{\varepsilon \searrow 0} \int_B { |\breve{g}_{\varepsilon}| \dd x} = \liminf_{\varepsilon \searrow 0} \int_B { | \nabla \ve +w_{\varepsilon} | \dd x}  \geq \int_B { |\nabla u +w| \dd x} = \int_B { |g| \dd x}
\end{equation}
Summing up \eqref{estimatevolumetotal}, \eqref{estimatevolumefirstpart} and \eqref{estimatevolumeBVh1}, we arrive at
\begin{equation*}
\liminf_{\varepsilon \searrow 0} E_{\varepsilon}(\ue,\gae,A) \geq \int_A |\nabla u - g|^2 \dd x + (1- \delta) c_0 \,\int_B { |g| \dd x}. \label{estimatevolumetotal2}
\end{equation*}
Since $B \Subset A$ is arbitrary, the assertion~\eqref{ACPMDBV} then follows by taking the supremum over $\delta > 0$.
\end{proof}

\subsection{Estimate from below of the Cantor term}

We finally deal with the estimate from below of the Cantor term. We basically follow the idea of Lussardi and Vitali in \cite{LVDD07}. First, we replace the average over the $d$-dimensional ball by an average over a $d$-dimensional cube in the second term. Via Fubini's theorem we then split this average into an average on a $(d-1)$-dimensional cube and an average on an interval, which in turn allows to apply the slicing method and to employ the corresponding one-dimensional result from~\cite{AuerVolkmannBeckSchmidt:22}. For this splitting, we use the notation 
\begin{align*}
{Q}_{r}^{d-1}(y') \coloneqq \big\{ z' \in \R^{d-1} \colon |z'_i - y'_i| \leq r,\text{ for } i \in \{1, \dots, d-1\} \big\}
\end{align*}
for the $(d-1)$-dimensional (axis-aligned) cube with center $y' \in \R^{d-1}$ and side length~$2r$. We start by recalling some facts on averaging a sequence of functions on (shrinking) $(d-1)$-dimensional cubes. 

\begin{lemma}[Lemma 4.4 in \cite{LVDD07}]\label{hlf1BV2}
Let $A \subset \R^{d-1}$ be open and $a,b \in \R$ with $a < b$. Let $\{\ue\}_{\varepsilon}$ be a sequence in $L^1((a,b) \times A)$ such that $u_{\varepsilon} \to u$ in $L^1((a,b) \times A )$ for some $u \in L^1((a,b) \times A)$.
For a.e.~$t \in (a,b)$, for every $y' \in A$  and for every $\varepsilon>0$ with $\varepsilon < ({1}/{\sqrt{d-1}}) \textnormal{ dist}(y',\partial A)$ we define
\begin{align*}
\widehat{u}_{\varepsilon}^{y'}(t) \coloneqq  \dashint_{\Qd_{\varepsilon}(y')} u_{\varepsilon}(t,z') \dd z'.
\end{align*}
Then, there exists a \textup{(}not relabeled\textup{)} subsequence  such that $\widehat{u}_{\varepsilon}^{y'} \to  u(\cdot,y')$ in $L^1(a,b)$ for a.e.~$y' \in A$.
\end{lemma}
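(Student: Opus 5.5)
The plan is to pass to a single averaged approximant $\bar u$, show that the averaged sequence converges to it, and then extract an a.e.\ convergent subsequence of the slices.

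\textbf{Reduction to a fixed averaged approximant.} For fixed $\varepsilon$ and $y'$, define the averaged limit slice
\[
\widehat u^{y'}_{0,\varepsilon}(t) \coloneqq \dashint_{\Qd_\varepsilon(y')} u(t,z')\dd z' .
\]
We then split
\[
\|\widehat u^{y'}_\varepsilon - u(\cdot,y')\|_{L^1(a,b)} \le \|\widehat u^{y'}_\varepsilon-\widehat u^{y'}_{0,\varepsilon}\|_{L^1(a,b)} + \|\widehat u^{y'}_{0,\varepsilon}-u(\cdot,y')\|_{L^1(a,b)}
\]
and treat the two terms separately. Strictly speaking, both terms are only defined for $y'$ with $\varepsilon<\dist(y',\partial A)/\sqrt{d-1}$. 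We therefore work on a fixed $A'\Subset A$ (for $\varepsilon$ small) and exhaust $A$ by countably many such $A'$ at the end. A diagonal argument then yields one subsequence that works a.e.\ on all of $A$.

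\textbf{First term.} By Jensen's inequality,
\[
|\widehat u^{y'}_\varepsilon(t)-\widehat u^{y'}_{0,\varepsilon}(t)| \le \dashint_{\Qd_\varepsilon(y')}|u_\varepsilon-u|(t,z')\dd z' .
\]
Integrating in $t$ and in $y'\in A'$ and applying Fubini gives
\[
\int_{A'}\|\widehat u^{y'}_\varepsilon-\widehat u^{y'}_{0,\varepsilon}\|_{L^1(a,b)}\dd y' \le \int_{(a,b)\times A}|u_\varepsilon-u|\dd x \to 0 .
\]
This works because each $z'$ lies in $\Qd_\varepsilon(y')$ for a set of $y'$ of measure exactly $|\Qd_\varepsilon|$. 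Hence the map $y'\mapsto\|\cdots\|_{L^1(a,b)}$ tends to $0$ in $L^1(A')$, and along a subsequence it tends to $0$ for a.e.\ $y'$.

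\textbf{Second term.} This term does not involve the sequence $u_\varepsilon$, only $u$. Consider $u$ as the map $U\colon A\to L^1(a,b)$, $y'\mapsto u(\cdot,y')$. By Fubini, $U$ is Bochner integrable, and $\widehat u^{y'}_{0,\varepsilon}$ is exactly the cube average of $U$. The Lebesgue differentiation theorem for Bochner integrable functions into the separable Banach space $L^1(a,b)$, applied with cubes as a regular family, gives
\[
\dashint_{\Qd_\varepsilon(y')}\|U(z')-U(y')\|_{L^1(a,b)}\dd z' \to 0 \quad \text{for a.e. } y' .
\]
By Jensen this bounds the second term, so it tends to $0$ for a.e.\ $y'$ along the full family.

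\textbf{Main obstacle.} The step that needs the most care is justifying the vector-valued differentiation. If we prefer to avoid Bochner theory, we can instead approximate $u$ in $L^1((a,b)\times A)$ by continuous functions $\phi_j$ and apply the scalar maximal inequality to $y'\mapsto\|u(\cdot,y')-\phi_j(\cdot,y')\|_{L^1(a,b)}$, which is the standard proof of Lebesgue differentiation. Combining the a.e.\ limits of both terms on the common subsequence proves the lemma.
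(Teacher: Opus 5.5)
The paper does not prove this lemma; it quotes it from \cite{LVDD07}, so there is no in-text argument to compare against. Your proof is correct. The Jensen--Fubini bound
\[
\int_{A'}\big\|\widehat u^{y'}_\varepsilon-\widehat u^{y'}_{0,\varepsilon}\big\|_{L^1(a,b)}\dd y'\le\|u_\varepsilon-u\|_{L^1((a,b)\times A)}
\]
is right, since the set of $y'$ with $z'\in\Qd_\varepsilon(y')$ is $\Qd_\varepsilon(z')$. The second term is bounded by $\dashint_{\Qd_\varepsilon(y')}\|u(\cdot,z')-u(\cdot,y')\|_{L^1(a,b)}\dd z'$. This tends to zero at every Lebesgue point of the Bochner integrable map $y'\mapsto u(\cdot,y')\in L^1(a,b)$: the identification $L^1((a,b)\times A)\cong L^1(A;L^1(a,b))$ gives strong measurability, and the Lebesgue point theorem holds in any Banach space.

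A more elementary route avoids vector-valued differentiation. Integrate the second term over $y'\in A'$ as well and apply Fubini:
\[
\int_{A'}\dashint_{\Qd_\varepsilon(y')}\|u(\cdot,z')-u(\cdot,y')\|_{L^1(a,b)}\dd z'\dd y'=\dashint_{\Qd_\varepsilon(0)}\int_{A'}\|u(\cdot,y'+h)-u(\cdot,y')\|_{L^1(a,b)}\dd y'\dd h .
\]
The right-hand side tends to zero by continuity of translations in $L^1$. Then the full error $y'\mapsto\|\widehat u^{y'}_\varepsilon-u(\cdot,y')\|_{L^1(a,b)}$ tends to zero in $L^1(A')$, and a single subsequence extraction finishes the proof.

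Your version gives a sharper intermediate statement. The term involving only $u$ converges along the whole family, at an explicit set of good $y'$ (the Lebesgue points), so only the $u_\varepsilon$-dependent term forces a subsequence. The cost is the Bochner or maximal-function machinery. Since the lemma only asserts a.e.\ convergence along a subsequence, that extra strength is not needed.

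In your route the diagonal argument can also be dropped. Choose $\varepsilon_j$ with $\sum_j\|u_{\varepsilon_j}-u\|_{L^1}<\infty$; this makes the first term summable in $L^1(A')$ for every $A'\Subset A$ at once.
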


\begin{remark}
\label{remark_slices_bounded_differentiable}
The averaging over the $(d-1)$-dimensional cubes in Lemma~\ref{hlf1BV2} also preserves regularity. In particular, there holds 
\begin{itemize}
 \item if $\ue \in L^{\infty}((a,b) \times A)$, then $\widehat{u}_{\varepsilon}^{y'} \in L^\infty(a,b)$ for a.e.~$y' \in A$, with 
 \begin{equation*}
  \|\widehat{u}_{\varepsilon}^{y'}\|_{L^{\infty}(a,b)} \leq \norm{\ue}_{L^{\infty}((a,b) \times A)},
 \end{equation*}
 \item if $\ue \in W^{1,1}((a,b) \times A)$, then $\widehat{u}_{\varepsilon}^{y'} \in W^{1,1}(a,b)$ for a.e.~$y' \in A$, with 
 \begin{equation*}
 (\widehat{u}_{\varepsilon}^{y'})^\prime(t) =   \dashint_{\Qd_{\varepsilon}(y')}  \nabla u_{\varepsilon}(t,z') \cdot e_1 \dd z' \text{ for }\mathcal{L}^1\text{-a.e. } t \in (a,b).
 \end{equation*}
\end{itemize}
\end{remark}

We now come to the estimate from below for the Cantor part.

\begin{Prop}\label{CPPMBV}
Let $A \subset \Omega$ be open.
For every $(u,\gamma) \in  BV(\Omega) \times \Mam$ with $\norm{u}_{L^{\infty}(\Omega)} \leq K$, $\gamma= D^su +g \mathcal{L}^d$ and $\nabla u - g \in L^2(\Omega, \R^d)$ we have
\begin{equation}
\label{cantorbv}
E^{\prime}(u,\gamma,A) \geq c_0 |D^c u|(A).  
\end{equation}
\end{Prop}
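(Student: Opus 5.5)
We prove \eqref{cantorbv} by slicing in a fixed direction, following the announced strategy of Lussardi and Vitali, and then reduce to the one-dimensional Cantor estimate of~\cite{AuerVolkmannBeckSchmidt:22}.

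\textbf{Reduction to rectangles.} Since the Cantor part vanishes on $\HA$-$\sigma$-finite sets, $E'(u,\gamma,\cdot)$ may be replaced by its inner regular envelope $\Psi'$, which is a measure by Lemma~\ref{derivativesingularpartBV}. It therefore suffices to prove $\Psi'(u,\gamma,A)\ge c_0\int_A |\langle \tfrac{\dd D^cu}{\dd |D^cu|},e\rangle|\dd|D^cu|$ for every direction $e$ and every open $A$. Indeed, the supremum over a countable dense set of directions, localized on disjoint open sets, recovers $c_0|D^cu|(A)$ by the standard measure-theoretic lemma on suprema of measures. We take $e=e_1$ by rotation invariance, and by additivity restrict to open rectangles $R=(a,b)\times A'\Subset A$.

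\textbf{Ball to cube and slicing.} Fix a sequence $(u_\eps,g_\eps\mathcal{L}^d)$ realising $E'(u,\gamma,R)$, and choose $\lambda<1$ with $Q_{\lambda\eps}(x)\subset B_\eps(x)$ up to an error controlled by a parameter. Since $f$ is nondecreasing, we bound
\[\eps\dashint_{B_\eps(x)}|g_\eps|\ \ge\ \kappa\,\eps\dashint_{Q_{\lambda\eps}(x)}|g_\eps\cdot e_1|\]
with $\kappa=|Q_{\lambda\eps}|/|B_\eps|$. The cube average splits into an interval average and an average over $\Qd_{\lambda\eps}(x')$.

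We then apply Lemma~\ref{hlf1BV2} and Remark~\ref{remark_slices_bounded_differentiable} to the averaged slices $\widehat u^{y'}_\eps$ and $\widehat g^{y'}_\eps:=\dashint_{\Qd}g_\eps\cdot e_1$. These converge to $u(\cdot,y')$ in $L^1(a,b)$ for a.e.\ $y'$, they satisfy $\|\widehat u^{y'}_\eps\|_\infty\le K$, and $(\widehat u_\eps^{y'})'-\widehat g^{y'}_\eps$ is the cube average of $\partial_1u_\eps-g_\eps\cdot e_1$. By Fubini and Jensen, this last quantity is bounded in $L^2$ on slices in an integrated sense. The nonlocal energy then dominates
\[\int_{A'}\frac1\eps\int_a^b f\Big(\kappa\,\eps\dashint_{x_1-\lambda\eps}^{x_1+\lambda\eps}|\widehat g^{y'}_\eps|\Big)\dd x_1\dd y'.\]
This is a one-dimensional eigendamage functional for the pair $(\widehat u^{y'}_\eps,\widehat g^{y'}_\eps)$ at scale $\lambda\eps$, with modified constant.

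\textbf{One-dimensional limit and reassembly.} By Fatou, we pass the $\liminf$ inside $\int_{A'}$. For a.e.\ $y'$ we add the elastic slice term, whose $\liminf$ is finite for a.e.\ $y'$, again by Fatou, and we invoke the one-dimensional Cantor lower bound of~\cite{AuerVolkmannBeckSchmidt:22}. This gives a lower bound $\kappa' c_0|D^c u(\cdot,y')|(a,b)$ with $\kappa'\to1$ as the parameters are sent to their limits. A point to check is that the 1D Cantor estimate does not need flat convergence of $\widehat g$, only the $L^1$ convergence of the slices together with the bounded elastic term; otherwise one argues directly that the compensated large derivative is penalised at the rate $c_0$. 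Integrating in $y'$ and using the slicing identity $\int_{A'}|D^c u(\cdot,y')|(a,b)\dd y'=\int_R|\langle \tfrac{\dd D^cu}{\dd|D^cu|},e_1\rangle|\dd|D^cu|$ from~\cite{afp}, we obtain the claim after letting $\lambda\to1$ and $\kappa\to1$.

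\textbf{Main obstacle.} The hardest step is the ball-to-cube comparison and the bookkeeping of the constants. The inscribed cube loses a dimensional factor. The fix we would try first is to cover $B_\eps$ in a nearly optimal way and exploit that the Cantor part sees only the saturation value $c_0$ of $f$, rather than its linear slope. This should allow the sublinear constants to be absorbed by rescaling $\eps$, so that the factor $\kappa'$ indeed tends to $1$.
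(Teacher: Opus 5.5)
Your proposal has a genuine gap in the ball-to-cube step, and this is exactly the step that fixes the constant. A cube $Q_{\lambda\varepsilon}(x)$ lies inside $B_\varepsilon(x)$ only if $\lambda\le 1/\sqrt d$. Hence $\kappa=(2\lambda)^d/\omega_d\le(2/\sqrt d)^d/\omega_d<1$ for $d\ge 2$; for instance $\kappa\le 2/\pi$ when $d=2$. So the limits $\lambda\to1$ and $\kappa\to1$ are not available. A cube that sticks out of the ball does not help either: it breaks the pointwise inequality, because $g_\varepsilon$ may concentrate in $Q_{\lambda\varepsilon}(x)\setminus B_\varepsilon(x)$.

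The remedy you suggest rests on a reversed premise. The Cantor part is charged through the slope of $f$ at $0$, not through its saturation value. For $|D^cu|$-a.e.\ $x$ one has $|Du|(B_\varrho(x))/\varrho^{d-1}\to 0$, so the argument $\varepsilon\dashint_{B_\varepsilon(x)}|g_\varepsilon|\dd y$ is essentially in the linear range of $f$ there. The 1D case shows this concretely: for a mollified Cantor function $u_\eta$ one gets $\frac1\eta\int\min\{\alpha\,\eta\dashint_{t-\eta}^{t+\eta}|u_\eta'|,\beta\}\dd t\to\alpha|Du|$, with no dependence on $\beta$. For the paper's $f$ the slope and the saturation value coincide, but for $s\mapsto f(\kappa s)$ the slope is $\kappa c_0$. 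Rescaling $\varepsilon$ or the slice functions only moves this factor around. Inserting your bound into the one-dimensional result therefore yields at best $\kappa c_0|D^cu\cdot e_1|$.

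What is missing is the Lussardi--Vitali averaging argument, i.e.\ the modified \cite[Lemma~4.3]{LuVi97} used in the paper. You compare with cubes much \emph{smaller} than the ball and pull the averaging out of $f$ by concavity. Set $r_h=\varepsilon(1-\sqrt d/h)$. Then $Q_{\varepsilon/h}(y)\subset B_\varepsilon(x)$ for all $y\in B_{r_h}(x)$, and Fubini gives $\dashint_{B_\varepsilon(x)}|g_\varepsilon|\ge c_h\dashint_{B_{r_h}(x)}\dashint_{Q_{\varepsilon/h}(y)}|g_\varepsilon|\dd z\dd y$ with $c_h=(1-\sqrt d/h)^d$. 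Jensen's inequality for the concave nondecreasing $f$, followed by a second Fubini argument in $x$, gives for $B\Subset A$ and $\varepsilon$ small
\[
\frac{1}{\varepsilon}\int_A f\Big(\varepsilon\dashint_{B_\varepsilon(x)}|g_\varepsilon|\dd y\Big)\dd x\;\ge\;\frac{1}{\varepsilon}\int_B f\Big(c_h\,\varepsilon\dashint_{Q_{\varepsilon/h}(x)}|g_\varepsilon|\dd z\Big)\dd x,\qquad c_h\to 1 .
\]
Only a shell of relative width $O(1/h)$ is lost, not a fixed fraction of the ball.

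After splitting $Q_{\varepsilon/h}$, the slice functional lives at scale $\varepsilon/h$ while the prefactor is still $1/\varepsilon$. You therefore rescale $(\widehat u,\widehat g)\mapsto c_hh(\widehat u,\widehat g)$, $f\mapsto c_h^2hf$ and $K\mapsto c_hhK$ to reach the one-dimensional form. The 1D bound then yields $c_hc_0|D^cu(\cdot,x')|$, and letting $h\to\infty$ concludes.

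The point you leave open is harmless. Apply Fatou to the \emph{sum} of the elastic and nonlocal slice terms, and for fixed $x'$ pass to a subsequence realizing a finite $\liminf$. The one-dimensional compactness theorem then gives the flat convergence of $\widehat g$ that the 1D $\liminf$ inequality requires. The rest of your outline (localization, the slicing identity, the supremum over directions) agrees with the paper.
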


\begin{proof}
Let $\{(\ue,\gae)\}_{\varepsilon}$ be a sequence in $W^{1,1}(\Omega) \times \Mam$ with $\norm{\ue}_{L^{\infty}(\Omega)} \leq K$, $\gae=\ggae \mathcal{L}^d$ for a function $\ggae \in L^1(\Omega,  \R^d)$ and $\nabla \ue - \ggae \in L^2(\Omega, \Rd)$ for every $\varepsilon >0$ such that $\ue \to u$ in $L^1(\Omega)$, $\gae \to \gamma_0$ in the flat norm on $\Omega$ and
\begin{equation*}
E_{\varepsilon}(\ue,\gae, A) \to E^{\prime}(u,\gamma, A) < \infty 
\end{equation*}
(with all elements being finite). Note that if $ E^{\prime}(u, \gamma, A)= \infty$, the estimate in \eqref{cantorbv} is trivial. We shall establish the lower bound via slicing. For this purpose, we fix a slicing direction $\xi \in \Sm$. The linear hyperplane orthogonal to $\xi$ is denoted by $\Pi_{\xi}$. For every $y  \in \Pi_{\xi}$ and  every open subset $B$ of $\R^d$ we consider the one-dimensional set 
\begin{equation*}
B^ {\xi, y} \coloneqq \lbrace t \in \R: y +t \xi \in B \rbrace.
\end{equation*}
In order to recover all of~$B$, we further consider the projection of $B$ on $\Pi_\xi$, that is, the set $\Pi_{\xi}^B \coloneqq \{ y \in \Pi_{\xi} \colon \exists t \in \R \colon y+t \xi \in B \}$. In what follows, it is not restrictive to assume $\xi = e_1$ for notational convenience, for which we simply have $\Pi_{\xi}= \{0\} \times \R^{d-1}$ and hence $\Pi_{\xi}^B =   \{0\} \times B^{P_1}$ with
\begin{equation*}
 B^{P_1} \coloneqq \{ y' \in \R^{d-1} \colon \exists t \in \R \colon (t,y') \in B\}.
\end{equation*}
We now estimate the two terms in~$E_{\varepsilon}(\ue,\gae,A)$ separately. To this end, we fix a set $B \Subset A$ with  $\sqrt{d-1} \varepsilon < \text{dist}(B, \partial A)$.  By Fubini and Jensen's inequality, we first observe for all $h \in \N$
\begin{align}
\int_A |\nabla u_{\varepsilon}-g_{\varepsilon}|^2 \dd x & = \int_{A^{P_1}}  \int_{A^{e_1,(0,x')}} \left| \nabla u_{\varepsilon}(t,x') -  \ggae(t,x') \right|^2 \dd t \dd x' \notag \notag\\
&\geq \int_{B^{P_1}}  \int_{B^{e_1,(0,x')}}  \dashint_{Q^{d-1}_{\varepsilon/h}(x')} \left| \nabla u_{\varepsilon}(t,z') -  \ggae(t,z') \right|^2 \dd z' \dd t \dd x' \notag \\
& \geq \int_{B^{P_1}}  \int_{B^{e_1,(0,x')}} \bigg| \, \dashint_{Q^{d-1}_{\varepsilon/h}(x')} \left(  \nabla u_{\varepsilon}(t,z') - \ggae(t,z')  \right) \cdot e_1 \dd z' \bigg|^2 \dd t \dd x' \notag\\
& \geq \int_{B^{P_1}}  \int_{B^{e_1,(0,x')}}|(\widehat{u}_{\varepsilon,h}^{e_1,x'})^{\prime}(t)-\widehat{g}_{\varepsilon,h}^{e_1,x'}(t)|^2 \dd t \dd x', \label{estimatefirstterm}
\end{align}
where we have defined 
\begin{align*}
 \widehat{u}_{\varepsilon,h}^{e_1,x'} (s) & \coloneqq \dashint_{Q^{d-1}_{\varepsilon/h}(x')} u_{\varepsilon}(s,z') \dd z' ,\\
 \widehat{g}_{\varepsilon,h}^{e_1,x'}(s) & \coloneqq \dashint_{Q^{d-1}_{\varepsilon/h}(x')}  g_{\varepsilon}(s,z')\cdot e_1\dd z' 
\end{align*}
for a.e.~$s \in B^{e_1,(0,x')}$ and $x' \in B^{P_1}$. Notice that we have here used the corresponding formula for the derivative of~$\widehat{u}_{\varepsilon,h}^{e_1,x'}$ from Remark~\ref{remark_slices_bounded_differentiable}, for every $h \in \N$ fixed.
Secondly, by a slightly modified version of  \cite[Lemma 4.3]{LuVi97} (where in the integrand the gradient of a $W^{1,1}$-function~$u$ is replaced by the $L^1$-function~$g_\varepsilon$), there exists a sequence $\{c_h\}_h$ of positive real numbers in $(0,1)$ with $c_h \nearrow 1$ as $h \to \infty$ such that there holds
\begin{equation*}
\frac{1}{\varepsilon} \int_A f \bigg( \varepsilon \dashint_{B_{\varepsilon}(x) \cap \Omega} |\ggae|\dd y \bigg) \dd x  \geq \frac{1}{\varepsilon} \int_{B} f \bigg( c_h \varepsilon  \dashint_{Q_{\varepsilon/h}(x)} |g_{\varepsilon}| \dd z \bigg)  \dd x.
\end{equation*}
By Fubini's theorem and the triangle inequality we further find for the inner integral on the right-hand side of the previous inequality in a similar way as above
\begin{equation*}
\dashint_{Q_{\varepsilon/h}(t,x')} |g_{\varepsilon}| \dd z  \geq  \dashint_{t -  \frac{\varepsilon}{h}}^{t +  \frac{\varepsilon}{h}} \bigg| \,  \dashint_{Q^{d-1}_{\varepsilon/h}(x')} g_{\varepsilon}(s,z') \cdot e_1  \dd z' \bigg| \dd s  = \dashint_{t -  \frac{\varepsilon}{h}}^{t +  \frac{\varepsilon}{h}}  \vert \widehat{g}_{e,h}^{e_1,x'}(s)\vert \dd s.
\end{equation*}
Thus, by monotonicity of~$f$ and once again Fubini's theorem we have
\begin{equation}
 \frac{1}{\varepsilon} \int_A f \bigg( \varepsilon \dashint_{B_{\varepsilon}(x) \cap \Omega} |\ggae|\dd y \bigg) \dd x \geq \frac{1}{\varepsilon}  \int_{B^{P_1}}   \int_{B^{e_1, x'}} f \bigg( c_h \varepsilon \dashint_{t -  \frac{\varepsilon}{h}}^{t +  \frac{\varepsilon}{h}}  |\widehat{g}_{e,h}^{e_1,x'}| \dd s \bigg)  \dd t \dd x'. \label{Cantor_estimatesecondterm}
\end{equation}
Combining~\eqref{estimatefirstterm} and \eqref{Cantor_estimatesecondterm} we infer  
\begin{align}
 E_{\varepsilon}(\ue,\gae,A)  & \geq  \int_{B^{P_1}}  \int_{B^{e_1,(0,x')}}|(\widehat{u}_{\varepsilon,h}^{e_1,x'})^{\prime}-\widehat{g}_{\varepsilon,h}^{e_1,x'}|^2 \dd t \dd x' \notag \\ & \quad +\frac{1}{\varepsilon}  \int_{B^{P_1}}   \int_{B^{e_1, (0,x')}} f \bigg( c_h \varepsilon \dashint_{t -  \frac{\varepsilon}{h}}^{t +  \frac{\varepsilon}{h}}  |\widehat{g}_{e,h}^{e_1,x'}| \dd s \bigg)  \dd t \dd x',
 \label{eqn_Cantor_intermediate}
 \end{align}
and by Fatou's lemma, we then obtain
\begin{align*}
 \lefteqn{E^{\prime}(u,\gamma, A) = \lim_{\varepsilon \to 0} E_{\varepsilon}(\ue,\gae, A)} \\
 & \geq \liminf_{\varepsilon \to 0}  \int_{B^{P_1}} \bigg[  \int_{B^{e_1,(0,x')}}|(\widehat{u}_{\varepsilon,h}^{e_1,x'})^{\prime}-\widehat{g}_{\varepsilon,h}^{e_1,x'}|^2 \dd t   + \frac{1}{\varepsilon}  \int_{B^{e_1,(0,x')}} f \bigg( c_h \varepsilon  \dashint_{t - \frac{\varepsilon}{h}}^{t + \frac{\varepsilon}{h}}  |\widehat{g}_{\varepsilon,h}^{e_1,x'}| \dd s \bigg)  \dd t \bigg] \dd x' \\
 & \geq \int_{B^{P_1}} \liminf_{\varepsilon \to 0} \bigg[  \int_{B^{B^{e_1,(0,x')}}}|(\widehat{u}_{\varepsilon,h}^{e_1,x'})^{\prime}-\widehat{g}_{\varepsilon,h}^{e_1,x'}|^2 \dd t  +\frac{1}{\varepsilon}  \int_{B^{e_1,(0,x')}} f \bigg( c_h \varepsilon   \dashint_{t - \frac{\varepsilon}{h}}^{t + \frac{\varepsilon}{h}} |\widehat{g}_{\varepsilon,h}^{e_1,x'}| \dd s \bigg)  \dd t \bigg] \dd x'.
\end{align*}

We next want to estimate the right-hand side via (a pointwise application of) the one-dimensional result in \cite{AuerVolkmannBeckSchmidt:22}. To this end, we first observe that, for $\mathcal{L}^{d-1}$-a.e.~$x' \in B^{P_1}$, the function $u^{e_1,x'} \coloneqq u(\cdot,x')$ belongs to $BV(B^{e_1,(0,x')})$, cf.~\cite[Theorem 3.103]{afp}, with uniform bound $\|u^{e_1,x'}\|_{L^\infty(B^{e_1,(0,x')})} \leq K$. Moreover, by Lemma~\ref{hlf1BV2}, we have the convergence $\widehat{u}_{\varepsilon,h}^{e_1,x'} \to u^{e_1,x'}$ 
for every $h \in \N$ fixed. We now rewrite the inner integral of the previous inequality as
\begin{align*}
\lefteqn{ \int_{B^{e_1,(0,x')}}|(\widehat{u}_{\varepsilon,h}^{e_1,x'})^{\prime}-\widehat{g}_{\varepsilon,h}^{e_1,x'}|^2 \dd t  +\frac{1}{\varepsilon}  \int_{B^{e_1, (0,x')}} f \bigg( c_h \varepsilon   \dashint_{t - \frac{\varepsilon}{h}}^{t + \frac{\varepsilon}{h}} |\widehat{g}_{\varepsilon,h}^{e_1,x'}| \dd s \bigg)  \dd t } \\
& = \frac{1}{c_h^2 h^2} \bigg[ \int_{B^{e_1,(0,x')}}|(c_h h \widehat{u}_{\varepsilon,h}^{e_1,x'})^{\prime}-c_h h \widehat{g}_{\varepsilon,h}^{e_1,x'}|^2 \dd t  + \frac{h}{\varepsilon}  \int_{B^{e_1, (0,x')}} (c_h^2 h f) \bigg( \frac{\varepsilon}{h}   \dashint_{t - \frac{\varepsilon}{h}}^{t + \frac{\varepsilon}{h}} |c_h h \widehat{g}_{\varepsilon,h}^{e_1,x'}| \dd s \bigg)  \dd t \bigg]
\end{align*}
and notice that the function $(c_h^2 h f)(t)$ is of the same form as $f$, with $c_0$ replaced by $c_h^2 h c_0$. Passing to a subsequence (possibly depending on $x'$) we may achieve that the $\liminf$ as $\eps \to 0$ of this expression is actually a limit. Moreover, seeking for a lower bound, we may without loss of generality assume that this limit is finite. We now apply the one-dimensional compactness result in \cite[Theorem~4.2]{AuerVolkmannBeckSchmidt:22} to extract a further subsequence for which $\widehat{g}_{\varepsilon,h}^{e_1,x'} \to \gamma^{e_1,x'}$ in the flat norm for some measure $\gamma^{e_1,x'}$ on $B^{e_1, (0,x')}$ whose singular part equals $D^s  (u^{e_1, x'})$. Together with $\widehat{u}_{\varepsilon,h}^{e_1,x'} \to u^{e_1,x'}$ the one-dimensional lower bound in \cite[Proposition 5.5]{AuerVolkmannBeckSchmidt:22} shows that the $\liminf$ above is estimated from below by
\begin{equation*}
 \frac{1}{c_h^2 h^2} \bigg[  c_h^2 h c_0 |D^c (c_h h u^{e_1, x'})|(B^{e_1,(0,x')}) \bigg] = c_h c_0  | D^c (u^{e_1, x'})|(B^{e_1,(0,x')}) 
\end{equation*}
for $\mathcal{L}^{d-1}$-a.e.~$x' \in B^{P_1}$. Therefore, we have 
\begin{equation*}
E^{\prime}(u,\gamma, A)  \geq c_h  c_0\int_{B^{P_1}} |D^cu^{e_1, x'}|(B^{e_1,(0,x')}) \dd x' = c_h c_0 | D^cu \cdot e_1|(B),
\end{equation*}
where the last equality follows from the fact that the Cantor part of the derivative can be recovered from the corresponding parts of the derivatives of the restrictions according to \cite[Theorem 3.108]{afp}. With $c_h \nearrow 1$ as $h \to \infty$ and the arbitrariness of $B \Subset A$ we then end up with $E^{\prime}(u,\gamma,A) \geq c_0| D^cu \cdot  e_1|(A)$. Since this inequality holds for any $\xi \in \Sm$ instead of $e_1$, we have proved so far
\begin{equation}
E^{\prime}(u,\gamma,A) \geq c_0 |D^c u \cdot \xi|(A) \quad \text{for every } \xi \in \Sm. \label{cantorbv_prelim} 
\end{equation}
Finally, let $\lambda\coloneqq  D^cu$, and for $\xi \in \Sm$ we define $\lambda_{\xi}\coloneqq  \lambda \cdot \xi$, implying that $|\lambda_{\xi}| = |  \tfrac{\dd \lambda}{\dd |\lambda|} \cdot \xi|  \dd |\lambda|$. By \eqref{cantorbv_prelim}  we have
\begin{equation*}
E^{\prime}(u,\gamma,A) \geq c_0| D^cu \cdot \xi|(A) = c_0 \int_A \psi_{\xi} \dd |\lambda|,
\end{equation*}
where $\psi_{\xi}= | \tfrac{\dd \lambda}{\dd |\lambda|} \cdot \xi |$.
Via the measure theoretic result \cite[Lemma 15.2]{br} (applied with the set function $\mu(\cdot) \coloneqq E^{\prime}(u,\gamma,\cdot)$) we then  deduce the claim
\begin{align*}
E^{\prime}(u,\gamma,A) &  \geq c_0 \int_A \sup_{\xi} \psi_{\xi} \dd |\lambda| =  c_0 |\lambda|(A)= c_0 |D^cu|(A). \qedhere
\end{align*}
\end{proof}

\subsection{Conclusion and proof of Theorem~\ref{mainresultmBV}~\ref{mainresultmBV_1}}

In this section we complete the proof of the $\Gamma$-$\liminf$-inequality. For this purpose, we combine the previous results by means of measure theory.

\begin{theorem}\label{liminfnBV}
For every $(u,\gamma) \in  BV(\Omega) \times \Mam$ with $\norm{u}_{L^{\infty}(\Omega)} \leq K$, $\gamma = D^su+ g \mathcal{L}^d$ and $\nabla u- g \in L^2(\Omega, \Rd)$ we have
\begin{equation*}
E^{\prime}(u,\gamma) \geq E(u, \gamma).  
\end{equation*}
\end{theorem}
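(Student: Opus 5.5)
The plan is to combine the three localized lower bounds (Propositions~\ref{surfacebv}, \ref{ACPPMBV} and~\ref{CPPMBV}) via the measure property of the inner regular envelope $\Psi'(u,\gamma,\scdot)$ established in Lemma~\ref{derivativesingularpartBV}. We may assume $E'(u,\gamma)<\infty$. Then $\Psi'(u,\gamma,\scdot)$ is the restriction to $\Am$ of a finite regular Borel measure $\mu$ on $\Omega$, because $\Psi'(u,\gamma,\Omega)\le E'(u,\gamma,\Omega)=E'(u,\gamma)$ by monotonicity of $E'(u,\gamma,\scdot)$. Moreover $\Psi'(u,\gamma,A)\le E'(u,\gamma,A)$ for every $A\in\Am$. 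Each of the three propositions gives a lower bound for $E'(u,\gamma,B)$ for every open $B$. Taking the supremum over $B\Subset A$, and using that the right-hand sides are inner regular measures in $B$, yields the same bounds for $\Psi'(u,\gamma,A)=\mu(A)$. We therefore obtain
\begin{equation*}
\mu \ge \lambda_1 \coloneqq \big(|\nabla u-g|^2+c_0|g|\big)\mathcal{L}^d, \quad \mu \ge \lambda_2 \coloneqq \theta([u])\HA\mrs J_u, \quad \mu\ge\lambda_3\coloneqq c_0|D^cu|
\end{equation*}
on open sets, and hence on all Borel sets by outer regularity of these finite measures.

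The three measures $\lambda_1,\lambda_2,\lambda_3$ are mutually singular. The measure $\lambda_1$ is absolutely continuous with respect to $\mathcal{L}^d$. The measure $\lambda_2$ is concentrated on the $\mathcal{L}^d$-null, $\sigma$-finite set $J_u$. The measure $\lambda_3$ vanishes on $\HA$-$\sigma$-finite sets and on $\mathcal{L}^d$-null sets chosen appropriately. So we choose disjoint Borel sets $N_1,N_2,N_3\subset\Omega$ with $\lambda_i$ concentrated on $N_i$: take $N_2=J_u$, let $N_3$ be an $\mathcal{L}^d$-null Borel set disjoint from $J_u$ carrying $|D^cu|$, and set $N_1=\Omega\setminus(N_2\cup N_3)$. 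Then
\begin{equation*}
\mu(\Omega)\ge \sum_{i=1}^3\mu(N_i)\ge\sum_{i=1}^3\lambda_i(N_i)=\sum_{i=1}^3\lambda_i(\Omega)=E(u,\gamma).
\end{equation*}
Combined with $E'(u,\gamma)\ge\Psi'(u,\gamma,\Omega)=\mu(\Omega)$, this proves the claim. Equivalently, $\mu\ge\lambda_1\vee\lambda_2\vee\lambda_3=\lambda_1+\lambda_2+\lambda_3$, where the supremum equals the sum by mutual singularity.

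The main point needing care is the passage from the propositions, which are stated for $E'(\scdot,\scdot,A)$ and which is not itself known to be a measure, to the measure $\mu$. This is where the inner regular envelope enters. The supremum over $B\Subset A$ of $E'(u,\gamma,B)$ dominates the supremum of the inner regular lower bounds, which equals their value on $A$. A second point to verify is that the hypotheses of Proposition~\ref{surfacebv} and Lemma~\ref{derivativesingularpartBV} are exactly those of the present theorem, so that $\mu$ is indeed a measure in the finite-energy case. The remaining measure-theoretic decomposition is routine.
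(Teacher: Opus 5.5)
Your argument is correct. It differs from the paper's only in the measure-theoretic device used to combine the three localized lower bounds.

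The paper works directly with the set function $E'(u,\gamma,\scdot)$. It takes $\lambda = \mathcal{L}^d + \HA\mrs J_u + |D^cu|$ and an $\mathcal{L}^d$-null Borel set $C\subset\Omega\setminus J_u$ carrying $|D^cu|$. It then uses densities $\psi_1,\psi_2,\psi_3$ supported on $\Omega\setminus(J_u\cup C)$, $J_u$ and $C$, respectively, and invokes \cite[Lemma 15.2]{br}. That supremum-of-measures lemma only requires $E'(u,\gamma,\scdot)$ to be superadditive on disjoint open sets. This is immediate for a $\Gamma$-liminf of functionals that are additive in the integration domain.

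You instead pass to the inner regular envelope $\Psi'(u,\gamma,\scdot)$, which Lemma~\ref{derivativesingularpartBV} already identifies as a regular Borel measure $\mu$. You transfer the three bounds to $\mu$ via inner regularity of the finite measures $\lambda_i$, extend them to Borel sets by outer regularity of $\mu$, and conclude with countable additivity over the partition $N_1,N_2,N_3$. This partition is the same decomposition the paper uses.

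Your route replaces the external Lemma~15.2 by the heavier internal Lemma~\ref{derivativesingularpartBV}, whose proof needs the fundamental estimate and the De Giorgi--Letta criterion. Since that lemma is already established for the surface term, your final combination step becomes fully elementary. Both routes also give the localized inequality $E'(u,\gamma,A)\ge E(u,\gamma,A)$ for every open $A$: apply your chain to $A\cap N_i$.

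One wording slip: $|D^cu|$ does not vanish on suitable $\mathcal{L}^d$-null sets; it is concentrated on one. Your actual choice of $N_3$ is correct.
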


\begin{proof}
By the Propositions~\ref{ACPPMBV}, \ref{CPPMBV} and~\ref{surfacebv} we have proved the following lower bounds for the volume, the Cantor and the jump part
\begin{enumerate}
 \item $E^{\prime}(u,\gamma,A) \geq \int_A {|\nabla u - g |^2\dd x } + c_0 \,\int_A { |g| \dd x}$,
 \item $E^{\prime}(u,\gamma,A) \geq c_0 |D^c u|(A)$,
 \item $E^{\prime}(u,\gamma,A) \geq \int_{J_u \cap A} \theta ([u]) \dd \HA$,
\end{enumerate}
for every open subset $A$ of $\Omega$. We now consider the Radon measure~$\lambda$ given by 
\begin{equation*}
\lambda \coloneqq  \mathcal{L}^d + \HA \mrs  J_u + |D^cu|. 
\end{equation*}
Let $C$ be a Borel subset of $\Omega\setminus J_u$ with $\mathcal{L}^d( C ) =0$ such that $\vert D^cu\vert (\Omega \setminus C)=0$. Then, we obtain
\begin{equation*}
\mu(A)\coloneqq E^{\prime}(u,\gamma,A)\geq \int_A \psi_i(x) \dd \lambda
\end{equation*}
for $ 1\leq i \leq 3$ and for every open subset $A$ of $\Omega$, where
\begin{align*}
 \psi_1 & \coloneqq  \big( |\nabla u - g|^2 + c_0 |g| \big) \chi_{\Omega \setminus (J_u  \cup C)}, \\
 \psi_2 & \coloneqq  \theta ([u]) \chi_{J_u}, \\
 \psi_3 & \coloneqq  c_0 \chi_C. 
\end{align*}
Next, we define
\begin{align*}
\psi(x)\coloneqq \sup_i \psi_i(x) =  \left\{
\begin{array}{l l}
\vert \nabla u(x) -g(x)\vert^2 +c_0 \vert g(x)\vert & \quad \text{if } x \in \Omega \setminus (J_u  \cup C),  \\ 
\theta ( [u](x))  & \quad \text{if } x \in J_u, \\
c_0 & \quad \text{if } x \in C.
\end{array}
\right.
\end{align*} 
By a measure theoretic result (see e.g.~\cite[Lemma 15.2]{br} applied with the set function $\mu(\cdot) \coloneqq E^{\prime}(u,\gamma,\cdot)$) we conclude that
\begin{equation*}
E^{\prime}(u,\gamma,A) \geq \int_A \sup_i \psi_i \dd \lambda = \int_A \psi \dd\lambda = E(u, \gamma,A)
\end{equation*}
for every open subset $A$ of $\Omega$. With the choice $A = \Omega$, this finishes the proof of the theorem. 
\end{proof}

\section{Estimate from above of the $\Gamma$-upper limit}\label{sec:upper-lim}

We now turn to the estimate from above of the $\Gamma$-upper limit $E^{\prime \prime}$ in order to conclude the proof of Theorem \ref{mainresultmBV}.
The basic tools for the proof are the relaxation result of Corollary~\ref{rel2conBV} and a suitable version of the density result \cite[Theorem~3.1]{CT99} for $SBV$-functions by Cortesani and Toader, which allows to reduce the problem of finding a recovery sequence to the case where the function~$u$ belongs to $\Wde$, that is, it is a function in $SBV(\Omega)$ with $u \in W^{k, \infty}(\Omega \setminus \overbar{J_u})$ for every $k$, satisfies $\HA(\overbar{J_u}\setminus {J_u} ) = 0$ and for which $\overbar{J_u}$ is a finite union of disjoint $(d-1)$-simplices with  $\overbar{J_u}\subset\Omega$. The main idea of the construction of the recovery sequence is to first find an approximation $\{\ue\}_{\varepsilon}$ of~$u$  in an energetically optimal way and to then take $\{\gae\}_{\varepsilon}$ defined by $\gae\coloneqq  D^s\ue + \ggae \mathcal{L}^d$ with $g_{\varepsilon} \coloneqq g - \nabla u +\nabla \ue$ for every $\varepsilon >0$ as approximation for~$\gamma$. We now give the precise construction under the aforementioned constraint $u  \in \Wde$.

\begin{Prop}\label{WdrBV}
For every $(u,\gamma) \in \Wd \times \Mam$  with $\gamma = D^su+ g \mathcal{L}^d$ and $\nabla u- g \in L^2(\Omega,\R^d)$ we have 
\begin{equation*}
E^{\prime \prime} (u,\gamma) \leq E(u, \gamma).
\end{equation*}
\end{Prop}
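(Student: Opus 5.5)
Since $u \in \Wde$, the jump set $\overline{J_u}$ is a finite union of disjoint $(d-1)$-simplices compactly contained in $\Omega$, $D^c u = 0$, and $u$ is smooth away from $\overline{J_u}$. We may assume $\|u\|_{L^\infty(\Omega)} \le K$; otherwise $E(u,\gamma)=\infty$ and there is nothing to prove. Following the strategy announced above, we will construct $\ue \in W^{1,1}(\Omega)$ by modifying $u$ only in a thin layer around $\overline{J_u}$. We then set
\[
g_\varepsilon \coloneqq g - \nabla u + \nabla \ue ,
\]
with $\nabla u$ the approximate gradient, so that $\nabla \ue - g_\varepsilon = \nabla u - g$ exactly. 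The elastic term is then untouched and equals $\int_\Omega |\nabla u-g|^2 \dd x$ for every $\varepsilon$.

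For the construction, near each point $x$ of a simplex $S$ with normal $\nu$ we take
\[
\ue(x) \coloneqq u^-(x) + \big(u^+(x)-u^-(x)\big)\,\Lambda_\varepsilon\big((x-x_S)\cdot\nu\big),
\]
where $u^\pm$ are smooth extensions of the traces to a neighbourhood of $S$, and $\Lambda_\varepsilon$ is a monotone ramp from $0$ to $1$ on an interval of length $\tau_\varepsilon = o(\varepsilon)$. Close to $\partial S$, which is $\HA$-negligible, a cut-off interpolates to $u$ on a region whose contribution vanishes. The $L^\infty$ bound $\|\ue\|_{L^\infty} \le K$ is kept because $\ue$ is a convex combination of values of $u$. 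Convergence is then direct. We have $\ue\to u$ in $L^1$. Moreover, $g_\varepsilon\mathcal{L}^d = \gamma + (D\ue - Du)$, since $\gamma - Du = (g-\nabla u)\mathcal{L}^d$, and $D\ue \to Du$ weakly$^*$ with bounded total variation. Lemma~\ref{Lemma_weak_negativ_flat} therefore gives $\gamma_\varepsilon \to \gamma$ in the flat norm.

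For the nonlocal term we split $|g_\varepsilon| \le |g - \nabla u + \nabla u_{\mathrm{reg}}| + |[u]|\,\Lambda_\varepsilon'\,\nu$ plus lower-order terms, where $\nabla u_{\mathrm{reg}}$ is bounded. Subadditivity of $f$ together with $f(t)\le c_0 t$ reduces the estimate to two pieces. The first is the bulk piece $\frac1\varepsilon\int f(\varepsilon\dashint_{B_\varepsilon}|g|) \le c_0\int_\Omega |g|$ up to an error that vanishes. The second is the jump piece
\[
\frac1\varepsilon\int_\Omega f\Big(\varepsilon\dashint_{B_\varepsilon(x)} |[u]|\,\Lambda_\varepsilon'(y\cdot\nu)\dd y\Big)\dd x .
\]
Because $\tau_\varepsilon/\varepsilon\to0$, the layer concentrates onto the hyperplane. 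Blowing up at scale $\varepsilon$ and freezing $[u]$, which is Lipschitz on $S$, the inner average tends to $\frac{\omega_{d-1}}{\omega_d}(1-s^2)^{\frac{d-1}{2}}|[u]|$ at normal distance $s\varepsilon$. This is exactly the computation for $\mu_0$ in Proposition~\ref{surfacebv2withoutslicing2}, and integrating over $s\in(-1,1)$ and over $S$ gives $\int_{J_u}\theta([u])\dd\HA$. Cross terms, where bulk and jump contributions share a ball, are handled by subadditivity and affect only an $O(\varepsilon)$-neighbourhood of $J_u$, of measure $O(\varepsilon)$; their scaled contribution is bounded by $c_0\int_{(J_u)_{\varepsilon+}}(|g|+C)\to0$ by absolute continuity.

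The main obstacle is this bulk estimate, because $g\in L^1$ is not smooth, so replacing $\frac1\varepsilon f(\varepsilon\dashint|g|)$ by $c_0|g|$ needs care. Here we would use $f(t)\le c_0t$ together with Fubini: we have $\int_\Omega\dashint_{B_\varepsilon(x)\cap\Omega}|g|\dd y\dd x \le (1+o(1))\int_\Omega|g|$, with the $o(1)$ coming from the Lipschitz boundary. This yields the bound with a limsup of at most $c_0\int|g|$, so no regularity of $g$ is needed. Collecting the three contributions gives $\limsup_\varepsilon E_\varepsilon(\ue,\gae)\le E(u,\gamma)$.
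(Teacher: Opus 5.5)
Your proposal is correct and follows essentially the paper's route. In both, $u$ is modified only in a layer of width $o(\varepsilon)$ around $J_u$ (the paper takes width $\delta=\varepsilon^2$ and interpolates linearly in the normal direction between the values of $u$ on $\partial J_\delta$). One sets $g_\varepsilon=g-\nabla u+\nabla u_\varepsilon$, so the elastic term is unchanged, and splits the nonlocal term by subadditivity of $f$. This gives a bulk part bounded by $c_0\int_\Omega|g|\dd x$ plus vanishing errors, and a jump part whose ball cross-section computation yields $\int_{J_u}\theta([u])\dd\HA$.

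The differences are only technical: the paper handles the edges of the simplex through the set $\tilde J_\delta$ and a bounded-slope estimate on $J_\delta\setminus\tilde J_\delta$ instead of a cut-off, and it bounds the jump part directly rather than via a blow-up limit. In your version, $u^\pm$ should be extensions of $u$ from each side (e.g.\ by reflection) rather than of the traces. Otherwise $u_\varepsilon=u$ fails off the layer, and the values are not guaranteed to stay in $[-K,K]$.
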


\begin{proof}
Since $u \in  \Wde$, we have that $J_u=J^1 \cup \ldots \cup J^m$ for pairwise disjoint ($d-1$)-simplices $J^1, \ldots , J^m$. It is not restrictive to assume $m=1$ and $J\coloneqq J^1 \subset \{ x \in \R^d \colon x_1= 0\}$.
Let $\delta > 0$. We denote by $J_{\delta} \coloneqq J_{\delta+}$ the $\delta$-neighborhood of $J$. Moreover, we will also consider $J_{ \varepsilon +\delta}$ and $J_{2 \varepsilon +\delta}$. Now, setting $\delta\coloneqq \varepsilon^2$ we may assume that $J_{2\varepsilon +\delta} \subset \Omega$ for $\varepsilon >0$ sufficiently small, and we next define functions~$\ue$, $\aee$ and $g_\varepsilon$ for such $\varepsilon > 0$. We define $\ue \colon \Omega \to \R$ by setting $\ue \coloneqq u$ on $ \Omega \setminus J_{\delta}$, while we define it via linear interpolation on~$J_{\delta}$ as follows: 
If $x \in J_{\delta}$ lies on the segment $(z_1,z_2)$ for $z_1,z_2 \in \partial J_{\delta}$ such that $z_2=z_1 + \lambda e_1$ for some $0 \leq \lambda \leq 2 \delta$, then
\begin{equation*}
\ue(x) \coloneqq  \frac{|x-z_1|}{|z_2-z_1|}u(z_2) + \frac{|z_2-x|}{|z_2-z_1|}u(z_1).
\end{equation*}
Since the pair $(z_1, z_2)$ depends on $x$, we write $(z_1(x),z_2(x))$ hereafter in order to express this dependence clearly.

\begin{center}
\begin{tikzpicture}[scale= 0.2]

\draw[->, thick] (-10.5,0) -- (11.8,0) node[right]  {$x_1$};

\fill[red!60] (0,4.2) circle (9.6);
\fill[red!60] (0,-3.2) circle (9.6);
\fill[red!60] (-9.6,-3.2) rectangle (9.6,4.2);

\fill[orange!60] (0,-3.2) circle (5.6);
\fill[orange!60] (0,4.2) circle (5.6);
\fill[orange!60] (-5.6,-3.2) rectangle (5.6,4.2);

\fill[yellow!60] (0,-3.2) circle (1.6);
\fill[yellow!60] (0,4.2) circle (1.6);
\fill[yellow!60] (1.6,-3.2) rectangle (-1.6,4.2);

\draw[thick] (9.6,4.2) arc (0:180:9.6);
\draw[thick] (9.6,-3.2) arc (0:-180:9.6);
\draw[thick] (9.6,4.2) -- (9.6,-3.2);
\draw[thick] (-9.6,4.2) -- (-9.6,-3.2);

\draw[thick] (5.6,4.2) arc (0:180:5.6);
\draw[thick] (5.6,-3.2) arc (0:-180:5.6);
\draw[thick,-] (5.6,-3.2) -- (5.6,4.2);
\draw[thick,-] (-5.6,-3.2) -- (-5.6,4.2);

\draw[thick] (1.6,4.2) arc (0:180:1.6);
\draw[thick] (1.6,-3.2) arc (0:-180:1.6);
\draw[thick,-] (1.6,-3.2) -- (1.6,4.2);
\draw[thick,-] (-1.6,-3.2) -- (-1.6,4.2);

\draw[->, thick, dashed] (0,-14) -- (0,16);

\draw[-,thick, blue] (0,-3.2) -- (0,4.2) ;

\node at (-0.7,1.2) {\textcolor{blue}{$J_u$}};

\draw[<->, thick] (0,2.8) -- (1.6, 2.8);
\node at (0.8, 1.8) {$\delta$};
\node at (-0.8,-3) {$J_{\delta}$};

\draw[<->, thick] (0,-0.7) -- (5.6, -0.7);
\node at (2.8, -1.7) {$\varepsilon + \delta$};
\node at (-2.9,-5.3) {$J_{\varepsilon + \delta}$};

\draw[<->, thick] (0,-4.2) -- (9.6, -4.2);
\node at (4.8, -5.2) {$2\varepsilon + \delta$};
\node at (-5.2,-8.5) {$J_{2\varepsilon + \delta}$};

\begin{scope}[scale =2.5, xshift=15cm]

\draw[->, thick] (-4.5,0) -- (4.8,0) node[right]  {$x_1$};
 
\fill[yellow] (0,4.2) circle (1.6);
\fill[yellow] (0,-3.2) circle (1.6);
\fill[yellow] (-1.6,-3.2) rectangle (1.6,4.2);

\draw[thick] (1.6,4.2) arc (0:180:1.6);
\draw[thick,-] (1.6,-3.2) -- (1.6,4.2);
\draw[thick,-] (-1.6,-3.2) -- (-1.6,4.2);
\draw[thick] (1.6,-3.2) arc (0:-180:1.6);

\node[blue] at (-0.8,3) {$J_{u}$};
\draw[thick, blue, -] (0,-3.2) -- (0,4.2);

\node at (-0.3,-4) {$J_{\delta}$};

\coordinate [label={[label distance=0cm]135: $z_1$}](A) at (-1.6,1);
\coordinate [label={[label distance=0cm]45: $z_2$}](B) at (1.6,1);

\coordinate [label=below: $x$](C) at (0.8,1);
\node (H1) at (-4.2,-2) {${u_{\varepsilon}=u}$};
\node (H2) at (4.2,-2) {${u_{\varepsilon}=u}$};
\node (H5) at (0,-2) {${\substack{u_{\varepsilon} \text{ linear} \\ \text{in direction} \\ \text{of }e_1 }}$};
\draw (A) --(B);
\fill (A) circle (4pt);
\fill (B) circle (4pt);
\fill (C) circle (4pt);

\draw[-] (-1.6,-3.2) -- (-1.6,4.2);
\draw[-] (1.6,-3.2) -- (1.6,4.2);

\end{scope}
\end{tikzpicture}
\end{center}
Moreover, we define $\aee \colon \Omega \to \R$ by $\aee \coloneqq 0$ on $\Omega \setminus J_{\delta}$ and
\begin{equation}
 \aee (x) \coloneqq \frac{u(z_2(x))-u(z_1(x))}{|z_2(x)-z_1(x)|} \quad \text{ for all } x \in J_{\delta}. \label{definitionaepsilon}
\end{equation}
Finally, we define $\gae = g_{\varepsilon} \mathcal{L}^d \in \Mam$ by $g_{\varepsilon} \coloneqq g - \nabla u +\nabla \ue$ (which is equal to~$g$ on~$\Omega\setminus J_{\delta}$). By these definitions, we immediately find $\ue \to u$ in $L^1(\Omega)$ and thus $D\ue = \nabla \ue \mathcal{L}^d \to Du$ and in turn $\gae \to \gamma=  D^su + g \mathcal{L}^d$ in the flat norm on~$\Omega$. Moreover, we have $\nabla \ue -\ggae = \nabla u- g$ for every $\varepsilon>0$ and consequently
\begin{equation}
 \int_{\Omega} |\nabla \ue - \ggae |^2 \dd x =  \int_{\Omega} |\nabla u - g|^2 \dd x.
 \label{estimateabove0}
\end{equation}
In order to estimate $E_{\varepsilon}(\ue,\gae)$ from above, we still need to control the non-local contribution. We first notice by subadditivity of $f$ combined with $f(r) \leq c_0 r$ for $r \geq 0$ and keeping in mind that $J_{2 \varepsilon + \delta} \subset \Omega$
\begin{align}
 \lefteqn{ \frac{1}{\varepsilon} \int_{\Omega} f \bigg( \varepsilon \dashint_{B_{\varepsilon}(x) \cap \Omega} |\ggae| \dd y \bigg) \dd x } \notag \\ 
 & = \frac{1}{\varepsilon} \int_{\Omega \setminus J_{\varepsilon + \delta}} f \bigg( \varepsilon \dashint_{B_{\varepsilon}(x) \cap \Omega} |g| \dd y \bigg) \dd x + \frac{1}{\varepsilon} \int_{J_{\varepsilon +\delta}} f \bigg( \varepsilon \dashint_{B_{\varepsilon}(x)} |g - \nabla u + \nabla \ue| \dd y \bigg) \dd x \notag \\
 & \leq \int_{\Omega \setminus J_{\varepsilon +\delta}}  c_0 \dashint_{B_{\varepsilon}(x) \cap \Omega} |g| \dd y \, \dd x +  \int_{J_{\varepsilon +\delta}} c_0  \dashint_{B_{\varepsilon}(x)} |g - \nabla u| \dd y  \, \dd x \notag\\
& \quad +\int_{J_{\varepsilon +\delta}} c_0{ \dashint_{B_{\varepsilon}(x)} | \nabla \ue - \aee e_1| \dd y} \dd x +  \frac{1}{\varepsilon} \int_{J_{\varepsilon +\delta}} f \bigg( \varepsilon \dashint_{B_{\varepsilon}(x)} |\aee| \dd y \bigg) \dd x.
\label{estimateabove1}
\end{align}
In view of $\nabla u- g \in L^2(\Omega, \Rd)$, we have for the second term on the right-hand side
\begin{align}
\int_{J_{\varepsilon +\delta}} c_0 \dashint_{B_{\varepsilon}(x)} |g - \nabla u| \dd y  \, \dd x \to 0  \quad \text{ as } \varepsilon \to 0. \label{estimateabove2}
\end{align}
Furthermore, $|\nabla \ue - \aee  e_1|$ is bounded on $\Omega$ by a uniform constant due to the regularity of~$u$, and hence, we get for the third term
\begin{align}
\int_{J_{\varepsilon +\delta}} c_0{ \dashint_{B_{\varepsilon}(x)} \betrag{ \nabla \ue - \aee \, e_1} \dd y} \dd x \to 0 \quad \text{ as } \varepsilon \to 0. \label{estimateabove3}
\end{align}

\noindent
\begin{minipage}{0.66\textwidth}
In order to find an appropriate estimate for the fourth term, we distinguish the two cases, whether we do or do not reach a jump point of $u$ when we go from one point of $\partial J_{\delta}$ to the other point of $\partial J_{\delta}$ (opposite) in $e_1$-direction. For this purpose we define 
\begin{align*}
\tilde{J}_{\delta} \coloneqq  \big\{ y \in J_{\delta} \colon & \exists x \in J_u, \, \lambda \in [-\delta,\delta] \text{ with } y + \lambda e_1 =x\big\}.
\end{align*}
We make three observations, which directly follow from the fact that $u~\in~W^{k, \infty}(\Omega \setminus \overbar{J_u})$. For fixed $\varrho > 0$ we have for sufficiently small $\varepsilon$:
\end{minipage}\hfill
\begin{minipage}{0.3\textwidth}
 
\begin{center}
\begin{tikzpicture}[scale= 0.4]
\draw[thick,->] (-3.8,0) -- (3.8,0) node[right]  {$x_1$};

\fill[violet!50] (0,4.2) circle (1.6);
\fill[violet!50] (0,-3.2) circle (1.6);

\fill[yellow] (-1.6,-3.2) rectangle (1.6,4.2);

\draw[thick] (1.6,4.2) arc (0:180:1.6);
\draw[thick] (1.6,-3.2) arc (0:-180:1.6);
\draw[thick,-] (-1.6,-3.2) -- (-1.6,4.2);
\draw[thick,-] (1.6,-3.2) -- (1.6,4.2);

\draw[-, thick, blue] (0,-3.2) -- (0,4.2) ;
\node[blue]  at (-0.8,1.5) {$J_u$};

\node (A) at (0.8,-2.3) {$\tilde{J}_{\delta}$};
\node[violet] at (-3.2, 4.5) {${J_{\delta} \setminus \tilde{J}_{\delta}}$};

\end{tikzpicture}
\end{center} 
 
\end{minipage}

\begin{enumerate}[font=\normalfont, label=(\roman{*}), ref=(\roman{*})]
 \item\label{limsup_observation_1} For all $x_1, x_2, x^0_1, x^0_2 \in \partial {J}_{\delta}$ with $|x_1 - x^0_1| \leq \varepsilon$ and $|x_2 - x^0_2| \leq \varepsilon$ such that $x_2 = x_1 + \mu e_1$ and $x^0_2 = x^0_1 + \mu^0 e_1$ for $0 \leq \mu ,\mu^0 \leq  2 \delta$, we have 
 \begin{equation*}
 |(u(x_2) - u(x_1))- (u(x^0_2)- u(x^0_1)) | \leq \varrho.
 \end{equation*}
 \item\label{limsup_observation_2} For all $x_1, x_2 \in \partial \tilde{J}_{\delta}  \cap \partial J_{\delta} $ such that $x_2 = x_1 + \mu e_1$ for $0 \leq \mu  \leq  2 \delta$ and all $x \in J_u$ on the segment $(x_1,x_2)$, we have 
 \begin{equation*}
 |(u(x_2) - u(x_1))- (u(x+)- u(x-)) | \leq \varrho.
 \end{equation*}
 \item\label{limsup_observation_3} For all $x_1, x_2 \in \partial J_{\delta} \setminus \partial \tilde{J_{\delta}}$  such that $x_2 = x_1 + \mu e_1$ for $0 < \mu  \leq  2 \delta$ there exists $L>0$ such that
 \begin{equation*}
 \frac{|u(x_2) - u(x_1)|}{|x_2 -x_1|}\leq L.
 \end{equation*}
\end{enumerate}
Since $a_{\varepsilon}=0$ on $\Omega \setminus J_{\delta}$, we deduce from the subadditivity of $f$ for the last term in~\eqref{estimateabove1}
\begin{align}
 \frac{1}{\varepsilon} \int_{J_{\varepsilon + \delta}} \ff{\varepsilon \dashint_{B_{\varepsilon}(x)} |\aee| \dd y } \dd x & \leq \frac{1}{\varepsilon} \int_{J_{\varepsilon + \delta}} f \bigg( \frac{1}{\omega_d \, \varepsilon^{d-1}}\int_{B_{\varepsilon}(x)\cap (J_{\delta}\setminus \tilde{J}_{\delta})} | \aee | \dd y \bigg) \dd x \notag \\
 & \quad + \frac{1}{\varepsilon} \int_{J_{\varepsilon + \delta}} f \bigg( \frac{1}{\omega_d \, \varepsilon^{d-1}}\int_{B_{\varepsilon}(x)\cap \tilde{J}_{\delta}} |\aee| \dd y \bigg) \dd x. \label{splittingsmoothnotsmooth}
\end{align}
Consistently with \eqref{definitionaepsilon}, we denote by $y_1(x)$ and $y_2(x)$ the points on $\partial J_{\delta}$ such that a given point $x\in J_{\delta}$ lies on the segment $(y_1(x), y_2(x))$.  Using the definition~\eqref{definitionaepsilon} of~$a_{\varepsilon}$ and~\ref{limsup_observation_3} we infer that the integrand $| \aee|$ of the inner integral of the first term on the right-hand side of~\ref{splittingsmoothnotsmooth} is bounded by~$L$. Hence, we get
 \begin{equation}
  \frac{1}{\varepsilon} \int_{J_{\varepsilon + \delta}} f \bigg( \frac{1}{\omega_d \, \varepsilon^{d-1}}  \int_{B_{\varepsilon}(x)\cap (J_{\delta}\setminus \tilde{J}_{\delta})} |\aee| \dd y \bigg) \dd x \leq    \frac{1}{\varepsilon} \int_{J_{\varepsilon + \delta}} \ff{ \varepsilon L} \dd x \leq c_0 \, L |J_{\varepsilon + \delta}|. \label{estimateaboveniceboundary}
 \end{equation}
 Let 
$J^{P_1}_{\varepsilon + \delta} \subset \R^{d-1}$ be the projection of $J_{\varepsilon + \delta}$ onto $\lbrace x_1= 0\rbrace$. Since $J_{\varepsilon + \delta} \subset (-\varepsilon - \delta, \varepsilon + \delta) \times J^{P_1}_{\varepsilon + \delta}$, Fubini's theorem yields for the second term
 \begin{multline}
 \frac{1}{\varepsilon} \int_{J_{\varepsilon + \delta}} f \bigg(\frac{1}{\omega_d \, \varepsilon^{d-1}} \int_{B_{\varepsilon}(x)\cap \tilde{J}_{\delta}}\betrag{\aee} \dd y \bigg) \dd x  \\
   \leq\frac{1}{\varepsilon} \int_{J^{P_1}_{\varepsilon + \delta}}  \int_{-\varepsilon - \delta}^{\varepsilon + \delta} f \bigg( \frac{1}{\omega_d \, \varepsilon^{d-1}}\int_{B_{\varepsilon}(s,x') \cap \tilde{J}_{\delta}} |\aee| \dd y \bigg) \,\dd s \, \dd x'. \label{estimateabovecriticalpart1}
 \end{multline}
For all $y=(y_1,y') \in \tilde{J}_{\delta}$ we have $z_1(y) -z_2(y) = 2 \delta$ and $(z_1(y), z_2(y)) = (z_1(0,y'), z_2(0,y'))$.
By observations~\ref{limsup_observation_1} and~\ref{limsup_observation_2} and again by the definition of $a_{\varepsilon}$, we thus obtain 
\begin{align*}
 \int_{B_{\varepsilon}(s,x')\cap \tilde{J}_{\delta}} |\aee| \dd y 
  & = \int_{B_{\varepsilon}(s,x')\cap \tilde{J}_{\delta}} \frac{\betrag{u(z_2(0,y'))-u(z_1(0,y'))}}{2 \delta} \dd (y_1,y')\\
       & \leq \int_{B_{\varepsilon}(s,x') \cap \tilde{J}_{\delta} }\frac{\betrag{u(y_2(0,x'))-u(y_1(0,x'))}+ \varrho}{2 \delta} \dd (y_1,y')\\
    &   \leq \int_{B_{\varepsilon}(s,x') \cap \tilde{J}_{\delta} }\frac{\betrag{u((0,x')+)-u((0,x')-)} + 2 \varrho }{2 \delta} \dd (y_1,y')\\
  & \leq  \omega_{d-1} \big[ \varepsilon^2 - ( \max\{ |s|-\delta, 0 \} ) ^2 \big]^{\frac{d-1}{2}}  \big( |u((0,x')+)-u((0,x')-)| + 2 \varrho\big).
\end{align*}
The expression $\max\lbrace{|s|-\delta, 0}\rbrace$ is necessary to distinguish between two possible cases. In the first case, if $(s,x') \notin \tilde{J}_{\delta}$ (and hence $|s| - \delta \geq 0$), the projection of $B_{\varepsilon}(s,x')\cap \tilde{J}_{\delta}$ onto $\{x_1=0\}$ is contained in a $(d-1)$-dimensional ball of radius $\sqrt{\varepsilon^2 - ( |s|-\delta)^2 })$ (see the left figure). In the second case, if $(s,x') \in \tilde{J}_{\delta}$ (and hence $|s| - \delta < 0$), this projection can only be guaranteed to lie in a $(d-1)$-dimensional ball of radius $\varepsilon$ (see the right figure).
\begin{center}
\begin{tikzpicture}[scale= 0.45]

\draw[thick,->] (-2,0) -- (9,0) node[right]  {$\R$};
\draw[thick,->] (0,-4.0) -- (0,6.6) node[above]	{$\R^{d-1}$};

\fill[yellow] (-1.6,-4.0) rectangle (1.6,6.2);

\draw[thick,-] (1.6,-4.0) -- (1.6,6.2);
\draw[thick,-] (-1.6,-4.0) -- (-1.6,6.2);

\draw[-] (0,-4.0) -- (0,6.2);
\draw[-] (-2,0) -- (9,0);

\node (A) at (0.8,-3) {$\tilde{J}_{\delta}$};

\fill[orange, opacity= 0.4] (3.8,2) circle (4);
\draw[thick] (3.8,2) circle (4);

\node at (8.2,5.2) {$B_{\varepsilon}(s,x')$};

\coordinate [label= right: {$(s,x')$}](D) at (3.8,2);
\fill(D) circle (4pt);

\coordinate (E) at (1.6,2);
\fill(E) circle (4pt);

\coordinate (G) at (1.6,5.35);
\fill(G) circle (4pt);

\draw [red, thick](D) -- (G)  node[midway, above right] {$\varepsilon$};

\coordinate(H) at (-1.6,2);
\fill(H) circle (4pt);

\draw[thick] (E) -- (H);

\coordinate  [label= above: {$(0,x')$}](I) at (0,2);
\fill(I) circle (4pt);

\draw[green!50 !black, thick] (E)--(G);

\draw[blue,thick] (E) -- (D);

\node (Z) at (3,1.2) [blue] {$|s| - \delta$};

\node (Y) at (4.5,7.5) [green!50 !black] {$\sqrt{\varepsilon^2 - (|s| - \delta)^2}$};
\draw[->, thick,green!50 !black] (Y) --(1.8,3.5);

\node (X1) at (-3.9,2){$(0,x')-\delta e_1$};

\begin{scope}[xshift=17cm]

\draw[thick,->] (-3,0) -- (6,0) node[right]  {$\mathbb{R}$};
\draw[thick,->] (0,-4.0) -- (0,6.6) node[above] {$\mathbb{R}^{d-1}$};

\fill[yellow] (-1.6,-4.0) rectangle (1.6,6.2);

\draw[thick,-] (1.6,-4.0) -- (1.6,6.2);
\draw[thick,-] (-1.6,-4.0) -- (-1.6,6.2);

\draw[-] (0,-4.0) -- (0,6.2);
\draw[-] (-2,0) -- (4,0);

\node (A) at (0.8,-3) {$\tilde{J}_{\delta}$};

\coordinate [label= below: {$(s,x')$}](D) at (1,2);
\fill[orange, opacity= 0.4] (1,2) circle (4);
\draw[thick] (1,2) circle (4);
\fill(D) circle (4pt);

\node (K) at (5.8,5.2) {$B_{\varepsilon}(s,x')$};

\draw[red,thick] (D) -- (1,6) node[midway, left]{$\varepsilon$};
\end{scope}

\end{tikzpicture}
\end{center}
This allows us to continue to estimate~\eqref{estimateabovecriticalpart1}. Via the substitution  $r= (s-\delta)/\varepsilon$ (and recalling that $\delta = \varepsilon^2$) we then obtain 
\begin{align*}
\lefteqn{\frac{1}{\varepsilon} \int_{J_{\varepsilon + \delta}} f \bigg(\frac{1}{\omega_d \, \varepsilon^{d-1}} \int_{B_{\varepsilon}(x)\cap \tilde{J}_{\delta}}\betrag{\aee} \dd y \bigg) \dd x} \\
 & \leq \frac{1}{\varepsilon} \int_{J^{P_1}_{\varepsilon + \delta}} 2 \int_{0}^{\varepsilon + \delta} f \bigg( \frac{\omega_{d-1}}{\omega_d \varepsilon^{d-1}} \big[ \varepsilon^2 - ( \max\{ s-\delta, 0 \} ) ^2 \big]^{\frac{d-1}{2}}  ( \vert[u(0,x')]\vert+ 2\varrho) \bigg) \dd s \,\dd x' \\
 & =  \int_{J^{P_1}_{\varepsilon + \delta}}  2 \int_{-\varepsilon}^{1} f\bigg(\frac{\omega_{d-1}}{\omega_d}\big[1- (\max \{r,0\} )^2\big] ^{\frac{d-1}{2}}  (\vert[u(0,x')]\vert+ 2\varrho)  \bigg)\dd r \,\dd x'.
\end{align*}
Hence, taking into account~\eqref{estimateaboveniceboundary} and the fact that $J^{P_1}_{1+ \varepsilon}$ was defined as the projection of $J_{\varepsilon + \delta}$ onto $\{x_1=0\}$, we deduce from~\eqref{splittingsmoothnotsmooth}
\begin{align*}
&\frac{1}{\varepsilon} \int_{J_{\varepsilon + \delta}}f  \bigg( \varepsilon \Ibb |\aee| \dd y \bigg) \dd x \\ 
&\leq  c_0 L |J_{\varepsilon + \delta}| +  \int_{ \lbrace x_1=0\rbrace}  2  \int_{-\varepsilon }^{1}  f\bigg(\frac{\omega_{d-1}}{\omega_d}\big[1- ( \max \{ r,0\})  ^2\big] ^{\frac{d-1}{2}} (\vert[u(x)]\vert+ 2\varrho) \bigg)\dd r  \,\dd  \HA(x).
\end{align*}
In the passage to the $\limsup$ as $\varepsilon \to 0$, the first term disappears and the inner integral of the second term yields $\tfrac{1}{2} \theta([u]+2\varrho)$ by definition~\eqref{definitiontheta} of~$\theta$. Taking then the limit $\varrho \searrow 0$, we conclude 
\begin{equation*}
\limsup_{\varepsilon \to 0}\frac{1}{\varepsilon} \int_{J_{\varepsilon + \delta}} f \bigg(\varepsilon \Ibb | \aee| \dd y \bigg) \dd x \leq  \int_{J_u} \theta([u])\dd \HA,
\end{equation*}
since $[u]$ vanishes $\HA$-almost everywhere outside of~$J_u$. 
Together with \eqref{estimateabove0}, \eqref{estimateabove1}, \eqref{estimateabove2} and~\eqref{estimateabove3} we then find 
\begin{align*}
\limsup_{\varepsilon \to 0} E_{\varepsilon}(\ue,\gae) &
 \leq \limsup_{\varepsilon \to 0} \int_{\Omega } \betrag{\nabla u - g }^2 \dd x +   \limsup_{\varepsilon \to 0}
 \int_{\Omega \setminus J_{\varepsilon +\delta}}  c_0 \dashint_{B_{\varepsilon}(x) \cap \Omega} |g| \dd y \, \dd x \\ 
 & \quad + \limsup_{\varepsilon \to 0}\frac{1}{\varepsilon} \int_{J_{\varepsilon+ \delta}} f \bigg( \varepsilon \dashint_{B_{\varepsilon}(x)} |\aee| \dd y \bigg) \dd x
 \\& \leq \int_{\Omega} \betrag{\nabla u - g }^2 \dd x+ \int_{\Omega}  c_0 |g| \dd x + \int_{J_u} \theta([u])\dd \HA = E(u, \gamma),
\end{align*}
which concludes the proof of Proposition \ref{WdrBV}.
\end{proof}

Next, we use the density result \cite[Theorem~3.1]{CT99} and the relaxation result of Corollary \ref{rel2conBV} to generalize the statement of Proposition \ref{WdrBV} from $\Wde$ to $BV(\Omega)$. Together with Theorem \ref{liminfnBV}, this shows the $\Gamma$-convergence result of Theorem \ref{mainresultmBV} (recall that the estimate is only nontrivial for pairs $(u, \gamma) \in BV(\Omega) \times \Mam$ with $\norm{u}_{L^{\infty}(\Omega)}  \leq K$, $\gamma= D^su + g \mathcal{L}^d$ and $\nabla u -g \in L^2(\Omega,\Rd)$ considered here).

\begin{theorem}
For every $(u,\gamma) \in  BV(\Omega) \times \Mam$  with $\norm{u}_{L^{\infty}(\Omega)} \leq K$, $\gamma = D^su+ g \mathcal{L}^d$ and $\nabla u- g \in L^2(\Omega,\Rd)$ we have
\begin{equation*}
E^{\prime \prime}(u,\gamma) \leq E(u,\gamma).
\end{equation*}
\end{theorem}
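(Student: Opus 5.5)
Given $(u,\gamma)$ with $\gamma = D^su + g\mathcal{L}^d$ and $\nabla u - g\in L^2$, I will decouple the two fields by writing $w\coloneqq g-\nabla u\in L^2(\Omega,\R^d)$. Then
\begin{equation*}
E(u,\gamma) = \int_\Omega |w|^2\dd x + c_0\int_\Omega|\nabla u + w|\dd x + \int_{J_u}\theta([u])\dd\HA + c_0|D^cu|(\Omega).
\end{equation*}
For fixed $w$ this is exactly the right-hand side of \eqref{rf2BV} with $v=w$, except that $v$ must be smooth. So the plan has three steps. First approximate $w$ by a smooth field. Then use Corollary~\ref{rel2conBV} to approximate $u$ by $SBV^2$ functions. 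Finally pass from $SBV^2$ to $\Wd$ by the Cortesani--Toader density theorem and apply Proposition~\ref{WdrBV}. The whole argument is closed by lower semicontinuity of the $\Gamma$-upper limit $E''$ with respect to $L^1\times$flat convergence, together with a diagonal argument.

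\emph{Step 1 (smoothing $w$).} Choose $v_j\in C^\infty_0(\overbar\Omega,\R^d)$ with $v_j\to w$ in $L^2$, and set $\gamma_j\coloneqq D^su+(\nabla u+v_j)\mathcal{L}^d$. Then $\gamma_j\to\gamma$ in the flat norm, since $\|\gamma_j-\gamma\|_{\text{flat}}\le\|v_j-w\|_{L^1}$. Moreover $E(u,\gamma_j)\to E(u,\gamma)$, because the bulk terms are continuous under $L^2$ (hence $L^1$) convergence of $v_j$. By lower semicontinuity of $E''$ it therefore suffices to treat smooth $v=v_j$.

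\emph{Step 2 (relaxation).} Fix smooth $v$. By Corollary~\ref{rel2conBV} there are $u_h\in SBV^2(\Omega)$ with $\|u_h\|_{L^\infty}\le K$, $u_h\to u$ in $L^1$ and $\limsup_h\G_v(u_h)\le E(u,D^su+(\nabla u+v)\mathcal{L}^d)$. Set $\gamma_h\coloneqq D^su_h+(\nabla u_h+v)\mathcal{L}^d$. Then $E(u_h,\gamma_h)=\G_v(u_h)$. I also need $\gamma_h\to\gamma$ in the flat norm, and this is where the choice pays off: $\gamma_h = Du_h + v\mathcal{L}^d$, and $Du_h\to Du$ in the flat norm because $u_h\to u$ in $L^1$ (test with $\varphi\in W^{1,\infty}_0$ and integrate by parts). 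Thus $\gamma_h\to Du+v\mathcal{L}^d=\gamma$.

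\emph{Step 3 (density in $SBV^2$).} For each $u_h$, apply \cite[Theorem~3.1]{CT99} to obtain $u_{h,k}\in\Wd$ with $u_{h,k}\to u_h$ in $L^1$, $\nabla u_{h,k}\to\nabla u_h$ in $L^2$ (so also in $L^1$), $\|u_{h,k}\|_\infty\le\|u_h\|_\infty\le K$, and
\begin{equation*}
\limsup_k\int_{J_{u_{h,k}}}\theta([u_{h,k}])\dd\HA\le\int_{J_{u_h}}\theta([u_h])\dd\HA .
\end{equation*}
The last property uses the version of the theorem for upper semicontinuous, subadditive surface densities $\theta$ that are independent of the normal. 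With $\gamma_{h,k}\coloneqq Du_{h,k}+v\mathcal{L}^d$, the same reasoning as in Step 2 gives convergence in the flat norm and $\limsup_k E(u_{h,k},\gamma_{h,k})\le E(u_h,\gamma_h)$. Proposition~\ref{WdrBV} yields $E''(u_{h,k},\gamma_{h,k})\le E(u_{h,k},\gamma_{h,k})$. Applying lower semicontinuity of $E''$ three times (in $k$, then $h$, then $j$) gives the claim.

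The main obstacle is Step~3: one must check that the Cortesani--Toader density theorem applies to the present surface density $\theta$, which is only bounded by $\min\{|t|,1\}$ and not of Griffith type, while simultaneously keeping the $L^\infty$ bound $K$. If the $L^\infty$ bound is lost, I would first try truncation followed by rescaling by $K/(K+\eta)$, as in the proof of Corollary~\ref{rel2conBV}.
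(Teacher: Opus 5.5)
Your proposal is correct and follows essentially the same route as the paper. Both arguments approximate $g-\nabla u$ by smooth fields in $L^2$, apply Corollary~\ref{rel2conBV} for smooth $v$, and use the Cortesani--Toader density theorem \cite{CT99} (with $\gamma$ rebuilt as $Du_\varepsilon+v\mathcal{L}^d$) together with Proposition~\ref{WdrBV} for $SBV^2$ functions, gluing everything by lower semicontinuity of $E^{\prime\prime}$. The only differences are presentational: the paper runs these steps bottom-up, starting from $SBV^2$, and restores the $L^\infty$ bound by multiplying with a factor tending to $1$.
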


\begin{proof}
We first consider $(u,\gamma) \in SBV^2(\Omega) \times \Mam$ with $\norm{u}_{L^{\infty}(\Omega)} \leq K$, $\gamma= D^su + g \mathcal{L}^d$ and $\nabla u - g \in L^2(\Omega, \Rd)$.
According to \cite[Theorem~3.1 and Remark~3.5]{CT99} and \cite[Remark~6.2]{Crismale_19}, there exists a sequence $\{\ue\}_{\varepsilon}$ in $\Wde$ such that $\ue \to u$ in $L^1(\Omega)$, $\nabla u_{\varepsilon} \to \nabla u$ in $L^2(\Omega, \R^d)$, $\norm{u_{\varepsilon}}_{L^{\infty}(\Omega)} \leq \norm{u}_{L^{\infty}(\Omega)}  \leq K$ for every $\varepsilon>0$ (possibly after multiplying $u_\varepsilon$ with the factor  $\tfrac{\| u \|_{L^\infty}(\Omega)}{\| u_{\varepsilon} \|_{L^\infty}(\Omega)}$  converging to~$1$) and 
\begin{align}
&\limsup_{\varepsilon \to 0} \int_{J_{u_{\varepsilon}}} \theta([u_{\varepsilon}])  \dd\HA \leq  \int_{J_{u}} \theta([u])  \dd\HA.  \label{WdeBV}
\end{align}
We next define a sequence $\{\gae\}_{\varepsilon}$ in $\Mam$ via $\gae\coloneqq  D^s\ue + \ggae \mathcal{L}^d$ with $\ggae\coloneqq g - \nabla u +\nabla \ue$ for every $\varepsilon>0$. In view of $\ue \to u$ in $L^1(\Omega)$, we notice that $D\ue = \nabla \ue \mathcal{L}^d +D^s\ue \to Du$ and in turn $\gae \to \gamma$ in the flat norm on~$\Omega$. In addition, since $\nabla \ue - \ggae = \nabla u - g$ for all $\varepsilon >0$, we have $\ggae \to g$ in $L^2(\Omega, \Rd)$ and then in particular also in $L^1(\Omega, \Rd)$. By the lower semicontinuity of the upper $\Gamma$-limit $E^{\prime \prime}$, Proposition \ref{WdrBV} and~\eqref{WdeBV}, we obtain 
\begin{align*}
E^{\prime \prime}(u  , \gamma) &\leq \liminf_{\varepsilon \to 0} E^{\prime \prime}(u_{\varepsilon}, \gamma_{\varepsilon}) \leq \liminf_{\varepsilon \to 0} E (u_{\varepsilon}, \gamma_{\varepsilon})\\
 &\leq \int_{\Omega} { |\nabla u - g|^2 \dd x } + \lim_{\varepsilon \to 0} c_0 \int_{\Omega} |\ggae| \dd x  + \limsup_{\varepsilon \to 0}  \int_{J_{u_{\varepsilon}}} \theta([\ue])  \dd\HA\\
 &\leq \int_{\Omega} { |\nabla u - g|^2 \dd x } +  c_0 \int_{\Omega} |g| \dd x  +  \int_{J_{u}} \theta([u])  \dd\HA.
\end{align*}

In a second step, we consider $(u,\gamma) \in BV(\Omega) \times \Mam$  as in the formulation of the theorem but with $g-\nabla u  \eqqcolon v \in C^{\infty}_0(\oOmega, \Rd)$ in addition. We consider the function $\psi \colon \oOmega \times \Rd \to [0,\infty)$ defined by $\psi(x, \xi)= |v(x)|^2 + c_0|\xi + v(x)|$ for $(x,\xi) \in \oOmega \times \Rd$ as in Section~\ref{sec:relBV}. Now, we apply Corollary~\ref{rel2conBV} combined with the lower semicontinuity of $E^{\prime \prime}$ along a suitable sequence $\{(\ue,\gae)\}_{\varepsilon}$ in $SBV^2(\Omega) \times \Mam$ with $\norm{\ue}_{L^{\infty}(\Omega)} \leq K$ and $\gae\coloneqq  D^s\ue + (g - \nabla u +\nabla \ue) \mathcal{L}^d$ for every $\varepsilon>0$ to deduce that
\begin{align*}
E^{\prime \prime}(u, \gamma) &\leq  \Io  \psi (x, \nabla u )\dd x + \Ij {\theta}([u])\dd \HA  + c_0 |D^cu|(\Omega) \notag \\
& = \Io |\nabla u - g|^2 \dd x + c_0 \Io |g| \dd x + \Ij {\theta}([u])\dd \HA  + c_0 |D^cu|(\Omega) \notag \\
&= E(u, \gamma). 
\end{align*}
Since $C^{\infty}_0(\oOmega, \Rd)$ is dense in $L^2(\Omega, \Rd)$, there exists a sequence $\{v_k\}_k$ in  $C^{\infty}_0(\oOmega, \Rd)$ with $v_k \to g - \nabla u$ in $L^2(\Omega, \Rd)$. Defining $g_k\coloneqq  \nabla u + v_k$, we then infer from the previous inequality (applied with $\psi_k$ instead of~$\psi$) $E^{\prime \prime}(u, D^su + g_k \mathcal{L}^d) \leq E(u,  D^su + g_k \mathcal{L}^d)$ for all $k \in \N$. By the lower semicontinuity of $E^{\prime \prime}$, we finally obtain
\begin{align*}
 E^{\prime \prime}(u, \gamma) 
 & \leq \liminf_{k \to \infty}  E^{\prime \prime}(u, D^su + g_k \mathcal{L}^d) 
 \leq  \liminf_{k \to \infty} E(u,  D^su + g_k \mathcal{L}^d) 
 \\
 & \leq \lim_{k \to \infty} \Io |\nabla u - g_k|^2 \dd x +  \lim_{k \to \infty} c_0 \Io |g_k | \dd x   + \Ij {\theta}([u])\dd \HA  + c_0 |D^cu|(\Omega) \\
 & =  E(u, D^su + g \mathcal{L}^d) = E(u,\gamma), 
\end{align*}
which completes the proof.
\end{proof}

\section{$\Gamma$-convergence for the minimal energies with respect to $\gamma$}\label{sec_optimzed-gamma}

In this final section we show the $\Gamma$-convergence result in Corollary \ref{mainresult2BV} for the minimal energies with respect to the second variable $\gamma$, i.e., for the energies~$\tilde{E}_{\varepsilon}$ and~$\tilde{E}$ from~\eqref{def_minimal_energy_eps} and~\eqref{def_minimal_energy_limit}, respectively. We note that, since the function~$g^*$ from \eqref{gammaaopt} solves the optimization problem in~\eqref{optimization_problem}, for every $u \in BV(\Omega)$ there holds
\begin{equation}\label{optgammaBV}
\tilde{E}(u)=E(u, D^su+ g^* \mathcal{L}^1) = E(u,\go).
\end{equation} 
For completeness, we also state the corresponding compactness result.

\begin{corollary}[Compactness of the minimal energies with respect to~$\gamma$]
Let $\{u_{\varepsilon}\}_{\varepsilon}$ be a sequence in $\Lme$ with
\begin{equation*}
\tilde{E}_{\varepsilon}(\ue)\leq C_0 \quad \text{for all } \varepsilon>0
\end{equation*}
for a positive constant $C_0$. There exists a function $u \in BV(\Omega)$ with $\norm{u}_{L^{\infty}(\Omega)} \leq K$ such that, up to a subsequence, $\{u_{\varepsilon}\}_{\varepsilon}$ converges to~$u$ in $\Lme$.
\end{corollary}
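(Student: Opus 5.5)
The plan is to reduce the claim to the compactness result of Theorem~\ref{commBV}, by choosing for each $\varepsilon$ an almost minimizing eigendeformation. Since $\tilde{E}_{\varepsilon}(u_\varepsilon)\le C_0<\infty$, the infimum in~\eqref{def_minimal_energy_eps} is finite. For each $\varepsilon>0$ we therefore pick $g_\varepsilon\in L^1(\Omega,\R^d)$ with
\begin{equation*}
E_\varepsilon(u_\varepsilon, g_\varepsilon\mathcal{L}^d)\le \tilde{E}_\varepsilon(u_\varepsilon)+1\le C_0+1 ,
\end{equation*}
and we set $\gamma_\varepsilon\coloneqq g_\varepsilon\mathcal{L}^d$. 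Finiteness of $E_\varepsilon(u_\varepsilon,\gamma_\varepsilon)$ automatically gives $u_\varepsilon\in W^{1,1}(\Omega)$, $\norm{u_\varepsilon}_{L^\infty(\Omega)}\le K$ and $\nabla u_\varepsilon-g_\varepsilon\in L^2(\Omega,\R^d)$.

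Thus the pairs $(u_\varepsilon,\gamma_\varepsilon)$ satisfy the uniform energy bound~\eqref{bmBV} with constant $C_0+1$. Theorem~\ref{commBV} then yields a subsequence and a function $u\in BV(\Omega)$ with $\norm{u}_{L^\infty(\Omega)}\le K$ such that $u_\varepsilon\to u$ in $L^1(\Omega)$. The accompanying convergence of $\gamma_\varepsilon$ in $W^{-1,q}$ and in the flat norm is not needed and is simply discarded. This is exactly the assertion.

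There is no real obstacle. The only point requiring care is that the infimum defining $\tilde{E}_\varepsilon$ need not be attained, which is why we use an almost minimizer with an additive error of $1$ rather than an exact minimizer. The corresponding $\Gamma$-convergence statement (Corollary~\ref{mainresult2BV}) then follows along the same lines, combining Theorem~\ref{mainresultmBV} with~\eqref{optgammaBV}.
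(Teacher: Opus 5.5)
Your proposal is correct and follows essentially the same route as the paper's proof: choose a low-energy eigendeformation $\gamma_\varepsilon$ with $E_\varepsilon(u_\varepsilon,\gamma_\varepsilon)\le \tilde{E}_\varepsilon(u_\varepsilon)+1\le C_0+1$, then apply Theorem~\ref{commBV} and keep only the $L^1$-convergence of $u_\varepsilon$. Using an additive error instead of an exact minimizer is also what the paper does.
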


\begin{proof}
We choose a low energy sequence $\{\gae\}_{\varepsilon}$ in $\Mam$ with $E_{\varepsilon}(\ue,\gae) \leq \tilde{E}_{\varepsilon}(\ue) + 1$ for all $\varepsilon>0$. In view of $E_{\varepsilon}(\ue,\gae) \leq C_0 +1$ for all $\varepsilon>0$, the claim then follows from Theorem~\ref{commBV}.
\end{proof}

\begin{proof}[Proof of Corollary \ref{mainresult2BV}]
It is again sufficient to establish the $\Gamma$-$\liminf$-inequality and the $\Gamma$-$\limsup$-inequality only for $u \in BV(\Omega)$ with $\norm{u}_{L^{\infty}(\Omega)} \leq K$ since the estimates are trivial otherwise. We start with the proof of the $\Gamma$-$\liminf$-inequality. Let $\{\ue\}_{\varepsilon}$ be an arbitrary sequence in $W^{1,1}(\Omega)$ with $\ue \to u$ in $\Lme$, for which we may assume $\tilde{E}_{\varepsilon}(\ue)\leq C_0$ for some positive constant~$C_0$  and all~$\varepsilon >0$. 
We further take a low energy sequence $\{\gae\}_{\varepsilon}$ in~$\Mam$ with $E_\varepsilon(u_\varepsilon,\gamma_\varepsilon) \leq \tilde{E}_{\varepsilon}(\ue) + \varepsilon$ for all~$\varepsilon >0$. Possibly by passing to a subsequence, we may assume
\begin{equation*}
\liminf_{\varepsilon \to 0} E_{\varepsilon}(\ue,\gae) = \lim_{\varepsilon \to 0} E_{\varepsilon}(\ue,\gae).
\end{equation*}
Since $u_{\varepsilon} \to u$ in $L^1(\Omega)$, we infer from the compactness result of Theorem~\ref{commBV} that there exists a function $g \in L^1(\Omega, \Rd)$ with $\nabla u - g \in L^2(\Omega, \Rd)$ such that, up to a further subsequence, $\gae \to D^su + g \mathcal{L}^{d}$ in the flat norm. By Theorem \ref{mainresultmBV}~\ref{mainresultmBV_1} (applied for an appropriate subsequence of $\{E_\varepsilon\}_\varepsilon$), we find
\begin{align*}
 \lim_{\varepsilon \to 0} E_{\varepsilon}(\ue,\gae) = \liminf_{\varepsilon \to 0 } E_{\varepsilon}(\ue,\gae) \geq E(u,D^su + g \mathcal{L}^{d} ).
\end{align*} 
Since $\{\gamma_\varepsilon\}_\varepsilon$ is a low energy sequence, this yields
\begin{equation*}
\liminf_{\varepsilon \to 0 }\tilde{E}_{\varepsilon}(u)\geq \tilde{E}(u).
\end{equation*}

Concerning the $\Gamma$-$\limsup$-inequality, we notice that by Theorem~\ref{mainresultmBV}~\ref{mainresultmBV_2} there exists a recovery sequence $\{(\ue,\gae)\}_{\varepsilon}$ of $(u,\go)$ in $L^{1}(\Omega) \times \Mam$. Because of~\eqref{optgammaBV}, the claim then follows via
\begin{equation*}
\limsup_{\varepsilon \to 0 } \tilde{E}_{\varepsilon}(\ue)\leq \limsup_{\varepsilon \to 0 } E_{\varepsilon}(\ue,\gae) \leq E(u, \go) = \tilde{E}(u). \qedhere
\end{equation*}
\end{proof}


\bibliographystyle{alpha} 
\bibliography{Bibliography} 

\end{document}